	\definecolor{dgreen}{RGB}{0,190,0}
\newcommand{\SortNoop}[1]{}
\theoremstyle{plain}
\newtheorem{theorem}{Theorem}[section]
\newtheorem{lemma}[theorem]{Lemma}
\newtheorem{proposition}[theorem]{Proposition}
\theoremstyle{definition}
\newtheorem{definition}[theorem]{Definition}
\newtheorem{remark}[theorem]{Remark}
\newtheorem{assumption}[theorem]{Assumption}
\newcommand{\downq}{\includegraphics[height=8pt]{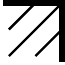}}
\newcommand{\upq}{\includegraphics[height=8pt]{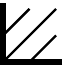}}
\newcommand{\Z}{\mathbb Z}
\newcommand{\R}{\mathbb R}
\newcommand{\N}{\mathbb N}
\newcommand{\K}{\mathbb K}
\renewcommand{\phi}{\varphi}
\def\1{{\mathchoice {\rm 1\mskip-4mu l} {\rm 1\mskip-4mu l}
{\rm 1\mskip-4.5mu l} {\rm 1\mskip-5mu l}}}
\newcommand{\cA}{\mathcal{A}}
\newcommand{\cC}{\mathcal{C}}
\newcommand{\cG}{\mathcal{G}}
\newcommand{\cI}{\mathcal{I}}
\newcommand{\cJ}{\mathcal{J}}
\newcommand{\cL}{\mathcal{L}}
\newcommand{\fL}{\mathfrak{L}}
\newcommand{\fS}{\mathfrak{S}}
\newcommand{\bK}{\mathbb{K}}
\newcommand{\bP}{\mathbb{P}}
\newcommand{\bR}{\mathbb{R}}
\newcommand{\bfE}{\mathbf{E}}
\newcommand{\limn}{\lim_{n\rightarrow \infty}}
\newcommand{\liminfn}{\liminf_{n\rightarrow \infty}}
\newcommand{\dd}{ \mathrm{d}}
\newcommand{\bONE}{\mathbbm{1}}
\DeclareMathOperator*{\argmin}{argmin}
\newcommand{\ip}[2]{\langle #1,#2\rangle}
\DeclareMathOperator{\arctanh}{arctanh}
\newenvironment{colorwil}
    {\color[rgb]{0,0.5,0}    }
\newcommand{\bwil}{\begin{colorwil}}
\newcommand{\ewil}{\end{colorwil}}
\newcommand{\towil}{\todo[color=green!50]}
\newenvironment{calculations}
    {\color{orange}
    }
    { 
    }
\begin{document}

\renewcommand{\thefootnote}{\Roman{footnote}}

\title{A Hamilton-Jacobi point of view on mean-field Gibbs-non-Gibbs transitions}
\author{
\renewcommand{\thefootnote}{\Roman{footnote}}
Richard C. Kraaij
\footnotemark[1]
\\
\renewcommand{\thefootnote}{\Roman{footnote}}
Frank Redig
\footnotemark[2]
\\
\renewcommand{\thefootnote}{\Roman{footnote}}
Willem B. van Zuijlen
\footnotemark[3]
}

\footnotetext[1]{
Delft Institute of Applied Mathematics,
Technische Universiteit Delft, 
Delft, the Netherlands,  
r.c.kraaij@tudelft.nl. 
}
\footnotetext[2]{
Delft Institute of Applied Mathematics,
Technische Universiteit Delft, 
Delft, the Netherlands,  
f.h.j.redig@tudelft.nl. 
}
\footnotetext[3]{
Weierstrass Institute, Berlin, Germany, 
vanzuijlen@wias-berlin.de. 
}

%

\maketitle

\renewcommand{\thefootnote}{\arabic{footnote}} 

\begin{abstract}

We study the loss, recovery, and preservation of differentiability of time-dependent large deviation rate functions. This study is motivated by mean-field Gibbs-non-Gibbs transitions. 
	
The gradient of the rate-function evolves according to a Hamiltonian flow. This Hamiltonian flow is used to analyze the regularity of the  time-dependent rate function, both for Glauber dynamics for the Curie-Weiss model and Brownian  dynamics in a potential. We extend the variational approach to this problem of time-dependent regularity in order to include Hamiltonian trajectories with a finite lifetime in closed domains with a boundary. 
This leads to new phenomena, such a recovery of smoothness.

We hereby create a new and unifying approach for the study of mean-field Gibbs-non-Gibbs transitions, based on Hamiltonian dynamics and viscosity solutions of Hamilton-Jacobi equations.

\end{abstract}

\smallskip
\noindent {\bf AMS 2010 subject classification:} 49L99, 60F10, 82C22, 82C27.

\smallskip
\noindent {\bf Keywords:} Hamiltonian dynamics, Hamilton-Jacobi equation, Mean-field models, large deviation principle, Gibbs versus non-Gibbs, dynamical transition, global minimisers of rate functions



\section{Introduction}

	The large deviation approach to dynamical Gibbs-non-Gibbs transitions, initiated in van Enter, Fern\'andez, den Hollander and Redig \cite{EFHR10}, characterizes the emergence of `bad configurations' via the non-uniqueness of optimal starting configurations corresponding to a given arrival configuration. `Bad configurations' have to be interpreted as points of essential discontinuity of conditional probabilities and `optimal' has to be interpreted here in the sense of minimizing a large deviation cost,
	\begin{equation} \label{eqn:intro_rate_function}
	I(\gamma) = I_0(\gamma(0)) + \int_0^\infty \cL(\gamma(s),\dot{\gamma}(s)) \, \dd s,
	\end{equation}
	 which is the sum of an initial cost  $I_0$  corresponding to the starting measure and a path-space cost  in the form of a Lagrangian action. 
	In the mean-field context one considers trajectories of the magnetization and the dynamical Gibbs-non-Gibbs transitions are to be interpreted in the sense of Gibbsianness for mean-field models, a notion introduced in K\"ulske and le Ny \cite{KN07} and studied in  \cite{EK10,FHM13,HRZ15,KiKu19,KuMe20b}. Inspired from the literature on dynamical Gibbs-non-Gibbs transitions it is natural to conjecture that for a large variety of models the following three statements are equivalent: 		
	\begin{enumerate}
		\item Mean-field Gibbsianness  at time $t$;
		\item Unique optimal trajectories: 
		for all arrival points $b$ at time $t$ the optimal trajectory arriving at $b$ is unique, i.e., 
		\begin{equation*}
		\argmin_{\gamma, \gamma(t) = b} I_0(\gamma(0)) + \int_0^t \cL(\gamma(s),\dot{\gamma}(s)) \dd s
		\end{equation*}
		is a singleton;
		\item Differentiability of the rate function  at time $t$, i.e., $I_t$ given by 
		\begin{equation} \label{eqn:intro_rate_It}
		I_t(b) := \inf_{\gamma, \gamma(t) = b} I_0(\gamma(0)) + \int_0^t \cL(\gamma(s),\dot{\gamma}(s)) \dd s,
		\end{equation}
		 is differentiable as a function of $b$. 
	\end{enumerate} 
	
	In this paper, we prove, for a broad class of models in the one-dimensional setting, the equivalence of (b) and (c) and introduce  new methods within the framework of calculus of variations to  investigate the differentiability of the rate function at time $t$. The proofs are based on techniques from the theory of calculus of variations.  Under certain conditions the equivalence of (b) and (c) has been proved, however, our setting is fundamentally different as will be explained below.

By using this general approach, we do not use  specific  information  of the considered models  and therefore our methods are applicable in a large variety of models.  This is in contrast to \cite{KN07,EK10,FHM13,HRZ15,KiKu19,KuMe20b}, whose approaches rely on explicit information about the specific models,  for which they treat the relation between (a) and (b) .

	 Let $H(x,p) = \sup_v pv - \cL(x,v)$ be the Hamiltonian corresponding to the Lagrangian in \eqref{eqn:intro_rate_function}. Following classical mechanics, if the characteristics of the Hamilton-Jacobi equation
	\begin{equation} \label{eqn:intro_HJ}
	\partial_t u(t,x) + H(x,\partial_x u(t,x)) = 0, \qquad  u(0,x) = I_0(x),
	\end{equation}
	do not intersect, then  $u(t,x) := I_t(x)$, with $I_t$ defined in \eqref{eqn:intro_rate_It},  is continuously differentiable  and a classical solution of \eqref{eqn:intro_HJ}. Following the theory of calculus of variations,  even  if the characteristics intersect, then  $u(t,x)=I_t(x)$ still  solves \eqref{eqn:intro_HJ} as a \textit{viscosity solution}. In addition, one can show that $u$ is \textit{locally semi-concave}.
	
	The characteristics  of \eqref{eqn:intro_HJ} are exactly the Hamilton trajectories, i.e., they  solve the Hamilton equations
	\begin{equation}\label{eqn:intro_hamilton_equations}
	\begin{bmatrix}
	\dot{X}(s) \\
	\dot{P}(s)
	\end{bmatrix}
	=
	\begin{bmatrix}
	\partial_p H(X(s),P(s)) \\
	-\partial_x H(X(s),P(s))
	\end{bmatrix}.
	\end{equation}
	 Moreover,  every optimal trajectory as in (b) above has an associated characteristic.  A rigorous analysis shows that these observations  can be turned into a proof that (b) is equivalent to (c). In addition, a study of solutions to \eqref{eqn:intro_hamilton_equations} can be used to prove or disprove that $I_t$ is differentiable.

	\smallskip
	
	We proceed with giving three techniques based on the analysis of Hamiltonian flows that either guarantee differentiability or non-differentiability of $I_t$: 
	order preservingness, linearization and rotation.  
	We illustrate these methods on the natural examples of Glauber dynamics for the Curie-Weiss model and that of mean-field interacting Brownian particles in a single- or double-well potential. Phenomena such as short-time conservation, loss and recovery of Gibbsianness (differentiability) for both high- and low- temperature dynamics, previously obtained in specific examples only, are now obtained in a broad context.

	\smallskip
	
\textbf{Comparison with existing literature on calculus of variations.}	
One of the earlier works dealing with the equivalence between (b) and (c) is \cite{Fl69}. The theory of viscosity solutions, their regularity and Hamiltonian trajectories is a well established theory, we refer to the works \cite{Li82}, \cite{FlSo89}, \cite{LiYo95}, \cite{BaCD97}, \cite{CaSi04} and \cite{FlSo06}. 
	However, these works do not apply in our context. Indeed, in the references mentioned above the spaces are open and all Hamilton trajectories have an infinite life time (they stay at all times in the open set) and the Lagrangian is finite and continuously differentiable on the closure of the space. 
	A major difficulty that we have to overcome is that the mean-field models which we consider have Hamiltonians for which the solutions to \eqref{eqn:intro_hamilton_equations} have a finite time of existence, after which they arrive at `points at infinity'. Also the Lagrangian is not finite on the boundary of the space. 
	These problems are not merely technical: they also lead to new phenomena, such as `recovery of differentiability' which in the Gibbs-non-Gibbs literature corresponds to `recovery of Gibbsianness' (which for measures on the lattice has been shown in \cite{EFHR02} and for infinite-temperature mean-field dynamics in \cite{FHM13}).

	\smallskip

\textbf{Overview.}
The rest of our paper is organized as follows. 

In Section \ref{section:preliminaries_theoretical_main_results} we introduce Gibbs-non-Gibbs transitions, path-space large deviations, examples of models giving rise to Hamiltonians that fall within our framework and some definitions and preleminaries from the theory of calculus of variations. 
Then we give our main result on the equivalence between (b) and (c), the relation between uniqueness of optimisers and the regularity of the rate function and finally we present a relation between this regularity and the push-forward of the graph of the derivative of the initial rate function.

In Section \ref{section:applications_of_analyzing_Hamiltonian_flow} we prescribe conditions under which the regularity is preserved or broken and apply this to obtain different scenarios for the models introduced in Section \ref{section:preliminaries_theoretical_main_results}. 
One of the scenarios treated here it the one of recovery, as mentioned before. 

Techniques and proofs for theorems of Section \ref{section:preliminaries_theoretical_main_results} can be found in 
Section \ref{section:calculus_of_variations},  \ref{section:proofs_regularity_rate_function} and \ref{section:topological_properties_of_Gt}. 

Techniques and proofs for theorems of Section \ref{section:applications_of_analyzing_Hamiltonian_flow} can be found in 
Section 
\ref{section:abstract_convexity_preserving}, 
\ref{section:creation_of_overhangs_proofs}
 and  \ref{section:verification_explicit_example}. 





\textbf{Acknowledgements.} 
The authors are grateful to C. Külske and A.C.M. van Rooij for discussions.

RK is supported by The Netherlands Organisation for Scientific Research (NWO), grant number 600.065.130.12N109 and the Deutsche Forschungsgemeinschaft (DFG) via RTG 2131 High-dimensional Phenomena in Probability – Fluctuations and Discontinuity. 

WvZ is supported by the German Science Foundation (DFG) via the Forschergruppe FOR2402 "Rough paths, stochastic partial differential equations and related topics".

\section{Preliminaries and theoretical main results} \label{section:preliminaries_theoretical_main_results}

\subsection{ Mean-field Gibbs measures }\label{section:gibbs_non_gibbs}

In this paper,  as the initial model at time $t=0$ we consider two mean-field models.  In the next two sections we will describe the dynamics to which these initial models will be subjected. Namely, we consider one with spins that attain values in $\R$, which we refer to as the $\R$-space-model, and one with spins that attain values in $\{-1,1\}$, which we refer to as the $\pm 1$-space-model. We will write $\K$  for the space in which empirical averages will take their values, in particular we have that $\K$  equals $\R$  for the $\R$-space-model, and $[-1,1]$ for the $\pm 1$-space-model. 

We start in both cases from an initial measure $\mu_{N,0}$ of the form
\begin{align}
\label{eqn:curie_weiss_measure}
\mu_{N,0}( \dd \sigma_1, \cdots, \dd \sigma_N) = \frac{e^{-NV(m_N(\sigma_1,\dots, \sigma_N)) }}{Z_N} \lambda^N ( \dd \sigma_1, \cdots, \dd \sigma_N),
\end{align}
where
\begin{align}
m_N(\sigma_1,\dots,\sigma_N) = \frac1N \sum_{i=1}^N \sigma_i,
\end{align}
$\lambda^N$ is the  $N$-fold  product of $\lambda$, $Z_N$ the normalizing constant, and
\begin{enumerate}[label=(\roman*)]
\item for the $\R$-space-model, $V: \R \rightarrow [0,\infty)$ is continuous and $\lambda$ is a standard normal distribution on $\R$.
\item for the $\pm 1$-space-model, $V: [-1,1]\rightarrow \R$ is continuous and $\lambda$ is the uniform measure on $\{-1,1\}$.
\end{enumerate}
The ``potential'' $V$ determines in both cases uniquely the rate function for the large deviation principle of the magnetization $m_N$ under $\mu_{N,0}$, which is the function 
\begin{align}
\label{eqn:rate_function_initial_CW}
x \mapsto V(x)+ i(x)- \inf_{x\in \K } (V(x)+i(x)),
\end{align}
where
\begin{enumerate}[label=(\roman*)]
\item for the $\R$-space-model, $i(x) = \frac12 x^2$,
\item for the $\pm 1$-space-model, $i(x)= \frac{1-x}{2}\log(1-x) +\frac{1+x}{2}\log(1+x)$. 
\end{enumerate}
We consider the spins to evolve according to the following dynamics
\begin{enumerate}[label=(\roman*)]
\item for the $\R$-space-model; interacting diffusions as described in Section \ref{subsection:interacting_diffusions}.
\item for the $\pm 1$-space-model; Glauber dynamics as described in Section \ref{subsection:glauber}.
\end{enumerate}

The initial measure $\mu_{N,0}$ is transformed by the dynamics to the measure $\mu_{N,t}$ at time $t>0$.

\begin{definition}
\label{def:seq_Gibbs}
Let $t\ge 0$.
$\alpha \in \K$ is called a
\emph{good magnetization for} $(\mu_{N,t})_{N\in\N}$ if there exists a probability measure $\gamma_t(\cdot|\alpha)$ such that \footnote{In case $\K =\R$, the conditional measure on the left-hand-side of \eqref{eqn:seq_gibbs_conv} has to be understood in terms of weakly continuous regular conditional probabilities as is done in \cite{HRZ15}.}
\begin{align}
\mu_{N,t}( \dd \sigma_1 | \sigma_2^N, \dots, \sigma_N^N ) \xrightarrow{weakly} \gamma_t ( \dd \sigma_1 | \alpha),
\label{eqn:seq_gibbs_conv}
\end{align}
for all $\sigma_2^N,\dots, \sigma_N^N$ such that $m_{N-1}(\sigma_2^N, \dots, \sigma_N^N) \rightarrow \alpha$. If $\alpha$ is not a good magnetization, it is called a \emph{bad magnetization}.

The sequence $(\mu_{N,t})_{N\in\N}$ is called \emph{sequentially Gibbs} if
$\alpha$ is a \emph{good magnetisation}
for all $\alpha \in \K$.

If $(\mu_{n,t})_{N\in\N}$ is sequentially Gibbs, then $\alpha \mapsto \gamma_t(\cdot | \alpha)$ is weakly continuous (see \cite[Theorem 3.A.1]{vZ16} or \cite[Lemma 1.3]{HRZ15}).
\end{definition}

\begin{remark}
The definition of sequentially Gibbs follows those in 
\cite{KN07},
\cite{EK10},
\cite{FHM13},
\cite{HRZ15}. We refer to \cite{KN07} for the explanation of the definition with regard to Gibbs measures on the lattice. 
\end{remark}

%

\subsection{Glauber dynamics}
\label{subsection:glauber}

In this section, we describe the dynamics for the $\pm 1$-space-model. For each $N$, we consider a continuous-time Markov process $(X_1(t),\cdots, X_N(t)) \in \{-1,1\}^N$ of mean-field interacting spins with mean-field jump rates $c_N$. The law of $(X_1(0),\cdots, X_N(0))$ is $\mu_{N,0}$ and the Markov generators of these spin-flip systems are of the form
\begin{equation*}
\cA_N f(\sigma_1,\dots,\sigma_N) := \sum_{i=1}^N c_N(\sigma_i,m_N(\sigma)) \left[f(\sigma^i) - f(\sigma) \right],
\end{equation*}
where $c_N \ge 0$
and where the configuration $\sigma^i \in \{-1,1\}^N$ is given by
\begin{equation*}
\sigma^i_j = \begin{cases}
- \sigma_j & \text{if } i = j, \\
\sigma_j & \text{if } i \neq j.
\end{cases}
\end{equation*}

We denote by $M_N(t) := \frac1N \sum_{i =1}^N X_i(t)$ the \emph{empirical magnetization at time $t$}. Due to the mean-field character of this dynamics, also the dynamics of the empirical magnetization is Markovian, and an elementary computation shows that the generator of the process $(M_N(t))_{t\ge 0}$ on $m_N(\{-1,1\}^N) \subseteq [-1,1]$ is given by
\begin{align} \label{generator_of_dynamics}
A_Nf(x) =
\notag
& \ \  N \frac{1-x}{2}
 c_N(-1,x) 
\left[f(x + 2N^{-1}) - f(x) \right] \\
& + N \frac{1+x}{2}
 c_N(+1,x) 
\left[f(x - 2N^{-1}) - f(x) \right],
\end{align}
 as it satisfies $\cA_N (f \circ m_N) = (A_N f) \circ m_N$. 
For later purposes, we assume the following.

\begin{assumption} \label{assumption:jump_rates}
There exist functions 
$v_+,v_- : [-1,1] \rightarrow [0,\infty)$ such that 
\begin{equation}
\label{eqn:convergence_c_N}
\lim_{N \rightarrow \infty} \sup_{x \in m_N(\{-1,1\}^N)}\left|\frac{1-x}{2}
c_N(-1,x)
 - v_+(x) \right| + \left|\frac{1+x}{2}
 c_N(+1,x)
  - v_-(x) \right| =0,
\end{equation}
for which the following properties hold:
\begin{enumerate}
\item $v_-(-1) = 0$, $v_-(x) > 0$ for $x \neq 1$, and $v_+(1) = 0$ and $v_+(x) > 0$ for $x \neq 1$,
\item 
$v_-,v_+$ have an extensions to an open set $V\subseteq \R$ that contains $[-1,1]$ and these extensions are twice continuously differentiable,
\item $v_+'(1) < 0$ and $v_-'(-1) > 0$. 
\end{enumerate}
\end{assumption}

In concrete examples, we consider $v_-,v_+$ of the form 
\begin{equation}
\label{eqn:common_v_-_and_v_+}
v_-(x) = 
\frac{1+x}{2} e^{-\beta x- h}, \qquad v_+(x) = 
\frac{1-x}{2} e^{\beta x+ h},
\end{equation} 
which  correspond to  the rates obtained from Glauber spin-flip dynamics  reversible with respect to the Curie-Weiss measure in \eqref{eqn:curie_weiss_measure}  at inverse temperature $\beta \geq 0$ and external magnetic field $h \in \bR$, i.e., for $V(x) = - \beta x^2 - hx$ .

\subsection{Interacting diffusion processes}
\label{subsection:interacting_diffusions}

In this section, we describe the dynamics for the $\R$-space-model. For each $N$, we consider $N$ mean-field interacting diffusions $(X_1(t),\dots,X_N(t)) \in \bR^N$ in a potential landscape $W_N : \bR \rightarrow \bR$, where $W_N$ is continuously differentiable. We assume that $-W_N'$ is one-sided Lipschitz: there is some $M \geq 0$ such that for all $x>y$ 
\begin{equation*}
-(W_N'(x) - W_N'(y)) \leq M (x-y).
\end{equation*}
The law of  $(X_1(0),\dots,X_N(0))$ is given by $\mu_{N,0}$ and the dynamics are given by
\begin{equation*}
	\dd X_i(t) = -W'_N(M_N(t)) \dd t + \dd B_i(t)
\end{equation*}
where $M_N(t) := \frac1N \sum_{i =1}^N X_i(t)$ is the {empirical magnetization at time $t$} as above and where $B_1, \dots, B_N$ are independent standard Brownian motions. Note that there exists a unique solution to this stochastic differential equation by \cite[Proposition 3.38]{PaRa14} and the one-sided Lipschitz property of $-W_N'$. The empirical magnetization is also Markovian and satisfies
\begin{equation*}
	\dd M_N(t) = -W'_N(M_N(t)) \dd t + \tfrac{1}{\sqrt{N}} \dd B_1(t).
\end{equation*}
Again by \cite[Proposition 3.38]{PaRa14} this equation has a unique solution, and additionally, its generator $A_N$ with domain $C_b^2(\bR)$ is given by 
\begin{equation*}
A_N f(x) = - W_N'(x)f'(x) + \tfrac{1}{2N} f''(x).
\end{equation*}

Also in this case we have the following assumption. 

\begin{assumption} \label{assumption:diffusion_drift}
We assume that there is some three times continuously differentiable function $W: \bR \rightarrow \bR$ for which  $-W'$ is one-sided Lipschitz and such that for every compact set $K \subseteq \bR$ 
\begin{equation}
\label{eqn:convergence_W_N}
	\lim_{N \rightarrow \infty} \sup_{x \in K}\left|W_N'(x) - W'(x)\right| =0.
\end{equation}
\end{assumption}

$W(x) = \sum_{i=1}^{2k} a_i x^i$ with $a_i \in \R$ and $a_{2k} > 0$ is an example of such a function for which $- W'$ is one-sided Lipschitz. 
In the examples that we will consider, we will use $W(x) = \frac{1}{4}x^4 - \frac{1}{2}dx^2$ with $d \in \bR$.
 This function is strictly convex for $d\le 0$ (`high temperature') and has the shape of a double well for $d>0$ (`low temperature').

\subsection{Path-space large deviations}

In various works, see e.g. \cite{Co89,FW98,DPdH96,Le95,FK06,Kr16b,CoKr17}, it has been shown that if the initial magnetization $M_N(0)$ satisfies a large deviation principle with rate function $I_0$, then the Markov process $t \mapsto M_N(t)$ satisfies the large deviation principle on\footnote{$D_{\K}([0,\infty))$ is the Skorohod space of càdlàg paths $[0,\infty) \rightarrow  \K$, see  also \cite[Section 3.5]{EK86}.} 
 $D_{\K}([0,\infty))$, i.e., 
\begin{equation*}
	\bP\left[ (M_N(t)) _{t \geq 0} \approx \gamma \right] \approx e^{-N \cI(\gamma)},
\end{equation*}
with rate function
\begin{equation}
\label{eqn:rate_function_paths}
	\cI(\gamma) = \begin{cases}
		I_0(\gamma(0)) + \int_0^\infty \cL(\gamma(s),\dot{\gamma}(s)) \dd s & \text{if } \gamma \in \cA\cC, \\
		\infty & \text{otherwise}.
	\end{cases}
\end{equation}
Here $\cA\cC$ is the space of absolutely continuous trajectories $\gamma : [0,\infty) \rightarrow \K$ and $\cL : \K\times \bR \rightarrow [0,\infty]$ is the Lagrangian
obtained by taking the Legendre transform
\begin{align*}
	\cL(x,v) := \sup_{p\in\R} \left( pv - H(x,p) \right),
\end{align*}
of the Hamiltonian $H : \K \times \bR \rightarrow \bR$ given 
\begin{enumerate}[label=(\roman*)]
	\item for the $\R$-space-model, by 
\begin{align}
\label{eqn:Hamiltonian_Rmodel}
H(x,p) = \frac{1}{2} p^2 - p W'(x).
\end{align}	
	\item for the $\pm 1$-space-model, by 
	\begin{equation}
	\label{eqn:Hamiltonian_CWmodel}
	H(x,p) = v_+(x) \left[e^{2p} - 1\right] + v_-(x) \left[e^{-2p} - 1\right].
	\end{equation}
\end{enumerate}
This Hamiltonian in turn is for example obtained by an operator approximation procedure introduced by Feng and Kurtz \cite{FK06}. This procedure is explained informally in Redig and Wang \cite{ReWa12} 
and rigorously for $\bR^d$-valued processes in Kraaij \cite{Kr16b} and Collet and Kraaij \cite{CoKr17}. 
$H$ is derived from an operator $\mathcal{H}$ by the relation $ H(x,f'(x))= \mathcal{H}f(x)$, where $\mathcal{H}$ satisfies $\mathcal{H}f = \lim_{N\rightarrow \infty} \mathcal{H}_N f $ where   $\mathcal{H}_N$ is the operator defined by $\mathcal{H}_N f = N^{-1}e^{-Nf} A_N e^{Nf}$, with $A_N$ as defined in \eqref{generator_of_dynamics}.


For any two points $a,b \in \bK$ and time $t$, denote by
\begin{equation}\label{miniaction}
S_t(a,b)= \inf_{\gamma \in \cA\cC: \gamma(0)=a, \gamma(t)=b} \int_0^t \cL(\gamma(s),\dot{\gamma}(s)) \dd s.
\end{equation}
 $S_t(a,b)$ is the minimal Lagrangian action of a trajectory starting at $a$ and arriving at time $t$ at $b$.  

By the contraction principle, the rate function for the large deviation principle for the magnetization $M_N(t)$ at time $t>0$ is given by (for $\cI$ see \eqref{eqn:rate_function_paths})
\begin{equation}
\label{borom}
I_t(b) = \inf_{x\in \K } (I_0(x) + S_t(x,b)) = 
\inf_{\gamma \in \cA\cC: \gamma(t) = b} \cI(\gamma).
\end{equation}


\begin{definition}
	\label{definition:optimal_paths_and_points}
	We call $\gamma\in \cA\cC$ an \emph{optimal trajectory for} $S_t(a,b)$ (see \eqref{miniaction}) if $\gamma(0)=a$, $\gamma(t)=b$ and  $\int_0^t \cL(\gamma(s),\dot{\gamma}(s)) \dd s = S_t(a,b)$. Analogously, $\gamma$ is called an \emph{optimal trajectory for} $I_t(b)$ (see  \eqref{borom}) if $\gamma(t) = b$ and $I_t(b) = I_0(\gamma(0)) + \int_0^\infty \cL(\gamma(s),\dot{\gamma}(s)) \dd s $.  Finally,  $x\in \K$ is called a \emph{optimal starting point for} $I_t(b)$ if $I_t(b)= I_0(x) + S_t(x,b)$.
\end{definition}

%
%
%
%
%
%


In the following definition we define another way to say that (c) of the conjecture  in the introduction does not hold.

\begin{definition}
	We will say that $\alpha \in \K^\circ$ is a point of \textit{non-differentiability} of $I_t$ when $I_t$  is not differentiable at $\alpha$. 
\end{definition}

%
%

	\subsection{Preliminaries from the theory of calculus of variations}

	We follow the route of studying the optimal trajectories and non-differentiabili\-ties in the rate function by introducing  techniques from the theory of calculus of variations.  The first observation from classical mechanics is that optimal trajectories are known to solve the second-order \textit{Euler-Lagrange} equation, which is the point of view taken in \cite{EK10}. On equal footing, it is known that dual variables satisfy the first-order Hamilton equations.

	\begin{definition}
		Let $t>0$. Let $A$ be any one of the intervals $[0,t], [0,t),(0,t]$ or $(0,t)$.
		
		Let $\gamma : A \rightarrow \K^\circ$ be absolutely continuous. 
		If $(\gamma(s),\dot\gamma(s))$ is in the domain where $\cL$ is $C^1$ for all $s\in A$, then the trajectory $\eta$  defined by 
		\begin{align}
		\label{eqn:dual_path_to_gamma}
		\eta(s) = \partial_v \cL(\gamma(s),\dot \gamma(s)),
		\end{align}
		is called the \emph{dual trajectory to} $\gamma$. 
		
		
		Let $\gamma \in C^1(A, \K )$ and $\eta \in C^1(A)$. 
		We say that $(\gamma,\eta)$ satisfies the Hamilton equations, if they solve
		\begin{align}
		\begin{bmatrix}
		\dot{\gamma}(s) \\
		\dot{\eta}(s)
		\end{bmatrix}
		=
		\begin{bmatrix}
		\partial_p H(\gamma(s),\eta(s)) \\
		-\partial_x H(\gamma(s),\eta(s))
		\end{bmatrix}.
		\label{eqn:hamilton_equations}
		\end{align}
		If $(\gamma,\eta)$ satisfies the Hamilton equations  and $\gamma \in C^1(A,\bK)$, then $\eta$ is the dual trajectory to $\gamma$ (see \cite[Corollary A.2.7, equation (A.28)]{CaSi04}). Moreover, there exists a $c\in \R$ such that $ H(\gamma(s),\eta(s))=c$ for all $s\in A$. 
	\end{definition}

In addition, we use the following definitions of Cannarsa and Sinestrari \cite{CaSi04}. Let $d\in\N$ and $A\subseteq \R^d$ be open. Let $v : A \rightarrow \R$.
\begin{description}
	\item[Superdifferential] \cite[Definition 3.1.1]{CaSi04} The \textit{superdifferential} of $v$ at $x \in A$ is defined as $D^+ v (x) := \{ p \in \R^d : \limsup_{y\rightarrow x} \frac{v(y) -v(x) - \ip{p}{y-x}}{|y-x|} \le 0\}$.  Similarly, we define a \textit{subgradient} $D^- v(x)$. If $v$ is differentiable at $x \in A$, we write $Dv(x)$ for the derivative of $v$ at $x$. Note that in that case $D^+ v(x) = D^- v(x) = \{Dv(x)\}$.
	\item[Reachable gradient] \cite[Definition 3.1.10]{CaSi04} Let $v$ be locally Lipschitz. A $p\in \R^d$ is called a \emph{reachable gradient} of $v$ at $x$ if there exists a sequence $(x_k)_{k\in\N}$ in $A\setminus\{x\}$ such that $v$ is differentiable at $x_k$ for all $k$ and $x_k \rightarrow x$, 
	$Dv(x_k) \rightarrow p$. We write $D^* v(x)$ for the set of all reachable gradients of $v$ at $x$.
	\item[Viscosity solutions] \cite[Definition 5.2.1]{CaSi04} Let $F \in C(A \times \bR \times \bR^d)$. Consider the equation 
	\begin{equation} \label{eqn:abstract_diff_equation}
	F(x,v,Dv) = 0.
	\end{equation}
	$v \in C(A)$ is called a \textit{viscosity subsolution} to \eqref{eqn:abstract_diff_equation} if for all $x \in B$, we have
	\begin{equation*}
	F(x,v(x),p) \leq 0, \qquad \mbox{ for all } \, p \in D^+ v(x),
	\end{equation*}
	$v \in C(A)$ is called a \textit{viscosity supersolution} to \eqref{eqn:abstract_diff_equation} if for all $x \in B$, we have
	\begin{equation*}
	F(x,v(x),p) \geq 0, \qquad \mbox{ for all } \, p \in D^- v(x).
	\end{equation*}
	$v$ is called a \textit{viscosity solution} to \eqref{eqn:abstract_diff_equation} it it is both a sub- and a supersolution.
	\item[ Local  semi-concavity] \cite[Definition 1.1.1 and Proposition 1.1.3]{CaSi04} Let $B$ be a subset of $\bR^d$. Let $K \subseteq B$ be compact. We say that $v$ is \textit{semi-concave} on $K$ if there is some $C \ge 0$ such that
	\begin{equation}
	\label{eqn:semi-concave}
	\lambda v(x) + (1-\lambda) v(y) - v\left(\lambda x + (1-\lambda)y\right) \leq C \frac{\lambda(1-\lambda)}{2} |x-y|^2
	\end{equation}
	for all $x,y \in K$ such that the line from $x$ to $y$ is contained in $K$ and for all $\lambda \in [0,1]$. We call $v$ \textit{locally semi-concave on $B$} if it is semi-concave on each compact set $K \subseteq B$. Note that in \cite{CaSi04} these functions are called semi-concave with linear modulus, to distinguish them from a broader class of semi-concave functions. We do not need this generality  here,  and therefore will call ``semi-concave functions with linear modulus'' simply ``semi-concave functions''. 
\end{description}

\begin{remark}
With $\Phi_2 V$ the second difference quotient of $V$ (see \cite[Section 1.2]{vRSc82} for a definition),  
	$\Phi_2$ describes the convexity and concavity properties of a function in the sense that $V$ is convex if and only if $\Phi_2 V \ge 0$ and $V$ is concave if and only if $\Phi_2 V \le 0$. 
	But it also relates to semi-concavity, as one has that if $V$ is continuous, then $V$ is semi-concave with constant $C>0$ if and only if $\Phi_2 V \le C$. In  \cite{HRZ15} the $\Phi_2 V$ is used to describe the Gibbsianity of the $\R$-space-model  with the dynamics of independent Brownian motions, see also Remark \ref{remark:comparing_with_HRZ15}.
\end{remark}

\subsection{Regularity of the rate-function} \label{section:regularity}

In this section we establish the announced equivalence between the uniqueness of optimizing trajectories and differentiability of the rate function (Theorem \ref{theorem:equivalences_differentiability}). The main issue, which distinguishes our problems from the ones considered in \cite{CaSi04}, is that the maximal time of existence  of solutions of the Hamilton equations, contrary to \cite{CaSi04}, may be finite. This causes certain divergence of the momentum at the boundary of $\K$. 
To extend the techniques to our setting, we will work under the assumptions in Assumption \ref{assumption:Hamiltonian_for_domain_extension}.

In our setting it is natural  to start with rate functions whose superdifferential is close to $-\infty$ near the left boundary and close to $\infty$ near the right boundary of $\bK$ as this property is preserved for $I_t$ (see Theorem \ref{theorem:preserviation_of_semi_concavity} and  \eqref{eqn:infinity_boundary_conditions}). 
Moreover, we assume our initial rate function to be $C^1$. The space of functions that combines these two properties is called $C^{1,\partial}$ (see Definition \ref{definition:C_1_with_boundary}). 

The main result (Theorem \ref{theorem:equivalences_differentiability}) then shows that such a rate function under the time evolution is again in $C^{1,\partial}(\K)$ if and only if there is a unique optimizing trajectory. In Proposition \ref{proposition:solution_HJ_equation} we state that the function $(t,x)\mapsto I_t(x)$ is the viscosity solution of the Hamilton-Jacobi equation.

\begin{definition}
	\label{definition:C_1_with_boundary}
	For $\K = \R$ we write $\partial_- = -\infty$ and $\partial_+ = \infty$, while for $\K = [-1,1]$ we write $\partial_- = -1$ and $\partial_+ = 1$. 
	
	We write $C^{k,\partial}(\K)$ for the set of functions $g: \K \rightarrow  \R$ such that $g$ is $k$ times continuously differentiable on $\K^\circ$, continuous on $\K$ and 
	\begin{align}
	\label{eqn:infinity_boundary_conditions}
	\lim_{x \rightarrow \partial_+} g' (x) = \infty, \qquad \lim_{x \rightarrow \partial_-} g'(x) = -\infty.
	\end{align}
\end{definition}	
Note that for $V\in C^1[-1,1]$ the function \eqref{eqn:rate_function_initial_CW} is an element of $C^{1,\partial}[-1,1]$. Moreover, note that $I_0 \in C^{1,\partial}(\K)$ implies that $I_0$ is bounded from below and has compact sublevel sets. 

\begin{assumption}
	\label{assumption:general_ones_on_R_and_CW}
	We will assume that $I_0 \in C^{1,\partial}(\K)$ and for  $H: \K \times \R \rightarrow \R$ we assume
	\begin{itemize}
		\item considering the $\R$-space-model that (a) or (b) is satisfied:
		\begin{enumerate}
			\item $H$ is of the form \eqref{eqn:Hamiltonian_Rmodel}, where $W$ satisfies Assumption \ref{assumption:diffusion_drift}.
			\item $H$ satisfies  Assumptions \ref{assumption:Hamiltonian_for_domain_extension} and \ref{assumption:compact_level_sets_R} below.
		\end{enumerate}
		\item 
		considering the $\pm 1$-space-model that (a) or (b) is satisfied:
		\begin{enumerate}
			\item $H$ is of the form \eqref{eqn:Hamiltonian_CWmodel}, where $v_+, v_-$ satisfy Assumption \ref{assumption:jump_rates}.
			\item $H$ satisfies Assumptions \ref{assumption:Hamiltonian_for_domain_extension} and \ref{assumption:trajectories_in_interior} below. 
		\end{enumerate}
	\end{itemize}
\end{assumption}
The examples that we consider satisfy condition (a), however the proofs of the theory are based  on the more general condition (b). In Appendix \ref{appendix:assumptions_for_models} we show that (a) indeed implies (b).

\begin{theorem}[Lemma \ref{lemma:classically_semi_concave} and \ref{lemma:exploding_derivatives_at_boundary}] \label{theorem:preserviation_of_semi_concavity}

Assume Assumption \ref{assumption:general_ones_on_R_and_CW}. 
Then $I_t$ is locally semi-concave on $\K^\circ$ for all $t \geq 0$. Moreover, $(t,x) \mapsto I_t(x)$ is  locally semi-concave on $(0,\infty) \times \K^\circ$ and
	\begin{align*}
	\sup_{a \in  (0,\partial_+)} \inf_{b \ge a} \inf D^+ I_t(b) = \infty,
	\qquad
	\inf_{ a \in  (\partial_-,0)} \sup_{b \le a}  \sup D^+ I_t(b) = - \infty.
	\end{align*}
\end{theorem}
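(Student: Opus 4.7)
The plan is to split the theorem into the two substatements which, according to the reference, correspond to Lemma \ref{lemma:classically_semi_concave} (semi-concavity) and Lemma \ref{lemma:exploding_derivatives_at_boundary} (boundary blow-up of the superdifferential), and to treat each by adapting the standard calculus-of-variations toolbox of \cite{CaSi04} to our possibly finite-time-of-existence setting.

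For the semi-concavity statement I would argue as follows. Fix $(t_0,x_0) \in (0,\infty) \times \K^\circ$. Since $I_0 \in C^{1,\partial}(\K)$ has compact sublevel sets and $S_t$ is jointly lower semicontinuous on compacts, the infimum in \eqref{borom} is attained: let $\gamma^*$ be an optimal trajectory for $I_{t_0}(x_0)$. Near $(t_0,x_0)$ I construct explicit competitors for $I_t(x)$ by perturbing $\gamma^*$ in two ways: a linear interpolation shift $\gamma_h(s) := \gamma^*(s) + (s/t_0)\,h$ which produces a trajectory arriving at $x_0 + h$ at time $t_0$, and a time rescaling $\gamma^*(s\,t/t_0)$ which produces a trajectory arriving at $x_0$ at time $t$. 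Combining them and plugging into \eqref{eqn:rate_function_paths} gives $I_t(x_0+h) \leq I_0(\gamma^*(0)) + \int_0^{t_0} \cL(\gamma^*(s) + sh/t_0,\,\dot\gamma^*(s)+h/t_0)\,\dd s$ plus analogous terms for the time perturbation, and a second-order Taylor expansion of $\cL$ in the velocity argument — legitimate because on a neighborhood of the graph of $\gamma^*$ the Hamiltonian $H$ (hence $\cL$) is smooth and strictly convex in the dual variable by Assumptions \ref{assumption:Hamiltonian_for_domain_extension}, \ref{assumption:compact_level_sets_R}, \ref{assumption:trajectories_in_interior} — yields the quadratic upper bound of \eqref{eqn:semi-concave}. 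The non-trivial point compared with \cite[Theorem 6.4.3]{CaSi04} is guaranteeing that $\gamma^*$ stays in a compact subset of $\K^\circ$; this is exactly what Assumptions \ref{assumption:compact_level_sets_R} and \ref{assumption:trajectories_in_interior} are designed to ensure.

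For the boundary explosion of $D^+ I_t$ I would argue by contradiction and use the Hamiltonian characterization of reachable gradients. Suppose there exists $M \in \R$ and a sequence $b_n \to \partial_+$ with $\inf D^+ I_t(b_n) \leq M$ for all $n$. Because $I_t$ is locally semi-concave, $D^+ I_t(b_n)$ equals the closed convex hull of the reachable gradients $D^* I_t(b_n)$ by \cite[Theorem 3.3.6]{CaSi04}, so for each $n$ one obtains a reachable gradient bounded by $M$, which by the dual-trajectory representation arises as $\eta_n(t)$ for a Hamilton pair $(\gamma_n,\eta_n)$ with $\eta_n(0) = I_0'(\gamma_n(0))$ and $\gamma_n(t) = b_n$. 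Energy conservation $H(\gamma_n(s),\eta_n(s)) = H(\gamma_n(0),I_0'(\gamma_n(0)))$ together with the uniform bound on $\eta_n(t)$ and the compactness of sublevel sets of $I_t$ forces $\gamma_n(0)$ to live in a compact subset of $\K^\circ$. Continuity of the Hamiltonian flow on such a compact set in phase space would then confine $b_n = \gamma_n(t)$ to a compact subset of $\K^\circ$, contradicting $b_n \to \partial_+$. The symmetric argument handles $\partial_-$.

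The main obstacle is the second step: rigorously connecting the boundedness of the terminal momentum to a compactness statement for the starting points. The difficulty is that the Hamilton flow is only locally defined in time, so one cannot quote global flow estimates; instead one has to exploit the $C^{1,\partial}$ assumption on $I_0$ (which forces $|I_0'(\gamma_n(0))| \to \infty$ as $\gamma_n(0) \to \partial_\pm$) together with the structural behavior of $H$ near the boundary — namely that $H(x,p)$ degenerates or becomes singular in a way that ties $p$ at the boundary to large $|\eta|$ along the flow. Once one knows that $\gamma_n(0)$ is bounded away from the boundary, the finite-time Hamiltonian flow takes a compact set in phase space to a compact subset of $\K^\circ \times \R$, giving the required contradiction. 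The semi-concavity of $(t,x) \mapsto I_t(x)$ jointly then follows from the same perturbation construction with both spatial and temporal variations combined.
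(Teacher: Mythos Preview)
Your semi-concavity argument is essentially the paper's: pick an optimizer for the midpoint, perturb linearly to reach nearby endpoints, and use that $\cL$ is $C^2$ (hence semi-concave) on a compact tube around the optimizer. The paper makes the ``trajectory stays in a compact subset of $\K^\circ$'' step precise via Proposition~\ref{proposition:optimizers_bounded_away_from_boundary}\ref{item:optimal_curves_bounded_away_from_boundary}, which in turn rests on the preserved-quadrant structure of Assumption~\ref{assumption:Hamiltonian_for_domain_extension}\ref{item:assumptions_convergent_quadrants} and Lemma~\ref{lemma:delta_n}; you allude to this but would need to invoke it explicitly.

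For the boundary blow-up your route diverges from the paper's and has a real gap. The paper does not argue by contradiction through the starting points at all: it proves directly (Proposition~\ref{proposition:optimizers_bounded_away_from_boundary}\ref{item:optimal_curves_end_far_from_zero}) that for every $m$ there is a $v$ such that any optimal Hamiltonian trajectory ending at $b>v$ must have terminal momentum $\eta(t)\ge m$. The mechanism is the preserved quadrants $\upq_{y_n^+,q_n^+}$ from Assumption~\ref{assumption:Hamiltonian_for_domain_extension}\ref{item:assumptions_convergent_quadrants} together with Lemma~\ref{lemma:delta_n}: either the trajectory starts in such a quadrant (forced by $I_0\in C^{1,\partial}$ when $\gamma(0)$ is far right) and never leaves, or it starts to the left and cannot enter the ``low-momentum, far-right'' region because its complement is preserved. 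Either way $\eta(t)\ge q_n^+$. Combined with $D^+I_t(b)=\text{co}\,D^*I_t(b)$ and Proposition~\ref{proposition:push_forward_graph_contains_reachable_grads}(c), this gives the claim in one line.

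Your proposed chain --- bound $\eta_n(t)$, use energy conservation and sublevel-set compactness to trap $\gamma_n(0)$, then flow forward --- breaks at the step you flag: energy conservation $H(b_n,\eta_n(t))=H(\gamma_n(0),I_0'(\gamma_n(0)))$ with $\eta_n(t)\le M$ and $b_n\to\partial_+$ does not by itself bound the energy (e.g.\ in the $\pm1$-model $H(1,p)=v_-(1)[e^{-2p}-1]$ is unbounded above as $p\to-\infty$), and compactness of sublevel sets of $I_t$ gives $\gamma_n(0)$ in a compact subset of $\K$, not of $\K^\circ$. Even granting that step, ``flow a compact set forward to time $t$'' is exactly where finite-time blow-up bites: you would need the extended flow of Proposition~\ref{proposition:extension_of_flow_with_bdr} plus Lemma~\ref{lemma:after_lifetime_in_boundary} to rule out $t_{x_0,p_0}=t$, and at that point you are essentially reconstructing the preserved-quadrant machinery anyway. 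The paper's direct argument is both shorter and avoids these issues.
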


\begin{lemma}
\label{lemma:differentiability_and_subgraphs_linked_for_loc_semiconcave}
 \cite[Theorem 3.3.4 and 3.3.6]{CaSi04}
Let $d\in \N$ and $A\subseteq \R^d$ be open. 
Let $v : A \rightarrow \R$ be locally semiconcave and $x\in A$. Then $D^+ v(x) \neq \emptyset$ and
\begin{enumerate}
\item $D^* v(x) \subseteq \partial D^+v(x)$.
\item $D^+ v(x) = \text{co}D^*v(x)$, 
where $\text{co}(S)$ denotes the closed convex hull of a set $S \subseteq \bR^d$. 
\item $D^+v(x)$ is a singleton if and only if $v$ is differentiable at $x$.
\item If $D^+ v(y)$ is a singleton for every $y\in A$, then $v\in C^1(A)$.
\end{enumerate}
\end{lemma}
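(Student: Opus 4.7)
The plan is to reduce all four assertions to the well-known corresponding facts for concave functions by exploiting the standard characterization of semiconcavity: $v$ is locally semiconcave on $A$ if and only if, for every compact convex $K \subseteq A$, there exists $C = C(K)$ such that the function $\tilde v(x) := v(x) - \frac{C}{2}|x|^2$ is concave on $K$. Since all of $D^+$, $D^-$, $D^*$ and differentiability transform in a controlled way under subtraction of the smooth function $\frac{C}{2}|x|^2$ (namely $D^+\tilde v(x) = D^+v(x) - Cx$, etc.), it is enough to prove the four items for a concave function $\tilde v$ on a convex neighborhood of $x$.

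First I would handle (a): for a concave function on an open convex set, the classical theorem from convex analysis (subdifferentiability of proper concave functions at interior points) gives $D^+\tilde v(x) \neq \emptyset$, because $\tilde v$ is continuous, hence locally Lipschitz, and thus admits a supporting affine functional from above at every interior point. Transferred back via the $\frac{C}{2}|x|^2$ shift, this yields $D^+v(x) \neq \emptyset$.

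Next, I would establish (b) and (c) simultaneously by invoking two classical facts about concave functions: (i) $\tilde v$ is differentiable almost everywhere (Rademacher, combined with concavity), and the set of differentiability points is dense in $A$; (ii) for a concave function the superdifferential coincides with the Clarke generalized gradient, which by definition is the closed convex hull of limits of gradients at nearby differentiability points, i.e.\ the closed convex hull of $D^*\tilde v(x)$. This gives $D^+\tilde v(x) = \mathrm{co}\, D^*\tilde v(x)$. The inclusion $D^*\tilde v(x) \subseteq \partial D^+\tilde v(x)$ then follows because every reachable gradient is an extreme point of the convex set $D^+\tilde v(x)$ (otherwise it could be written as a nontrivial convex combination of other reachable gradients, contradicting the single-valuedness of the gradient at the approximating differentiability points together with the upper semicontinuity of $D^+$).

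For (d), one direction is trivial: differentiability at $x$ forces $D^+v(x) = \{Dv(x)\}$. For the converse, suppose $D^+v(x)$ is a singleton $\{p\}$; then by (c), $D^*v(x) = \{p\}$, which combined with local Lipschitz continuity of $v$ and the density of differentiability points near $x$ implies $Dv(y) \to p$ along any sequence $y \to x$ of differentiability points. A standard mean-value argument using local Lipschitz continuity then shows $v(x+h) - v(x) = p\cdot h + o(|h|)$, i.e.\ $v$ is differentiable at $x$ with derivative $p$. Finally, (e) follows by combining (d) with upper semicontinuity of the set-valued map $x \mapsto D^+v(x)$ (a general property inherited from concavity of $\tilde v$): if $D^+v$ is everywhere single-valued, then upper semicontinuity of a single-valued map into $\bR^d$ is just continuity, and a function with continuous derivative on $A$ is of class $C^1$. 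The main conceptual step, and the only nontrivial one once the reduction to concavity is in place, is establishing (c); everything else is bookkeeping around convex analysis. I do not foresee obstacles beyond checking that the reduction $\tilde v = v - \frac{C}{2}|x|^2$ preserves superdifferentials and reachable gradients exactly, which is straightforward from their definitions.
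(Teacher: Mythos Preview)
The paper does not prove this lemma; it is stated with a direct citation to \cite[Theorems 3.3.4 and 3.3.6]{CaSi04} and used as a black box. Your proposal is essentially a sketch of the standard proof found in that reference, based on the reduction to concave functions via subtracting a quadratic, so in that sense there is nothing to compare.

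That said, your argument for item (a), that $D^*v(x) \subseteq \partial D^+v(x)$, has a gap. You claim that if a reachable gradient $p$ lay in the interior of $D^+v(x)$ it would be a nontrivial convex combination of other reachable gradients, and that this contradicts single-valuedness of the gradient at the approximating points together with upper semicontinuity of $D^+$. Neither implication is justified: being an interior point of $\mathrm{co}\,D^*v(x)$ does not by itself produce a contradiction with anything about the approximating sequence. The correct argument uses the semiconcavity inequality directly: if $p_k = Dv(x_k) \to p$ with $x_k \to x$, then for any $q \in D^+v(x)$ one combines the two inequalities $v(x_k) \le v(x) + \langle q, x_k - x\rangle + \tfrac{C}{2}|x_k-x|^2$ and $v(x) \le v(x_k) + \langle p_k, x - x_k\rangle + \tfrac{C}{2}|x_k-x|^2$ to obtain $\langle p_k - q, \tfrac{x_k-x}{|x_k-x|}\rangle \le C|x_k-x|$. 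Passing to a subsequence so that $\tfrac{x_k-x}{|x_k-x|} \to e$, one gets $\langle p - q, e\rangle \le 0$ for all $q \in D^+v(x)$, which exhibits $p$ as lying on a supporting hyperplane of $D^+v(x)$, hence on its boundary. The rest of your sketch is fine.
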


\begin{theorem}
\label{theorem:equivalences_differentiability}
Assume Assumption \ref{assumption:general_ones_on_R_and_CW}. Let $u : [0,\infty) \times \K \rightarrow [0,\infty]$ be given by 
\begin{align*}
u(t,x) = I_t(x).
\end{align*}
Let $b \in \K^\circ $, $t>0$. 
The following are equivalent.
\begin{enumerate}
\item
\label{item:unique_optimal_point}
$x\mapsto I_0(x) + S_t(x,b)$ has a unique optimizer ($I_t(b)$ has a unique optimal point).
\item
\label{item:unique_optimal_path}
$\gamma \mapsto I_0(\gamma(0)) + \int_0^\infty \cL(\gamma(s),\dot{\gamma}(s)) \dd s$ has an unique optimal trajectory with $\gamma(t) = b$.
\item 
\label{item:I_t_diffb_at_b}
$I_t$ is differentiable at $b$.
\item 
\label{item:u_diffb_at_t,b}
$u$ is differentiable at $(t,b)$, 
\item 
\label{item:reachable_grad_singleton}
$D^*u(t,b)$ is a singleton,
\end{enumerate}
Moreover, the following are equivalent.
\begin{enumerate}[label={\normalfont(\Alph*)}]
\item 
\label{item:unique_optimal_point_all_ends}
$x\mapsto I_0(x) + S_t(x,b)$ has a unique optimizer for all $b \in \K^\circ $.
\item 
\label{item:unique_optimal_path_all_ends}
$\gamma \mapsto I_0(\gamma(0)) + \int_0^t \cL(\gamma(s),\dot{\gamma}(s)) \dd s$ has an unique optimal trajectory with $\gamma(t) = b$ for all $b \in \K^\circ $.  
\item 
\label{item:I_t_diffb}
$I_t$ is differentiable on $\K^\circ $.
\item 
\label{item:I_t_in_C_1_partial}
$I_t \in C^{1,\partial}(\K)$.
\end{enumerate}
\end{theorem}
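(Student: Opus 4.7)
The plan is to deduce the pointwise equivalences (a)--(e) first, using the local semi-concavity from Theorem \ref{theorem:preserviation_of_semi_concavity} together with a duality between optimal trajectories and reachable gradients, and then to upgrade the pointwise statements to the global ones (A)--(D) using the boundary behavior also provided by Theorem \ref{theorem:preserviation_of_semi_concavity}.

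For Step 1, I would dispose of \ref{item:reachable_grad_singleton} $\Leftrightarrow$ \ref{item:I_t_diffb_at_b} $\Leftrightarrow$ \ref{item:u_diffb_at_t,b}: Since $I_t$ is locally semi-concave on $\K^\circ$, Lemma \ref{lemma:differentiability_and_subgraphs_linked_for_loc_semiconcave}(2)--(3) gives that $D^+I_t(b)$ is a singleton iff $D^*I_t(b)$ is a singleton iff $I_t$ is differentiable at $b$. The same argument applied to the semi-concave function $u$ on $(0,\infty)\times \K^\circ$ shows $D^*u(t,b)$ is a singleton iff $u$ is differentiable at $(t,b)$. The projection of $D^*u(t,b)$ onto its spatial coordinate recovers $D^*I_t(b)$, and the viscosity equation $\partial_t u + H(x,\partial_x u) = 0$ (which $u$ satisfies) uniquely determines the temporal coordinate from the spatial one, giving \ref{item:u_diffb_at_t,b} $\Leftrightarrow$ \ref{item:reachable_grad_singleton}.

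For Step 2, the equivalence \ref{item:unique_optimal_point} $\Leftrightarrow$ \ref{item:unique_optimal_path} is essentially a bookkeeping argument: an optimal trajectory $\gamma$ for $I_t(b)$ starts at an optimal starting point $\gamma(0)$; conversely, given an optimal starting point $x$, the minimizer of $\int_0^t \cL(\gamma(s),\dot\gamma(s))\,\dd s$ over trajectories from $x$ to $b$ is unique by strict convexity of $\cL(x,\cdot)$, so optimal trajectories stand in bijection with optimal starting points. The heart of the proof is Step 3, the identification
\begin{equation*}
D^*u(t,b) \;=\; \bigl\{ \bigl(-H(b,\eta(t)),\,\eta(t)\bigr) : \gamma \text{ is an optimal trajectory with }\gamma(t)=b,\ \eta \text{ its dual}\bigr\},
\end{equation*}
which I would establish via two inclusions. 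For ``$\subseteq$'', if $(\tau,p)\in D^*u(t,b)$ then there are points $(t_k,b_k)\to(t,b)$ at which $u$ is differentiable and $Du(t_k,b_k)\to(\tau,p)$; at each such point the pointwise equivalence we are in the process of proving (applied at $(t_k,b_k)$ where we already know uniqueness from differentiability via Step 1) yields a unique optimal trajectory $\gamma_k$ with dual $\eta_k(t_k) = p_k$, and passing to a subsequential limit produces an optimal trajectory to $b$ with dual $\eta(t) = p$. For ``$\supseteq$'', given any optimal $\gamma$, a perturbation argument shows $(-H(b,\eta(t)),\eta(t))$ lies in $D^+u(t,b)$, and one refines this to a reachable gradient by approximating $b$ by nearby points of differentiability via local semi-concavity. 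This step is where the finite-time-of-existence issue of Hamilton's flow under Assumption \ref{assumption:Hamiltonian_for_domain_extension} is genuinely delicate, since we must guarantee that the limiting trajectories stay inside $\K^\circ$ on $[0,t]$ and do not escape to the boundary before time $t$; this is the main obstacle and I would rely on the technical Sections \ref{section:calculus_of_variations} and \ref{section:proofs_regularity_rate_function} for the required a priori bounds.

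Combining Steps 1--3 closes the pointwise loop: \ref{item:unique_optimal_path} says there is a unique optimal $\gamma$, hence by Step 3 a unique reachable gradient, hence by Step 1 $I_t$ is differentiable at $b$; conversely \ref{item:I_t_diffb_at_b} pins $D^*u(t,b) = \{(-H(b,I_t'(b)),I_t'(b))\}$ as a singleton, which by Step 3 forces every optimal trajectory to arrive with dual momentum $I_t'(b)$, and integrating Hamilton's equations \eqref{eqn:hamilton_equations} backward from the terminal condition $(b,I_t'(b))$ yields a unique trajectory on $[0,t]$, giving \ref{item:unique_optimal_path} and \ref{item:unique_optimal_point}. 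Finally for the global equivalences, \ref{item:unique_optimal_point_all_ends} $\Leftrightarrow$ \ref{item:unique_optimal_path_all_ends} $\Leftrightarrow$ \ref{item:I_t_diffb} is the pointwise statement quantified over $b\in\K^\circ$, while \ref{item:I_t_diffb} $\Leftrightarrow$ \ref{item:I_t_in_C_1_partial} follows by applying Lemma \ref{lemma:differentiability_and_subgraphs_linked_for_loc_semiconcave}(4) (which promotes everywhere-differentiability of a semi-concave function to $C^1$) and then invoking the boundary asymptotics of $D^+I_t$ in Theorem \ref{theorem:preserviation_of_semi_concavity} to upgrade $C^1(\K^\circ)$ to $C^{1,\partial}(\K)$.
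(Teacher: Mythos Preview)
Your overall architecture closely parallels the paper's: local semi-concavity from Theorem \ref{theorem:preserviation_of_semi_concavity}, the bijection between optimal trajectories and reachable gradients (your Step 3, which is essentially Proposition \ref{proposition:push_forward_graph_contains_reachable_grads}), and the upgrade to (A)--(D) via Lemma \ref{lemma:differentiability_and_subgraphs_linked_for_loc_semiconcave}(d) plus the boundary asymptotics.

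However, Step 2 contains a genuine gap. You claim that, given an optimal starting point $x$, the minimizer of $\int_0^t \cL(\gamma(s),\dot\gamma(s))\,\dd s$ over trajectories from $x$ to $b$ is unique ``by strict convexity of $\cL(x,\cdot)$''. This is false in general: strict convexity of the Lagrangian in the velocity variable does \emph{not} imply uniqueness of action minimizers between two fixed endpoints (think of conjugate points; e.g.\ for $\cL(x,v)=\tfrac12 v^2-\tfrac12 x^2$ every trajectory $s\mapsto B\sin s$ minimizes the action from $0$ to $0$ in time $\pi$). So you cannot deduce \ref{item:unique_optimal_path} from \ref{item:unique_optimal_point} this way. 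The paper's argument is different and uses the transversality condition: any optimal trajectory $\gamma$ for $I_t(b)$, together with its dual $\eta$, satisfies Hamilton's equations \emph{and} $\eta(0) = I_0'(\gamma(0))$ (the final assertions of Propositions \ref{proposition:optimizers_equiv_hamflow_R} and \ref{proposition:optimizers_equiv_hamflow_CW}). Hence if $\gamma(0)=x$ is the unique optimal starting point, the \emph{initial} data $(x,I_0'(x))$ for \eqref{eqn:hamilton_equations} is pinned down, and uniqueness of solutions to the ODE forces $\gamma$ to be unique. The key is that the free endpoint at time $0$ fixes the initial momentum via $I_0'$, not any two-point uniqueness for $S_t$.

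A secondary issue: in Step 1 you assert that the spatial projection of $D^*u(t,b)$ recovers $D^*I_t(b)$. One inclusion is not obvious, since a reachable gradient of $u$ arises as a limit along $(t_k,b_k)\to(t,b)$ with varying $t_k$, and it is not immediate that the spatial component is reachable for $I_t$ along a sequence with $t_k\equiv t$. The paper sidesteps this: \ref{item:u_diffb_at_t,b} $\Rightarrow$ \ref{item:I_t_diffb_at_b} is trivial, and \ref{item:I_t_diffb_at_b} $\Rightarrow$ \ref{item:unique_optimal_path} goes by contrapositive --- two distinct optimal trajectories have distinct terminal momenta (backward uniqueness for \eqref{eqn:hamilton_equations}), each of which lies in $D^+I_t(b)$ by Proposition \ref{proposition:push_forward_graph_contains_reachable_grads}(a), so $D^+I_t(b)$ is not a singleton.
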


\begin{proof}
\ref{item:unique_optimal_path} $\Rightarrow$ \ref{item:unique_optimal_point}.
If $\gamma$ is an optimal trajectory, then $\gamma(0)$ is an optimal point for $I_t(b)$.

\ref{item:unique_optimal_point} $\Rightarrow$ \ref{item:unique_optimal_path}. Suppose $x$ is a unique optimal starting point for $I_t(b)$, and $\gamma_1,\gamma_2$ are  optimal trajectories for $I_t(b)$ that start in $x$. To both trajectories, we can associate dual trajectories $\eta_1,\eta_2$ so that the pairs $(\gamma_1,\eta_1)$ and $(\gamma_2,\eta_2)$ solve the Hamilton equations. Because their starting points are the same, the starting condition implies that $\eta_i(0) = I_0'(x)$. This, however, implies that the Hamilton equations are initialized with the same starting data, implying that $\gamma_1 = \gamma_2$. We conclude there is a unique optimal trajectory.

By Proposition \ref{proposition:push_forward_graph_contains_reachable_grads} 
the optimal trajectories for $I_t(b)$ are in one to one correspondence with the elements of $D^* u(t,b)$.
Hence \ref{item:unique_optimal_path} $\iff$ \ref{item:reachable_grad_singleton}. 

Theorem \ref{theorem:preserviation_of_semi_concavity} implies that $u$ and $I_t$ are locally semi-concave. 
By Lemma \ref{lemma:differentiability_and_subgraphs_linked_for_loc_semiconcave} we have
\ref{item:u_diffb_at_t,b} $\iff$ \ref{item:reachable_grad_singleton}. 

\ref{item:u_diffb_at_t,b} $\Rightarrow$ \ref{item:I_t_diffb_at_b} as differentiability of $u$ at $(t,b)$ implies differentiability of $I_t$ at $b$.

On the other hand, if there exist two distinct optimal trajectories for $I_t(b)$, then the corresponding end momenta are different. By Lemma \ref{lemma:differentiability_and_subgraphs_linked_for_loc_semiconcave} and \cite[Theorems 6.4.8]{CaSi04} this implies that $D^+ I_t(b)$ consists of at least two elements, i.e., $I_t$ is not differentiable at $b$ by Lemma \ref{lemma:differentiability_and_subgraphs_linked_for_loc_semiconcave}. Hence \ref{item:I_t_diffb_at_b} $\Rightarrow$ \ref{item:unique_optimal_path}. 

\ref{item:unique_optimal_point_all_ends} $\iff$ \ref{item:unique_optimal_path_all_ends} $\iff$ \ref{item:I_t_diffb} follow from \ref{item:unique_optimal_point} $\iff$ \ref{item:unique_optimal_path} $\iff$ \ref{item:I_t_diffb_at_b}. 

As $I_t$ is locally semi-concave, $I_t$ is differentiable if and only if $I_t \in C^1$ on $\K^\circ$ (see Lemma \ref{lemma:differentiability_and_subgraphs_linked_for_loc_semiconcave}). Hence \ref{item:I_t_diffb} $\iff$ \ref{item:I_t_in_C_1_partial} by Theorem \ref{theorem:preserviation_of_semi_concavity}. 
\end{proof}

\begin{proposition} \label{proposition:solution_HJ_equation}
Assume assumption \ref{assumption:general_ones_on_R_and_CW}.  Then $u$ (as in Theorem \ref{theorem:equivalences_differentiability}) is a viscosity solution to the Hamilton-Jacobi equation
	\begin{equation}
	\label{eqn:hamilton_jacobi}
	\partial_t u(t,x) + H\left(x, \partial_x u(t,x)  \right)=0
	\end{equation} 
	on $\K^\circ \times (0,\infty)$.
\end{proposition}

\subsection{Regularity via the push-forward of the graph of \texorpdfstring{$I_0'$}{} } \label{section:regularity_via_pushforward}

Our next step is to relate optimal trajectories to the solutions of the Hamilton equations. In the following definition we present the push-forward under the Hamiltonian flow of the graph of $I_0'$. In Proposition \ref{proposition:graph_of_derivative_pushforward}, we show that if this push-forward at time $t$ is a graph then $I_t$ is differentiable. Additionally, we show that overhangs in the push-forward are indications for existence of points of non-differentiability.

\begin{definition}
Assume Assumption \ref{assumption:general_ones_on_R_and_CW}.
\begin{enumerate}
\item We write
\begin{align*}
\cG:=\{(x, I_0'(x)) : x\in \K^\circ\},
\end{align*}
for the graph of the derivative of $I_0$.
\item For all $(x,p)\in \K \times \bR$ let 
 $(X^{x,p}_t, P^{x,p}_t)$  be the solution of the Hamilton equations \eqref{eqn:hamilton_equations} with initial conditions  $(X_0^{x,p},P_0^{x,p})=(x,p)$  and up to the maximal time of existence $t_{x,p}$. 
 (See \cite[Theorem 2.4.1]{Pe01}, which by Assumption \ref{assumption:Hamiltonian_for_domain_extension}(a) can also be applied in the case that $\K$ equals $[-1,1]$.)
\item For all $t>0$ we define the \emph{push-forward of} $\cG$ to be the set 
\begin{equation*}
\cG_t := \left\{(X^{x,p}_t, P^{x,p}_t) : (x,p) \in \cG, \, t < t_{x,p} \right\}.
\end{equation*}
\item 		Fix $t > 0$. We say that $\cG_t$ has an \textit{overhang} at $x \in \K^\circ$, if there exist $y_1, y_2 \in \bR$ with $y_1 \neq y_2$ such that $(x,y_1),(x,y_2) \in \cG_t$. 
Hence if $\cG_t$ has an overhang, then it is not a graph (of a function). 
\end{enumerate}
\end{definition}

\begin{proposition} \label{proposition:graph_of_derivative_pushforward}
Assume Assumption \ref{assumption:general_ones_on_R_and_CW}. 
Fix $t>0$. 
Then 
\begin{equation}
\left\{(x,p) : x \in \K^\circ, \, p \in D^* I_t(x)\right\} \subseteq \cG_t.
\label{eqn:graph_inclusion}
\end{equation}
Consequently, 
if $\cG_t$ has no overhang at $x$, then $I_t$ is differentiable at $x$. 

On the other hand, if there are $x_1, x_2 \in \K^\circ$, $x_1 < x_2$ such that $\cG_t$ has no overhang at $x_1$ and $x_2$, then 
	\begin{enumerate}
		\item 
		\label{item:I_t_diffb_implies_no_overhang}
		$I_t$ is differentiable on $[x_1,x_2]$ $\Longrightarrow$ $\cG_t$ has no overhang at $x$ for all $x \in (x_1,x_2)$.
		\item 
		\label{item:overhang_implies_non_diffb_I_t}		
		$\cG_t$ has an overhang at some $x \in (x_1,x_2)$  $\Longrightarrow$ $I_t$ is not differentiable on $[x_1,x_2]$  
	\end{enumerate}
\end{proposition}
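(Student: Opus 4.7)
For the first inclusion I would invoke Proposition \ref{proposition:push_forward_graph_contains_reachable_grads} (already used in the proof of Theorem \ref{theorem:equivalences_differentiability}): each $p \in D^* I_t(x)$ arises from an optimal trajectory $\gamma$ for $I_t(x)$, whose corresponding dual trajectory $\eta$ solves the Hamilton equations with $\eta(0) = I_0'(\gamma(0))$ and $\eta(t) = p$. Setting $a := \gamma(0) \in \K^\circ$, we have $(a, I_0'(a)) \in \cG$ and $(x,p) = (X^{a, I_0'(a)}_t, P^{a, I_0'(a)}_t)$, placing $(x,p) \in \cG_t$. The consequence that absence of overhang at $x$ yields differentiability of $I_t$ at $x$ then follows by combining \eqref{eqn:graph_inclusion} with Theorem \ref{theorem:preserviation_of_semi_concavity} and Lemma \ref{lemma:differentiability_and_subgraphs_linked_for_loc_semiconcave}: no overhang forces $D^* I_t(x)$ to contain at most one element, while local semi-concavity provides at least one, so $D^* I_t(x)$ is a singleton; its closed convex hull $D^+ I_t(x)$ is then also a singleton, and hence $I_t$ is differentiable at $x$.

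Items \ref{item:I_t_diffb_implies_no_overhang} and \ref{item:overhang_implies_non_diffb_I_t} are mutually contrapositive, so I would only prove \ref{item:I_t_diffb_implies_no_overhang}. Assume $I_t$ is differentiable on $[x_1, x_2]$. By Theorem \ref{theorem:equivalences_differentiability} each $x \in [x_1, x_2]$ has a unique optimal starting point $\psi(x)$, and the associated optimal trajectory is the unique solution of the Hamilton equations with initial condition $(\psi(x), I_0'(\psi(x)))$. Local semi-concavity together with differentiability on $[x_1,x_2]$ gives $I_t' \in C([x_1,x_2])$; combined with the continuity of the backward Hamilton flow this makes $\psi$ continuous, while the determinism of the Hamilton flow makes $\psi$ injective. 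Hence $\psi$ is a homeomorphism from $[x_1,x_2]$ onto a compact interval; set $\alpha_i := \psi(x_i)$ and take WLOG $\alpha_1 \leq \alpha_2$. The no-overhang hypothesis at $x_i$, together with determinism of the Hamilton flow, forces $\alpha_i$ to be the \emph{unique} preimage of $x_i$ under the first-coordinate map $a \mapsto X^{a, I_0'(a)}_t$ on $D_t := \{a \in \K^\circ : t < t_{a, I_0'(a)}\}$.

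Suppose, toward contradiction, that $\cG_t$ has an overhang at some $x^* \in (x_1, x_2)$, realized by a starting point $\alpha' \in D_t \setminus \{\psi(x^*)\}$; let $J \subseteq D_t$ be the connected component containing $\alpha'$, which is an open interval. If $J \supseteq [\alpha_1, \alpha_2]$, then $\alpha' \notin [\alpha_1, \alpha_2]$ (else $\alpha' = \psi(x^*)$ by the bijectivity of $\psi$), so WLOG $\alpha' > \alpha_2$. The continuous map $a \mapsto X^{a, I_0'(a)}_t$ then takes the value $x_2$ at $\alpha_2$, the value $x^* \in (x_1, x_2)$ at $\alpha'$, and escapes to $\partial \K$ as $a \to (\sup J)^-$; since $\partial \K$ lies strictly outside $[x_1, x_2]$, the intermediate value theorem produces an additional preimage of either $x_1$ or $x_2$ in $(\alpha', \sup J)$, contradicting the uniqueness established above. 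If instead $J \cap [\alpha_1, \alpha_2] = \emptyset$, the same escape-to-boundary property at both endpoints of $J$, combined with $X^{\alpha', I_0'(\alpha')}_t = x^* \in (x_1, x_2)$, yields via IVT a preimage of $x_1$ or $x_2$ inside $J$; but $\alpha_1, \alpha_2 \notin J$, so this again contradicts uniqueness and proves \ref{item:I_t_diffb_implies_no_overhang}.

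The main obstacle I expect to encounter is the rigorous justification of the escape-to-boundary property for the map $a \mapsto X^{a, I_0'(a)}_t$ at the endpoints of a connected component of $D_t$: when the endpoint corresponds to a finite time of existence of the Hamilton trajectory, this amounts to controlling the blow-up of solutions to \eqref{eqn:hamilton_equations}; when the endpoint is $\partial_\pm$, it amounts to showing that the divergence $|I_0'(a)| \to \infty$ built into $I_0 \in C^{1,\partial}(\K)$ drives $X^{a, I_0'(a)}_t$ to $\partial \K$. Both points should follow from Assumption \ref{assumption:Hamiltonian_for_domain_extension} together with standard properties of the Hamilton flow developed elsewhere in the paper.
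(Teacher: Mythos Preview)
Your argument is correct. For the first part (the inclusion \eqref{eqn:graph_inclusion} and its immediate consequence) you follow exactly the paper's route via Proposition~\ref{proposition:push_forward_graph_contains_reachable_grads} and local semi-concavity.

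For \ref{item:I_t_diffb_implies_no_overhang} you take a somewhat different path than the paper. The paper works with the \emph{forward} map $\Phi_t$ of Proposition~\ref{proposition:hamiltonian_paths_and_graph_structures}: it singles out the special connected component $(a,b)\subseteq E_t$ whose image under $\Phi_t$ connects $(\partial_-,-\infty)$ to $(\partial_+,\infty)$ (Proposition~\ref{proposition:hamiltonian_paths_and_graph_structures}\ref{item:interval_connecting_boundaries}), observes that the graph of $I_t'$ on $[x_1,x_2]$ is a connected subset of $\cG_t$ meeting $\Phi_t(a,b)$ and hence lies entirely in it, and then uses injectivity of $\Phi_t$ together with the no-overhang hypothesis at the endpoints to conclude that $\Phi_t$ restricted to the relevant subinterval of $(a,b)$ coincides with this graph. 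Your argument instead works with the \emph{backward} map $\psi$ (endpoint $\mapsto$ optimal starting point), identifies its image $[\alpha_1,\alpha_2]$, and rules out other preimages by an explicit intermediate-value case analysis on connected components of $E_t$. Both arguments ultimately rest on the same structural facts about $E_t$ and $\Phi_t$ collected in Proposition~\ref{proposition:hamiltonian_paths_and_graph_structures}: in particular, your anticipated ``escape-to-boundary'' obstacle is precisely parts \ref{item:max_interval_in_E_t} and \ref{item:Phi_at_boundary} there, so it is already handled. Your version is more elementary and self-contained (pure IVT), while the paper's is shorter once the machinery of Proposition~\ref{proposition:hamiltonian_paths_and_graph_structures} is in place; neither buys anything the other does not.
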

\begin{proof}
\eqref{eqn:graph_inclusion} is proved in 
Proposition \ref{proposition:push_forward_graph_contains_reachable_grads}(c). 
If $\cG_t$ has no overhang at $x$, then $D^*I_t(x)$ is a singleton  and so $I_t$ is differentiable at $x$ by Theorem \ref{theorem:equivalences_differentiability}.  

We prove \ref{item:I_t_diffb_implies_no_overhang} as \ref{item:overhang_implies_non_diffb_I_t} is equivalent to \ref{item:I_t_diffb_implies_no_overhang}. 
The interval $(a,b)$ as in Proposition \ref{proposition:hamiltonian_paths_and_graph_structures}\ref{item:interval_connecting_boundaries} is such that $\Phi_t(a,b)$ contains $(x_i,I_t'(x_i))$ for $i\in \{1,2\}$. As $\Phi_t(a,b)$ is connected, even $\{(x,I_t'(x)): x\in [x_1,x_2] \} \subseteq \Phi_t(a,b)$. 
By continuity and injectivity of $\Phi_t$, see Proposition \ref{proposition:hamiltonian_paths_and_graph_structures}\ref{item:E_t_is_open}, 
and the fact that $\{y :  (x_i ,y) \in \cG_t\}$ are singletons for $i \in \{1,2\}$, 
 it follows  for all $x \in (x_1,x_2)$ that $\{y \in \bR : (x,y) \in \cG_t\} = \{y \in \bR : (x,y) \in \Phi_t(a,b)\} $ and that this set cannot contain multiple elements.
\end{proof}

%
%

\section{Applications of analyzing the Hamiltonian flow} \label{section:applications_of_analyzing_Hamiltonian_flow}

In this section we study the differentiability of $I_t$ by analysing the push-forward $\cG_t$. 

We will give a description of the results of this section in terms of the informal -but more familiar- notions \textit{high}- and \textit{low-temperature}. 
We say that a rate function $I_0$ is \emph{high-temperature} if it is strictly convex, whereas we say that $I_0$ is \emph{low-temperature} if it has at least two strict local minima. 
We say that the dynamics is \emph{high}- and \emph{low-temperature} if there is a high-  and  low-temperature $I_0$ such that $H(x,I_0'(x)) = 0$, respectively.  This means that $I_0$ is the rate function of the stationary distributions of the dynamics with Hamiltonian $H$.  

\smallskip

In Section \ref{section:preserve_lose_differentiability} we give two general results on the preservation of differentiability, and two general results on the creation of overhangs:
\begin{description}
	\item  [Preservation at high temperature] 
	We show that for certain types of high-temperature dynamics, combined with  high-temperature starting rate functions, we have differentiability of $I_t$ for all $t \geq 0$.
	\item  [Short-time preservation] 
	We show that `order preserving' behaviour of the dynamics close to the boundary, combined with a starting rate-function $I_0$ that is strictly convex close to the boundary, implies short-time differentiability of $I_t$. 

	\item  [Large-time loss, heating] 
	We consider linearizations of the Hamiltonian flow around stationary points,  which  in combination with low-temperature rate-functions, create overhangs for sufficiently large times.
	\item  [Large-time loss, cooling] 
	We considers areas in phase-space where the energy is negative and where the Hamiltonian flow `rotates'. If the graph of the gradient of the rate function $I_0$ crosses this `rotating' region, an overhang is created for sufficiently large times. Rotating regions occur for low-temperature Hamiltonians.  
\end{description}

We proceed in Section \ref{section:explicit_results_for_examples} by applying these results to two sets of well known examples: Glauber dynamics for the Curie-Weiss model and interacting diffusions in a potential. 

\smallskip

Before starting with the various applications, we introduce some notation.

\begin{definition} \label{def:quadrants_CW}
	For $(y,q) \in \K^\circ \times \bR$, we write 
		\begin{equation*}
		\upq_{y,q} := \{(x,p) \in \K \times \R : x \ge y, p \ge q\} , \qquad \downq_{y,q} := \{(x,p) \in \K \times \R : x \le y, p \le q\},
		\end{equation*}
				and $\upq_{y,q}^\circ, \downq_{y,q}^\circ$ for the interiors of $\upq_{y,q}, \downq_{y,q}$ in $\K \times \bR$. 
				
E.g., for $\K = [-1,1]$, $\upq_{y,q} = [y,1] \times [q, \infty)$ and $\upq^\circ_{y,q} = (y,1] \times (q,\infty)$. 
\end{definition}

\begin{definition} \label{definitions:H_preservation_convexity_I_convexity}
Assume Assumption \ref{assumption:general_ones_on_R_and_CW}.
\begin{enumerate}
	\item Let $A \subseteq \bK \times \R$. We say that $H$ \textit{preserves} $A$, or $A$ is \textit{preserved} under $H$, if all trajectories $(\gamma,\eta)$ satisfying the Hamilton equations with $(\gamma(0),\eta(0)) \in A$ stay in $A$ during their life-time, i.e., $(\gamma(s),\eta(s)) \in A$ for all $s < t_{\gamma(0),\eta(0)}$ if $(\gamma(0),\eta(0)) \in A$.
	\item If $(x_0,p_0) \in \K \times \R$ is such that $H$ preserves the set $\{(x_0,p_0)\}$, then we call $(x_0,p_0)$ \emph{stationary} under $H$. Mostly we  consider stationary points of the form $(x_0,0)$, in such case we will also say that $x_0$ is \emph{stationary}. 
	\item 
	\label{item:preserves_order}
	We say that $H : \K \times \bR \rightarrow \bR$ \emph{preserves order} on a subset $A \subseteq \K \times \bR$ if $A$ is preserved under $H$ and
	if for $(x_1,p_1), (x_2,p_2) \in \K \times \R$ and $t < t_{x_1,p_1} \wedge t_{x_2,p_2}$, 
	\begin{align*}
	x_1 <  x_2, \ p_1  <  p_2 \quad \Longrightarrow \quad 
	X_t^{x_1,p_1}  <  X_t^{x_2,p_2} , \ 	P_t^{x_1,p_1}  <  P_t^{x_2,p_2}. 
	\end{align*}
	\item 
	We say that $H$ \textit{preserves order at infinity} if there are $(y_-,q_-), (y_+,q_+) \in \K^\circ \times \bR$ such that $H$ preserves order on $\downq_{y_-,q_-}$ and $\upq_{y_+,q_+}$. 
	\item We say that $I_0$ is \textit{strictly convex at infinity} if there is some compact set $K \subseteq \bK^\circ$ such that $I_0'$ is strictly increasing on the complement of $K$. 
\end{enumerate}
\end{definition}

%
%

Two effective methods to determine that $\cG_t$ has overhangs are via rotating areas in the Hamiltonian flow and via linearizations of the flow,  of which the definition follows.


\begin{definition}
	\label{definition:linearization}
	Assume Assumption \ref{assumption:general_ones_on_R_and_CW}. 
	Let $x_0\in  \K^\circ$ be a stationary point for the Hamiltonian flow, i.e., $\partial_p H(x_0,0) = 0$. We say that the Hamiltonian flow admits a \emph{$C^1$ linearization} at $(x_0,0)$ if there exist open neighbourhoods $U,V\subseteq \R^2$ of $(x_0,0)$, and a $C^1$-diffeomorphism $\Psi : U \rightarrow V$ such that $\Psi(x_0,0)=(x_0,0)$ and $D\Psi(x_0,0) = \bONE$ (the identity matrix) and  such  that a trajectory $(\gamma,\eta)$ with values in $U$ solves the Hamilton equations \eqref{eqn:hamilton_equations} if and only if $(\xi, \zeta) =\Psi(\gamma,\eta)$ solves
	\begin{align}
	\label{eqn:linearised_system}
	\begin{bmatrix}
	\dot{\xi}(s) \\
	\dot{\zeta}(s)
	\end{bmatrix}
	=
	\nabla^2 H(x_0,0) 
	\begin{bmatrix}
	\xi(s) - x_0 \\
	\zeta(s)
	\end{bmatrix},
	\end{align}
	where $\nabla^2 H(x_0,0)$ denotes the Hessian of $H$ at $(x_0,0)$, so that with
	\begin{align}
	\label{eqn:definition_m_and_c}
	m:= \partial_x \partial_p H(x_0,0), \qquad c:= \partial_p^2 H (x_0,0),
	\end{align}
	it has the form
	\begin{align*}
	\nabla^2 H(x_0,0) 
	= 
	\begin{bmatrix}
	\partial_x \partial_p H(x_0,0) & \partial_p^2 H(x_0,0) \\ -\partial_x^2 H(x_0,0) & - \partial_x \partial_p H(x_0,0)
	\end{bmatrix} 
	= 
	\begin{bmatrix}
	m & c \\ 0 & -m
	\end{bmatrix},
	\end{align*}
	note that $- \partial_x^2 H(x_0,0) = 0$ because $H(x,0) = 0$ for all $x$ and that $c>0$. 
\end{definition}

	\begin{remark} \label{remark:value_of_Jacobi_linearization}
		Note that we include in the definition that $D \Psi(x_0,0) = \bONE$ which is not a standard assumption. However, we need this assumption to connect the dynamics of the graph $\cG$ under the Hamiltonian flow, to that of the dynamics of the tangent of the push-forward of $\cG$.
	\end{remark}

\subsection{Preservation of differentiability and the creation of overhangs} \label{section:intro_short_time_differentiability} \label{section:preserve_lose_differentiability}

The following theorem relates {preservation of order under $H$ and strict convexity of $I_0$ to differentiability of $I_t$. The proof can be found in Section \ref{section:abstract_convexity_preserving}.

\begin{theorem} \label{theorem:convexity_preserving_implies_diffb_preserv}
Assume Assumption \ref{assumption:general_ones_on_R_and_CW}. 
\begin{enumerate}
\item
\label{item:conv_implies_diffb}
Suppose that $I_0$ is strictly convex and that $\cG$ is contained in a set on which $H$ preserves order. 
	Then $I_t \in C^{1,\partial}(\K)$ for all $t \geq 0$. 
\item 
\label{item:conv_at_infinity_implies_short_time}
Assume that $I_0$ is $C^2$ on $\K^\circ$,  strictly convex at infinity 
and that $H$ preserves order at infinity. 
Then there is a $t_0 > 0$ such that $I_t \in C^{1,\partial}(\bK)$ for all $0 \leq t \leq t_0$.
\end{enumerate}
\end{theorem}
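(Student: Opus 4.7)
The plan is to reduce both parts to the absence of overhangs in $\cG_t$. By Proposition~\ref{proposition:graph_of_derivative_pushforward}, no overhangs at any $x \in \K^\circ$ yields differentiability of $I_t$ on $\K^\circ$, which by Theorem~\ref{theorem:equivalences_differentiability} is equivalent to $I_t \in C^{1,\partial}(\K)$. Part~\ref{item:conv_implies_diffb} then follows quickly: strict convexity of $I_0$ makes $I_0'$ strictly increasing, so any two distinct points of $\cG$ satisfy both $x_1 < x_2$ and $I_0'(x_1) < I_0'(x_2)$; order preservation by $H$ on a set containing $\cG$ then gives $X^{x_1, I_0'(x_1)}_t < X^{x_2, I_0'(x_2)}_t$ throughout the joint lifetime, hence $x \mapsto X^{x, I_0'(x)}_t$ is strictly increasing and $\cG_t$ carries at most one $p$-value over each $x$.

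For Part~\ref{item:conv_at_infinity_implies_short_time} the plan is to split $\cG$ into two tails near $\partial_\pm$ that sit in the order-preserving quadrants and a compact middle in $\K^\circ$, and to treat each piece separately. Fix $(y_\pm, q_\pm)$ so that $H$ preserves order on $\downq_{y_-, q_-}$ and $\upq_{y_+, q_+}$, and let $K \subseteq \K^\circ$ be a compact set outside of which $I_0'$ is strictly increasing. Since $I_0 \in C^{1,\partial}(\K)$ gives $I_0'(x) \to \pm\infty$ as $x \to \partial_\pm$, I would pick $a_-' \le a_+'$ with $K \subseteq [a_-', a_+']$, $a_-' \le y_-$, $a_+' \ge y_+$, $I_0'(x) < q_-$ for $x < a_-'$, and $I_0'(x) > q_+$ for $x > a_+'$. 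The left tail $\{(x, I_0'(x)) : x < a_-'\}$ then lies in $\downq_{y_-, q_-}$ and is strictly ordered, so Part~\ref{item:conv_implies_diffb}'s argument yields strict ordering of its pushforward in both coordinates for every $t$ within its lifetime; symmetrically for the right tail.

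On the compact middle $[a_-', a_+']$ I would use $C^2$ regularity of $I_0$ and smoothness of $H$ to see that $(t, x) \mapsto X^{x, I_0'(x)}_t$ is $C^1$ with $\partial_x X^{x, I_0'(x)}_0 = 1$; by compactness and continuity there exists $t_0 > 0$ with $\partial_x X^{x, I_0'(x)}_t > 0$ uniformly on $[0, t_0] \times [a_-', a_+']$, so the middle too is strictly $x$-ordered on that window. What remains is to rule out a collision between tail and middle: strict order within the left tail combined with continuity of the flow in initial data gives $\sup_{x < a_-'} X^{x, I_0'(x)}_t = X^{a_-', I_0'(a_-')}_t$, which is the left endpoint of the middle's $x$-range, and symmetrically on the right; thus $\cG_t$ decomposes into three $x$-disjoint pieces and contains no overhang, completing the proof. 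The main obstacle I expect is this final gluing: order preservation operates only within each quadrant, so the disjointness across the tail/middle interface has to be extracted from continuous dependence on initial data rather than from any intrinsic monotonicity, and the short-time threshold $t_0$ arises precisely because the middle's strict $x$-monotonicity may fail for large $t$ when the $x$-component of the Jacobian of the flow degenerates.
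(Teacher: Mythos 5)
Your argument is correct and is essentially the paper's own proof: reduce to showing that $\gamma_t(x) := X_t^{x,I_0'(x)}$ is strictly increasing on its domain so that $\cG_t$ is a graph and hence $I_t\in C^{1,\partial}(\K)$ (Proposition~\ref{proposition:hamiltonian_paths_and_graph_structures}\ref{item:gamma_t} together with Theorem~\ref{theorem:equivalences_differentiability}), settle part~\ref{item:conv_implies_diffb} and the two tails in part~\ref{item:conv_at_infinity_implies_short_time} by order preservation on the quadrants, and settle the compact middle by noting that $(t,x)\mapsto\gamma_t(x)$ is $C^1$ there with $\gamma_0'\equiv 1$, so $\gamma_t'>0$ uniformly for small $t$. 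The one place you diverge is the gluing you flag as the ``main obstacle'': the paper sidesteps it by taking the tails closed at their finite endpoints, which is legitimate because by continuity of $I_0'$ the points $(a_\pm',I_0'(a_\pm'))$ themselves lie in the respective order-preserving quadrants (e.g.\ $I_0'(a_-')\le q_-$); strict monotonicity on the overlapping closed intervals $(\partial_-,a_-']$, $[a_-',a_+']$, $[a_+',\partial_+)$ then combines automatically, with no need for the limit-via-continuity-in-initial-data argument. Your version is also sound; just make explicit that $t_0$ must additionally be chosen so small that the compact set $[a_-',a_+']\times I_0'([a_-',a_+'])$ has lifetime exceeding $t_0$, which follows from lower semicontinuity of $(x,p)\mapsto t_{x,p}$ (Proposition~\ref{proposition:extension_of_flow_with_bdr}\ref{item:lower_semi_cont_of_max_time}), exactly as the paper does.
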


The next theorem gives conditions under which overhangs are created. 
The proofs of (a) and (b) can be found in Section \ref{section:creation_of_overhangs_proofs}.

\begin{theorem} \label{theorem:loss_of_differentiability}
	Assume Assumption \ref{assumption:general_ones_on_R_and_CW}. 
	\begin{enumerate}
	
		\item 
		\label{item:loss_linearization} 		
		Suppose $x_0 \in \bK^\circ$ is such that $I_0'(x_0) = 0$ and that $(x_0,0)$ is a stationary point. 
Let $m,c$ be as in \eqref{eqn:definition_m_and_c}. 
Suppose $I_0$ is $C^2$ in a neighbourhood of $x_0$ with $I_0''(x_0) < \min\{- \frac{2m}{c}, 0\}$.
		In addition, assume that the Hamiltonian flow admits a $C^1$ linearization at $(x_0,0)$ (a sufficient condition for this is that $H$ is $C^\infty$ and $m\ne 0$\footnote{See Theorem \ref{theorem:existence_linearization}.}).
		Let  
			\begin{equation} \label{eqn:time_for_vertical_in_linearized_sytem}
			t_0 := \begin{cases}
			- \frac{1}{2m} \log \left(1+\left(\frac{c}{2m}I_0''(x_0)\right)^{-1} \right) & \text{if } m \neq 0, \\
			-\frac{1}{c I_0''(x_0)} & \text{if } m = 0.
			\end{cases}
			\end{equation}		
		Then $\cG_t$ contains an overhang at $x_0$ for all $t > t_0$.
		
				\item 
		\label{item:loss_rotating}
			Suppose that $H$ is $C^3$. In addition suppose that $m_1, m_2 \in \bK^\circ$ are two points such that $m_1 < m_2$ and
			\begin{enumerate}[label=\textnormal{(\roman*)}]
				\item 
				\label{item:partial_p_H_in_m_1_m_2_and_inbetween}
				$\partial_p H(m_1,0) = 0 = \partial_p H(m_2,0)$ and $\partial_p H(x,0) \neq 0$ for all $x \in (m_1,m_2)$,
				\item 
				\label{item:partial_x_partial_p_H_in_m_1_and_m_2}				
				$\partial_x \partial_p H(m_1,0) \neq 0$ and $\partial_x \partial_p H(m_2,0) \neq 0$.
%
			\end{enumerate}
			Suppose that $\cG \cap \{(x,p) \in (m_1,m_2) \times \bR  : H(x,p) < 0 \} \neq \emptyset$. 
			Then there is a time $t_0 > 0$ such that there is a $x_0 \in (m_1,m_2)$ such that $\cG_t$ contains an overhang at $x_0$ for all $t \geq t_0$.
	\end{enumerate}
\end{theorem}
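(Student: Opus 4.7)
For part \ref{item:loss_linearization}, the plan is to pass to the linearized coordinates $(\xi,\zeta)=\Psi(x,p)$ in which the Hamiltonian flow is exactly the linear flow $e^{tA}$ generated by $A=\nabla^2 H(x_0,0)=\bigl(\begin{smallmatrix} m & c \\ 0 & -m\end{smallmatrix}\bigr)$. Because $D\Psi(x_0,0)=\bONE$ and $I_0'(x_0)=0$, the lifted graph $\Psi(\cG)$ is a $C^1$ curve through $(x_0,0)$ with tangent $(1,\lambda)$, where $\lambda:=I_0''(x_0)$. A direct matrix computation gives
\[
e^{tA}\begin{pmatrix}1\\ \lambda\end{pmatrix}=\begin{pmatrix}K(t)\\ \lambda e^{-mt}\end{pmatrix},\qquad K(t)=e^{mt}+\frac{c\lambda}{2m}\bigl(e^{mt}-e^{-mt}\bigr)
\]
(with the limit $K(t)=1+c\lambda t$ when $m=0$), and the hypothesis $\lambda<\min\{-2m/c,0\}$ is exactly what forces $K$ to have a unique positive root at the value $t_0$ of \eqref{eqn:time_for_vertical_in_linearized_sytem}, with $K$ changing sign there. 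Thus the tangent to the time-transported lifted graph at the fixed point $(x_0,0)$ becomes vertical precisely at $t=t_0$.

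To convert this vertical tangent into an overhang, I would parametrize $\Psi(\cG)$ near $(x_0,0)$ by $s\mapsto\Psi(x_0+s,I_0'(x_0+s))$ and write its time-$t$ image as a curve $(F(s,t),G(s,t))$ with $\partial_s F(0,t)=K(t)$ and $\partial_s G(0,t)=\lambda e^{-mt}\neq 0$. Under a sufficiently regular linearization -- for instance a $C^2$ one, which can be arranged when $H\in C^\infty$ and $m\neq 0$ by Hartman--Grobman/Sternberg-type results -- one obtains the expansion $F(s,t)-x_0=K(t)s+\mu(t)s^2+O(s^3)$ with $\mu(t_0)\neq 0$ generically; since $K(t_0)=0$ the equation $F(s,t_0)=x_0$ has a double root at $s=0$, and for $t$ just past $t_0$ a Taylor argument yields a second root $s(t)=-K(t)/\mu(t)+O(K(t)^2)\neq 0$ with $G(s(t),t)\neq 0$, so $(x_0,0)$ and $(x_0,G(s(t),t))$ are two distinct points of $\cG_t$ and we have an overhang at $x_0$. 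The main technical obstacle is precisely the regularity of the linearization: with only a $C^1$ linearization the quadratic coefficient $\mu(t)$ is not well-defined, and the argument must be supplemented by a compactness/continuity argument exploiting the flow-invariance of $(x_0,0)$ and the normalization $D\Psi(x_0,0)=\bONE$ from Remark \ref{remark:value_of_Jacobi_linearization}, which aligns tangent directions at the stationary point so that folds in $\xi$ transfer to folds in $x$.

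For part \ref{item:loss_rotating}, the plan is to exploit closed periodic orbits in $\{H<0\}\cap(m_1,m_2)\times\bR$. Since $H(x,0)\equiv 0$ the line $p=0$ is flow-invariant, and each $(m_i,0)$ is a hyperbolic saddle of $\nabla^2 H$ (indefinite Hessian because $\partial_{xp}H(m_i,0)\neq 0$ while $H_{xx}(m_i,0)=0$). Consequently the level set $\{H=0\}$ decomposes near $[m_1,m_2]\times\{0\}$ as this segment together with a second heteroclinic arc, and these bound an open region $R\subset(m_1,m_2)\times\bR$ on which $H$ has constant sign; the sign hypothesis on $\cG$ forces $H<0$ on $R$ and $\cG\cap R\neq\emptyset$. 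Inside $R$ the level sets $\{H=c\}$, $\min_R H<c<0$, are smooth closed orbits $\cO_c$ with periods $T(c)$, and $\cG$ crosses a continuum of them. Pick $(x_0,p_0)\in\cG\cap R$ with $c_0:=H(x_0,p_0)$ and $T_0:=T(c_0)$: at times $nT_0$ the image of $(x_0,p_0)$ returns to itself, while images of nearby points of $\cG$ on orbits $\cO_{c_0+\Delta c}$ rotate by a phase $\approx 2\pi n T'(c_0)\Delta c/T_0$, so for $n$ large these phases wrap around and continuity forces some image to cross the vertical line $\{x=x_0\}$ with nonzero momentum, yielding the overhang. The main obstacle is the twist condition $T'(c_0)\neq 0$ on some subinterval of energies traversed by $\cG\cap R$: I would establish it by showing, using condition \ref{item:partial_x_partial_p_H_in_m_1_and_m_2} together with the $C^3$-regularity of $H$, that $T(c)$ diverges logarithmically as $c\uparrow 0$ (the standard saddle-separatrix asymptotic), which prevents $T$ from being locally constant near the boundary of $R$ and supplies the claimed $t_0>0$ beyond which an overhang persists at some $x_0\in(m_1,m_2)$.
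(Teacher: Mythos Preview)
Your setup is right up to the computation of $K(t)$ and the identification of $t_0$, but you then take an unnecessary detour that creates a genuine gap. You try to produce the overhang via a second-order Taylor expansion $F(s,t)-x_0=K(t)s+\mu(t)s^2+O(s^3)$, which requires a $C^2$ linearization and the unjustified genericity hypothesis $\mu(t_0)\neq 0$. Neither is available: the statement only assumes a $C^1$ linearization, and you yourself note this is a problem.

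The paper avoids this entirely by observing that the first-order information already suffices. Since $(x_0,0)$ is stationary, $\gamma_t(x_0)=x_0$ for all $t$; and the chain rule (using only $D\Psi(x_0,0)=\bONE$ and $D\Psi^{-1}(x_0,0)=\bONE$) gives exactly $\gamma_t'(x_0)=K(t)$. For $t>t_0$ this is strictly negative, so $\gamma_t$ is locally decreasing at $x_0$: there are nearby points $x_-<x_0<x_+$ with $\gamma_t(x_-)>x_0>\gamma_t(x_+)$. Combined with the global fact that $\Phi_t$ connects $(\partial_-,-\infty)$ to $(\partial_+,\infty)$ (Proposition \ref{proposition:hamiltonian_paths_and_graph_structures}), the intermediate value theorem yields at least three points of $\cG_t$ over $x_0$. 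No second-order term is needed.

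\textbf{Part (b).} Your approach via period variation (``twist'') is genuinely different from the paper's, and more fragile. The paper does not use $T'(c)\neq 0$ at all. Instead it first shows (Lemma \ref{lemma:regular_rotating_region}) that conditions (i)--(ii) guarantee a single rotating loop $\fL=\{H=E\}$ in $(m_1,m_2)\times\bR$ with $\cG\cap\fL\neq\emptyset$; the key analytic step is that $\bfE(x):=\min_p H(x,p)$ satisfies $\bfE''(m_i)<0$, forcing the sublevel sets near the boundary to be connected and their boundary to be a genuine periodic orbit. Then (Theorem \ref{theorem:above_and_below_lines}) a purely topological homotopy argument in the punctured space $\fS\setminus A$ shows that once the endpoint $\Phi_t(x_1)$ on the loop has rotated past a fixed vertical line $\{x_0\}\times\bR$, the arc $\Phi_t((\partial_-,x_1])$, being connected to $(\partial_-,-\infty)$, must cross that line above the loop for \emph{all} later times; symmetrically for the arc from $x_2$ to $\partial_+$ below. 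This gives a fixed $x_0$ with an overhang for all $t\ge t_0$.

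Your twist argument has two weak points. First, the logarithmic divergence of $T(c)$ as $c\uparrow 0$ only gives $T$ nonconstant near the separatrix, not necessarily at energies actually traversed by $\cG$; you would need $\cG$ to cross a continuum of energies where $T'\neq 0$, which is extra work. Second, and more seriously, your phase-wrapping argument naturally produces overhangs at times near $nT_0$ and at locations that drift with $t$; it is not clear it yields a \emph{single} $x_0$ with an overhang for \emph{every} $t\ge t_0$, which is what the theorem asserts. The paper's homotopy argument delivers exactly this persistence for free.
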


\begin{remark}
	In \ref{item:loss_linearization} $t_0$ is the time that the line $x \mapsto x (1,I_0''(x_0))$ is transformed into a vertical line under the linearized flow.
	
	Regarding \ref{item:loss_rotating}, in Section \ref{section:creation_of_overhangs_proofs} we show that if a set $A$ satisfies certain properties, then the Hamiltonian flow rotates over the boundary $\partial A$ of $A$, which means that every Hamilton path started on $\partial A$ will move along $\partial A$ and return to its initial point in finite time. 
	
	We prove that the conditions of \ref{item:loss_rotating} imply the existence of such a set in Lemma \ref{lemma:regular_rotating_region}.
\end{remark}

\subsection{Explicit results for two main classes of examples} \label{section:explicit_results_for_examples}

In the present section, we   analyze particular canonical instances of the $\pm 1$-space-model and the $\R$-space-model: 

\begin{enumerate}[label=(\roman*)]
\item for the $\R$-space-model we consider $H$ and $I$ of the form
\begin{equation} \label{eqn:H_and_I_0_for_R}
H(x,p) = \frac{1}{2}p^2 - pW_b'(x), \qquad	I_0(x) = W_a(x),
\end{equation}
for $a,b\in \R$, where $W_d(x) = \frac{1}{4}x^4 - \frac{1}{2} d x^2$ for $d\in \R$. 
\item for the $\pm 1$-space-model we consider $H$ and $I$ of the form 
\begin{align}
\label{eqn:H_for_CW}
H(x,p) & = \frac{1-x}{2} e^{\beta x+ h} \left[ e^{2p} -1 \right] + \frac{1+x}{2} e^{-\beta x - h} \left[ e^{-2p} -1 \right], \\
\label{eqn:I_0_for_CW}
I_0(x) & = \frac{1-x}{2} \log (1-x) + \frac{1+x}{2} \log (1+x) - \frac{1}{2}\alpha x^2 - \theta x + C, 
\end{align} 
where $\alpha,\beta \ge 0$, $\theta,h\in \R$ and $C\in\R$ is such that $\inf_{x\in [-1,1]} I_0(x) =0$. 
I.e., we consider Curie-Weiss dynamics with inverse temperature $\beta$ and a starting rate function that corresponds to an inverse temperature $\alpha$. 
\end{enumerate}

In Proposition \ref{proposition:convex_at_infty_explict_functions} we consider conditions sufficient for $H$ to preserve order. In Theorem \ref{theorem:application_explicit} we  apply the results of Theorems \ref{theorem:convexity_preserving_implies_diffb_preserv} and \ref{theorem:loss_of_differentiability} for various  choices of $a$ and $b$ or of $\alpha,\beta, h$ and $\theta$. In Theorem \ref{theorem:loss_recovery_differentiability}, we show that for specific choices of $\alpha$ and $\theta$ one obtains recovery of differentiability. We restrict our analysis mostly to $\beta = h =0$, i.e., infinite-temperature dynamics and zero dynamical external magnetic field.  
For other parameters
we expect that 
one can obtain 
similar results 
at the expense of more involved computations, 
which go beyond the scope of this text. 

In the proof of Theorem \ref{theorem:application_explicit} and Theorem \ref{theorem:loss_recovery_differentiability}, we use the creation of overhangs proved in Theorem \ref{theorem:loss_of_differentiability} to show non-differentiability with the help of Proposition \ref{proposition:graph_of_derivative_pushforward}.


\begin{remark}
\label{remark:derivatives_of_H_and_I_0}
One can rewrite $H$ as in \eqref{eqn:H_for_CW} 
and compute the following derivatives, which will be used later in proofs. We spare the reader the computations. 
\begin{align}
\label{eqn:H_in_hyperbolic_functions}
	H(x,p) & = 2 \sinh(p) \left[ \sinh(\beta x +h + p) - x \cosh(\beta x +h + p) \right], \\
	\label{eqn:partial_p_H_in_hyperbolic_functions}
	\partial_p H(x,p) 
	& =    2 \sinh( \beta x + h + 2p) -   2 x \cosh(\beta x + h + 2p) 
	\\
\label{eqn:partial_x_H_in_hyperbolic_functions}
	\partial_x H(x,p) &=
- 2 \sinh(p)
\left[ (1- \beta) \cosh(\beta x + h + p) + \beta x \sinh(\beta x +h+  p) \right], \\
\label{eqn:partial_p_partial_x_H_in_hyperbolic_functions}
\partial_p \partial_x H(x,p) 
& =     2( \beta - 1)  \cosh( \beta x + h + 2p) -  2\beta x \sinh(\beta x + h + 2p), \\
\label{eqn:partial_x_sq_H_in_hyperbolic_functions}
	\partial_x^2 H(x,p) &= 
	 - 2 \beta \sinh(p) \left[  (2-\beta) \sinh(\beta x + h + p) +  \beta x  \cosh(\beta x + h +  p) \right], \\
	 \label{eqn:partial_p_sq_H_in_hyperbolic_functions}
	\partial_p^2 H(x,p) &=   4 \cosh( \beta x + h + 2p) -  4 x \sinh(\beta x + h + 2p). 
\end{align}

Note that $H(x,p) =0$ if and only if either $p=0$ or $h + p = \arctanh(x)- \beta x$ and that $\partial_p H(x,p)=0$ if and only if $2p = \arctanh(x)- \beta x - h$. 

For $I_0$ as in \eqref{eqn:I_0_for_CW} we have 
\begin{align}
\label{eqn:derivative_I_0}
I_0'(x) & 
=  \arctanh (x) - \alpha x - \theta, \\
\label{eqn:second_derivative_I_0}
I_0''(x) & = \frac{1}{1-x^2} - \alpha. 
\end{align}
\end{remark}

\begin{calculations}
\begin{align*}
& 2H(x,p) = 
(1-x) e^{\beta x + h} (e^{2p}-1) + (1+x) e^{-\beta x - h} (e^{-2p}-1) \\
& = e^{\beta x + h} (e^{2p}-1) +e^{-\beta x - h} (e^{-2p}-1) 
-x \left( e^{\beta x + h} (e^{2p}-1) - e^{-\beta x - h} (e^{-2p}-1) \right) \\
& = e^{\beta x + h+p} (e^{p}-e^{-p}) +e^{-\beta x - h-p} (e^{-p}-e^{p}) 
-x \left( e^{\beta x + h+p} (e^{p}-e^{-p}) - e^{-\beta x - h-p} (e^{-p}-e^{p}) \right) \\
& = (e^{p}-e^{-p}) \left(  e^{\beta x + h+p}  - e^{-\beta x - h-p}  \right)
-x (e^{p}-e^{-p}) \left( e^{\beta x +h+p}  + e^{-\beta x - h-p} \right) \\
& = 4 \sinh(p) \left( \sinh(\beta x +h + p ) - x \cosh(\beta x +h + p) \right) 
\end{align*}

\begin{align*}
& 2 \partial_x \left[\sinh(\beta x + h+ p ) - x \cosh(\beta x + h + p)\right]\\
&= \partial_x \left[  e^{\beta x + h+p}  - e^{-\beta x - h-p} - x ( e^{\beta x + h+p}  + e^{-\beta x - h-p} ) \right]\\
&=  \left[ \beta e^{\beta x + h+p}  + \beta e^{-\beta x - h-p} -  ( e^{\beta x + h+p}  + e^{-\beta x - h-p} ) - x ( \beta e^{\beta x + h+p}  - \beta e^{-\beta x - h-p} )\right]\\
& = 2\beta \cosh(\beta x + h+ p) - 2\cosh( \beta x + h + p ) -2 \beta x  \sinh(\beta x + h + p)  \\
& =-2 \left( (1- \beta) \cosh(\beta x + h+ p) + \beta x \sinh(\beta x + h+ p) \right) \\
&  \partial_x^2 \left[\sinh(\beta x + h + p ) - x \cosh(\beta x + h+ p)\right]\\
& = - \left( \beta (1- \beta) \sinh(\beta x + h + p) + \beta  \sinh(\beta x + h + p)  + \beta^2 x \cosh(\beta x + h + p) \right)
\end{align*}

\begin{align*}
\cosh(p) \cosh(q) + \sinh(p) \sinh(q) 
=  \cosh(p+q) \\
\cosh(p) \sinh(q) + \sinh(p) \cosh(q) = \sinh(p+q)
\end{align*}
Use this to get
\begin{align*}
\partial_p H(x,p) 
& = 2\cosh(p) \left[ \sinh(\beta x + h + p ) - x \cosh(\beta x + h+ p) \right]  \\
& + 2 \sinh(p) 
\left[ \cosh(\beta x + h + p ) - x \sinh(\beta x + h + p) \right] \\
& = 2 \sinh( \beta x + h + 2p) - 2x \cosh(\beta x + h + 2p) 
\end{align*}
and 
\begin{align*}
 \partial_p \partial_x H(x,p) 
& = 
-  2\cosh(p)
\left[ (1- \beta) \cosh(\beta x + h + p) + \beta x \sinh(\beta x +h+  p) \right] \\
&\quad  - 
2\sinh(p) 
\left[ (1- \beta) \sinh(\beta x + h + p) + \beta x \cosh(\beta x +h+  p) \right] \\
& = - 2(1- \beta)  \cosh( \beta x + h + 2p) - 2\beta x \sinh(\beta x + h + 2p). 
\end{align*}

\end{calculations}

	\begin{proposition}\label{proposition:convex_at_infty_explict_functions}
		\begin{enumerate}
			\item 
			\label{item:order_preserving_for_CW}
			Assume Assumption \ref{assumption:jump_rates} for the $\pm 1$-space-model, with $H$ as in \eqref{eqn:Hamiltonian_CWmodel}. 
			In addition, suppose $v_+''(1) < 0$ and  $v_-''(-1) < 0$ on a neighbourhood of $-1$. Then $H$  preserves order at infinity.

Let $H$ as in \eqref{eqn:H_for_CW}. Then $H$ preserves order at infinity for all $\beta \ge 0$ and $h\in \R$; for $h=0$ and  $\beta \in [0,2]$, $H$ preserves order on $\downq_{0,0}^\circ \cup \{0\} \cup \upq_{0,0}^\circ$; and, for $\beta = h =0$, $H$ preserves order on the entire space $[-1,1]\times \R$.

			
			\item 
			\label{item:order_preserving_for_R}
			Assume Assumption \ref{assumption:diffusion_drift} for the $\R$-space-model, with $H$ as in \eqref{eqn:Hamiltonian_Rmodel}. 
			Let $y,z\in \R$.  
		If $W'''(x) \le 0$ for all $x \le y$, then $H$ preserves order on $\downq_{y,0}^\circ$. 
		If $W'''(x) \ge 0$ for all $x \ge z$, then $H$ preserves on $\upq_{z,0}^\circ$. 

If such $y$ and $z$ as above exist, then $H$ preserves order at infinity. 
			Such $y$ and $z$ exist, e.g., for  $W$ of the form  $W(x) = \sum_{i=1}^{2k} a_i x^i$ with $a_{2k} > 0$ and thus for $W$ as in \eqref{eqn:H_and_I_0_for_R}.
		\end{enumerate}
		
	\end{proposition}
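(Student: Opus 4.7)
My plan is to treat both models inside the unifying framework of cooperative (quasi-monotone) planar ODE systems. Recall that for a planar system $\dot\gamma = f(\gamma,\eta)$, $\dot\eta = g(\gamma,\eta)$, Kamke's theorem asserts that on any invariant set where $\partial_\eta f \ge 0$ and $\partial_\gamma g \ge 0$ the flow preserves the componentwise partial order. Applied to the Hamiltonian vector field, with $f=\partial_p H$, $g=-\partial_x H$, this reduces to the two pointwise inequalities
\begin{equation*}
\partial_p^2 H \ge 0, \qquad -\partial_x^2 H \ge 0.
\end{equation*}
The whole proof will consist of verifying these on the claimed quadrants and then separately establishing invariance of those quadrants under the flow.

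For part \ref{item:order_preserving_for_R}, I compute directly from $H(x,p)=\tfrac12 p^2 - pW'(x)$ that $\partial_p^2 H\equiv 1$ and $-\partial_x^2 H(x,p) = pW'''(x)$. On $\upq_{z,0}^\circ$ both $p$ and $W'''(x)$ are non-negative by the assumption $W'''\ge 0$ on $[z,\infty)$, and on $\downq_{y,0}^\circ$ both are non-positive, so the cooperative condition holds in both cases. Existence of $y$ and $z$ for $W(x)=\sum_{i=1}^{2k} a_ix^i$ with $a_{2k}>0$ is then immediate, since $W'''$ is a polynomial of odd degree $2k-3$ with positive leading coefficient, so the required sign on $W'''$ holds for all sufficiently large $|x|$.

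For part \ref{item:order_preserving_for_CW}, the vector field \eqref{eqn:Hamiltonian_CWmodel} satisfies
\begin{equation*}
\partial_p^2 H(x,p) = 4v_+(x)e^{2p} + 4v_-(x)e^{-2p} \ge 0
\end{equation*}
automatically, while $-\partial_x^2 H(x,p) = -v_+''(x)[e^{2p}-1] - v_-''(x)[e^{-2p}-1]$. Near $x=1$ and for $p$ large and positive the $-v_+''(x)\,e^{2p}$ term dominates all others; since $v_+''(1)<0$ and $v_+''$ is continuous, this yields $-\partial_x^2 H \ge 0$ on a set $\upq_{y_+,q_+}$ with $y_+$ close to $1$ and $q_+$ sufficiently large. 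The symmetric argument at $x=-1$ produces $\downq_{y_-,q_-}$. For the explicit Curie--Weiss Hamiltonian \eqref{eqn:H_for_CW} I would substitute the closed-form expressions for $v_\pm$, compute $v_\pm''$ and then check the cooperative condition on the larger sets claimed: for $\beta=h=0$ both $v_\pm''$ vanish identically, so $-\partial_x^2 H \equiv 0$ and Kamke's theorem applies on all of $[-1,1]\times\R$; for $h=0$ and $\beta\in[0,2]$, the symmetry $(x,p)\mapsto(-x,-p)$ of $H$ reduces the verification to a single sign check on $\upq_{0,0}^\circ$.

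The hardest step, common to both models, will be passing from cooperativity of the vector field to invariance of the prescribed quadrants under the flow, since Kamke's theorem presupposes that the comparing trajectories both live in the set. I would establish invariance via two ingredients: (i) the line $\{p=0\}$ is always invariant, because $H(x,0)\equiv 0$ together with $\dot P|_{p=0}=0$ in both models forces a zero-momentum start to remain at zero momentum, and uniqueness of solutions to \eqref{eqn:hamilton_equations} (see Assumption \ref{assumption:Hamiltonian_for_domain_extension}) then separates $\{p>0\}$ from $\{p<0\}$; and (ii) a sign analysis of $\dot X = \partial_p H$ on the vertical boundary $\{x = y_\pm\}$. For the $\pm 1$-model this sign is controlled by choosing $q_\pm$ large enough so that the dominant exponential term in $\partial_p H$ has the desired sign. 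For the $\R$-model, where $\partial_p H(z,p) = p - W'(z)$ has no definite sign, I would bootstrap: use the cooperative comparison against the reference trajectory $t\mapsto (X(t),0)$ through $(z,0)$ (which stays on $\{p=0\}$) to deduce, via the order preservation already established in the interior, that any interior trajectory remains to the right of $z$ during its lifetime; the analogous downward comparison handles $\downq_{y,0}^\circ$.
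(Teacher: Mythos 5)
Your cooperative-systems framing is in spirit the same as the paper's Proposition \ref{proposition:ordering_is_preserved}, whose proof is precisely a first-time-of-violation comparison; so this is not a genuinely different route, and the two sign conditions you isolate ($\partial_p^2 H\geq 0$, $\partial_x^2 H\leq 0$) do match the paper's variant \ref{item:partial_x_strictly_decr_in_x}. There are, however, two real gaps.

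First, Kamke's theorem with non-strict inequalities gives only non-strict order preservation, whereas Definition \ref{definitions:H_preservation_convexity_I_convexity}\ref{item:preserves_order} demands strict inequalities. The position-collision case is always handled because $\partial_p^2 H>0$ holds strictly (this is exactly Lemma \ref{lemma:ordering_is_preserved_basic}), but the momentum-collision case is not excluded by your conditions when $\partial_x^2 H$ can vanish. This degeneracy is not hypothetical: it is exactly what happens in the $\beta=0$ cases you claim (for all $h$ at infinity, and $\beta=h=0$ on the whole space), where $v_\pm''\equiv 0$ and hence $\partial_x^2 H\equiv 0$. There the Jacobian is reducible, so Kamke does not deliver strong monotonicity; the paper instead uses variant \ref{item:partial_x_decreasing_in_x_p} of Proposition \ref{proposition:ordering_is_preserved}, an integral argument that requires the additional cross-derivative condition $\partial_p\partial_x H\leq 0$, which is not part of the Kamke cooperativity hypotheses. (For $\beta=h=0$ one could alternatively observe that $\dot P=\sinh(2P)$ is autonomous and invoke ODE uniqueness, but your proposal does not say this.) Also note that for $\beta=0$ your dominance argument ``$-v_+''(x)e^{2p}$ dominates'' breaks down since $v_+''\equiv 0$, so you cannot reach the conclusion ``for all $\beta\geq 0$'' by that route.

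Second, the bootstrap invariance argument for the $\R$-model is circular and does not give the stated conclusion. Even granting the comparison, it yields $X^{x,p}_t\geq X^{z,0}_t$, not $X^{x,p}_t>z$; and the reference trajectory solves $\dot X=-W'(X)$, so $X^{z,0}_t$ drops below $z$ whenever $W'(z)>0$ (e.g.\ $W(x)=\tfrac14 x^4$, $z=1$). Moreover the cooperative conditions are only verified on $\upq^\circ_{z,0}$, and neither the reference trajectory (which sits on $\{p=0\}$, the boundary) nor the compared trajectory is shown to remain in that set, so the comparison presupposes what you are trying to prove. What is actually used downstream and what one can cleanly prove is order preservation ``at infinity'' on quadrants $\upq_{y,q}$ with $q$ large ($q>W'(y)$ and $W''\geq 0$ on $[y,\infty)$), for which $\dot X\geq 0$ on $\{x=y\}$ and $\dot P\geq 0$ on $\{p=q\}$ are immediate, giving genuine invariance.
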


\begin{figure}
	\begin{center}
		\includegraphics[width=0.9\textwidth]{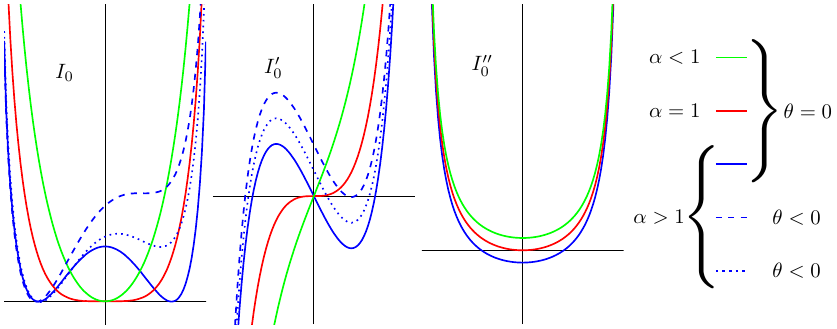}  
	\caption{$I_0$ of \eqref{eqn:I_0_for_CW}, $I_0'$ and $I_0''$ for different values of $\alpha$ and $\theta$.}
		\end{center}
	\label{fig:I_0_derivative_different_alphas}
\end{figure}

	\begin{calculations}
	\begin{align*}
	\partial_x H(x,p)= v_+'(x) [e^{2p} -1] + v_-'(x) [e^{-2p}-1], \\
	\partial_x^2 H(x,p)= v_+''(x) [e^{2p} -1] + v_-''(x) [e^{-2p}-1], 
	\end{align*}
	For $v_+(x) = \frac{1-x}{2} e^{\beta x +h} $ we have 
	\begin{align*}
	v_+'(x) &= \tfrac12 [ \beta (1-x) - 1 ] e^{\beta x + h}\\
	v_+''(x) 
	&= \tfrac{1}{2} [ - \beta + \beta (\beta (1-x) - 1) ] e^{\beta x + h}\\
	&= \tfrac{\beta}{2} [ \beta (1-x) - 2 ] e^{\beta x + h}
	\end{align*}
	And for $v_-(x) = \frac{1+x}{2} e^{-\beta x - h}$ we have 
	\begin{align*}
	v_-'(x) &= \tfrac12 [ 1-\beta (1+x)] e^{-\beta x - h}\\
	v_-''(x) & = \tfrac12 [ -\beta - \beta (1-\beta (1+x) )] e^{-\beta x - h}\\
	& = \tfrac{\beta}{2} [ \beta (1+x)  -2] e^{-\beta x - h}
	\end{align*}
	\end{calculations}

	\begin{proof}

\ref{item:order_preserving_for_CW}
By Proposition \ref{proposition:ordering_is_preserved} and 
because Assumption \ref{assumption:Hamiltonian_for_domain_extension}\ref{item:assumptions_convergent_quadrants} holds (see Lemma \ref{lemma:assumptions_jump_imply_assumptions_general1}), 
it is sufficient to show that $\partial_x^2 H  < 0$ on quadrants $\upq_{x,p}$ and $\downq_{x,p}$ for $x,p$ close to $(1,\infty)$ and $(-1,-\infty)$, respectively. 
Those $x,p$ can be found as $\partial_x^2 H(x,p)= v_+''(x) [e^{2p} -1] + v_-''(x) [e^{-2p}-1]$ and $v''$ is continuous. 

For $H$ as in \eqref{eqn:H_for_CW}, i.e., $H$ as in \eqref{eqn:Hamiltonian_CWmodel} with $v_-$ and $v_+$ as in \eqref{eqn:common_v_-_and_v_+} we have $v_+''(1)= - \beta e^{\beta +h}$ and $v_-''(-1) = - \beta e^{\beta - h}$ which are both $<0$ for all $\beta >0$ and $h\in \R$. 

For $\beta =0$ and $h\in \R$ it follows that $(x,p) \mapsto \partial_x H(x,p)$ is a decreasing function at infinity, whence $H$ preserves at infinity by Proposition \ref{proposition:ordering_is_preserved}. 

For $\beta\in (0,2]$ and $h = 0$, by \eqref{eqn:partial_x_sq_H_in_hyperbolic_functions} we see that $ \partial_x^2 H(x,p)<0 $ on $\downq_{0,0}^\circ \cup \upq_{0,0}^\circ$. 
As  $\partial_p H(0,p)= 2 \sinh(2p)$, 
and $H(x,p)=0$ if and only if either $p=0$ or $p = \arctanh(x) - \beta x$, a Hamilton trajectory in $\downq_{0,0}^\circ$ could only leave this set via the point $(x,0)$ with $H(x,0)=0$, which is a stable point (see Remark \ref{remark:derivatives_of_H_and_I_0}) and therefore cannot be reached. 
Hence $H$ preserves $\downq_{0,0}^\circ$ and similarly $\upq_{0,0}^\circ $. 
Proposition \ref{proposition:ordering_is_preserved} implies that $H$ preserves order on both $\downq_{0,0}^\circ$ and $\upq_{0,0}^\circ $, and therefore on the union of those sets with $\{0\}$. 

For $\beta =h=0$, by \eqref{eqn:partial_x_H_in_hyperbolic_functions} $\partial_x H(x,p) = - 2 \sinh(p) \cosh( p)$ which is decreasing as a function of $(x,p)$, so that $H$ preserves order on the whole space $[-1,1]\times \R$. 

\ref{item:order_preserving_for_R}
Note that $ \partial_x^2 H(x,p) = - p W'''(x)$, which immediately establishes the claim.
\end{proof}

In addition to applying the abstract results of Section \ref{section:preserve_lose_differentiability}, we use Proposition \ref{proposition:graph_of_derivative_pushforward} to show that in particular settings overhangs do induce non-differentiabilities of the rate function. The proofs of the following results are given in Section \ref{section:verification_explicit_example}.

\begin{theorem} \label{theorem:application_explicit}
For the $\R$-space-model and the $\pm 1$-space-model, under the above assumptions, we have the following scenarios for the following $a,b$ and $\alpha, \beta$: 
	\begin{enumerate}
		\item 
		\label{item:all_times_I_t_equals_I_0}
		\textnormal{[$a=b$], [$\alpha = \beta$,  $\theta = h $],  \textbf{equilibrium.}}
		We have $I_0 = I_t$ for all $t \geq 0$.
		\item 
		\label{item:all_a,b_alpha_beta_allways_short_time_gibbs}
		\textnormal{[$a,b\in \R$], [$\alpha,\beta \ge 0$, \ $\theta, h\in \R$],  \textbf{short-time differentiability.}}

		There is a $t_0 > 0$ such that for $t \leq t_0$ we have $I_t \in C^{1,\partial}(\bK)$.
		\item 
		\label{item:high_temp_starting}
		\textnormal{[$a \leq 0$], 
		[$\alpha \le 1$,  $0<\beta \leq 2$, $\theta \in \R$, $h=0$],
		\textbf{high-temperature starting profile.}} 
		For all $t \ge 0$ we have  $I_t \in C^{1,\partial}(\bK)$.
		\item
		\label{item:heating_up_low_temp}
		\textnormal{[$a > b \vee 0$, $b \neq 0$], [$\alpha>\beta \vee 1$, $\beta \neq 1$, $\theta = h =0$],
		\textbf{heating up a low-temperature starting profile.}}
		There is an overhang at $x = 0$ for all $t \geq t_1$ where 
		\begin{enumerate}[label=(\roman*)]
			\item for the $\R$-space-model
			$t_1 = -\frac{1}{b} \log \left(\frac{a-b}{a}\right)$. 
			\item for the $\pm 1$-space-model
			$t_1 = -  \frac{1}{4(1-\beta)} \log \left(\frac{\beta - \alpha}{1-\alpha}\right)$. 
		\end{enumerate}		
		\item
		\label{item:cooling_down_low_temp}
		\textnormal{[$0 <a < b$], [$1<\alpha<\beta$, $\theta = h =0$], 
		\textbf{cooling down a low-temperature starting profile.}}
		There is some $t_2$ such that for all $t \geq t_2$ there exist at least two points $x_{-},x_{+}$ of non-differentiabili\-ty of $I_t$ with $m_{-} < x_{-} < 0 < x_{+} < m_{+}$, where  
		\begin{enumerate}[label=(\roman*)]
		\item for the $\R$-space-model $m_{\pm} = \pm \sqrt{b}$.
		\item for the $\pm 1$-space-model $m_{\pm}$ are the solutions to $\arctanh(x) = \beta x$.\footnote{As $\beta>1$ such $m_-$ and $m_+$ exist.}
		\end{enumerate}		
	\end{enumerate}
\end{theorem}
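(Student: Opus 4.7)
I will dispatch the five scenarios separately, in each case invoking the appropriate abstract tool from Section \ref{section:preserve_lose_differentiability} and verifying its hypotheses from the derivative formulas of Remark \ref{remark:derivatives_of_H_and_I_0} together with the order-preservation input of Proposition \ref{proposition:convex_at_infty_explict_functions}. Parts \ref{item:all_times_I_t_equals_I_0}, \ref{item:all_a,b_alpha_beta_allways_short_time_gibbs}, \ref{item:high_temp_starting} establish regularity via Theorem \ref{theorem:convexity_preserving_implies_diffb_preserv}; parts \ref{item:heating_up_low_temp}, \ref{item:cooling_down_low_temp} produce overhangs in $\cG_t$ via Theorem \ref{theorem:loss_of_differentiability}, which Proposition \ref{proposition:graph_of_derivative_pushforward} then promotes to non-differentiability of $I_t$.

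For \ref{item:all_times_I_t_equals_I_0}, under the equilibrium parameter match the graph $\cG$ lies on a zero-energy curve of $H$ preserved by the flow: for Curie-Weiss, \eqref{eqn:derivative_I_0} gives $\beta x + h + I_0'(x) = \arctanh(x)$, which combined with \eqref{eqn:H_in_hyperbolic_functions} forces $H(x, I_0'(x)) = 0$, and an analogous computation using the factorisation $H(x, p) = p\bigl(\tfrac{1}{2} p - W'(x)\bigr)$ treats the diffusion case. Since $H$ is constant along Hamilton trajectories, $\cG_t \subseteq \{H = 0\}$ for all $t \ge 0$; as this zero-energy set is itself a graph over $\K^\circ$, injectivity of the flow yields $\cG_t = \cG$, hence $I_t' = I_0'$, and fixing the additive constant via the Hamilton-Jacobi equation gives $I_t = I_0$. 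For \ref{item:all_a,b_alpha_beta_allways_short_time_gibbs} and \ref{item:high_temp_starting}, $I_0 \in C^{1,\partial}(\K) \cap C^2(\K^\circ)$ is immediate (the $\arctanh$ blow-up at $\pm 1$, respectively the quartic growth of $W_a$); strict convexity at infinity follows from $I_0''(x) \to +\infty$ at $\partial_\pm$ via \eqref{eqn:second_derivative_I_0}, so Proposition \ref{proposition:convex_at_infty_explict_functions} combined with Theorem \ref{theorem:convexity_preserving_implies_diffb_preserv}\ref{item:conv_at_infinity_implies_short_time} delivers \ref{item:all_a,b_alpha_beta_allways_short_time_gibbs}. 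For \ref{item:high_temp_starting} the restriction $\alpha \le 1$ (resp.\ $a \le 0$) upgrades $I_0$ to globally strictly convex, and $I_0'(0) = 0$ from $\theta = h = 0$ places $\cG$ inside $\downq_{0,0}^\circ \cup \{(0,0)\} \cup \upq_{0,0}^\circ$, which for $\beta \in [0, 2], h = 0$ is a globally order-preserving set by Proposition \ref{proposition:convex_at_infty_explict_functions}\ref{item:order_preserving_for_CW} and its analogue for the diffusion; Theorem \ref{theorem:convexity_preserving_implies_diffb_preserv}\ref{item:conv_implies_diffb} then concludes.

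For \ref{item:heating_up_low_temp}, the overhang is produced at the stationary point $x_0 = 0$: under $\theta = h = 0$ one has $I_0'(0) = 0$, and $\partial_p H(0, 0) = 0$ by \eqref{eqn:partial_p_H_in_hyperbolic_functions} (resp.\ $W_b'(0) = 0$). I compute $(m, c) = (2(\beta - 1), 4)$ for CW and $(m, c) = (b, 1)$ for the diffusion via \eqref{eqn:partial_p_partial_x_H_in_hyperbolic_functions}--\eqref{eqn:partial_p_sq_H_in_hyperbolic_functions}, together with $I_0''(0) = 1 - \alpha$ (resp.\ $-a$); the hypotheses $m \neq 0$ and $I_0''(0) < \min\{-2m/c, 0\}$ of Theorem \ref{theorem:loss_of_differentiability}\ref{item:loss_linearization} then reduce exactly to the stated parameter constraints. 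An overhang at $0$ for $t$ past \eqref{eqn:time_for_vertical_in_linearized_sytem} follows, and the sharp value $t_1$ in the statement is obtained by integrating the Hamilton equations in closed form along their invariant energy levels. For \ref{item:cooling_down_low_temp} I apply Theorem \ref{theorem:loss_of_differentiability}\ref{item:loss_rotating} separately on each of $(m_-, 0)$ and $(0, m_+)$: the points $m_\pm$ are the non-zero zeros of $\partial_p H(\cdot, 0)$, equal to $\pm\sqrt{b}$ for the diffusion and to the non-trivial roots of $\arctanh(x) = \beta x$ for CW (which exist since $\beta > 1$), while the non-vanishing of $\partial_x \partial_p H$ at $0$ and $m_\pm$ is immediate from \eqref{eqn:partial_p_partial_x_H_in_hyperbolic_functions}. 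The key energetic hypothesis $\cG \cap \{H < 0\} \cap (m_1, m_2) \times \R \neq \emptyset$ is verified by a direct sign computation: for the diffusion $H(x, I_0'(x)) = -\tfrac12 x^2 (x^2 - a)(x^2 - (2b - a))$ is strictly negative on $(0, \sqrt{a}) \subset (0, m_+)$ under $0 < a < b$, and an analogous calculation via \eqref{eqn:H_in_hyperbolic_functions} and \eqref{eqn:derivative_I_0} treats CW.

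The principal obstacle in parts \ref{item:heating_up_low_temp} and \ref{item:cooling_down_low_temp} is converting the overhangs of $\cG_t$ into genuine non-differentiability of $I_t$ via Proposition \ref{proposition:graph_of_derivative_pushforward}, which demands flanking points $x_1 < x_* < x_2$ in $\K^\circ$ at which $\cG_t$ has no overhang at the relevant finite time. I produce such flanks by combining two ingredients: the stationarity of $(0, 0)$ and $(m_\pm, 0)$ under the Hamiltonian flow (so trajectories through these points are constant and cannot contribute overhangs at their $x$-coordinate), and the short-time/near-boundary preservation from the proof of \ref{item:all_a,b_alpha_beta_allways_short_time_gibbs} (so that points sufficiently close to $\partial_\pm$ remain overhang-free on any fixed time horizon). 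A secondary technical point is pinning down the sharp times $t_1$ and $t_2$: the linearised and rotating-region analyses give only qualitative upper bounds, and the explicit closed-form values will require integration of the full non-linear Hamilton system on its integrable energy levels.
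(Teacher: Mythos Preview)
Your treatment of \ref{item:all_times_I_t_equals_I_0}--\ref{item:high_temp_starting} matches the paper's approach. For \ref{item:heating_up_low_temp} you are overcomplicating in two places. First, the statement only asserts an overhang at $0$, not non-differentiability of $I_t$, so no conversion via Proposition \ref{proposition:graph_of_derivative_pushforward} is needed there. Second, the displayed $t_1$ \emph{is} the time $t_0$ of \eqref{eqn:time_for_vertical_in_linearized_sytem}: substituting $m = 2(\beta-1)$, $c = 4$, $I_0''(0) = 1-\alpha$ (respectively $m = b$, $c = 1$, $I_0''(0) = -a$) yields the stated closed form directly, so no additional integration of the nonlinear Hamilton system is required.

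For \ref{item:cooling_down_low_temp} your flanking-point plan has a genuine gap. The points $(m_\pm,0)$ are stationary for the flow but do not lie on $\cG$: since $\arctanh(m_\pm) = \beta m_\pm$ one has $I_0'(m_\pm) = (\beta-\alpha)m_\pm \neq 0$, so the trajectory of $\cG$ through $x = m_\pm$ starts with nonzero momentum and moves. More fundamentally, stationarity of a point $(x_0,0)$ does not by itself exclude an overhang of $\cG_t$ at $x_0$, because other Hamilton trajectories started elsewhere on $\cG$ may cross the vertical line $\{x = x_0\}$. And the near-boundary order preservation you invoke from \ref{item:all_a,b_alpha_beta_allways_short_time_gibbs} only supplies overhang-free points close to $\partial_\pm$, which cannot localise the non-differentiability to the claimed subintervals $(m_-,0)$ and $(0,m_+)$.

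The paper's device is different and uses the specific structure of $\cG$. It exploits the three zeros $y_-,0,y_+$ of $I_0'$, which satisfy $m_- < y_- < 0 < y_+ < m_+$ because $1 < \alpha < \beta$, and shows that $\cG_t$ has no overhang at $\gamma_t(y_-)$, at $0$, and at $\gamma_t(y_+)$ for every $t$. The argument is a sign/ordering one resting on the invariance of the line $\{p = 0\}$ and of the negative-energy regions $B_\pm = \{(x,p) \in (m_\mp,0)\cup(0,m_\pm) : H(x,p) < 0\}$: for $x < y_-$ one has $P_t^{x,I_0'(x)} < 0 = P_t^{y_-,0}$ for all $t$, while for $x \in (y_-,0)$ the trajectory $\Phi_t(x)$ is trapped in $B_-$; Lemma \ref{lemma:ordering_is_preserved_basic} then forces $\gamma_t(x_1) < \gamma_t(y_-) < \gamma_t(x_2) < 0$ whenever $x_1 < y_- < x_2 < 0$, and symmetrically on the right. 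With these three overhang-free flanks, Proposition \ref{proposition:graph_of_derivative_pushforward} places one point of non-differentiability in $(\gamma_t(y_-),0) \subset (m_-,0)$ and another in $(0,\gamma_t(y_+)) \subset (0,m_+)$.
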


\begin{theorem}[Loss and recovery] \label{theorem:loss_recovery_differentiability}
 	Let $\K = [-1,1]$ and $H$ and $I_0$ be as in \eqref{eqn:H_for_CW} and \eqref{eqn:I_0_for_CW} with $\beta = h = 0$. 
\begin{enumerate}
\item 
\label{item:unique_zero_derivative_then_later_diff}
For $\alpha \geq 0$ and $\theta \in \R$ such that $I_0'=0$ has a unique solution, there is some time $t^*$ such that  $I_t \in C^{1,\partial}[-1,1]$ for $t \geq t^*$. 
\item 
\label{item:loss_recovery_differentiability}
For all $\alpha>1$ there exists a $\kappa>0$ such that for all $\theta \in \R$ with $|\theta|>\kappa$, there are times $t_0 < t_1 \leq t_2$ such that 
	\begin{enumerate}[label=(\roman*)]
		\item 
		\label{item:small_times_differentiable}
		$I_{t} \in C^{1,\partial}[-1,1]$ for $t<  t_0$, 
		\item 
		\label{item:intermediate_times_overhang_and_non_diff}
		$\cG_t$ contains an overhang for $t \in (t_0,t_2)$ and $I_t$ is non-differentiable for $t \in (t_0,t_1)$,
		\item 
		\label{item:large_times_differentiable}
		$I_t \in C^{1,\partial}[-1,1]$ for $t > t_2$. 
	\end{enumerate}
\end{enumerate}
\end{theorem}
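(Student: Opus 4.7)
The plan is to exploit the explicit solvability of the Hamilton equations in the infinite-temperature case $\beta = h = 0$. One has $H(x,p) = \cosh(2p) - 1 - x\sinh(2p)$, and integrating \eqref{eqn:hamilton_equations} gives $\tanh(P_t^{x,p}) = e^{2t}\tanh(p)$ together with $x \mapsto X_t^{x,p}$ affine in $x$ for each fixed $p$. Parametrizing $\cG$ by $x_0 \in (-1,1)$ and setting $u := \tanh(I_0'(x_0))$, this yields
\begin{align*}
X_t(x_0) = \frac{u(e^{2t} - e^{-2t})}{1-u^2} + x_0 \cdot \frac{e^{-2t} - e^{2t}u^2}{1-u^2},
\end{align*}
valid on $J_t := \{x_0 : |I_0'(x_0)| < \arctanh(e^{-2t})\}$, the set of initial points whose Hamilton trajectory survives until time $t$. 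Using $du/dx_0 = (1-u^2) I_0''(x_0)$, differentiation produces
\begin{align*}
(1-u^2)\frac{dX_t}{dx_0} = (e^{2t} - e^{-2t})\bigl[(u - x_0)^2 + 1 - x_0^2\bigr] I_0''(x_0) + (e^{-2t} - e^{2t}u^2),
\end{align*}
where both the bracket and the last term are strictly positive on $J_t$. By Proposition \ref{proposition:graph_of_derivative_pushforward}, an overhang of $\cG_t$ corresponds exactly to $X_t$ failing to be injective on $J_t$, driven by the competition between $I_0''(x_0)$ and the growing prefactor $(e^{2t} - e^{-2t})$.

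For (a), the subcase $\alpha \le 1$ follows immediately from Theorem \ref{theorem:application_explicit}\ref{item:high_temp_starting} because $I_0$ is then strictly convex. For $\alpha > 1$, uniqueness of the zero $x^*$ of $I_0'$ forces $|\theta| > M := \alpha\bar x - \arctanh(\bar x)$ with $\bar x := \sqrt{1 - 1/\alpha}$, so $x^*$ lies outside $(-\bar x, \bar x)$ and $I_0''(x^*) > 0$. Since $\arctanh(e^{-2t}) \downarrow 0$ as $t \to \infty$, $J_t$ shrinks to $\{x^*\}$, and there is some $t^*$ such that $J_t \subset \{x : I_0''(x) > 0\}$ for all $t \ge t^*$. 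On this interval both terms in the derivative are positive, so $X_t$ is strictly increasing, and a boundary analysis ($|u| \to e^{-2t}$ forces $X_t \to \pm 1$) shows $\cG_t$ is a graph over $(-1,1)$; Proposition \ref{proposition:graph_of_derivative_pushforward} then gives $I_t \in C^{1,\partial}[-1,1]$. The same argument yields part (b)(iii).

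Short-time differentiability in (b)(i) is Theorem \ref{theorem:application_explicit}\ref{item:all_a,b_alpha_beta_allways_short_time_gibbs}. For the overhang in (b)(ii), for each $x_0 \in (-\bar x, \bar x)$ (where $I_0''(x_0) < 0$) the equation $dX_t/dx_0 = 0$ is linear in $e^{4t}$ and has an explicit positive root $t_c(x_0)$ satisfying $0 < t_c(x_0) < -\tfrac{1}{2}\log|u(x_0)|$ (the blow-up time of the trajectory starting at $x_0$). Setting $t_0 := \inf_{x_0 \in (-\bar x, \bar x)} t_c(x_0)$ and $t_2 := \sup_{x_0 \in (-\bar x, \bar x)}\bigl(-\tfrac{1}{2}\log|u(x_0)|\bigr) = -\tfrac{1}{2}\log\tanh(|\theta| - M)$, continuity and compactness, together with the strict inequality $t_c(x_0) < -\tfrac{1}{2}\log|u(x_0)|$, give $0 < t_0 < t_2 < \infty$. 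For $t \in (t_0, t_2)$ some $x_0 \in J_t \cap (-\bar x, \bar x)$ has $dX_t/dx_0 < 0$, producing a geometric overhang in $\cG_t$. The constant $\kappa$ is chosen slightly above $M$ so that the construction yields a nontrivial interval $(t_0, t_2)$.

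The hardest step is upgrading the geometric overhang to non-differentiability of $I_t$ on an open interval $(t_0, t_1)$. Proposition \ref{proposition:graph_of_derivative_pushforward} only gives the necessary direction (no overhang implies differentiability); the converse requires that both branches of $\cG_t$ at the fold correspond to globally optimal (not merely stationary) trajectories. The plan is a Maxwell-set / equal-action expansion at the cusp $t = t_0$: the two emerging branches share action at birth by continuity, so for $t$ slightly past $t_0$ an open set of endpoints $x$ realizes the infimum defining $I_t(x)$ through two distinct trajectories, giving non-differentiability via Theorem \ref{theorem:equivalences_differentiability}. The time $t_1$ is then the first moment at which the action balance breaks uniformly; for $t \in (t_1, t_2)$ one branch strictly dominates and $I_t$ becomes differentiable again even though the geometric overhang persists until $t_2$.
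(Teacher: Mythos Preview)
Your explicit-formula approach is a legitimate alternative for most of the argument: the autonomous momentum equation $\dot P=\sinh(2P)$ indeed gives $\tanh P_t=e^{2t}\tanh p$, and the linearity of $\dot X$ in $X$ makes the derivative $dX_t/dx_0$ computable. This handles part~(a), and parts (b)(i), (b)(iii), and the overhang portion of (b)(ii) essentially as you describe (modulo the vague ``$\kappa$ slightly above $M$'': the paper takes $\kappa=M$ exactly).

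The genuine gap is the non-differentiability step in (b)(ii). You write that Proposition~\ref{proposition:graph_of_derivative_pushforward} ``only gives the necessary direction (no overhang implies differentiability)''. This is a misreading: the second half of that proposition \emph{does} give the converse. If one can exhibit two points $x_1<x_2$ at which $\cG_t$ has no overhang, then an overhang anywhere in $(x_1,x_2)$ forces $I_t$ to be non-differentiable on $[x_1,x_2]$. Your proposed ``Maxwell-set / equal-action expansion at the cusp'' is not carried out and, as stated, does not work: equal action at $t=t_0$ does not by itself guarantee that \emph{both} branches remain global (rather than merely local) minimizers for $t$ slightly larger; one needs additional structure to rule out a third, lower branch.

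The paper's route avoids this entirely. Using that $H$ preserves order on all of $[-1,1]\times\bR$ when $\beta=h=0$ (Proposition~\ref{proposition:convex_at_infty_explict_functions}), it identifies two explicit points $y<-z$ (the unique zero of $I_0'$) and $w>z$ (chosen so that $I_0'(w)=I_0'(-z)$) at which $\cG_t$ has no overhang for every $t<t_1:=t_{-z,I_0'(-z)}$. The overhang produced in your Step lies between $\gamma_t(y)$ and $\gamma_t(w)$, so Proposition~\ref{proposition:graph_of_derivative_pushforward}\ref{item:overhang_implies_non_diffb_I_t} applies directly. In your explicit-coordinate framework this amounts to checking that $dX_t/dx_0>0$ on $(-1,y]$ and on $[w,1)\cap J_t$ for $t<t_1$, which follows from $I_0''>0$ there together with order preservation; you should replace the Maxwell-set paragraph with this argument.
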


\subsection{Remarks on the results and comparison with the literature}

\begin{remark} 
Our method to verify non-differentiability for Theorem \ref{theorem:application_explicit} \ref{item:heating_up_low_temp} is based on Proposition \ref{proposition:graph_of_derivative_pushforward} up to the time at which the push-forward of $\cG$ falls apart in three separate curves, cf. Proposition \ref{proposition:hamiltonian_paths_and_graph_structures}. 

Similarly, in our proof of Theorem \ref{theorem:loss_recovery_differentiability}\ref{item:loss_recovery_differentiability}, we actually have $t_1< t_2$ (see also Remark \ref{remark:t_1_strict_less_t_2}), i.e., for $t\in [t_1,t_2)$ there is an overhang but again Proposition \ref{proposition:graph_of_derivative_pushforward} cannot be used to conclude that $I_t$ is non-differentiable. 
\end{remark}

\begin{remark}
\label{remark:comparing_with_HRZ15}
In \cite{HRZ15} the $\R$-space-model with $H(x,p) = \frac12 p^2$ and $I_0(x) = \frac12 x^2 +V(x) $, where $V \in C^1(\R,[0,\infty))$  is considered.
We show that the existence of an overhang as in Theorem \ref{theorem:loss_of_differentiability}\ref{item:loss_linearization} agrees with the non-differentiability claimed in \cite{HRZ15}. 
Note that the Hamiltonian flow admits a $C^1$ linearization by the identity map as the flow itself is linear. 
In \cite[Corollary 1.12]{HRZ15} it is shown that $I_t(b)$ has a unique global minimiser for all $b\in \R$ if and only if $\Phi_2 V > - \frac{1+t}{2t}$. 
Hence $I_t$ is not differentiable if $\Phi_2 V \not > - \frac{1+t}{2t}$, which by \cite[Lemma 5.9]{HRZ15} is the case when there exists an $x_0$ for which $V''(x_0) + \frac12 < - \frac{1}{t}$, which is the same as $t>t_0$ for $t_0$ as in \eqref{eqn:time_for_vertical_in_linearized_sytem}. 

Theorem \ref{theorem:convexity_preserving_implies_diffb_preserv} 
also agrees with \cite[Corollary 1.12]{HRZ15} in case $I_0(x) = \frac12 x^2 + V(x) $ is strictly convex (at infinity). 
However, the setting in \cite{HRZ15} allows $I_0$ not to be in $C^{1,\partial}$.
This is the case for, e.g., $V(x) = 1+ \cos(x^2)$ as in \cite[Example 1.16]{HRZ15}. 
Here one has immediate loss of uniqueness of minimisers and therefore an immediate loss of differentiability, which can be proved by \cite[Corollary 6.4.4 and Theorem 6.4.8]{CaSi04} in the same way as in the proof of Theorem \ref{theorem:equivalences_differentiability}. 
\end{remark}

	\begin{remark} 
		Consider a stationary point $x_0$. The condition that $\partial_p \partial_x H(x_0,0) \neq 0$ in both Theorem \ref{theorem:loss_of_differentiability} \ref{item:loss_linearization} and \ref{item:loss_rotating} is reflected by the exclusion of having 	$b =0$ and $\beta = 1$, i.e., $\partial_p \partial_x H(0,0) = 0$, in Theorem \ref{theorem:application_explicit} \ref{item:heating_up_low_temp}. 
		
		In the literature on dynamical systems, the failure of $\partial_p \partial_x H(x_0,0) \neq 0$ implies that $(x_0,0)$ is a non-hyperbolic fixed point of the Hamiltonian flow. This can be considered to be critical behaviour: for a non-hyperbolic fixed point, the first-order approximation does not describe the global behaviour of the flow around this point. 
		
 		This is similar to the statement that $\alpha = 1$ is critical for the Curie-Weiss model: the first-order approximation  $I_0'(0)$, with $I_0$ as in \eqref{eqn:I_0_for_CW}, of the rate function $I_0$ at the point $0$ vanishes for $\alpha = 0$, indicating a transition from a convex to a non-convex rate-function. 
		
	\end{remark}

\begin{remark}
Both the idea of linearization and rotation already appeared for the Lagrangian flow in \cite{EK10}. 
The idea of considering the Hamiltonian flow instead of the Lagrangian flow already appeared in \cite[Chapter 5]{KrPhD}. 
As our methods do not depend on a specific model, we recover part of the results of \cite{EK10}, however some of our results are slightly weaker: 
		
		Using explicit calculations, \cite[Theorem 1.3]{EK10} obtains the result in Theorem \ref{theorem:application_explicit}\ref{item:high_temp_starting} for all $\beta$ instead of $\beta \leq 2$. 		
		
		Our result in \ref{item:heating_up_low_temp} for $1 \vee \beta < \alpha$ is sub-optimal. \cite{KN07,EK10} show that points of non-differentiability occur before the linearized system assures that we have a point of non-differentiability at $0$. In this setting, there is a different mode, apart from the rotation around $0$ that creates the overhang. This mode can easily be identified by using pictorial analysis based on the Hamiltonian flow. 
\end{remark}

\section{A study of the Hamiltonian dynamics, optimizers, and the time-evolved rate function}
\label{section:calculus_of_variations}

The extension of calculus of variations to a setting where the Hamiltonian trajectories may have finite maximal times of existence needs a treatment of the behaviour of the Hamiltonian flow close to the boundary. This analysis will be carried out under general conditions that are introduced in Sections \ref{section:cond_ham_boundary} and \ref{section:cond_ham_boundary_case_specific}. We will show that these conditions imply that Hamiltonian trajectories are pushed away from the boundary, and can only arrive at the boundary with infinite momentum at their maximal time of existence.  

In Section \ref{section:relating_optimizers_to_hamflow}, we show that optimizers of $I_t(b)$ together with their dual trajectories,  are solutions of the Hamilton equations. In Section \ref{section:proofs_regularity_rate_function} we establish the regularity properties of the rate function that we introduced in Sections \ref{section:regularity} and \ref{section:regularity_via_pushforward}.

\subsection{Conditions on Hamiltonian and properties of the Hamiltonian flow} \label{section:cond_ham_boundary}

Below, we introduce the main assumptions of our paper. The assumptions (a)-(e) fall apart in two natural groups. (a) and (b) are there to study the Hamiltonian flow in open subsets of $\bK\times \bR$. In particular, these conditions suffice to apply the methods described in \cite{CaSi04}, compare with Conditions (L1)-(L4) on $\cL$ in \cite{CaSi04}, as long as Hamiltonian trajectories remain inside such open sets.

The assumptions (c)-(e) are made to study behaviour of Hamiltonian trajectories at the boundary, or to show that Hamiltonian trajectories stay away from the boundary.

\begin{assumption} \label{assumption:Hamiltonian_for_domain_extension}
	The Hamiltonian $H : \K \times \bR \rightarrow \bR$ satisfies $H(x,0) = 0$ for all $x$ and
	\begin{enumerate}
		\item \label{item:assumption_extended_regularity}
 		$H$ is $C^2$ and $\partial_p^2 H(x,p) > 0$ for all $(x,p) \in \K \times \bR$.
		
		If $\K = [-1,1]$, then we additionally assume that there exist an $\epsilon>0$ and a twice continuously differentiable function $\tilde H: (-1-\epsilon,1+\epsilon) \times \bR \rightarrow \bR$ such that $H$ equals $ \tilde H$ on $[-1,1]\times \bR$. 
		\item \label{item:assumptions_theta}
		For every compact set $K \subseteq  \K^\circ$, there exists a function $\theta_K : [0,\infty) \rightarrow [0,\infty)$, with the properties 
		\begin{enumerate}[label=(\roman*)]
			\item $\lim_{r \rightarrow \infty} r^{-1}\theta_K(r) = \infty$,
			\item for every  $M \geq 0$, there is a  $k_M \geq 0$ such that 	
			$\theta_K(r+m) \leq k_M\left(1+\theta_K(r)\right)$ for all $m \in [0,M]$ and $r \geq 0$, 
			\item there exists $c_K$ such that $\cL(x,v) \geq \theta_K(|v|) - c_K$ for all $x \in K$ and $v \in \bR$,
			\item there exists $C_K$ such that $|\partial_x \cL(x,v)| + |\partial_v \cL(x,v)| \leq C_K \theta_K(|v|)$ for all $x \in K$ and $v \in \bR$.
		\end{enumerate}
		\item \label{item:assumptions_quotient_H} For each compact $K \subseteq \K^\circ$, we have $\lim_{|p| \rightarrow \infty} \inf_{x\in K} \frac{H(x,p)}{|p|} = \infty$.
		
		If $\K = [-1,1]$, then we additionally assume
		\begin{align*}
		\lim_{p \rightarrow \infty}   \frac{H(-1,p)}{p} = \infty, 
		\qquad \lim_{p \rightarrow - \infty}  \frac{H(1,p)}{-p} = \infty.
		\end{align*}		
		
			
			\item \label{item:assumptions_drift_boundary}  We have 
			\begin{equation*}
			\lim_{x \rightarrow \partial_-} \argmin_{p\in\R} H(x,p) = -\infty, \qquad \lim_{x \rightarrow \partial_+} \argmin_{p\in\R} H(x,p) = \infty
			\end{equation*}
		

		\item \label{item:assumptions_convergent_quadrants}
		There exists a sequence $((y_n^+, q_n^+))_{n\in\N}$ in $\K^\circ \times (0,\infty)$  with $\limn (y_n^+, q_n^+) = (\partial_+,\infty)$ and
		\begin{align}
		\partial_p H(y_n^+,q) \geq 0 & \mbox{ for } q \ge q_n^+, \\
		-\partial_x H(y,q_n^+) \geq 0 & \mbox{ for } y \ge y_n^+,
		\end{align}
		and there exists a sequence $((y_n^-, q_n^-))_{n\in\N}$ in $\K^\circ \times (-\infty,0)$  with \\ $\limn (y_n^-, q_n^-) = (\partial_-,-\infty)$ and
		\begin{align}
		\partial_p H(y_n^-,q) \leq 0 & \mbox{ for } q \le q_n^-, \\
		-\partial_x H(y,q_n^-) \leq 0 & \mbox{ for } y \le y_n^-,
		\end{align}
	\end{enumerate}
\end{assumption}

\begin{remark} \label{remark:assumptions_implies_preserved_quadrants}
\begin{itemize}
\item 	By Assumption \ref{assumption:Hamiltonian_for_domain_extension} \ref{item:assumptions_convergent_quadrants}  $\upq_{y_n^+,q_n^+}$ and $\downq_{y_n^-,q_n^-}$ are preserved under $H$. 
\item 
Assumption \ref{assumption:Hamiltonian_for_domain_extension}\ref{item:assumptions_quotient_H} implies that for every compact set $K \subseteq \K^\circ$ and $c\in \R$ the set 
$(K\times \R) \cap H^{-1}(-\infty,c]$ is compact. 
\end{itemize}
\end{remark}

\begin{lemma}
\label{lemma:properties_obtained_for_CW_from_the_assumptions}
Let $\K = [-1,1]$. Then 
Assumption \ref{assumption:Hamiltonian_for_domain_extension}\ref{item:assumption_extended_regularity} and \ref{item:assumptions_drift_boundary} imply
		\begin{align}
\label{eqn:limits_partial_p_H_at_pm_1}
			v_-:= \lim_{p \rightarrow - \infty} \partial_p H(-1,p) &  \geq 0, &
			\qquad 
			v_+:=\lim_{p \rightarrow \infty} \partial_p H(1,p) & \leq 0, \\
\label{eqn:limit_of_quotient_of_H_at_boundary_points}
				\lim_{p \rightarrow - \infty} \frac{H(-1,p)}{p} =  v_-& \geq 0, &
				\qquad 
				\lim_{p \rightarrow \infty} \frac{H(1,p)}{p}  = v_+ & \leq 0, 
		\end{align}
and that the Hamiltonian vector field on the boundary points inwards:
		\begin{equation}
		\label{eqn:push_away_from_boundary}
		\partial_p H(-1,p) > 0, \qquad \partial_p H(1,p) < 0. 
		\end{equation}
Moreover, these assumptions together with Assumption \ref{assumption:Hamiltonian_for_domain_extension}\ref{item:assumptions_quotient_H} imply that for all $c\in \R$, for every $(a,q) \in (-1,1) \times (0,\infty)$ and $(b,r) \in (-1,1) \times (-\infty,0)$ the set (see Figure \ref{fig:push_boundary_and_complement_quadrants}(a)) 
\begin{equation}
\label{eqn:compact_set_of_complement_quadrants}
\Big( \upq_{a,q}^\circ \cup \downq_{b,r}^\circ \Big)^c \cap H^{-1}(-\infty,c]
\end{equation}
is compact. 
\end{lemma}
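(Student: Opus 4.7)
The statement splits naturally into three blocks: the sign conditions $v_-\ge 0$, $v_+\le 0$ together with the inward-pointing vector field \eqref{eqn:push_away_from_boundary}; the quotient limits in \eqref{eqn:limit_of_quotient_of_H_at_boundary_points}; and the compactness of \eqref{eqn:compact_set_of_complement_quadrants}, which carries the bulk of the work.

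By Assumption \ref{assumption:Hamiltonian_for_domain_extension}\ref{item:assumption_extended_regularity}, $p\mapsto H(x,p)$ is strictly convex with $H(x,0)=0$, so it has a unique minimizer $p^*(x)$ characterized by $\partial_pH(x,p^*(x))=0$, and $p\mapsto\partial_pH(x,p)$ is strictly increasing. Assumption \ref{assumption:Hamiltonian_for_domain_extension}\ref{item:assumptions_drift_boundary} says $p^*(x)\to-\infty$ as $x\to -1$. Fixing any $q\in\R$, for $x$ close enough to $-1$ we have $p^*(x)<q$, hence $\partial_pH(x,q)>0$; letting $x\to -1$ and using continuity of $\partial_pH$ gives $\partial_pH(-1,q)\ge 0$, so $v_-=\inf_{q\in\R}\partial_pH(-1,q)\ge 0$. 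The proof of $v_+\le 0$ is symmetric. The inward-pointing property \eqref{eqn:push_away_from_boundary} then follows for free: strict monotonicity of $\partial_pH(-1,\cdot)$ forces $\partial_pH(-1,p)>\inf_q\partial_pH(-1,q)=v_-\ge 0$ at every $p\in\R$, and similarly $\partial_pH(1,p)<v_+\le 0$.

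For the quotient limits, $H(\pm 1,p)=\int_0^p\partial_pH(\pm 1,s)\,\mathrm ds$ since $H(\pm 1,0)=0$, and $\partial_pH(-1,s)\to v_-$ monotonically as $s\to-\infty$. Splitting the integral at a threshold $S<0$ on which $|\partial_pH(-1,s)-v_-|<\varepsilon$ for $s<S$, a direct Ces\`aro-type estimate shows $H(-1,p)/p\to v_-$ as $p\to-\infty$; equivalently, this is the standard convex-analytic fact that for a convex function vanishing at the origin the secant slope $g(p)/p$ converges to $\inf g'$ as $p\to-\infty$. The argument for $H(1,p)/p\to v_+$ is entirely analogous.

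Compactness of the set in \eqref{eqn:compact_set_of_complement_quadrants} is the main obstacle. Closedness follows at once from continuity of $H$ and openness of the two quadrants. For boundedness I argue by contradiction: take $(x_n,p_n)$ in the set with $|p_n|\to\infty$, say $p_n\to\infty$ (the opposite direction being symmetric). Because $(x_n,p_n)\notin\upq_{a,q}^\circ$ and $p_n>q$ eventually, $x_n\le a$, so after extraction $x_n\to x_\infty\in[-1,a]$. If $x_\infty>-1$, then $\{x_n\}$ eventually lies in a compact $K\subseteq\K^\circ$, and Assumption \ref{assumption:Hamiltonian_for_domain_extension}\ref{item:assumptions_quotient_H} gives $H(x_n,p_n)/p_n\to\infty$, contradicting $H(x_n,p_n)\le c$. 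The subtle case is $x_\infty=-1$, where the interior growth condition does not extend uniformly up to the boundary; here I exploit convexity in $p$ via the tangent bound
\[
H(x_n,p_n)\ \ge\ H(x_n,q_0)+\partial_pH(x_n,q_0)(p_n-q_0),\qquad q_0\in\R.
\]
The boundary condition $H(-1,p)/p\to\infty$ together with the convex-analytic identity above forces $\partial_pH(-1,q_0)\to\infty$ as $q_0\to\infty$, so given any $M>0$ one first fixes $q_0$ with $\partial_pH(-1,q_0)>2M$ and then, by continuity at $(-1,q_0)$, obtains $\partial_pH(x_n,q_0)>M$ for $n$ large; plugging in, $H(x_n,p_n)\ge H(x_n,q_0)+M(p_n-q_0)\to\infty$, the desired contradiction.
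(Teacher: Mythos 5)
Your proposal is correct, and the first two blocks (sign of $v_\pm$, inward pointing, quotient limits) follow essentially the same reasoning as the paper: use Assumption (d) to place $\argmin_p H(x,p)$ below any fixed $q$ as $x\to-1$, pass to the limit to get $\partial_p H(-1,q)\ge 0$, then use strict monotonicity of $\partial_p H(-1,\cdot)$ for the strict inequality, and a Ces\`aro/secant argument for the quotient limit (the paper phrases this via a secant–slope sandwich, equation \eqref{eqn:property_of_strict_convexity}, but it is the same convex-analytic fact you invoke).

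For the compactness of \eqref{eqn:compact_set_of_complement_quadrants} your route is genuinely different, and it is worth noting why. The paper decomposes the complement of the two open quadrants into a bounded strip $[-1,1]\times[r,q]$ together with a central slab $[u,v]\times\bR$ and two lateral slabs $[-1,u]\times[0,\infty)$, $[v,1]\times(-\infty,0]$; compactness of the central slab intersected with the sublevel set uses the interior growth part of Assumption (c), while for the lateral slabs the paper uses Assumption (d): choose $u$ close enough to $-1$ so that $\argmin_p H(x,p)<0$ for all $x\in[-1,u]$, get a uniform $\epsilon>0$ with $\partial_p H(x,p)\ge\epsilon$ on $[-1,u]\times[0,\infty)$ by compactness of $[-1,u]$ and convexity, and conclude $H(x,p)\ge\epsilon p$. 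You instead argue by contradiction with an escaping sequence $(x_n,p_n)$ and split on whether the accumulation point of $x_n$ is interior or equal to $-1$; in the interior case you use the interior growth part of (c), and in the boundary case you use the \emph{boundary} growth conditions $H(\mp 1,p)/|p|\to\infty$ from Assumption (c) together with the tangent bound $H(x_n,p_n)\ge H(x_n,q_0)+\partial_p H(x_n,q_0)(p_n-q_0)$ and continuity of $\partial_p H$ near $(\mp 1,q_0)$. In particular, your compactness proof uses only (a) and (c) while the paper's uses (a), (c)-interior and (d); both are valid under the hypotheses of the lemma, and your version is arguably cleaner (no need to build the explicit covering, and it makes transparent exactly where each growth condition enters). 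The only point worth flagging for precision is the tacit assumption, present in both proofs, that $\argmin_p H(x,p)$ exists for $x$ near the boundary — this is implicit in the formulation of Assumption (d) itself, so it is not a gap.
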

\begin{proof}
Because \ref{item:assumption_extended_regularity} we have that $p \mapsto H(x,p)$ is strictly convex and thus that 
$q=  \argmin_{p\in\R} H(x,p) $ if and only if $\partial_p H(x,p) =0$. 
Moreover, if $q >\argmin_{p\in\R} H(x,p) $ then $\partial_p H(x,p)>0$. 
Therefore \ref{item:assumptions_drift_boundary} implies that $\partial_p H(-1,q) = \lim_{x\rightarrow -1} \partial_p H(x,p) \ge 0$. This in turn implies \eqref{eqn:limits_partial_p_H_at_pm_1}. 
By the strict convexity of $p \mapsto  H(x,p) $ we have 
\begin{align}
\label{eqn:property_of_strict_convexity}
q< p 
\quad \Longrightarrow 
\quad \partial_p H(x,q) < \frac{H(x,q) - H(x,p)}{q-p} < \partial_p H(x,p). 
\end{align}
Hence \eqref{eqn:limits_partial_p_H_at_pm_1} implies \eqref{eqn:limit_of_quotient_of_H_at_boundary_points} and \eqref{eqn:push_away_from_boundary}. 

Note that $\Big( \upq_{a,q}^\circ \cup \downq_{b,r}^\circ )^c$ is a subset of the union of the compact set  $[-1,1]\times [r,q]$ with $[-1,u] \times [0,\infty)$, $[v,1] \times (-\infty,0]$ and $[u,v] \times \R$ for  any $u <a$ and $v>b$. 
Hence to show that \eqref{eqn:compact_set_of_complement_quadrants} is compact, it is by Remark \ref{remark:assumptions_implies_preserved_quadrants} sufficient to show that there exist $u$ and $v$ in $(-1,1)$ such that $\big ( [-1,u] \times [0,\infty) \big) \cap H^{-1}(-\infty,c]$ and $\big( [v,1] \times (-\infty,0] \big) \cap H^{-1}(-\infty,c]$ are compact. 

By \ref{item:assumptions_drift_boundary} there exists an $u$ such that $\argmin_{p\in \R} H(x,p)<0$ for all $x\le u$. 
Therefore there exists an $\epsilon>0$ such that $\partial_p H(x,p) > \epsilon$ for all $x \in [-1, u]$ and all $p \ge 0$ (see also \eqref{eqn:property_of_strict_convexity}). 
As $H(x,0)=0$ for all $x$, this implies that there exists an $M>0$ such that for $p \ge M$ and $x\in [-1,u]$ we have  $H(x,p) > c$. This proves that $\big ( [-1,u] \times [0,\infty) \big) \cap H^{-1}(-\infty,c]$ is compact. The existence of a $v$ can be proved in the same way. 
\end{proof}

We proceed with exploring the behaviour of solutions to the Hamilton equations `close' to the boundary. This analysis will crucially depend on Assumptions \ref{assumption:Hamiltonian_for_domain_extension} (c), (d) and (e). We start with a result that captures the idea that the Hamiltonian flow points away from the `boundary' points $\partial_-$ and $\partial_+$, unless one   starts with  very low and very high momentum, respectively (see also Figure \ref{fig:push_boundary_and_complement_quadrants}(b)). 

\begin{figure}[h]
	\begin{center}
\begin{tabular}{cc}
		\includegraphics[width=0.2\textwidth]{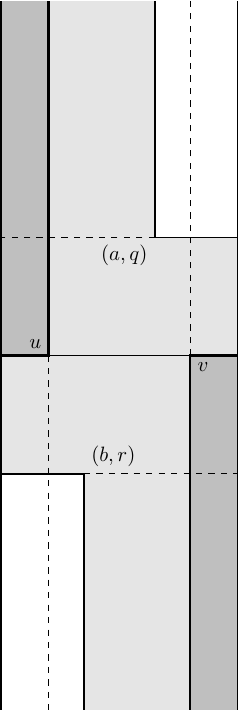}
		& \quad \quad \quad \quad 
		\includegraphics[width=0.2\textwidth]{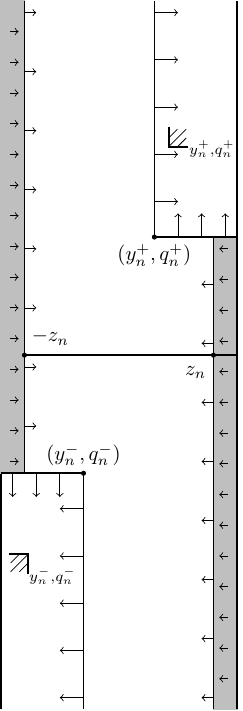}   \\
		(a) & (b)
\end{tabular}
\caption{(a) complement of quadrants, (b) push from boundary.}
	\end{center}
	\label{fig:push_boundary_and_complement_quadrants}
\end{figure}

	\begin{lemma}
		\label{lemma:delta_n}
		Let $H$ satisfy Assumption \ref{assumption:Hamiltonian_for_domain_extension}. 
		There exists a sequence  $(z_n )_{n\in\N}$ in $(0,\partial_+)$
		such that 
		\begin{align}
		\label{eqn:bound_delta_n_and_q_n}
		-z_n < y_n^-, &  \qquad   z_n > y_n^+, \\
		\label{eqn:forbidden_regions_left}
		\partial_p H(x,p) >0 &  \qquad \mbox{ if } & \partial_- \le  & \ x  \le -z_n, & p & \ge q_n^-, \\
		\label{eqn:forbidden_regions_right}
		\partial_p H(x,p) <0 &  \qquad \mbox{ if } & z_n \le & \ x  \le \partial_+,  & p & \le q_n^+. 
		\end{align}
		\eqref{eqn:forbidden_regions_left} and \eqref{eqn:forbidden_regions_right} 
		(together with Assumption \ref{assumption:Hamiltonian_for_domain_extension}\ref{item:assumptions_convergent_quadrants})
		imply that the complements of the sets 
		$\{x: x \le z_n\} \times [q_n^-,\infty)$ and $\{x: x \ge z_n\}\times (-\infty,q_n^+]$ 
		are preserved. 
	
See Figure \ref{fig:push_boundary_and_complement_quadrants}(b) for a picture. 
	\end{lemma}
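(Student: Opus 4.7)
My plan is to construct $(z_n)$ explicitly via the minimizer map
\begin{equation*}
p^*(x) := \argmin_{p \in \R} H(x,p).
\end{equation*}
By Assumption \ref{assumption:Hamiltonian_for_domain_extension}(a), $p \mapsto H(x,p)$ is $C^2$ and strictly convex with $\partial_p^2 H > 0$, so $p^*$ is well-defined, continuous, and characterized by $\partial_p H(x, p^*(x)) = 0$. Strict convexity further gives: $\partial_p H(x,p) > 0$ iff $p > p^*(x)$, and $\partial_p H(x,p) < 0$ iff $p < p^*(x)$. This reduces the inequalities \eqref{eqn:forbidden_regions_left}--\eqref{eqn:forbidden_regions_right} to a statement about the location of $p^*$.

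By Assumption \ref{assumption:Hamiltonian_for_domain_extension}(d), $p^*(x) \to \infty$ as $x \to \partial_+$ and $p^*(x) \to -\infty$ as $x \to \partial_-$. Hence for each $n$ there exist $A_n, B_n \in \K^\circ$ with $A_n > 0 > B_n$, arbitrarily close to $\partial_+$ and $\partial_-$ respectively, such that $p^*(x) > q_n^+$ for every $x \in [A_n, \partial_+)$ and $p^*(x) < q_n^-$ for every $x \in (\partial_-, B_n]$. I then set $M_n := \max\{A_n, -B_n, y_n^+, -y_n^-\}$ and pick any $z_n \in (M_n, \partial_+)$; note $M_n \ge A_n > 0$, and $M_n < \partial_+$ since each of its four entries lies strictly inside $\K$. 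This $z_n$ lies in $(0,\partial_+)$ and satisfies \eqref{eqn:bound_delta_n_and_q_n} strictly, while $z_n > A_n$ and $-z_n < B_n$ combined with the characterization of $p^*$ immediately deliver \eqref{eqn:forbidden_regions_right} and \eqref{eqn:forbidden_regions_left}.

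For the preservation assertion, I read the two sets symmetrically as $S_n^- := \{x \le -z_n\} \times [q_n^-, \infty)$ and $S_n^+ := \{x \ge z_n\} \times (-\infty, q_n^+]$ (matching the sign structure of the two newly proved inequalities; any other reading would contradict the vector-field direction at $\{\pm z_n\}$). To prove the complement of $S_n^-$ is preserved, I argue that a Hamilton trajectory in this complement cannot cross into $S_n^-$ through either piece of $\partial S_n^-$. On the vertical piece $\{-z_n\} \times [q_n^-, \infty)$, \eqref{eqn:forbidden_regions_left} gives $\dot x = \partial_p H(-z_n, p) > 0$, so the vector field points out of $S_n^-$. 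On the horizontal piece $[\partial_-, -z_n] \times \{q_n^-\}$, the inequality $-z_n < y_n^-$ ensures $x \le y_n^-$, and Assumption \ref{assumption:Hamiltonian_for_domain_extension}(e) yields $\dot p = -\partial_x H(x, q_n^-) \le 0$, so $p$ does not increase and the trajectory cannot enter $S_n^-$ from below. The argument for $S_n^+$ is symmetric.

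The only subtle step is converting the pointwise limit in Assumption \ref{assumption:Hamiltonian_for_domain_extension}(d) into a uniform one-sided bound on $p^*$ past a threshold, which is immediate from the definition of an $\pm \infty$ limit once $p^*$ has been seen to be continuous on $\K^\circ$. Beyond that, the proof amounts to the short case analysis above; I do not expect a significant obstacle.
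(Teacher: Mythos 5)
Your proof follows essentially the same route as the paper's: both construct $z_n$ from the minimizer $p^*(x) = \argmin_{p\in\R} H(x,p)$, use strict convexity of $p\mapsto H(x,p)$ to characterize $p^*$ by $\partial_p H(x,p^*(x))=0$ and to read off the sign of $\partial_p H$ away from $p^*$, and invoke Assumption~\ref{assumption:Hamiltonian_for_domain_extension}\ref{item:assumptions_drift_boundary} to push the threshold toward $\partial_\pm$. You went somewhat further on two points. First, you actually proved the preservation assertion at the end of the lemma, which the paper's own proof omits; your boundary vector-field argument (outward $\dot x$ on the vertical face from \eqref{eqn:forbidden_regions_left}, non-increasing $p$ on the horizontal face from Assumption~\ref{assumption:Hamiltonian_for_domain_extension}\ref{item:assumptions_convergent_quadrants} together with $-z_n<y_n^-$) is the right one. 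Second, you correctly read $\{x : x \le z_n\}\times[q_n^-,\infty)$ as $\{x : x \le -z_n\}\times[q_n^-,\infty)$; the statement appears to carry a sign typo, and your symmetric reading is the one consistent with the uses of the lemma in Propositions~\ref{prop:trajectories_bounded_away_from_boundary} and~\ref{proposition:optimizers_bounded_away_from_boundary}. One small point both proofs leave implicit: when $\K=[-1,1]$, inequality \eqref{eqn:forbidden_regions_right} is asserted up to and including $x=\partial_+=1$, where $p^*(1)$ need not be finite, so a finite threshold $A_n\in\K^\circ$ does not literally cover the endpoint; this is harmless since Lemma~\ref{lemma:properties_obtained_for_CW_from_the_assumptions} gives $\partial_p H(1,p)<0$ for all $p$.
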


	\begin{proof}
		Fix $n$. By  
			Assumption \ref{assumption:Hamiltonian_for_domain_extension}\ref{item:assumptions_drift_boundary}
		we can choose  $-z_n$ and $z_n$ in $\K^\circ$ such that
		\begin{equation*}
		\inf_{x \leq -z_n} \argmin_{p\in \R} H(x,p) \leq q_n^-, \qquad \inf_{x \geq z_n} \argmin_{p\in \R} H(x,p) \geq q_n^+.
		\end{equation*}
		By the assumed strict convexity of $p \mapsto H(x,p)$ we have $\partial_p H(x,p_0)=0 $ for $p_0 =\argmin_{p\in \R} H(x ,p)$ and $\partial_p H(x,p)<0$ for $p< p_0$ and $\partial_p H(x,p)>0$ for $p > p_0$. Therefore we conclude \eqref{eqn:forbidden_regions_left} and \eqref{eqn:forbidden_regions_right}. 
		We can choose $z_n$  large enough  such that  \eqref{eqn:bound_delta_n_and_q_n} is satisfied as well. 
	\end{proof}

\begin{remark}
	\label{remark:limit_Hamiltonian_path_boundaries}
	Note that Lemma \ref{lemma:delta_n} implies that for $x \in \K$ and $p\in \R$, one has the following implications  for $t>0$
	\begin{align}
	\label{eqn:diverging_momentum_minus}
	\lim_{s\uparrow t} X_t^{x,p} = -1 & \qquad \Longrightarrow \qquad \lim_{s\uparrow t} P_t^{x,p} = -\infty, \\
	\label{eqn:diverging_momentum_plus}
	\lim_{s\uparrow t} X_t^{x,p} = 1 & \qquad \Longrightarrow \qquad \lim_{s\uparrow t} P_t^{x,p} = \infty.
	\end{align}
	Note that in both cases $t= t_{x,p}$. 
\end{remark}

	\begin{lemma} \label{lemma:after_lifetime_in_boundary}
		Suppose that $(x,p) \in \K \times \bR$ 
		are such that $t_{x,p} < \infty$. Then
		\begin{equation*}
		\lim_{t \uparrow t_{x,p}} (X_t^{x,p},P_t^{x,p})
		\end{equation*}
		takes its values in $\{(\partial_-,-\infty),(\partial_+,\infty)\}$.
		\end{lemma}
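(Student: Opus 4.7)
The plan is to combine conservation of energy with the forward-invariant quadrants from Assumption \ref{assumption:Hamiltonian_for_domain_extension}\ref{item:assumptions_convergent_quadrants} and the compactness statement in Lemma \ref{lemma:properties_obtained_for_CW_from_the_assumptions}. Set $c := H(x,p)$. Because $H$ is a first integral of the Hamilton equations, the trajectory lies on the level set $\Sigma_c := H^{-1}(\{c\})$ for all $t \in [0,t_{x,p})$. Since $t_{x,p} < \infty$ and the Hamiltonian vector field is $C^1$ on the (open) domain of definition (Assumption \ref{assumption:Hamiltonian_for_domain_extension}\ref{item:assumption_extended_regularity}), the standard escape-from-compact-sets lemma for ODEs forces the trajectory to leave every compact subset of that domain as $t \uparrow t_{x,p}$.

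For each $n$ I would define the set
\[
K_n := \bigl( \upq_{y_n^+,q_n^+}^\circ \cup \downq_{y_n^-,q_n^-}^\circ \bigr)^c \cap \Sigma_c.
\]
Compactness of $K_n$ is exactly Lemma \ref{lemma:properties_obtained_for_CW_from_the_assumptions} in the $\K = [-1,1]$ setting; for $\K = \bR$ one decomposes $K_n$ into (i) the vertical strip $\{y_n^- \le x \le y_n^+\} \cap \Sigma_c$, on which Assumption \ref{assumption:Hamiltonian_for_domain_extension}\ref{item:assumptions_quotient_H} bounds $|p|$ via the energy constraint, and (ii) the two lateral pieces where $p \in [q_n^-, q_n^+]$ lies outside the open quadrants, on which Assumption \ref{assumption:Hamiltonian_for_domain_extension}\ref{item:assumptions_convergent_quadrants} (via the sign conditions on $\partial_p H$ and $\partial_x H$) traps $x$ in a bounded range. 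The escape lemma then yields $s_n < t_{x,p}$ such that $(X_t, P_t) \in \upq_{y_n^+,q_n^+}^\circ \cup \downq_{y_n^-,q_n^-}^\circ$ for all $t \in [s_n, t_{x,p})$. By Remark \ref{remark:assumptions_implies_preserved_quadrants} both closed quadrants are forward invariant and, for $n$ sufficiently large, disjoint, so by continuity the trajectory stays in exactly one of them past $s_n$.

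Finally I would show this dichotomy is consistent across $n$: if, for some $n_0$, the trajectory is eventually in $\upq_{y_{n_0}^+,q_{n_0}^+}$, then for every $n \ge n_0$ large enough that $y_n^- < y_{n_0}^+$ (possible since $y_n^- \to \partial_-$), the inclusion in $\downq_{y_n^-,q_n^-}$ would contradict $X_t \ge y_{n_0}^+$, so the trajectory is eventually in $\upq_{y_n^+,q_n^+}$. Combined with $(y_n^+,q_n^+) \to (\partial_+,\infty)$, this gives $X_t \ge y_n^+$ and $P_t \ge q_n^+$ on an interval $[s_n', t_{x,p})$ for every such $n$, which forces $\lim_{t \uparrow t_{x,p}}(X_t,P_t) = (\partial_+,\infty)$. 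The opposite case gives $(\partial_-,-\infty)$, completing the proof.

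The main obstacle is the compactness of $K_n$ in the $\K = \bR$ case sketched in the middle paragraph; once that is in place, the remainder is a clean combination of invariance, continuity and the escape-from-compact-sets principle. A secondary subtlety is to make sure that the dichotomy (upper versus lower quadrant) is truly consistent in $n$ rather than possibly alternating, but this is handled by the disjointness argument using $y_n^- < y_{n_0}^+$ for $n$ large.
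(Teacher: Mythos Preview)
Your overall strategy---energy conservation, escape from compact sets, and the forward-invariant quadrants of Assumption \ref{assumption:Hamiltonian_for_domain_extension}\ref{item:assumptions_convergent_quadrants}---matches the paper's proof, and for $\bK = [-1,1]$ your invocation of Lemma \ref{lemma:properties_obtained_for_CW_from_the_assumptions} is exactly what the paper does. The consistency-in-$n$ argument at the end is also fine.

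The gap is precisely where you flag it: the compactness of $K_n$ for $\bK = \bR$. Your description of the lateral pieces is incorrect. The complement of $\upq_{y_n^+,q_n^+}^\circ \cup \downq_{y_n^-,q_n^-}^\circ$ outside the vertical strip is $\{x > y_n^+,\, p \le q_n^+\} \cup \{x < y_n^-,\, p \ge q_n^-\}$, not $\{p \in [q_n^-,q_n^+]\}$. On these pieces the intersection with $\Sigma_c$ need not be bounded: take $H(x,p) = \tfrac12 p^2 - p x^3$ and $c = 1$, so that one branch of $\Sigma_1$ is $p = x^3 - \sqrt{x^6+2} \sim -x^{-3}$ as $x \to +\infty$. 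This branch has $p < 0 < q_n^+$ for all large $x$, hence lies entirely in the right lateral piece, and $K_n$ is unbounded. Assumption \ref{assumption:Hamiltonian_for_domain_extension}\ref{item:assumptions_convergent_quadrants} alone cannot rule this out: it only gives sign information on the vector field along the two specific rays $\{y_n^+\} \times [q_n^+,\infty)$ and $[y_n^+,\infty) \times \{q_n^+\}$, nothing about the interior of the lateral piece.

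What the paper does instead is invoke Lemma \ref{lemma:delta_n}, which uses Assumption \ref{assumption:Hamiltonian_for_domain_extension}\ref{item:assumptions_drift_boundary} to produce $z_n > y_n^+$ such that the \emph{complement} of $\{x \ge z_n\} \times (-\infty,q_n^+]$ is itself forward invariant. Since the initial point $(x,p)$ satisfies $|x| < z_n$ for $n$ large, the trajectory can never enter the unbounded tail of the lateral piece; it is therefore confined (outside the quadrants) to the compact set $([-z_n,z_n] \times \bR) \cap \Sigma_c$, on which Assumption \ref{assumption:Hamiltonian_for_domain_extension}\ref{item:assumptions_quotient_H} bounds $|p|$. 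This is the missing dynamical ingredient: you need to trap the \emph{trajectory}, not the level set, and that requires the push-from-boundary mechanism of \ref{item:assumptions_drift_boundary} via Lemma \ref{lemma:delta_n}, not just \ref{item:assumptions_convergent_quadrants}.
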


	\begin{proof}
		By \cite[Theorem 2.4.3]{Pe01} (in case $\K = [-1,1]$, then applied to the Hamilton equations for the extended Hamiltonian as in Assumption \ref{assumption:Hamiltonian_for_domain_extension}(a)), for any compact set $K \subseteq \K \times \bR$ there exists a time $\beta < t_{x,p}$ such that $(X^{x,p}_\beta,P^{x,p}_\beta) \notin K$.  We will use this fact in two separate ways, depending on the setting, to prove the result.
		
		First suppose $\K = [-1,1]$. Let $c= H(x,p)$  and $n\in\N$,  $(a,q) = (y_n^+,q_n^+)$ and $(b,r) = (y_n^-,q_n^-)$ (see Assumption \ref{assumption:Hamiltonian_for_domain_extension}(e)). By Lemma \ref{lemma:properties_obtained_for_CW_from_the_assumptions} the set \eqref{eqn:compact_set_of_complement_quadrants} is compact.	Hence there exists a $\beta_0< t_{x,p}$ such that $(X^{x,p}_{\beta_0},P^{x,p}_{\beta_0})$ is either in $\upq_{y_n^+,q_n^+}$ or $\downq_{y_n^-,q_n^-}$. Assume it is in $\upq_{y_n^+,q_n^+}$. Then $(X^{x,p}_{\beta_0},P^{x,p}_{\beta_0})$ is in $\upq_{y_n^+,q_n^+}$  for all $\beta \in (\beta_0,t_{x,p})$ as $\upq_{y_n^+,q_n^+}$ is preserved (see also Remark \ref{remark:assumptions_implies_preserved_quadrants}). In a similar way as above, for each $n\in\N$ there exists a $\beta_n$ such that $(X^{x,p}_{\beta},P^{x,p}_{\beta}) \in \upq_{y_n^+,q_n^+}$ for all $\beta > \beta_n$. This implies $\lim_{t \uparrow t_{x,p}}(X_t^{x,p}, P_t^{x,p}) = (1,\infty)$.
		
		Now let $\K = \R$. Let $c= H(x,p)$ and let $n_0\in\N$ be such that $y_n^- < x < y_n^+$ for $n\ge n_0$. Instead of using Lemma \ref{lemma:properties_obtained_for_CW_from_the_assumptions} in the above lines of proof for $\K =[-1,1]$, we can use Remark \ref{remark:assumptions_implies_preserved_quadrants} and Lemma \ref{lemma:delta_n} to obtain the existence of a $\beta_0$ and $\beta_n$ as above and follow the same lines. 
	\end{proof}

	\begin{proposition} \label{prop:trajectories_bounded_away_from_boundary}
		Let $H$ satisfy Assumption \ref{assumption:Hamiltonian_for_domain_extension}.
		For all $u\in (0,\partial_+)$ there exists a $v\in (u,\partial_+)$ such that for all $t\ge 0$ and all starting points and momenta $(x,p) \in (-v,v) \times \bR$, with $ X_t^{x,p} = b \in (-u,u)$ we have $X^{x,p}_s \in (-v,v)$ for all $s \in [0,t]$.
	\end{proposition}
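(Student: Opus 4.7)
The idea is to trap $X_s^{x,p}$ inside $(-v,v)$ by intersecting two forward-invariant subsets of $\K\times\R$: a preserved quadrant coming from Assumption \ref{assumption:Hamiltonian_for_domain_extension}\ref{item:assumptions_convergent_quadrants}, applied \emph{contrapositively} using the endpoint $b \in (-u,u)$, and a preserved ``anti-corner'' coming from Lemma \ref{lemma:delta_n}, applied forward from the starting point $x \in (-v,v)$. Using Assumption \ref{assumption:Hamiltonian_for_domain_extension}\ref{item:assumptions_convergent_quadrants}, I pick $n$ large enough that $y_n^+ > u$ and $y_n^- < -u$, let $z_n$ be the number from Lemma \ref{lemma:delta_n}, and set $v := z_n$. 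Then $v \in (u,\partial_+)$ since $z_n > y_n^+ > u$ and $z_n \in (0,\partial_+)$.

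I would then carry out the right-boundary argument in detail; the left is symmetric. By Remark \ref{remark:assumptions_implies_preserved_quadrants} the quadrant $\upq_{y_n^+,q_n^+}$ is preserved, so its complement is backward-invariant along any Hamilton trajectory. The endpoint condition $b < u < y_n^+$ gives $(X_t,P_t)\notin \upq_{y_n^+,q_n^+}$, whence
\[
(X_s,P_s) \in \upq_{y_n^+,q_n^+}^c = \{x < y_n^+\} \cup \{p < q_n^+\} \qquad \text{for all } s \in [0,t].
\]
Simultaneously, Lemma \ref{lemma:delta_n} supplies the preserved set $A := \{x < z_n\} \cup \{p > q_n^+\}$. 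The starting hypothesis $x \in (-v,v)$ gives $x < z_n$, so $(X_0,P_0) \in A$, and forward invariance yields $(X_s,P_s)\in A$ on $[0,t]$. Distributing the intersection,
\[
A \cap \upq_{y_n^+,q_n^+}^c \;=\; \{x < y_n^+\} \cup \{x < z_n,\, p < q_n^+\},
\]
because $\{p>q_n^+\}\cap\{p<q_n^+\}=\emptyset$ and $\{x<y_n^+\}\cap\{p>q_n^+\}\subseteq\{x<y_n^+\}$. Both components satisfy $x < z_n = v$, so $X_s < v$ on $[0,t]$. The mirror argument, using $A_1 := \{x > -z_n\} \cup \{p < q_n^-\}$ together with $\downq_{y_n^-,q_n^-}$, and the hypotheses $x > -v$ and $b > -u > y_n^-$, delivers $X_s > -v$.

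The main subtlety, and the reason both hypotheses enter, lies in matching each preservation statement to the direction in which it gives information. The quadrants from Assumption \ref{assumption:Hamiltonian_for_domain_extension}\ref{item:assumptions_convergent_quadrants} are forward-invariant, so only their \emph{complements} propagate backwards along trajectories --- this is the structure that consumes the endpoint condition. The anti-corner sets from Lemma \ref{lemma:delta_n} are forward-invariant themselves and consume the starting condition. Neither alone bounds $X_s$: without the anti-corner $A$, a trajectory with low momentum could wander through the strip $y_n^+ \le x < z_n$ arbitrarily close to $\partial_+$ while remaining in $\upq_{y_n^+,q_n^+}^c$, and it is precisely the intersection that rules this out. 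A brief check that the case $\K=[-1,1]$ causes no issues (both $A$ and the quadrant are preserved for the extended Hamiltonian of Assumption \ref{assumption:Hamiltonian_for_domain_extension}\ref{item:assumption_extended_regularity}, so Hamiltonian trajectories remain well-defined on $[0,t]$) completes the proof.
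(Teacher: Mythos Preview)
Your proof is correct and follows the same two-ingredient strategy as the paper: use forward-invariance of the quadrant $\upq_{y_n^+,q_n^+}$ \emph{backward} from the endpoint $b$, and forward-invariance of the anti-corner $\{x<z_n\}\cup\{p>q_n^+\}$ from Lemma~\ref{lemma:delta_n} \emph{forward} from the starting point $x$, then intersect. Your explicit choice $v=z_n$ is in fact cleaner than the paper's ``$v>z_n$'': with the latter, the backward step only forces $x\ge z_n$, which does not contradict $x\in(-v,v)$, so your version actually closes a small gap.
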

	\begin{proof}
		Let $n$ be such that $b\in (y_n^-,y_n^+)$. 
		Let $z_n$ be as in Lemma \ref{lemma:delta_n} and $v> z_n$. 
		Suppose $x \in (-v ,v)$ and $p \in \R$ are such that $X_t^{x,p}=b$. 
		As the end point $(X_t^{x,p}, P_t^{x,p})$ lies outside the quadrants $\upq_{y_n^+,q_n^+}$ and $\downq_{y_n^-,q_n^-}$ the whole trajectory does not enter either of the quadrants as those quadrants are preserved. 
		
		As the complements of the sets $\{x: x \le z_n\} \times [q_n^-,\infty)$ and $\{x: x \ge z_n\}\times (-\infty,q_n^+]$ are preserved, 
		 $X_s^{x,p}$ for $s\in [0,t]$ is prevented from entering $\{x : x\le v\} \cup \{x : x\ge v\}$. 
	\end{proof}

\subsection{Additional model dependent conditions and properties} \label{section:cond_ham_boundary_case_specific}

In addition to the properties of the Hamiltonian flow that were established above, there are some peculiarities due to the boundaries of both settings that need to be treated separately. In the setting of the $\pm 1$-space-model we need these auxiliary technical results in Section \ref{section:relating_optimizers_to_hamflow} below to show that optimizers that start at the boundary can be related to solutions of the Hamilton equations. For the $\R$-space-model, we introduce a condition that ensures that the rate function has compact sublevel sets, something that for the $\pm 1$-space-model follows from Assumption \ref{assumption:Hamiltonian_for_domain_extension} above.

\subsubsection{The \texorpdfstring{$\pm 1$-space-model}{}}

In this section, we consider $\bK = [-1,1]$.
We start by showing that $\cL$ is twice continuously differentiable on an appropriate domain. 
We then proceed by extending the regularity of the Hamiltonian flow up to the boundary of $[-1,1]\times \bR$, after which we give an additional assumption that is needed to verify Proposition \ref{proposition:optimizers_equiv_hamflow_CW} \ref{item:optimisers_in_interior_interm_times} below.

	\begin{lemma}
		\label{lemma:lagrangian_is_C2_on_interior}
 Let $\K=[-1,1]$ and let $H$ satisfy Assumption \ref{assumption:Hamiltonian_for_domain_extension}.  Then $\cL(-1,v) = \infty$ for all $v\in (-\infty,v_-)$ and $\cL(1,v) = \infty$ for all $v\in (v_+,\infty)$,  and
		$\cL$ is $C^2$ on $(-1,1) \times \R \cup (\{-1\} \times (v_-,\infty)) \cup (\{1\} \times (-\infty,v_+))$.
	\end{lemma}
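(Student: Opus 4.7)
The plan is to split the claim into two parts: (i) verifying the infinity values on the boundary rays $\{-1\}\times(-\infty,v_-)$ and $\{1\}\times(v_+,\infty)$, and (ii) proving $C^2$-regularity on the stated domain, first on the open rectangle and then on the admissible boundary rays, in both cases via the Legendre-pairing and the implicit function theorem.

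For (i), I would exploit the limits provided by Lemma \ref{lemma:properties_obtained_for_CW_from_the_assumptions}. Fix $v < v_-$ and write, for $p<0$,
\begin{equation*}
pv - H(-1,p) \;=\; |p|\Bigl(\tfrac{H(-1,p)}{p}-v\Bigr).
\end{equation*}
By \eqref{eqn:limit_of_quotient_of_H_at_boundary_points} the bracket tends to $v_- - v > 0$ as $p \to -\infty$, whence the expression diverges to $+\infty$ and $\cL(-1,v) = \sup_p(pv - H(-1,p)) = \infty$. The argument at $x=1$ for $v>v_+$ is symmetric.

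For (ii) on the open interior $(-1,1)\times\R$, I would run the standard Legendre theory: by Assumption \ref{assumption:Hamiltonian_for_domain_extension}\ref{item:assumption_extended_regularity}, $H$ is $C^2$ with $\partial_p^2 H > 0$, and by \ref{item:assumptions_quotient_H} the map $p\mapsto H(x,p)$ is superlinear for $x$ in compacts of $(-1,1)$. Hence the sup is attained at a unique critical point $p^*(x,v)$ defined implicitly by $\partial_p H(x,p^*)=v$; the implicit function theorem applied to $(x,v,p)\mapsto \partial_p H(x,p)-v$ (whose $p$-derivative $\partial_p^2 H$ is strictly positive) yields $p^*\in C^1$, so $\cL(x,v) = p^*(x,v)\,v - H(x,p^*(x,v))$ is $C^2$.

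For (ii) on the boundary ray, fix $(x_0,v_0) = (-1,v_0)$ with $v_0 > v_-$. The map $p\mapsto\partial_p H(-1,p)$ is strictly increasing, with $\lim_{p\to-\infty}\partial_p H(-1,p) = v_-$ by Lemma \ref{lemma:properties_obtained_for_CW_from_the_assumptions} and $\lim_{p\to\infty}\partial_p H(-1,p) = \infty$ deduced from Assumption \ref{assumption:Hamiltonian_for_domain_extension}\ref{item:assumptions_quotient_H} via the convexity inequality \eqref{eqn:property_of_strict_convexity}. Hence there is a unique $p_0^*\in\R$ with $\partial_p H(-1,p_0^*) = v_0$, and it is the global maximizer in the definition of $\cL(-1,v_0)$. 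Using the $C^2$ extension $\tilde H$ from Assumption \ref{assumption:Hamiltonian_for_domain_extension}\ref{item:assumption_extended_regularity}, and the fact that $\partial_p^2\tilde H(-1,p_0^*) = \partial_p^2 H(-1,p_0^*) > 0$ persists in a neighbourhood by continuity, the implicit function theorem produces a $C^1$ solution $\tilde p^*(x,v)$ of $\partial_p\tilde H(x,\tilde p^*) = v$ on a neighbourhood $U$ of $(-1,v_0)$ in $(-1-\epsilon',1)\times\R$. Setting $\tilde\cL(x,v) := \tilde p^*(x,v)\,v - \tilde H(x,\tilde p^*(x,v))$ produces a $C^2$ function on $U$. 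It agrees with $\cL$ on $U\cap([-1,1]\times\R)$, because on this set $\tilde p^*(x,v)$ coincides with the unique global maximizer of $p\mapsto pv-H(x,p)$ (for $x\in(-1,1)$ by the interior argument; for $x=-1$ by the analysis above). This exhibits $\cL$ as the restriction of a $C^2$ function in a neighbourhood of $\{-1\}\times(v_-,\infty)$, yielding the required regularity. The case $x=1$, $v<v_+$ is symmetric.

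The main obstacle is the boundary regularity step: one must ensure that the local solution $\tilde p^*$ produced by the implicit function theorem on $U$ actually corresponds to the global supremum defining $\cL(-1,v)$ (which follows from the unique maximizer analysis at $x=-1$), and that the extended Hamiltonian $\tilde H$, whose strict $p$-convexity is only guaranteed on $[-1,1]\times\R$, retains it on a two-sided neighbourhood of $(-1,p_0^*)$ so that the implicit function argument can be executed on the open set $U$.
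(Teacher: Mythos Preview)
Your proof is correct and follows essentially the same route as the paper: both arguments deduce the infinity values from \eqref{eqn:limit_of_quotient_of_H_at_boundary_points}, and both obtain $C^2$-regularity by showing that the Legendre maximizer $p^*(x,v)$ (the paper's $q$) is $C^1$ via the implicit function theorem applied to $\partial_p\tilde H(x,p)=v$, exploiting the $C^2$ extension $\tilde H$ from Assumption \ref{assumption:Hamiltonian_for_domain_extension}\ref{item:assumption_extended_regularity} to work in an open neighbourhood of the boundary. Your treatment is slightly more self-contained in that you verify directly the range of $\partial_p H(-1,\cdot)$ to get existence and uniqueness of the boundary maximizer, whereas the paper first establishes the one-variable regularity of $v\mapsto\cL(-1,v)$ by adapting \cite[Theorems A.2.3--A.2.5]{CaSi04} before invoking the joint argument of \cite[Theorem A.2.7]{CaSi04}.
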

	
	\begin{proof}
That $\cL(-1,v) = \infty$ for all $v\in (-\infty,v_-)$ and $\cL(1,v) = \infty$ for all $v\in (v_+,\infty)$ follows from \eqref{eqn:limit_of_quotient_of_H_at_boundary_points}. 

		First we prove that $v \mapsto \cL(-1,v)$ is $C^2$ on $(v_-,\infty)$, similarly one proves that $v\mapsto \cL(1,v)$ is $C^2$ on $(-\infty,v_+)$. 
		Write $L(v) = \cL(-1,v)$. 
		In an analogous way as in \cite[Theorem A.2.3]{CaSi04} one proves that for all $v\in (v_-,\infty)$ there exists a unique $p$ such that $L(v) = pv - H(-1,p)$, that $\lim_{v\rightarrow \infty} \frac{L(v)}{v} = \infty$, that $H(-1,\cdot)$ is the Legendre transform of $L$ (see the Fenchel-Moreau Theorem \cite[Chapter 5]{RaSe15}) and that $p \in D^- L(v)$ if and only if $v\in D^- H(-1,p)$ (see \cite[Theorem 5.22]{RaSe15}). With this one proves with the same argument as in the proof of \cite[Theorem A.2.4]{CaSi04} that $L$ is $C^1$ on $(v_-,\infty)$. Similarly, the argument of the proof of \cite[Theorem A.2.5]{CaSi04} carries over so that we have that $L$ is $C^2$ on $(v_-,\infty)$. 
		
		That $\cL$ is $C^2$ on $(-1,1) \times \R$ follows from \cite[Theorem A.2.7]{CaSi04}. 
		
		Following the lines of the proof of \cite[Theorem A.2.7]{CaSi04}, it is sufficient to show that the map $q: (-1,1) \times \R \cup (\{-1\} \times (v_-,\infty)) \cup (\{1\} \times (-\infty,v_+)) \rightarrow \R$, which assigns to an element $(x,v)$ the unique $p$ such that $\cL(x,v) = pv - H(x,p)$, is $C^1$. 
		As $q$ satisfies $v - \partial_p H(x,q(x,v)) =0$, and $\partial_p^2 \overline H$ (see Assumption \ref{assumption:Hamiltonian_for_domain_extension}\ref{item:assumption_extended_regularity}) is $>0$ in an neighbourhood of $(x,p)$ for all $(x,p) \in [-1,1] \times \R$, by the implicit function theorem it follows that $q$ is $C^1$. 
	\end{proof}

\begin{lemma}
	\label{lemma:limit_points_in_extended_space}
	 Let $\K = [-1,1]$ and  let $H$ satisfy Assumption \ref{assumption:Hamiltonian_for_domain_extension}.

	Suppose that $(\gamma,\eta): (0,t] \rightarrow [-1,1]\times \R$ is $C^1$ 
	 and satisfies the Hamilton equations \eqref{eqn:hamilton_equations}. Then $(\gamma(0),\eta(0)):=\lim_{s\downarrow 0} (\gamma(s),\eta(s))$ exists in $[-1,1] \times \bR$ and $(\gamma,\eta)$ is $C^1$ on $[0,t]$ and satisfies the Hamilton equations.
\end{lemma}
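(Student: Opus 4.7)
The strategy is first to show that $\eta$ stays bounded on $(0,t]$, and then to extend $(\gamma,\eta)$ to $[0,t]$ by running the backward Hamiltonian flow of the $C^2$-extended Hamiltonian $\tilde H$ from a point close to $s=0$ and invoking uniqueness.

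Since $(\gamma,\eta)$ is $C^1$ on $(0,t]$ and satisfies the Hamilton equations, the energy $H(\gamma(s),\eta(s))=c$ is constant on $(0,t]$. Suppose for contradiction that there is a sequence $s_n\downarrow 0$ with $\eta(s_n)\to +\infty$; the case $\eta(s_n)\to -\infty$ is handled symmetrically. By compactness of $[-1,1]$, after passing to a subsequence $\gamma(s_n)\to x_0\in[-1,1]$, and I would distinguish three cases. If $x_0\in(-1,1)$, Assumption~\ref{assumption:Hamiltonian_for_domain_extension}\ref{item:assumptions_quotient_H} applied to a compact neighborhood of $x_0$ in $(-1,1)$ forces $H(\gamma(s_n),\eta(s_n))\to\infty$, contradicting $H\equiv c$. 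If $x_0=1$, then for each fixed $k$ one has $(\gamma(s_n),\eta(s_n))\in\upq_{y_k^+,q_k^+}$ for all $n$ large; the preservation of this quadrant (Assumption~\ref{assumption:Hamiltonian_for_domain_extension}\ref{item:assumptions_convergent_quadrants} and Remark~\ref{remark:assumptions_implies_preserved_quadrants}) together with $s_n<t$ yields $(\gamma(t),\eta(t))\in\upq_{y_k^+,q_k^+}$, so $\eta(t)\ge q_k^+$, and sending $k\to\infty$ gives $\eta(t)=+\infty$, contradicting $\eta(t)\in\R$. If $x_0=-1$, I would use the structure of the level set $\{H=c\}$: since $\partial_pH(-1,\cdot)>0$ (Lemma~\ref{lemma:properties_obtained_for_CW_from_the_assumptions}) and $H(-1,p)\to\infty$ as $p\to\infty$ (Assumption~\ref{assumption:Hamiltonian_for_domain_extension}\ref{item:assumptions_quotient_H}), the equation $H(-1,p)=c$ has at most one solution $p^c(-1)\in\R$, and by the implicit function theorem the ``upper'' branch $x\mapsto p_+(x)$ of $\{H(x,p)=c\}$ extends continuously to $x=-1$ with value $p^c(-1)$. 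The ``lower'' branch, if present, satisfies $p_-(x)\le\argmin_p H(x,p)\to -\infty$ as $x\to -1$ by Assumption~\ref{assumption:Hamiltonian_for_domain_extension}\ref{item:assumptions_drift_boundary}. Since $\eta(s_n)$ lies on one of these two branches, it either stays bounded (close to $p^c(-1)$) or tends to $-\infty$, which in either case contradicts $\eta(s_n)\to+\infty$.

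Once $\eta$ is bounded on $(0,t]$, the trajectory lies in a compact subset of $(-1-\epsilon,1+\epsilon)\times\R$ on which the extended Hamiltonian vector field $(\partial_p\tilde H,-\partial_x\tilde H)$ is $C^1$ and therefore locally Lipschitz (Assumption~\ref{assumption:Hamiltonian_for_domain_extension}\ref{item:assumption_extended_regularity}). Standard ODE theory then provides a uniform lower bound $\delta>0$ on the time of existence of the backward flow from any point in this compact set. Choosing $\tau\in(0,\delta)$, the backward solution starting from $(\gamma(\tau),\eta(\tau))$ extends to time $\tau-\delta<0$, and uniqueness identifies this extension with $(\gamma,\eta)$ on $(0,\tau]$. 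Combining with $(\gamma,\eta)$ on $[\tau,t]$ yields a $C^1$ extension to $[0,t]$ satisfying the Hamilton equations. In particular $\lim_{s\downarrow 0}(\gamma(s),\eta(s))$ exists, and since $[-1,1]\times\R$ is closed and the trajectory takes values there on $(0,t]$, the limit lies in $[-1,1]\times\R$.

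The main technical obstacle is the level-set analysis in the case $x_0=-1$: the level set $\{H=c\}$ is genuinely unbounded near $x=-1$, and one must exploit the asymmetry between its two branches, using $\argmin_p H(x,p)\to-\infty$, to conclude that the unbounded branch escapes to $p=-\infty$ rather than $p=+\infty$. The analogous control at $x=+1$ handles the symmetric case $\eta(s_n)\to -\infty$.
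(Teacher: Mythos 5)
Your overall strategy matches the paper's: show that the trajectory stays in a compact subset of $[-1,1]\times\R$ on $(0,t]$, then invoke standard ODE theory for the locally Lipschitz extended vector field $(\partial_p\tilde H,-\partial_x\tilde H)$ to extend the solution across $s=0$. Your quadrant argument for the case $x_0=1$ (and its mirror $x_0=-1$, $\eta(s_n)\to-\infty$) is exactly the paper's mechanism: the paper states it as ``the time-reversed trajectory, started from $(\gamma(t),\eta(t))$ outside $\upq_{y_n^+,q_n^+}\cup\downq_{y_n^-,q_n^-}$, never enters these quadrants,'' which is the contrapositive of your forward-preservation argument from time $s_n$ to $t$.

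Where you diverge is in how compactness is established. The paper avoids your case analysis entirely: it applies Lemma~\ref{lemma:properties_obtained_for_CW_from_the_assumptions} to the time-reversed trajectory, which lies in the complement of the two quadrants and on the level set $\{H=c\}$, and that intersection is already shown to be compact. You re-prove a version of this by hand. Your cases $x_0\in(-1,1)$ and the ``same-sign corners'' are fine, but the ``opposite corner'' case $x_0=-1$ with $\eta(s_n)\to+\infty$ is treated via a level-set analysis that has soft spots: you tacitly assume $H(-1,p)=c$ has a solution $p^c(-1)$, which fails when $c<\lim_{p\to-\infty}H(-1,p)$ (a genuine possibility since $\partial_pH(-1,\cdot)>0$ makes $H(-1,\cdot)$ bounded below along $p\to-\infty$), and the claim that the IFT branch through $(-1,p^c(-1))$ captures \emph{all} of $\{H=c\}$ near $x=-1$ with $p$ bounded below needs a separate argument. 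A cleaner treatment of this corner, and the one the paper's Lemma~\ref{lemma:properties_obtained_for_CW_from_the_assumptions} uses in disguise, is via strict convexity in $p$: choose $p_0$ with $H(-1,p_0)>2M$ (possible by Assumption~\ref{assumption:Hamiltonian_for_domain_extension}\ref{item:assumptions_quotient_H}), note $\partial_pH(-1,p_0)>0$, and by continuity of $\tilde H$ and $\partial_p\tilde H$ pick $\delta>0$ so that $H(x,p_0)>M$ and $\partial_pH(x,p_0)>0$ for $x\in[-1,-1+\delta]$; convexity then gives $H(x,p)\ge H(x,p_0)>M$ for all $p\ge p_0$, contradicting $H\equiv c$. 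Replacing your level-set discussion with this estimate, or simply citing Lemma~\ref{lemma:properties_obtained_for_CW_from_the_assumptions} as the paper does, closes the gap and makes the argument equivalent to the paper's.
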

\begin{proof}

	Define the time-inverted trajectories $\gamma_*, \eta_*$ on $[0,t)$ by
	\begin{equation*}
	\gamma_*(s) := \gamma(t-s) , \qquad
	\eta_* (s) := \eta( t-s).
	\end{equation*}
	Then 
	\begin{align}
	\begin{bmatrix}
	\dot{ \gamma_*}(s) \\
	\dot{ \eta_*}(s)
	\end{bmatrix}
	=
	\begin{bmatrix}
	-\partial_p H(\gamma_*(s),\eta_*(s)) \\
	\partial_x H(\gamma_*(s),\eta_*(s))
	\end{bmatrix}.
	\label{eqn:minus_hamilton_equation}
	\end{align}
	Because $(\gamma_*,\eta_*)$ solves the time-inverted Hamilton equations,  \eqref{eqn:minus_hamilton_equation}, Assumption \ref{assumption:Hamiltonian_for_domain_extension}(e) implies that if $n\in\N$ is  such that $(\gamma_*(0), \eta_*(0))$ is not in the set
	\begin{align}
	\label{eqn:union_of_two_quadrants}
	\upq_{y_n^+,q_n^+}
	\cup \,
	\downq_{y_n^-,q_n^-}
	\end{align}
	then $(\gamma_*(s),\eta_*(s))$ is not in this set for all $s\in [0,t)$.
		As $H(\gamma_*(s),\eta_*(s))= H(\gamma_*(0),\eta_*(0)=:c$ for all $s$, and the complement of \eqref{eqn:union_of_two_quadrants} intersected with $H^{-1}(\{c\})$
		is compact by Lemma \ref{lemma:properties_obtained_for_CW_from_the_assumptions}, 
	by \cite[Theorem 2.4.3]{Pe01} we find that the maximal interval of existence $(\gamma_*,\eta_*)$ satisfying \eqref{eqn:minus_hamilton_equation} is larger than $[0,t)$, which implies that both limits $\lim_{s\downarrow 0} (\gamma(s),\eta(s))= \lim_{s\uparrow t} (\gamma_*(s),\eta_*(s))$ and $ \lim_{s\downarrow 0} (\dot \gamma(s), \dot \eta(s))=- \lim_{s \uparrow t}  (\dot \gamma_*(s), \dot \eta_*(s))$ exist. Additionally, we  conclude that the trajectory $(\gamma,\eta)$ solves the Hamilton equations on the interval $[0,t]$.
\end{proof}


\begin{assumption} \label{assumption:trajectories_in_interior}
Let	 $\K = [-1,1]$ and assume that $H$ satisfies Assumption \ref{assumption:Hamiltonian_for_domain_extension} and that there is a function $S : [-1,1] \rightarrow [0,\infty]$ that is twice continuously differentiable on $(-1,1)$ such that 
	\begin{enumerate}
		\item $S$ is a Lyapunov function for $\dot{x} = \partial_p H(x,0)$, i.e. if $x(t)$ solves $\dot{x} = \partial_p H(x,0)$ then $t \mapsto S(x(t))$ is decreasing.
		\item $H(x,S'(x) - p) = H(x,p)$ for all $(x,p) \in (-1,1) \times \bR$.
		\item The map $x \mapsto \cL(x,0)$ is decreasing on a  neighbourhood $U_{-1}$ of $-1$ in $[-1,1]$ and increasing on an  neighbourhood $U_1$ of $1$ in $[-1,1]$. 
		\item We have
		\begin{equation*}
		\lim_{x \downarrow -1} \frac{\cL(x,0) - \cL(-1,0)}{S(x) - S(-1)} = \infty, \quad
		\lim_{x \uparrow 1} \frac{\cL(x,0) - \cL(1,0)}{S(x) - S(1)} = \infty.
		\end{equation*}
	\end{enumerate}
\end{assumption}

\subsubsection{The \texorpdfstring{$\R$-space-model}{}}

In this section, we consider $\bK = \bR$. We assume the existence of a Lyapunov function $\Upsilon$ that is needed to treat the non-compactness of $\bR$ in  the proof of compactness of the sublevel sets of the rate function (see Lemma \ref{lemma:compact_sublevel_sets}).

\begin{assumption} \label{assumption:compact_level_sets_R}
	There is a continuously differentiable function $\Upsilon$ with compact sublevel sets, i.e. for all $c \in \bR$ the set $\{x \in \bR : \Upsilon(x) \leq c\}$ is compact, with the additional property that $\sup_{x \in \R} H(x,\Upsilon'(x)) < \infty$. 
\end{assumption}

\subsection{The relation between optimizers and Hamiltonian trajectories and a study of the time-evolved rate function} \label{section:relating_optimizers_to_hamflow}

From the discussion above, we understand the behaviour of the Hamiltonian flow. In this section, we show that optimal trajectories exist (Lemma \ref{lemma:compact_sublevel_sets}), and, combined with their dual trajectories, satisfy the Hamilton equations (Propositions \ref{proposition:optimizers_equiv_hamflow_CW} and \ref{proposition:optimizers_equiv_hamflow_R}).
Then we show that the range of optimal trajectories can be controlled if $I_0 \in C^{1,\partial}(\bK)$ (Proposition \ref{proposition:optimizers_bounded_away_from_boundary}).

\begin{lemma}
\label{lemma:compact_sublevel_sets}
	Let $H$ satisfy Assumption \ref{assumption:Hamiltonian_for_domain_extension}. If $\bK = \bR$, let $H$ in addition satisfy Assumption \ref{assumption:compact_level_sets_R}
	
	Let $I_0$ have compact sublevel sets, then the rate function $\cI$ in \eqref{eqn:rate_function_paths} has compact sublevel sets.  
	
	 In addition, for every $t > 0$, $a \in \bK$ and $b \in \bK^\circ$ there is an optimal trajectory $\gamma$ for $S_t(a,b)$ and $I_t(b)$. 
\end{lemma}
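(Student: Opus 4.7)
I would apply the direct method of the calculus of variations in two steps: first establish that $\cI$ has compact sublevel sets in the Skorohod space $D_\bK([0,\infty))$, then extract a minimiser from a minimising sequence for each variational problem. Throughout, the key ingredients are Assumption \ref{assumption:Hamiltonian_for_domain_extension}(a)--(b) (providing strict convexity and superlinear coercivity of $\cL$) and, for $\bK = \bR$, Assumption \ref{assumption:compact_level_sets_R} (providing a Lyapunov function to control trajectories at infinity).

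For compact sublevel sets, fix $c \geq 0$ and consider $\cI^{-1}([0,c])$. Finiteness of $\cI(\gamma)$ forces $\gamma \in \cA\cC$, so working with uniform convergence on compact time intervals suffices. Lower semicontinuity follows from the classical Tonelli argument: Assumption \ref{assumption:Hamiltonian_for_domain_extension}(a) gives $\partial_p^2 H > 0$, hence by Legendre duality $v \mapsto \cL(x,v)$ is convex, and combined with the growth bounds in Assumption \ref{assumption:Hamiltonian_for_domain_extension}(b) the action $\gamma \mapsto \int_0^T \cL(\gamma(s),\dot\gamma(s))\,ds$ is sequentially lower semicontinuous along absolutely continuous curves converging uniformly on $[0,T]$ with weakly $L^1$-converging derivatives. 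Since $I_0$ has compact sublevel sets, it is lower semicontinuous, and thus so is $\cI$.

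For relative compactness I would use an Arzel\`a--Ascoli type argument. Starting points $\gamma(0)$ lie in the compact set $\{I_0 \leq c\}$. The coercivity estimate $\cL(x,v) \geq \theta_K(|v|) - c_K$ on compact $K \subseteq \bK^\circ$ combined with the de la Vall\'ee-Poussin criterion shows that if the ranges of a family of trajectories remain in $K$, then $\{\dot\gamma\}$ is uniformly integrable on bounded intervals, yielding equicontinuity. For $\bK = [-1,1]$, containment in the compact set $\bK$ is automatic. For $\bK = \bR$, I would use the Lyapunov function $\Upsilon$: by Young's inequality $p v \leq H(x,p) + \cL(x,v)$ and the bound $\sup_x H(x,\Upsilon'(x)) =: C < \infty$,
$$\Upsilon(\gamma(T)) - \Upsilon(\gamma(0)) = \int_0^T \Upsilon'(\gamma(s))\dot\gamma(s)\,ds \leq \int_0^T \cL(\gamma(s),\dot\gamma(s))\,ds + CT \leq c + CT,$$
which, combined with compactness of $\{I_0 \leq c\}$ and of the sublevel sets of $\Upsilon$, forces the image of $\gamma|_{[0,T]}$ into a common compact set of $\bR$. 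Together with lower semicontinuity, this yields compactness of $\cI^{-1}([0,c])$.

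For existence of optimisers, fix $t > 0$, $a \in \bK$ and $b \in \bK^\circ$. I would first exhibit a test path from $a$ to $b$ of finite action, which shows $S_t(a,b) < \infty$ and hence $I_t(b) \leq I_0(a) + S_t(a,b) < \infty$. For $\bK = \bR$ any smooth interpolation works; for $\bK = [-1,1]$ with $a \in \{-1,1\}$, I would invoke Lemma \ref{lemma:lagrangian_is_C2_on_interior} together with $v_- \geq 0$ and $v_+ \leq 0$ from Lemma \ref{lemma:properties_obtained_for_CW_from_the_assumptions} to design a path that initially departs the boundary with admissible velocity. Given a minimising sequence $(\gamma_n)$ for $S_t(a,b)$ or $I_t(b)$, the actions are bounded, so by the compactness just proved we extract a subsequence converging uniformly on $[0,t]$ to some $\gamma^\ast$; endpoint conditions pass to the limit, and lower semicontinuity gives $\cI(\gamma^\ast) \leq \liminf_n \cI(\gamma_n)$, so $\gamma^\ast$ is optimal. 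The main obstacle is the range control for $\bK = \bR$: without Assumption \ref{assumption:compact_level_sets_R} minimising sequences could in principle escape to infinity, and the Lyapunov estimate above is precisely what rules this out; a secondary subtlety is constructing a finite-action test path from boundary points in the $\pm 1$-model, where $\cL$ equals $+\infty$ on forbidden velocities.
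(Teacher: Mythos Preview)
Your proposal is correct and follows essentially the same underlying ideas as the paper, but presented in a more self-contained way. The paper's own proof is very brief: it delegates the compactness of the sublevel sets of $\cI$ to external references (\cite{Kr16b} for $\bK=[-1,1]$ and \cite{CoKr17} for $\bK=\bR$, the latter indeed exploiting $\Upsilon$ exactly as you do), and then obtains the existence of optimizers by invoking the contraction principle after exhibiting a finite-cost test path (using Lemma~\ref{lemma:lagrangian_is_C2_on_interior} to stay in the region where $\cL$ is $C^2$). Your direct-method argument unwraps what those references contain: Tonelli lower semicontinuity plus Arzel\`a--Ascoli with de la Vall\'ee-Poussin, and the Lyapunov bound via Young's inequality is precisely the mechanism behind the cited \cite{CoKr17} argument. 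The contraction-principle phrasing and your minimising-sequence extraction are equivalent here.

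One small point worth tightening: for $\bK=[-1,1]$ you note that range containment is automatic, but the coercivity bound $\cL(x,v)\geq\theta_K(|v|)-c_K$ in Assumption~\ref{assumption:Hamiltonian_for_domain_extension}\ref{item:assumptions_theta} is only stated for compact $K\subseteq\bK^\circ$, so equicontinuity of a sublevel family near the boundary $\{-1,1\}$ is not covered by that estimate alone. This is handled in \cite{Kr16b} (and is not hard, since the Lagrangian stays bounded below and the state space is compact), but your write-up glosses over it slightly. The paper sidesteps the issue by citing the reference outright.
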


	\begin{proof}
	Compactness of sublevel sets of $\cI$ is generally proven as a part of a large deviation principle. For a proof of this result in the $\pm 1$-space-model setting, see  \cite[Theorem 2]{Kr16b}. A similar proof for the $\R$-space-model can be carried out by using $\Upsilon$, see the proof of  \cite[Theorem A.14]{CoKr17}.
	
	 Pick $a \in \bK$, $b \in \bK^\circ$ and $t > 0$. Pick a $C^1$ curve $\gamma$ connecting $a$ to $b$ in time $t$ such that in addition $(\gamma(s),\dot{\gamma}(s))$ takes its values in the region where $\cL$ is $C^2$, cf. Lemma \ref{lemma:lagrangian_is_C2_on_interior} for the case where $\bK = [-1,1]$. This implies that $\gamma$ has finite cost. Thus, by the contraction principle and the compactness of the sublevel sets of $\cI$, where we take as a starting rate function $I_0(a) = 0$ and $I_0(x) = \infty$ if $x \neq a$, there is an optimizer for $S_t(a,b)$.

	
	Again by the contraction principle, but now with a general starting rate function $I_0$, we find that there is an optimizer for $I_t(b)$.
\end{proof}

\begin{proposition} \label{proposition:optimizers_equiv_hamflow_R}
	Let $H$ satisfy Assumption \ref{assumption:Hamiltonian_for_domain_extension}  in the setting that $\bK = \bR$, let $I_0 \in C^{1,\partial}(\bR)$, and $t > 0$.  

		Let $a,b \in \bR$ and suppose that $\gamma$ is an optimal trajectory for $S_t(a,b)$. Then $\gamma$ is $C^2$ on $[0,t]$. The dual trajectory $\eta$ defined on $[0,t]$ is $C^1$ and $(\gamma,\eta)$ satisfies the Hamilton equations \eqref{eqn:hamilton_equations} on the interval $[0,t]$. 
		
 		If $\gamma$ is an optimal trajectory for $I_t(b)$, then $\eta(0) = I_0'(\gamma(0))$. 
\end{proposition}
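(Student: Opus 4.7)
The plan is to invoke standard Tonelli regularity from the calculus of variations to boost the a priori absolute continuity of the optimizer to $C^2$ smoothness, then reformulate the Euler--Lagrange equation in Hamiltonian variables via Legendre duality, and finally derive the transversality condition $\eta(0)=I_0'(\gamma(0))$ by a first-variation argument at the free initial endpoint.

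First, I would verify that on all of $\bR\times\bR$ the Lagrangian $\cL$ is $C^2$. Since $H\in C^2(\bR\times\bR)$ is strictly convex in $p$ by Assumption \ref{assumption:Hamiltonian_for_domain_extension}\ref{item:assumption_extended_regularity}, the map $v=\partial_p H(x,p)$ has a $C^1$ inverse $p=q(x,v)$ by the implicit function theorem, and then \cite[Theorem A.2.7]{CaSi04} (the interior half of the argument used in Lemma \ref{lemma:lagrangian_is_C2_on_interior}) yields $\cL\in C^2(\bR\times\bR)$. Assumption \ref{assumption:Hamiltonian_for_domain_extension}\ref{item:assumptions_theta} provides the superlinearity $\cL(x,v)\ge \theta_K(|v|)-c_K$ and the controlled growth of $\partial_x\cL,\partial_v\cL$ in $|v|$ on every compact $K\subseteq\bR$. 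Since $\gamma$ is continuous with $\gamma(0)=a,\gamma(t)=b$, its range is contained in some compact $K\subseteq\bR$, so these bounds apply throughout $[0,t]$.

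Second, the standard Tonelli regularity theorem (see e.g.\ \cite[Theorem 6.2.5]{CaSi04}) together with the hypotheses above implies that the minimizer $\gamma$ is $C^2$ on $[0,t]$ and solves the Euler--Lagrange equation
\begin{equation*}
\frac{\dd}{\dd s}\,\partial_v\cL(\gamma(s),\dot\gamma(s)) \;=\; \partial_x\cL(\gamma(s),\dot\gamma(s)).
\end{equation*}
Setting $\eta(s):=\partial_v\cL(\gamma(s),\dot\gamma(s))$, the Legendre duality identities $\dot\gamma=\partial_p H(\gamma,\eta)$ and $\partial_x\cL(\gamma,\dot\gamma)=-\partial_x H(\gamma,\eta)$ translate Euler--Lagrange into the Hamilton system \eqref{eqn:hamilton_equations} on $[0,t]$, and show that $\eta\in C^1([0,t])$.

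Third, suppose $\gamma$ is optimal for $I_t(b)$. Write $x_0:=\gamma(0)$. For any $h\in\bR$ pick a $C^1$ family $\gamma_\epsilon$ with $\gamma_0=\gamma$, $\gamma_\epsilon(0)=x_0+\epsilon h$, $\gamma_\epsilon(t)=b$, and variation $\delta\gamma:=\partial_\epsilon\gamma_\epsilon|_{\epsilon=0}\in C^1([0,t])$ satisfying $\delta\gamma(0)=h$, $\delta\gamma(t)=0$. Since $\gamma_0$ minimizes $y\mapsto I_0(y)+S_t(y,b)$ at $y=x_0$, differentiating at $\epsilon=0$ and integrating by parts using the Euler--Lagrange equation yields
\begin{equation*}
0 \;=\; I_0'(x_0)\,h \;+\; \bigl[\eta\,\delta\gamma\bigr]_0^{t} \;=\; I_0'(x_0)\,h - \eta(0)\,h,
\end{equation*}
and since $h\in\bR$ is arbitrary, $\eta(0)=I_0'(x_0)$.

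\textbf{Main obstacle.} The decisive step is the $C^2$-regularity boost in step two: an a priori merely absolutely continuous minimizer must be shown to be twice continuously differentiable before one can even speak of a dual trajectory satisfying Hamilton's equations. This is exactly what Tonelli's theorem delivers, and it is applicable here because (i) $\cL$ is finite and $C^2$ on the whole of $\bR\times\bR$ (so no boundary singularities of $\cL$ need to be handled, unlike in the $\pm1$-space-model of Lemma \ref{lemma:limit_points_in_extended_space}), (ii) strict convexity of $\cL$ in $v$ is inherited from $\partial_p^2 H>0$, and (iii) Assumption \ref{assumption:Hamiltonian_for_domain_extension}\ref{item:assumptions_theta} supplies the superlinearity and the local bounds on $\partial_x\cL,\partial_v\cL$ required by the theorem. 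Once regularity is in hand, the remaining steps (Hamiltonian reformulation and transversality) are elementary.
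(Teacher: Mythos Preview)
Your proposal is correct and follows essentially the same approach as the paper. The paper's proof is a one-line invocation of \cite[Theorems 6.2.8 and 6.3.3]{CaSi04}; you have unpacked precisely what those theorems deliver---Tonelli regularity under Assumption \ref{assumption:Hamiltonian_for_domain_extension}\ref{item:assumptions_theta} giving $\gamma\in C^2$ and the Euler--Lagrange equation, Legendre duality converting to the Hamilton system, and the first-variation (transversality) argument at the free initial endpoint yielding $\eta(0)=I_0'(\gamma(0))$. The only cosmetic difference is that the paper cites Theorem 6.2.8 rather than 6.2.5 for the regularity step, and packages the transversality conclusion inside the citation of Theorem 6.3.3 instead of writing it out by hand.
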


\begin{proof}
This follows by \cite[Theorem 6.2.8 and 6.3.3]{CaSi04}. 
\end{proof}

\begin{proposition} \label{proposition:optimizers_equiv_hamflow_CW}
	Let $H$ satisfy Assumption \ref{assumption:trajectories_in_interior} in the setting that $\bK= [-1,1]$, $I_0 \in C^{1,\partial}[-1,1]$ and let $t>0$  and $a,b \in [-1,1]$.  
	\begin{enumerate}
		\item
		\label{item:optimisers_in_interior_interm_times}
		Any optimal trajectory $\gamma$ for $S_t(a,b)$ satisfies $\gamma(s) \in (-1,1)$ for all $s \in (0,t)$.
		\item
		\label{item:optimizers_satisfy_hamilton_equations}
		Let $\gamma$ be an optimal trajectory for $S_t(a,b)$. 
		
		If $b \in (-1,1)$, then $\gamma$ is $C^2$ on $[0,t]$. The dual trajectory $\eta$ is defined on $[0,t]$ is $C^1$ and $(\gamma,\eta)$ satisfies the Hamilton equations \eqref{eqn:hamilton_equations} on the interval $[0,t]$. 

		If $b \in \{-1,1\}$ then $\gamma$ is $C^2$ on $[0,t)$. The dual trajectory $\eta$ is defined on $[0,t)$ is $C^1$ and $(\gamma,\eta)$ satisfies the Hamilton equations \eqref{eqn:hamilton_equations} on $[0,t)$ and $\lim_{s \uparrow t} (\gamma(s),\eta(s)) \in \{(-1,-\infty),(1,\infty)\}$.
		\item
		\label{item:optimisers_in_interior_time_0}
Any optimal trajectory $\gamma$ for $I_t(b)$	satisfies  $\gamma(0) \in (-1,1)$. Let $\eta$ be the dual trajectory as above, then we have $\eta(0) = I_0'(\gamma(0))$. 
	\end{enumerate}
\end{proposition}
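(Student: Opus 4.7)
The proposition has three parts, of which (a) is the crux: once optimal trajectories are known to avoid the boundary at intermediate times, parts (b) and (c) follow from the classical calculus of variations theory in \cite{CaSi04}, combined with Lemma \ref{lemma:limit_points_in_extended_space} and the $C^{1,\partial}$ regularity of $I_0$. I will sketch (a) in detail and outline the rest.

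For (a), I would argue by contradiction. Suppose $\gamma$ is an optimizer for $S_t(a,b)$ with $\gamma(s_0) = -1$ for some $s_0 \in (0, t)$ (the case $\gamma(s_0) = 1$ is symmetric). The plan is to replace $\gamma$ near $s_0$ by an interior detour that is strictly cheaper, using Assumption \ref{assumption:trajectories_in_interior}. An outbound leg following the natural drift $\dot x = \partial_p H(x, 0)$ has zero Lagrangian cost since $H(x, 0) = 0$ implies $\cL(x, \partial_p H(x,0)) = 0$. An inbound leg against the drift has cost per unit time equal to $-S'(x) \partial_p H(x, 0)$, by the identity $\cL(x, -v) = \cL(x, v) - S'(x) v$, which follows from the symmetry $H(x, S'(x) - p) = H(x, p)$. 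Integrating, the round trip to a nearby interior point $x_\delta$ costs at most $S(-1) - S(x_\delta)$. During the time spent near $x_\delta$ we save running cost at rate $\cL(-1, 0) - \cL(x_\delta, 0)$, which by the divergence condition (d) of Assumption \ref{assumption:trajectories_in_interior} dominates $S(-1) - S(x_\delta)$ as $\delta \downarrow 0$. For $\delta$ small and the duration of the perturbation chosen appropriately, the modification produces a trajectory with strictly smaller cost, contradicting optimality.

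Given (a), part (b) is routine: $\gamma$ takes values in $(-1,1)$ on $(0, t)$, and on any compact interior subinterval it lies in a compact set where $\cL$ is $C^2$ by Lemma \ref{lemma:lagrangian_is_C2_on_interior}. The standard Euler-Lagrange and Hamiltonian arguments of \cite[Chapter 6]{CaSi04} then give $\gamma \in C^2$ on $(0, t)$ and show that $(\gamma, \eta)$ solves the Hamilton equations there. To extend to $s = 0$ (and to $s = t$ when $b \in (-1,1)$) I would invoke Lemma \ref{lemma:limit_points_in_extended_space}, which guarantees the limits exist in $[-1,1] \times \bR$ and provides $C^1$ extension up to the endpoints. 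When $b \in \{-1, 1\}$, Remark \ref{remark:limit_Hamiltonian_path_boundaries} combined with (a) forces $\eta(s) \to \pm \infty$ as $s \uparrow t$.

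For (c), writing $a := \gamma(0)$, the point $a$ minimizes $x \mapsto I_0(x) + S_t(x, b)$. Assume for contradiction that $a = -1$. By part (b), the associated $\eta(0)$ is finite, and $-\eta(0) \in D^+ S_t(\cdot,b)(a)$ is bounded (a standard fact from value-function theory, cf.\ \cite[Theorem 6.4.8]{CaSi04}). Since $I_0 \in C^{1,\partial}(\bK)$ yields $\lim_{x \downarrow -1} I_0'(x) = -\infty$, perturbing the starting point slightly into the interior decreases $I_0$ by more than any possible increase in $S_t(\cdot, b)$, contradicting optimality. Hence $\gamma(0) \in (-1, 1)$, where $I_0$ is classically differentiable, and the first-order condition $I_0'(\gamma(0)) + \partial_a S_t(\gamma(0), b) = 0$ together with $\partial_a S_t(a, b) = -\eta(0)$ yields $\eta(0) = I_0'(\gamma(0))$. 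The main obstacle in the whole argument is making the perturbation in (a) fully rigorous and tracking the cost difference uniformly in the detour parameters $\delta$ and the duration.
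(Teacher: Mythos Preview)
Your overall architecture matches the paper's: (a) is the substantive step, (b) is classical calculus of variations on interior subintervals plus Lemma~\ref{lemma:limit_points_in_extended_space}, and (c) is a perturbation argument exploiting $I_0 \in C^{1,\partial}$. For (a) the paper simply cites \cite[Proposition~4.9]{Kr16d}; your detour sketch using the symmetry $H(x,S'(x)-p)=H(x,p)$ and the divergence condition in Assumption~\ref{assumption:trajectories_in_interior}(d) is precisely the mechanism behind that reference, and your identity $\cL(x,-v)=\cL(x,v)-S'(x)v$ is correct. For (b) you and the paper are identical.

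The genuine difference is in (c). You perturb the \emph{starting point} and appeal to $-\eta(0)\in D^+ S_t(\cdot,b)(a)$; the paper perturbs the \emph{trajectory} directly, setting $\gamma_\varepsilon=\gamma+\varepsilon\rho$ with $\rho(0)>0$, $\rho\equiv 0$ on $[t/2,t]$, and shows the difference quotient of $\cI$ tends to $-\infty$. Your route needs the superdifferential inclusion at the boundary point $a=-1$, which is not covered by \cite[Theorem~6.4.8]{CaSi04} (that result is for interior points), and establishing the one-sided bound $S_t(a',b)-S_t(-1,b)\le C(a'+1)$ for $a'>-1$ is exactly what the paper's trajectory perturbation provides. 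So your argument is correct but incomplete at that step, and filling it in collapses to the paper's approach. The paper also uses part (b) in a specific technical way you do not mention: since $\dot\gamma(0)=\partial_p H(-1,\eta(0))>v_-$, the perturbed pairs $(\gamma+\theta\rho,\dot\gamma+\theta\dot\rho)$ remain, for small $\theta$, in the region $\{-1\}\times(v_-,\infty)\cup(-1,1)\times\bR$ where $\cL$ is $C^2$ (Lemma~\ref{lemma:lagrangian_is_C2_on_interior}); this is what guarantees the path-space difference quotient stays bounded while the $I_0$ part diverges to $-\infty$.
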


\begin{proof}
	\ref{item:optimisers_in_interior_interm_times} follows from  \cite[Proposition 4.9]{Kr16d} and Assumption \ref{assumption:trajectories_in_interior}.
	
	For \ref{item:optimizers_satisfy_hamilton_equations}, first we consider $b\in (-1,1)$. 
	Let $\gamma$ be an optimizer for $S_t(a,b)$. By \ref{item:optimisers_in_interior_interm_times}, we have $\gamma(s) \in (-1,1)$ for $s \in (0,t)$. Thus, for any $\delta \in (0,t)$, the trajectory $\gamma$ restricted to $[\delta,t ]$ is contained in $(-1,1)$ and is the optimal trajectory over all absolutely continuous trajectories $\xi$ on $[\delta,t]$ with $\xi(t) = \gamma(t)$ and $\xi(\delta) = \gamma(\delta)$ of the functional
	\begin{align*}
	\xi \mapsto \int_{\delta}^{t}  \cL(\xi(s),\dot \xi(s)) \dd s.
	\end{align*}
	By Assumption \ref{assumption:Hamiltonian_for_domain_extension} (a), (b) and \cite[Theorem 6.2.8]{CaSi04} (note that $\cL$ is $C^2$ on $(-1,1)\times \R$ by Lemma \ref{lemma:lagrangian_is_C2_on_interior}),
	we find that the restriction of $\gamma$ to $[\delta,t]$ is $C^2$ and solves the Euler-Lagrange equation classically. This can be done for all $\delta > 0$, so $\gamma$ is $C^2$ on $(0,t]$ and solves the Euler-Lagrange equation
	\begin{equation}
	\frac{\dd}{\dd s} \partial_v \cL(\gamma(s),\dot \gamma(s)) = \partial_x \cL(\gamma(s),\dot \gamma(s)), \qquad s \in (0,t].
	\label{eqn:euler_lagrange_eq}
	\end{equation}
Let $\eta : (0,t] \rightarrow \bR$ be the dual path as in \eqref{eqn:dual_path_to_gamma} (note that $\cL$ is $C^2$ by Lemma \ref{lemma:lagrangian_is_C2_on_interior}). 

By \cite[Theorem 6.3.3]{CaSi04} $(\gamma,\eta)$ solves the Hamilton equations for all times $s \in (0,t]$.	 Lemma \ref{lemma:limit_points_in_extended_space}
 extends this to the closed interval $[0,t]$. 
	
	 If $b \in \{-1,1\}$, we can follow the same lines as above, now restricting to trajectories on $[\delta, t - \delta]$ instead of $[\delta,t]$ with $\delta \in (0,\frac{t}{2})$ for example. 
	 For the limit $\lim_{s\uparrow t} (\gamma(s),\eta(s))$ we refer to  Remark \ref{remark:limit_Hamiltonian_path_boundaries}.

	\smallskip

	For \ref{item:optimisers_in_interior_time_0},
	suppose that there exists an optimal trajectory $\gamma$ such that $\gamma(0) = -1$ and $\gamma(t) = b$. The proof for $\gamma(0) = 1$ is similar. By \ref{item:optimisers_in_interior_interm_times} and \ref{item:optimizers_satisfy_hamilton_equations} there exists a $\kappa>0$ such that $\gamma(s) < 1- \kappa$ for all $s\in [0,\frac{t}{2}]$. 

	Let $\rho \in C^1[0,t]$ be such that $\rho(0) > 0$, $\rho=0$ on $[\frac{t}{2},t]$ and such that $\rho$ takes its values in $[0,\kappa]$. Then $\gamma + \epsilon \rho$ attains its values in $[-1,1]$ for all $\epsilon \in [0,1]$.

		We derive a contradiction by showing the existence of a $\epsilon\in (0,1)$ such that $\gamma + \epsilon \rho$ has a lower cost than $\gamma$, i.e., $\cJ(\gamma + \epsilon \rho) < \cJ(\gamma)$ (for $\cI$ see \eqref{eqn:rate_function_paths}). We do this by showing that the difference quotient 
	\begin{equation} \label{eqn:optimisers_in_interior_proof}
	\frac{\cI(\gamma + \varepsilon \rho) - \cI(\gamma)}{\varepsilon} = \frac{I_0(\gamma(0) + \varepsilon\rho(0)) - I_0(\gamma(0))}{\varepsilon} + \frac{J(\gamma + \varepsilon\rho) - J(\gamma)}{\varepsilon},
	\end{equation}
	converges to $-\infty$ as $\epsilon \downarrow 0$, 	
	where $J$ is the path-space cost
	\begin{equation*}
	J(\zeta): = \int_0^t \cL(\zeta(s),\dot{\zeta}(s)) \dd s.
	\end{equation*}
As $I_0 \in C^{1,\partial }[-1,1]$, the first term on the right-hand-side of \eqref{eqn:optimisers_in_interior_proof} converges to $-\infty$. Therefore it is sufficient to show that the second term on the right-hand-side is bounded from above for small $\epsilon$. 
	
%

For $\theta>0$ we write 
\begin{align*}
\psi_\theta(s) = (\gamma(s)+ \theta \rho(s),\dot{\gamma}(s)+ \theta \dot{\rho}(s)). 
\end{align*}
By the mean-value theorem there exists a $\theta_s \in (0,\epsilon)$ for all $s\in [0,t]$ such that 
	\begin{equation*}
	\frac{J(\gamma + \varepsilon\rho) - J(\gamma)}{\varepsilon}
	=
	\int_0^t \partial_x \cL(\psi_{\theta_s}(s)) \rho(s) + \partial_v \cL(\psi_{\theta_s}(s)) \dot{\rho}(s) \dd s.
	\end{equation*}
The integrand and therefore the integral is bounded for all $\epsilon \le \tilde \epsilon$  if 
\begin{align}
\label{eqn:compact_set_in_domain}
\{ \psi_\theta(s) : s\in [0,t], \theta \in [0,\tilde \epsilon]\}
\subseteq (-1,1) \times \R \cup (\{-1\} \times (v_-,\infty)) \cup (\{1\} \times (-\infty,v_+)),
\end{align}
as the latter set is the domain on which  $\partial_x \cL$ and $\partial_v \cL$ are continuous, see Lemma \ref{lemma:lagrangian_is_C2_on_interior}. 
As $(\gamma,\eta)$ satisfies the Hamilton equations, we have $\dot{\gamma}(0) = \partial_p H(\gamma(0),\eta(0)) > v_-$ (by Assumption \ref{assumption:Hamiltonian_for_domain_extension}\ref{item:assumptions_quotient_H}). 
Hence we can choose $\tilde \epsilon$ small enough such that $\dot \gamma(0) + \theta \dot \rho(0) >v_-$ for all $\theta \in [0, \tilde \epsilon]$, which implies that 	\eqref{eqn:compact_set_in_domain} holds.

	Now that it is established that $\gamma$ starts in the interior, we can apply  \cite[Theorem 6.3.3]{CaSi04} to obtain that $\eta(0) = I_0'(\gamma(0))$.
\end{proof}

\begin{proposition} \label{proposition:optimizers_bounded_away_from_boundary}
Let $H$ satisfy Assumption \ref{assumption:Hamiltonian_for_domain_extension} and if $\K = [-1,1]$ additionally assume Assumption \ref{assumption:trajectories_in_interior}. Let $I_0 \in C^{1,\partial}(\K)$ and let $t>0$. 
\begin{enumerate}
		\item
		\label{item:optimal_curves_bounded_away_from_boundary}
For all $w \in (0,\partial_+)$ there exists a $v\in (w,\partial_+)$ such that 
for all $b\in [-w,w]$ and any optimal trajectory $\gamma$ for $I_t(b)$ we have
	$\gamma(s) \in [-v,v]$ for all $s\in [0,t]$.
		\item
		\label{item:optimal_curves_end_far_from_zero}
		For all $m>0$ there exists a $v\in (0,\partial_+)$ such that
		for all $b\in (\partial_-, -v) \cup (v,\partial_+)$ and any  optimal trajectory $\gamma$ for $I_t(b)$ with dual trajectory $\eta$
		\begin{align*}
		\eta(t) \ge m & \quad \mbox{ if } \quad b\in (v,\partial_+), \\
		\eta(t) \le -m & \quad \mbox{ if } \quad b\in  (\partial_-, -v).
		\end{align*}
\end{enumerate}
\end{proposition}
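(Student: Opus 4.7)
\textbf{Proof plan for Proposition \ref{proposition:optimizers_bounded_away_from_boundary}.}

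\textbf{Part (a).} My plan is to first bound the starting point $\gamma(0)$ in a compact subset of $\K^\circ$, uniformly in $b \in [-w,w]$. To do this I would take the constant trajectory $\tilde\gamma(s) \equiv b$ as a competitor, yielding
\[
I_t(b) \le I_0(b) + t\,\cL(b,0).
\]
The right-hand side is continuous and thus bounded by some $M = M(w,t)$ on the compact set $[-w,w] \subseteq \K^\circ$; finiteness of $\cL(b,0)$ on $\K^\circ$ follows from Assumption \ref{assumption:Hamiltonian_for_domain_extension}\ref{item:assumptions_quotient_H} together with $H(b,0)=0$, and the required regularity on the boundary in the $\pm 1$-case is contained in Lemma \ref{lemma:lagrangian_is_C2_on_interior}. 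For any optimal $\gamma$ this forces $I_0(\gamma(0)) \le M$, and since $I_0 \in C^{1,\partial}(\K)$ has compact sublevel sets (see the remark following Definition \ref{definition:C_1_with_boundary}), $\gamma(0)$ lies in a compact $K_0 \subseteq \K^\circ$ depending only on $w$ and $t$.

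I would then invoke Proposition \ref{prop:trajectories_bounded_away_from_boundary} with some $u \in (w,\partial_+)$ chosen so that $[-w,w] \subseteq (-u,u)$. The subtle point is that the $v$ produced must simultaneously be large enough to accommodate $K_0$. Inspection of the proof of Proposition \ref{prop:trajectories_bounded_away_from_boundary} (rather than just its statement) shows that the conclusion is valid for any $v$ exceeding the threshold $z_n$ from Lemma \ref{lemma:delta_n}, so I enlarge $v$ to satisfy both $v > z_n$ and $K_0 \subseteq (-v,v)$. Since $\gamma(0) \in K_0 \subseteq (-v,v)$ and $\gamma(t) = b \in (-u,u)$, Proposition \ref{prop:trajectories_bounded_away_from_boundary} then delivers $\gamma(s) \in (-v,v)$ for all $s \in [0,t]$.

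\textbf{Part (b).} The key input I would use is that the end momentum $\eta(t)$ of any optimal trajectory for $I_t(b)$ is an element of the superdifferential $D^+ I_t(b)$. This is supplied by Proposition \ref{proposition:push_forward_graph_contains_reachable_grads} combined with the local semi-concavity in Theorem \ref{theorem:preserviation_of_semi_concavity} and Lemma \ref{lemma:differentiability_and_subgraphs_linked_for_loc_semiconcave}, applied in the same manner as inside the proof of Theorem \ref{theorem:equivalences_differentiability}. Granted this, Theorem \ref{theorem:preserviation_of_semi_concavity} provides
\[
\sup_{a \in (0,\partial_+)} \inf_{b \ge a} \inf D^+ I_t(b) = \infty,
\]
and the symmetric statement at $\partial_-$. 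Given $m > 0$, I choose $v \in (0,\partial_+)$ so large that $\inf D^+ I_t(b) \ge m$ for $b > v$ and $\sup D^+ I_t(b) \le -m$ for $b < -v$; the conclusion then follows from $\eta(t) \ge \inf D^+ I_t(b)$ (respectively $\eta(t) \le \sup D^+ I_t(b)$).

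\textbf{Main obstacle.} The genuinely technical step is the compatibility of parameter choices in (a): one must open up the proof of Proposition \ref{prop:trajectories_bounded_away_from_boundary} to see that $v$ can be taken arbitrarily large beyond the threshold $z_n$. Part (b) is essentially immediate once the membership $\eta(t) \in D^+ I_t(b)$ is recognised from the machinery already set up in Section \ref{section:regularity}.
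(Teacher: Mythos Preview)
Your argument for (a) is correct and is in fact more explicit than the paper's: the paper simply cites Proposition~\ref{prop:trajectories_bounded_away_from_boundary} together with Propositions~\ref{proposition:optimizers_equiv_hamflow_R} or~\ref{proposition:optimizers_equiv_hamflow_CW}\ref{item:optimizers_satisfy_hamilton_equations}, whereas you spell out why the starting point $\gamma(0)$ lies in a fixed compact set and why $v$ may be enlarged beyond the threshold $z_n$. This is a legitimate and useful unpacking.

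Your argument for (b), however, is circular. You invoke the boundary statement
\[
\sup_{a \in (0,\partial_+)} \inf_{b \ge a} \inf D^+ I_t(b) = \infty
\]
from Theorem~\ref{theorem:preserviation_of_semi_concavity}. That part of Theorem~\ref{theorem:preserviation_of_semi_concavity} is nothing but Lemma~\ref{lemma:exploding_derivatives_at_boundary}, and the proof of Lemma~\ref{lemma:exploding_derivatives_at_boundary} explicitly calls Proposition~\ref{proposition:optimizers_bounded_away_from_boundary}\ref{item:optimal_curves_end_far_from_zero} --- precisely the statement you are trying to prove. (Your use of Proposition~\ref{proposition:push_forward_graph_contains_reachable_grads}(a) to get $\eta(t)\in D^+ I_t(b)$ only depends on part~(a), so that step alone would be fine; the circularity is in the appeal to the boundary blow-up of $D^+ I_t$.)

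The paper avoids this by arguing directly on the Hamiltonian flow, never passing through $D^+ I_t$. Given $m$, one fixes $n$ with $q_n^+\ge m$, $q_n^-\le -m$, and then chooses $v>z_n$ large enough that $I_0'>q_n^+$ on $(v,\partial_+)$ and $I_0'<q_n^-$ on $(\partial_-,-v)$, which is possible since $I_0\in C^{1,\partial}(\K)$. For $b>v$ one then splits into two cases. If $\gamma(0)>v$, the initial datum $(\gamma(0),I_0'(\gamma(0)))$ already lies in the preserved quadrant $\upq_{y_n^+,q_n^+}$, so $\eta(t)\ge q_n^+$. If $\gamma(0)\le v$, the trajectory starts in the complement of $\{x\ge v\}\times(-\infty,q_n^+]$, which is itself preserved by Lemma~\ref{lemma:delta_n}; since $\gamma(t)=b\ge v$, the endpoint must have $\eta(t)\ge q_n^+$. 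This direct flow argument is what you should replace your part~(b) with.
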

\begin{proof}
	
	\ref{item:optimal_curves_bounded_away_from_boundary} follows from Proposition \ref{prop:trajectories_bounded_away_from_boundary} 
 together with  Proposition \ref{proposition:optimizers_equiv_hamflow_R} (for $\K = \R$) or together with Proposition \ref{proposition:optimizers_equiv_hamflow_CW}\ref{item:optimizers_satisfy_hamilton_equations} (for $\K = [-1,1]$).

	\ref{item:optimal_curves_end_far_from_zero} Fix $n$ such that $q_n^+ \geq m$ and $q_n^- \leq - m$. As $I_0 \in C^{1,\partial}(\K)$, there is a $v\in (0,\partial_+)$ such that 	
	\begin{equation} \label{eqn:rate_function_and_starting_momentum}
	\sup_{x\in (\partial_-,-v)} I'_0(x) \leq q_n^-, \qquad \inf_{x\in (v,\partial_+)} I_0'(x) \geq q_n^+,
	\end{equation}
Let $z_n$ be as in Lemma \ref{lemma:delta_n}. We may and do assume that $v> z_n$. 
	
	Let $b> v$ and $\gamma$ the optimal trajectory for $I_b(b)$ and $\eta$ its dual trajectory. We show that $\eta(t) \ge y_n^+$ (in a similar way one proves that $b< -v$ implies $\eta(t) \le y_n^-$). 
	
	Suppose that $\gamma(0) > v$. Then $I_0'(\gamma(0)) > q_n^+$ by \eqref{eqn:rate_function_and_starting_momentum}. 
As $\gamma(0) > 1- \delta > 1- \delta_n > y_n^+$ (see \eqref{eqn:bound_delta_n_and_q_n}), $(\gamma,\eta)$ starts and, therefore, stays in the quadrant $\upq_{y_n^+,q_n^+}$, which implies $\eta(t) \ge q_n^+$. 
	
	Suppose that $\gamma(0) \leq v$. 	
	Because the complement of the region $\{x: x\ge v\} \times (-\infty,q_n^+]$ is preserved (by Lemma \ref{lemma:delta_n}), the Hamiltonian trajectory $(\gamma,\eta)$ cannot enter this region. Because $\gamma(t) = b \ge v > z_n \ge y_n^+$, this implies that $\eta(t) \geq q_n^+$.	
\end{proof}

\section{Properties of the time-evolved rate function}\label{section:proofs_regularity_rate_function}

To rigorously study the time-dependent rate function $u(t,x) := I_t(x)$ and $x \mapsto I_t(x)$  for both the $\pm 1$-space-model and the $\R$-space-model, we establish local semi-concavity and the boundary behaviour. The local semi-concavity implies we can use sub-gradients as in the classical theory and conclude that the push-forward of the gradient of the starting rate function  under the Hamiltonian flow  contains the reachable gradients of $I_t$.

\begin{lemma} \label{lemma:classically_semi_concave}
Assume Assumption \ref{assumption:general_ones_on_R_and_CW}. Then $I_t$ is locally semi-concave on $\K^\circ$ for all $t \geq 0$. Moreover, $(t,x) \mapsto I_t(x) = u(t,x)$ is  locally semi-concave on $(0,\infty) \times \K^\circ$. 
\end{lemma}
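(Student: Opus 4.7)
The plan is to follow the variational argument of Cannarsa--Sinestrari \cite[Theorem 6.4.3]{CaSi04}, in which semi-concavity of a value function is obtained from the representation $I_t(b) = \inf_\gamma J(\gamma)$ by testing against explicit perturbations of an optimal trajectory and Taylor-expanding the $C^2$ Lagrangian. The key new difficulty compared with the setting of \cite{CaSi04} is that $\cL$ need not be $C^2$ up to the boundary of $\K \times \R$ (see Lemma \ref{lemma:lagrangian_is_C2_on_interior}), so we must first confine optimal trajectories to a compact subset of the smooth region. This confinement has already been prepared in Proposition \ref{proposition:optimizers_bounded_away_from_boundary}\ref{item:optimal_curves_bounded_away_from_boundary}.

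First I would fix a compact set $K \subseteq \K^\circ$ and a closed interval $[t_1,t_2] \subseteq (0,\infty)$ and apply Proposition \ref{proposition:optimizers_bounded_away_from_boundary}\ref{item:optimal_curves_bounded_away_from_boundary} to produce a compact $K' \subseteq \K^\circ$ that contains every optimal trajectory $\gamma$ for $I_t(b)$ with $b \in K$, $t \in [t_1,t_2]$. By Propositions \ref{proposition:optimizers_equiv_hamflow_R} and \ref{proposition:optimizers_equiv_hamflow_CW} such $\gamma$ is $C^2$ and, together with its dual $\eta$, solves the Hamilton equations with $\eta(0) = I_0'(\gamma(0))$; since $\gamma(0)\in K'$ the starting momentum is bounded, and then conservation of $H$ on $K'$ bounds $\eta$, hence $\dot\gamma(s)=\partial_p H(\gamma(s),\eta(s))$ uniformly. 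Therefore $(\gamma(s),\dot\gamma(s))$ lies in a fixed compact subset $\widetilde K$ of the $C^2$ domain of $\cL$.

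Next, for $b \in K$, $t \in [t_1,t_2]$ and small $h$, pick an optimizer $\gamma^*$ for $I_t(b)$ and define the competitor paths $\gamma_\pm(s) := \gamma^*(s) \pm (s/t) h$ on $[0,t]$, which satisfy $\gamma_\pm(0) = \gamma^*(0)$ and $\gamma_\pm(t) = b \pm h$. For $|h|$ small, uniformly in $(b,t)$, these curves stay in a slight enlargement of $\widetilde K$. The initial-cost contributions cancel, so
\begin{equation*}
I_t(b+h) + I_t(b-h) - 2 I_t(b) \;\leq\; \int_0^t \bigl[\cL(\gamma_+,\dot\gamma_+) + \cL(\gamma_-,\dot\gamma_-) - 2\cL(\gamma^*,\dot\gamma^*)\bigr] \dd s.
\end{equation*}
A second-order Taylor expansion of $\cL$ around $(\gamma^*(s),\dot\gamma^*(s))$, using uniform $C^2$ bounds on the enlarged $\widetilde K$, dominates the integrand by $M\bigl[(s/t)^2 + (1/t)^2\bigr] h^2$; integrating yields $I_t(b+h)+I_t(b-h) - 2I_t(b) \leq C h^2$ with $C$ independent of $b \in K$. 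This is local semi-concavity of $I_t$ on $\K^\circ$ for each $t > 0$.

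For the joint semi-concavity of $u(t,x) = I_t(x)$ on $(0,\infty)\times\K^\circ$ I would run the same construction but also perturb in time: given $(t,b)$ and small $(\tau,h)$, rescale $\gamma^*$ from $[0,t]$ to $[0,t\pm\tau]$ via $\gamma^*(\cdot\, t/(t\pm\tau))$ and add the linear shift $(s/(t\pm\tau))h$, giving paths ending at $b\pm h$ at time $t\pm\tau$. Since $\cL$ is autonomous and $C^2$ on $\widetilde K$, a Taylor expansion of the same flavour yields $u(t+\tau,b+h) + u(t-\tau,b-h) - 2u(t,b) \leq C(\tau^2 + h^2)$. The main obstacle throughout is keeping the perturbed trajectories inside the $C^2$ domain of $\cL$ and obtaining constants that are uniform on $K \times [t_1,t_2]$; this is exactly what Proposition \ref{proposition:optimizers_bounded_away_from_boundary}\ref{item:optimal_curves_bounded_away_from_boundary} combined with energy conservation delivers, so the estimate reduces to a routine but careful Taylor computation.
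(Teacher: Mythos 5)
Your proposal follows the same variational route the paper takes: confine optimizers via Proposition \ref{proposition:optimizers_bounded_away_from_boundary}\ref{item:optimal_curves_bounded_away_from_boundary}, perturb the optimal path by a linear interpolation of the endpoint displacement, Taylor-expand the $C^2$ Lagrangian, and close the joint $(t,x)$ case by also rescaling in time as in \cite[Corollary 6.4.4]{CaSi04}. One place where you are actually more careful than the paper's own proof is the uniformity of the semi-concavity constant for $\cL$: you derive a uniform bound on $\dot\gamma$ from the Hamilton equations, the bound $\eta(0)=I_0'(\gamma(0))$ and conservation of $H$, so that $(\gamma(s),\dot\gamma(s))$ stays in a fixed compact subset of the $C^2$-domain of $\cL$ on which the Hessian of $\cL$ is bounded. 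The paper instead asserts that ``$\cL$ is semi-concave on $[-q,q]\times\R$,'' which is not literally true for the models in question (e.g. for the diffusion model $\partial_x^2\cL(x,v)=(v+W'(x))W'''(x)+W''(x)^2$ grows linearly in $v$, so only local semi-concavity holds); what saves the argument is precisely the confinement of $\dot\xi$ that you spell out. The remaining difference — you work with a midpoint inequality while the paper carries a general $\lambda$ — is cosmetic, since midpoint semi-concavity of a continuous function on a convex set upgrades to the full estimate \eqref{eqn:semi-concave}.
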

	\begin{proof}
Let $w\in (0,\partial_+)$ and $v\in (w,\partial_+)$ be as in Proposition \ref{proposition:optimizers_bounded_away_from_boundary}.
Let $x,y \in \K^\circ$ be such that $x<y$, $[x,y] \subseteq[-w,w]$ and $q:=v+y-x<\partial_+$. 
We show that $I_t$ is semi-concave on $[x,y]$ (this is sufficient as for all $z\in \K^\circ$ there exist $w,x,y$ as above with $z\in [x,y]$). 

Let $\lambda \in [0,1]$ and $\xi$ be an optimal trajectory for $I_t(\lambda x + (1-\lambda)y)$. 
By the choice of $v$, we have that $\xi$ attains its values in $[-v,v]$.
In addition, consider the trajectories $\xi_x, \xi_y$ defined by
		\begin{equation*}
		\xi_x (s) = \xi(s) + \frac{s}{t} 
			(1-\lambda) 
		\left(x-y\right), \qquad 	\xi_y (s) = \xi(s) + \frac{s}{t}
		\lambda 
		\left(y-x\right).
		\end{equation*}
		Note that $\xi_x(t) = x$ and $\xi_y(t) = y$, $\lambda \xi_x + (1-\lambda) \xi_y = \xi$ and $\xi_x(0)=\xi(0)=\xi_y(0)$. 
		The trajectories $\xi_x,\xi_y$ and $\xi$ take their values in $[-q,q]$. As $\cL$ is $C^2$ on $\bK^\circ \times \R$ by Lemma \ref{lemma:lagrangian_is_C2_on_interior}, we find that $\cL$ is semi-concave on $[-q,q] \times \R$. Thus the first statement follows similarly as in the proof of \cite[Theorem 6.4.3]{CaSi04}:
\begin{align*}
& \lambda I_t(x) + (1-\lambda) I_t(y) - I_t\left(\lambda x + (1-\lambda)y\right) \\
& \le   \int_0^t \lambda \cL(\xi_x(s), \dot \xi_x(s) )  
+ (1- \lambda)  \cL(\xi_y(s), \dot \xi_y(s) )  
+ \cL(\xi(s), \dot \xi(s) ) \dd s \\
& \le \int_0^t C \frac{\lambda(1-\lambda)}{2}   \frac{s^2+1}{t^2} ( (1-\lambda)^2 + \lambda^2)|y-x|^2 \dd s
 = 2C \left( \frac{t}{3}+ \frac{1}{t} \right) \frac{\lambda(1-\lambda)}{2} |x-y|^2.
\end{align*}			
		 The `moreover' statement follows along the same lines as the proof of \cite[Corollary 6.4.4]{CaSi04}.

	\end{proof}

\begin{proposition} \label{proposition:push_forward_graph_contains_reachable_grads}
Assume assumption \ref{assumption:general_ones_on_R_and_CW}. Let $t>0$. 
	
	\begin{enumerate}
		\item Fix $b \in \K^\circ$ and let $\gamma$ be an optimizer for $I_t(b)$ and let $\eta$ be the dual trajectory, then $\eta(t) \in D^+ I_t(b)$.
		\item For any $b \in \K^\circ$ the map that associates to $(p_t,p) \in D^* u(t,b)$ the Hamiltonian trajectory $(\gamma,\eta)$ with terminal condition $\gamma(t) = b$ and $\eta(t) = p$ provides a one-to-one correspondence between $D^* u(t,b)$ and optimizers of $I_t(b)$.
		\item We have
		\begin{equation*}
		\left\{(x,p) \, \middle| \, x \in \K^\circ, \, p \in D^* I_t(x)\right\} \subseteq \cG_t.
		\end{equation*}
		For any $b \in \K^\circ$ and any element $p \in D^* I_t(b)$, the Hamiltonian trajectory $(\gamma,\eta)$ with terminal condition $\gamma(t) = b$ and $\eta(t) = p$ yields an optimal trajectory $\gamma$ for $I_t(b)$.
	\end{enumerate}
\end{proposition}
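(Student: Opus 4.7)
My plan combines four structural results established in the preceding sections: (i) any optimizer paired with its dual trajectory solves the Hamilton equations and remains in the interior $\K^\circ$ (Propositions \ref{proposition:optimizers_equiv_hamflow_R}, \ref{proposition:optimizers_equiv_hamflow_CW}, \ref{proposition:optimizers_bounded_away_from_boundary}), (ii) $u$ and $I_t$ are locally semi-concave (Theorem \ref{theorem:preserviation_of_semi_concavity}) so that Lemma \ref{lemma:differentiability_and_subgraphs_linked_for_loc_semiconcave} identifies differentiability with singleton superdifferentials, (iii) differentiability of $I_t$ at a point is equivalent to uniqueness of the optimizer (Theorem \ref{theorem:equivalences_differentiability}), and (iv) $\cI$ has compact sublevel sets (Lemma \ref{lemma:compact_sublevel_sets}), supplying the compactness needed to take limits of sequences of optimizers.

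For part (a), I will fix an optimizer $\gamma$ for $I_t(b)$ with dual $\eta$ and, for $y$ close to $b$, introduce the comparison path $\gamma_y(s) := \gamma(s) + \tfrac{s}{t}(y-b)$ so that $\gamma_y(0) = \gamma(0)$ and $\gamma_y(t) = y$. Since $\gamma$ is interior-valued with bounded velocity, for $|y-b|$ small the graph of $(\gamma_y, \dot\gamma_y)$ remains inside the region where $\cL$ is $C^2$ (Lemma \ref{lemma:lagrangian_is_C2_on_interior}). A second-order Taylor expansion of $\cL$ around $(\gamma, \dot\gamma)$, combined with an integration by parts using the Euler-Lagrange equation $\tfrac{d}{ds}\partial_v\cL(\gamma,\dot\gamma) = \partial_x\cL(\gamma,\dot\gamma)$ and the equality $\gamma_y(0) = \gamma(0)$, will yield
\begin{equation*}
I_t(y) - I_t(b) \le \int_0^t \bigl[ \cL(\gamma_y, \dot\gamma_y) - \cL(\gamma, \dot\gamma) \bigr] ds = \partial_v\cL(\gamma(t),\dot\gamma(t))(y-b) + O(|y-b|^2) = \eta(t)(y-b) + O(|y-b|^2),
\end{equation*}
which gives $\eta(t) \in D^+ I_t(b)$.

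For part (b), the forward direction starts from $(p_t, p) \in D^* u(t, b)$ and an approximating sequence $(t_k, b_k) \to (t,b)$ with $u$ differentiable at $(t_k, b_k)$ and $Du(t_k, b_k) \to (p_t, p)$. Semi-concavity combined with Theorem \ref{theorem:equivalences_differentiability} provides a unique optimizer $\gamma^k$ for $I_{t_k}(b_k)$; part (a) forces its terminal momentum $\eta^k(t_k)$ to equal $p^k := \partial_x u(t_k, b_k)$, while the Hamilton-Jacobi equation \eqref{eqn:intro_HJ}, valid pointwise at differentiability points of the viscosity solution $u$, forces $\partial_t u(t_k, b_k) = -H(b_k, p^k)$. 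Compactness (Lemma \ref{lemma:compact_sublevel_sets}) together with the interior confinement of Proposition \ref{proposition:optimizers_bounded_away_from_boundary}\ref{item:optimal_curves_bounded_away_from_boundary} will extract a subsequential limit $\gamma$ that optimizes $I_t(b)$ with $\eta(t) = p$ and $-H(b, p) = p_t$; injectivity of the map is immediate from uniqueness of solutions of Hamilton's equations with given terminal data. For surjectivity onto the set of optimizers, given an optimizer $\gamma$ with dual $\eta$, I will propagate Hamilton's equations from initial data $(x, I_0'(x))$ for $x$ in a neighbourhood of $\gamma(0)$, running to times $t'$ near $t$; on the complement of the collision set of the resulting characteristics, $u$ is classically $C^1$ with gradient agreeing with the Hamiltonian terminal data, so Alexandrov's theorem applied to the semi-concave $u$ supplies a dense set of differentiability points whose gradients converge to $(-H(b, \eta(t)), \eta(t))$.

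Part (c) will follow from (b): for $p \in D^* I_t(x)$ with approximating sequence $x_n \to x$, $I_t'(x_n) \to p$, $I_t$ differentiable at each $x_n$, let $\gamma_n$ be the unique optimizer for $I_t(x_n)$ (Theorem \ref{theorem:equivalences_differentiability}). Part (a) gives $\eta_n(t) = I_t'(x_n) \to p$ and Propositions \ref{proposition:optimizers_equiv_hamflow_R}/\ref{proposition:optimizers_equiv_hamflow_CW} give $\eta_n(0) = I_0'(\gamma_n(0))$; compactness extracts a convergent subsequence $\gamma_n \to \gamma$ and continuity of the Hamiltonian flow yields $(x, p) = (X_t^{x_0, I_0'(x_0)}, P_t^{x_0, I_0'(x_0)}) \in \cG_t$ with $x_0 := \gamma(0)$. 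The hard part will be the surjectivity step in (b): showing every optimizer is realized by an element of $D^* u(t,b)$ requires controlling the surface of characteristics near $\gamma$ and ruling out pathological caustic clustering, adapting the classical Cannarsa-Sinestrari machinery to the present setting where Hamilton trajectories can have finite lifetime. The interior confinement of optimizers established in Proposition \ref{proposition:optimizers_bounded_away_from_boundary} is the crucial input that makes this adaptation go through.
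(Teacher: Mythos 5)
Your strategy matches the paper's (the paper proves (a) and (b) by citing Cannarsa--Sinestrari Theorems 6.4.8 and 6.4.9, relying on the interior confinement from Proposition \ref{proposition:optimizers_bounded_away_from_boundary}, and proves (c) by the same approximation argument you describe). Your detailed proof of (a) via the comparison path $\gamma_y(s) = \gamma(s) + \tfrac{s}{t}(y-b)$ together with the Euler--Lagrange integration-by-parts is exactly the standard argument and is correct. Your proof of (c) is essentially identical to the paper's, modulo whether one invokes compactness of sublevel sets or continuous dependence on terminal data; both routes are valid and the paper uses the latter.

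The one place where you have a genuine gap is the surjectivity direction of (b). Your sketch propagates characteristics from initial data $(x, I_0'(x))$ with $x$ near $\gamma(0)$ and asserts that away from a collision set $u$ is $C^1$ with gradient given by the characteristic terminal data. This is not a valid step: $u$ is the infimum over \emph{all} admissible curves, not just those emanating from a neighbourhood of $\gamma(0)$, so other branches of $\cG_t$ may arrive at the same point and spoil differentiability even where your local characteristic surface is smooth. The standard argument, which is what Theorem 6.4.9 of Cannarsa--Sinestrari actually uses, is a restriction-uniqueness lemma: for any $0<s<t$ the curve $\gamma|_{[0,s]}$ is the \emph{unique} optimizer for $I_s(\gamma(s))$. (Sketch: if $\zeta$ were another such optimizer, the concatenation $\zeta*\gamma|_{[s,t]}$ would also optimize $I_t(b)$, hence be $C^2$ and satisfy Euler--Lagrange by Propositions \ref{proposition:optimizers_equiv_hamflow_R}/\ref{proposition:optimizers_equiv_hamflow_CW}; matching derivatives at $s$ forces $\dot\zeta(s)=\dot\gamma(s)$, and backward uniqueness of Hamilton's equations gives $\zeta=\gamma|_{[0,s]}$.) By Theorem \ref{theorem:equivalences_differentiability} this uniqueness makes $u$ differentiable at $(s,\gamma(s))$ with $\partial_x u(s,\gamma(s))=\eta(s)$ (by (a)) and $\partial_t u(s,\gamma(s))=-H(\gamma(s),\eta(s))$ (by Proposition \ref{proposition:solution_HJ_equation}); letting $s\uparrow t$ and using the interior confinement of $\gamma$ to stay inside $\K^\circ$ then gives $(-H(b,\eta(t)),\eta(t))\in D^*u(t,b)$, completing surjectivity. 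Replacing your caustic-avoidance sketch by this restriction-uniqueness argument closes the gap.
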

\begin{proof}
	(a) and (b) can be proven as \cite[Theorems 6.4.8 and 6.4.9]{CaSi04} using that optimizers are bounded away from the boundary by Proposition  \ref{proposition:optimizers_bounded_away_from_boundary} \ref{item:optimal_curves_bounded_away_from_boundary}.
	
	The proof of (c) uses a variation on the ideas in the proof of \cite[Theorem 6.4.9]{CaSi04}. Let $x \in \K^\circ$ and $p \in D^* I_t(x)$. By definition, there are $(x_k,p_k) \in \K^\circ \times \bR$ such that $(x_k,p_k) \rightarrow (x,p)$, $I'_t$ is differentiable at $x_k$ and $p_k = I'_t(x_k)$.
	
	\smallskip
	
	Let $\gamma_k : [0,t] \rightarrow \K$ be an optimizing trajectory for $I_t(x_k)$ and let $\eta_k$ be the dual trajectory. By Propositions \ref{proposition:optimizers_equiv_hamflow_R} and \ref{proposition:optimizers_equiv_hamflow_CW} $(\gamma_k,\eta_k)$ satisfies Hamilton's equations and $\eta_k(0) = I_0'(\gamma_k(0))$. \cite[Theorem 6.4.8]{CaSi04} implies that $(\gamma_k(t),\eta_k(t)) = (x_k,p_k)$. 
 	By continuous dependence on the final conditions, we have $(\gamma_k(s),\eta_k(s)) \rightarrow (\gamma(s),\eta(s))$ for all $s\in [0,t]$ (see \cite[Theorem 2.3.2]{Pe01}),
	 where $(\gamma,\eta)$ solves the Hamilton equations with $(\gamma(t),\eta(t)) = (x,p)$. 
	 By continuity of  $\partial_p H$, we also obtain  $\dot \gamma_k(s) \rightarrow \dot \gamma(s)$ for all $s\in [0,t]$. 	
	 We obtain that indeed $(x,p) \in \cG_t$. By continuity of $I_0,I_t$ and $\cL$ we find additionally that
	\begin{align*}
	I_t(x) & = \lim_{k \rightarrow \infty} I_t(x_k) \\
	& = \lim_{k \rightarrow \infty} I_0(\gamma_k(0)) + \int_0^t \cL(\gamma_k(s),\dot{\gamma}_k(s)) \dd s \\
	& =  I_0(\gamma(0)) + \int_0^t \cL(\gamma(s),\dot{\gamma}(s)) \dd s,
	\end{align*}
	thus $\gamma$ is an optimizer for $I_t(x)$.
\end{proof}

\begin{lemma}
	\label{lemma:exploding_derivatives_at_boundary}
Assume Assumption \ref{assumption:general_ones_on_R_and_CW}. Then 
	\begin{align*}
	\sup_{a \in  (0,\partial_+)} \inf_{b \ge a} \inf D^+ I_t(b) = \infty,
	\qquad
	\inf_{ a \in  (\partial_-,0)} \sup_{b \le a}  \sup D^+ I_t(b) = - \infty.
	\end{align*}
\end{lemma}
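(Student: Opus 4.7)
The plan is to combine three ingredients already established earlier in the paper: the terminal-momentum control for optimal trajectories ending near the boundary (Proposition~\ref{proposition:optimizers_bounded_away_from_boundary}\ref{item:optimal_curves_end_far_from_zero}), the identification of reachable gradients with terminal momenta of optimal trajectories (Proposition~\ref{proposition:push_forward_graph_contains_reachable_grads}), and the local semi-concavity of $I_t$ (Lemma~\ref{lemma:classically_semi_concave}) together with the structure result Lemma~\ref{lemma:differentiability_and_subgraphs_linked_for_loc_semiconcave}.

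I would treat the two statements symmetrically and only write out the first. Fix $M>0$. Applying Proposition~\ref{proposition:optimizers_bounded_away_from_boundary}\ref{item:optimal_curves_end_far_from_zero} with $m=M$ yields a $v\in(0,\partial_+)$ such that for every $b\in(v,\partial_+)$ and every optimal trajectory $\gamma$ for $I_t(b)$ with dual trajectory $\eta$, one has $\eta(t)\ge M$. Choose any $a\in(v,\partial_+)$. The claim is then that $\inf D^+ I_t(b)\ge M$ for every $b\ge a$, which immediately gives $\sup_{a}\inf_{b\ge a}\inf D^+I_t(b)\ge M$; since $M$ is arbitrary, the first identity follows.

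To justify the claim, fix such a $b$. By Proposition~\ref{proposition:push_forward_graph_contains_reachable_grads}(c), every $p\in D^*I_t(b)$ is realised as $\eta(t)$ for some optimal trajectory of $I_t(b)$, so the preceding step yields $D^*I_t(b)\subseteq[M,\infty)$. Since $I_t$ is locally semi-concave on $\K^\circ$ by Lemma~\ref{lemma:classically_semi_concave}, Lemma~\ref{lemma:differentiability_and_subgraphs_linked_for_loc_semiconcave} gives $D^+I_t(b)=\mathrm{co}\,D^*I_t(b)$, and since $b$ is one-dimensional this convex hull is a closed interval in $\R$ whose infimum coincides with $\inf D^*I_t(b)$. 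Hence $\inf D^+I_t(b)\ge M$, as required.

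For the second identity I would run the same argument, applying the other half of Proposition~\ref{proposition:optimizers_bounded_away_from_boundary}\ref{item:optimal_curves_end_far_from_zero} with $m=M$ to obtain $\eta(t)\le -M$ for all optimal trajectories ending at $b\in(\partial_-,-v)$, and conclude via $\sup D^+I_t(b)=\sup D^*I_t(b)\le -M$. The only point requiring minor care is the passage from a bound on $D^*I_t(b)$ to a bound on $\inf/\sup D^+I_t(b)$; this is immediate from the one-dimensionality of $b$ and the closed-convex-hull description above, so I do not expect any real obstacle, all the technical work having been done upstream in Propositions~\ref{proposition:optimizers_bounded_away_from_boundary} and~\ref{proposition:push_forward_graph_contains_reachable_grads}.
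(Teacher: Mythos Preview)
Your proposal is correct and follows essentially the same route as the paper: the paper's proof also invokes $D^+I_t(b)=\mathrm{co}\,D^*I_t(b)$ (from local semi-concavity), then Proposition~\ref{proposition:push_forward_graph_contains_reachable_grads}(c) to realise each reachable gradient as a terminal momentum of an optimal trajectory, and finally Proposition~\ref{proposition:optimizers_bounded_away_from_boundary}\ref{item:optimal_curves_end_far_from_zero} for the momentum bound. You supply a bit more detail (the one-dimensional convex-hull observation), but the logical structure is identical.
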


\begin{proof}
	First note that $D^+I_t(b) =  \text{co}D^*I_t(b)$. By Proposition \ref{proposition:push_forward_graph_contains_reachable_grads} (c) $p \in D^* I_t(b)$ corresponds to a trajectory $(\gamma,\eta)$ that solves the Hamilton equations with terminal condition $(\gamma(t),\eta(t)) = (b,p)$, the rest follows by Proposition \ref{proposition:optimizers_bounded_away_from_boundary} \ref{item:optimal_curves_end_far_from_zero}.
\end{proof}

We end this section with the proof of the statement that $u$ is a viscosity solution of the Hamilton-Jacobi equation \eqref{eqn:hamilton_jacobi}.

\begin{proof}[Proof of Proposition \ref{proposition:solution_HJ_equation}]
	The proof follows as in the proof of \cite[Theorem 6.4.5]{CaSi04}, using Proposition  \ref{proposition:optimizers_bounded_away_from_boundary} \ref{item:optimal_curves_bounded_away_from_boundary} to make sure that optimal trajectories remain in compact sets bounded away from the boundary and Lemma \ref{lemma:classically_semi_concave} to make sure that $u$ is locally semi-concave on $\bK^\circ \times (0,\infty)$. 
\end{proof}


\section{Topological structure of \texorpdfstring{$\cG_t$}{}} \label{section:topological_properties_of_Gt}

In this section we study the structure of $\cG_t$. Inspired by the result of Proposition \ref{lemma:after_lifetime_in_boundary} we introduce an extension of the Hamiltonian flow to include the points $(\partial_-,-\infty)$ and $(\partial_+,\infty)$. In this way we make sense of Hamilton paths starting at $(x,p)$ for times $t>t_{x,p}$. In addition, we extend $\cG$ to include the points at infinity and thus we can use the properties of connected sets to study the structure of $\cG_t$ in Proposition \ref{proposition:hamiltonian_paths_and_graph_structures}.



\begin{proposition} \label{proposition:extension_of_flow_with_bdr}
	Assume assumption \ref{assumption:general_ones_on_R_and_CW}. Let 
	\begin{equation*}
	E := \left\{(t,x,p) \in [0,\infty) \times \K \times \bR : t < t_{x,p} \right\}.
	\end{equation*} 
We extend the space $\K \times \R$ with two points and write $\fS := \K \times \bR \cup \{(\partial_-,-\infty)\} \cup \{(\partial_+,\infty)\}$. We equip $\fS$ with the topology generated by the open subsets of $\K \times \R$ together with the sets $\upq_{a,b}^\circ \cup \{(\partial_+,\infty)\}$  and $\downq_{a,b}^\circ \cup \{(\partial_-,-\infty)\}$ for $a\in \K^\circ$ and $b\in \R$. 
	
	\begin{enumerate}
		\item \label{item:lower_semi_cont_of_max_time} The map $(x,p) \mapsto t_{x,p}$ is lower semi-continuous and $E\mapsto \K \times \R$, $(t,x,p)\mapsto (X_t^{x,p},P_t^{x,p})$ is continuous. 
		\item \label{item:extension_of_flow} 
		The map $\Psi : [0,\infty) \times \fS \rightarrow \fS$ defined by\footnote{Note that $\lim_{s \uparrow t_{x,p}} (X_s^{x,p},P_s^{x,p})$ is welldefined by Lemma \ref{lemma:after_lifetime_in_boundary}.} 
			\begin{equation*}
	\Psi(t,x,p) := \begin{cases}
	(X_t^{x,p},P_t^{x,p}) & \text{if } (t,x,p) \in E, \\
	\lim_{s \uparrow t_{x,p}} (X_s^{x,p},P_s^{x,p}) & \text{if } (x,p) \in \K \times \bR, \, t \geq t_{x,p}, \\
	(\partial_-,-\infty) & \text{if } (x,p) = (\partial_-,-\infty), \\
	(\partial_+,\infty) & \text{if } (x,p) = (\partial_+,\infty). \\
	\end{cases}
	\end{equation*}
	is continuous. 
	\end{enumerate} 
\end{proposition}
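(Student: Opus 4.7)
I would invoke standard results from the theory of ordinary differential equations. Lower semi-continuity of $(x,p)\mapsto t_{x,p}$ (equivalently openness of $E$) and joint continuity of the flow on $E$ are contained in \cite[Theorem 2.3.2]{Pe01}, applied in the case $\K=[-1,1]$ to the extended Hamiltonian from Assumption \ref{assumption:Hamiltonian_for_domain_extension}\ref{item:assumption_extended_regularity}; that the trajectories of the extended system starting in $[-1,1]\times\bR$ never leave this strip is ensured by \eqref{eqn:push_away_from_boundary} in Lemma \ref{lemma:properties_obtained_for_CW_from_the_assumptions}, so the flow agrees with the one for $H$.

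\textbf{Plan for (b).} My plan is a case analysis on a convergent sequence $(t_k,x_k,p_k)\to(t,x,p)$ in $[0,\infty)\times \fS$, distinguishing by the nature of $(x,p)$. When $(x,p)\in\K\times\bR$ and $t<t_{x,p}$, the point $(t,x,p)$ lies in the open set $E$ and, by lower semi-continuity of $t_{\cdot,\cdot}$, eventually $(t_k,x_k,p_k)\in E$; continuity then reduces to part (a). When $(x,p)\in\K\times\bR$ and $t\geq t_{x,p}$ (so $t_{x,p}<\infty$), Lemma \ref{lemma:after_lifetime_in_boundary} gives $\Psi(t,x,p)\in\{(\partial_-,-\infty),(\partial_+,\infty)\}$; after possibly reflecting, I will treat only the case $\Psi(t,x,p)=(\partial_+,\infty)$.

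\textbf{The key step.} Given a basic neighbourhood $\upq^\circ_{a,b}\cup\{(\partial_+,\infty)\}$ of $(\partial_+,\infty)$, I choose $n$ large enough so that $y_n^+>a$ and $q_n^+>b$, which ensures $\upq_{y_n^+,q_n^+}\subseteq \upq^\circ_{a,b}$. By Lemma \ref{lemma:after_lifetime_in_boundary} applied to $(x,p)$, there exists $\tau\in(0,t_{x,p})$ such that $(X_\tau^{x,p},P_\tau^{x,p})\in \upq^\circ_{y_n^+,q_n^+}$. Joint continuity of the flow at $(\tau,x,p)$ furnishes, for all large $k$, both $\tau<t_{x_k,p_k}$ and $(X_\tau^{x_k,p_k},P_\tau^{x_k,p_k})\in \upq^\circ_{y_n^+,q_n^+}$. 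Preservation of $\upq_{y_n^+,q_n^+}$ under the Hamiltonian flow (Remark \ref{remark:assumptions_implies_preserved_quadrants}) then forces the trajectory through $(X_\tau^{x_k,p_k},P_\tau^{x_k,p_k})$ to remain in $\upq_{y_n^+,q_n^+}$ for the remainder of its maximal interval; since $t_k\to t\geq t_{x,p}>\tau$, for large $k$ one has $t_k\geq\tau$ and consequently $\Psi(t_k,x_k,p_k)\in \upq_{y_n^+,q_n^+}\cup\{(\partial_+,\infty)\}\subseteq \upq^\circ_{a,b}\cup\{(\partial_+,\infty)\}$, as required. The case $(x,p)=(\partial_+,\infty)$ is handled by the same mechanism: the convergence $(x_k,p_k)\to(\partial_+,\infty)$ in $\fS$ means that eventually $(x_k,p_k)\in \upq^\circ_{y_n^+,q_n^+}\cup\{(\partial_+,\infty)\}$, and preservation immediately implies $\Psi(t_k,x_k,p_k)$ lies in the same set; the case $(\partial_-,-\infty)$ is symmetric.

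\textbf{Main obstacle.} The delicate point is the finite-lifetime case, where different initial data $(x_k,p_k)$ may blow up at quite different times $t_{x_k,p_k}$, some possibly smaller than $t_k$ and some larger, so one cannot directly compare $\Psi(t_k,x_k,p_k)$ to $\Psi(t,x,p)$ via pointwise continuity of the ODE flow. The resolution is precisely the invariant quadrants supplied by Assumption \ref{assumption:Hamiltonian_for_domain_extension}\ref{item:assumptions_convergent_quadrants}: once a trajectory has been pushed inside $\upq^\circ_{y_n^+,q_n^+}$ before the common reference time $\tau$, the preserving property of $H$ prevents it from leaving, whether or not it subsequently hits its maximal time, and the definition of $\Psi$ at times beyond the lifetime already takes values at the point $(\partial_+,\infty)$, which by design lies in every chosen neighbourhood.
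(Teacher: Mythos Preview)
Your proposal is correct and follows essentially the same route as the paper. The paper's proof is organized identically: part (a) is a citation to standard ODE results (the paper uses \cite[Theorem 2.5.1]{Pe01} rather than 2.3.2, but either suffices), and part (b) is the same case split, with the finite-lifetime case handled by choosing a time $s<t_{x,p}$ at which the reference trajectory has already entered a preserved quadrant $\upq_{y_k^+,q_k^+}$, pulling the approximating trajectories into that quadrant by continuity of the flow at $s$, and then invoking preservation. One point you leave implicit that the paper makes explicit: to argue continuity via sequences you need that the topology on $\fS$ is first countable (or metrizable); the paper invokes Urysohn's metrization theorem for this, noting $\fS$ is second countable and normal.
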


\begin{proof}
\ref{item:lower_semi_cont_of_max_time} follows from \cite[Theorem 2.5.1]{Pe01} or \cite[Theorem 2.8]{Te12} for example (as those theorems apply to open sets, in case $\K = [-1,1]$ consider $(-1-\epsilon,1+\epsilon)$ for $\epsilon>0$ and $\hat H$ as in Assumption \ref{assumption:Hamiltonian_for_domain_extension}(a) instead of $K$ and $H$).

\ref{item:extension_of_flow}
The continuity on $E$ follows from \ref{item:lower_semi_cont_of_max_time}. 
The continuity of $\Psi$ at $(t,\partial_-,-\infty)$ and $(t,\partial_+,\infty)$ follows by the preservation of quadrants, as mentioned in Remark  \ref{remark:assumptions_implies_preserved_quadrants} and Remark \ref{remark:limit_Hamiltonian_path_boundaries}.  
Therefore we are left to prove continuity of $\Psi$ at $(t,x,p)$ with $t\ge t_{x,p}$. 
We may restrict ourselves to sequential continuity because the topology on $\fS$ is metrizable by Urysohn's metrization theorem (e.g. \cite[Theorem 4.1.10]{Ru05}); the topology is second countable and normal (we leave it to the reader to check those properties). 

Assume $(t,x,p)$ is such that $t\ge t_{x,p}$ and $\Psi(t,x,p) = (\partial_+,\infty)$ (the case $\Psi(t,x,p) = (\partial_-,-\infty)$ is similar). 
Suppose that $(x_n,p_n,t_n) \rightarrow (x,p,t)$. 

It is sufficient to show that for all $k\in\N$ there exists an $N$ such that for all $n\ge N$  
\begin{align}
\label{eqn:hamilton_in_quadrant_for_large_n}
 (X_{t_n}^{x_n,p_n}, P_{t_n}^{x_n,p_n}) \in \upq_{y_k,q_k} \cup \{(\partial_+,\infty)\}.
\end{align}
Let $k\in \N$ and $s< t_{x,p}$ be such that 
\begin{align*}
(X_s^{x,p},P_s^{x,p}) \in \upq_{y_k,q_k}.
\end{align*}
By \ref{item:lower_semi_cont_of_max_time} we have $\liminfn t_{x_n,p_n} \ge t_{x,p}$. 
Hence there exists a $N_0$ such that $t_{x_n,p_n} > s$ for all $n\ge N_0$. As $s< t_{x_n,p_n}$ and $s<t_{x,p}$ we have by \ref{item:lower_semi_cont_of_max_time}
\begin{align}
\label{eqn:specificly_used_convergence_hamilton_paths}
\limn (X_s^{x_n,p_n}, P_s^{x_n,p_n})  = (X_s^{x,p},P_s^{x,p}) \in \upq_{y_k,q_k}. 
\end{align}
Therefore there exists an $N_1>N_0$ such that $(X_s^{x_n,p_n}, P_s^{x_n,p_n})  \in \upq_{y_k,q_k}$ for all $n\ge N_1$. 
Let $N >N_1$ be such that $t_n > s$ for all $n\ge N$.
As $\upq_{y_k,q_k}$ is preserved under the Hamiltonian flow and $t_n >s$ for $n\ge N$ we obtain that \eqref{eqn:hamilton_in_quadrant_for_large_n} holds for all $n\ge N$. 
\end{proof}

\begin{proposition}
\label{proposition:hamiltonian_paths_and_graph_structures}
Assume Assumption \ref{assumption:general_ones_on_R_and_CW}. 
Define $E_{t} := \{ x\in  \K :  t_{x,I_0'(x)} > t \}$ and  let $\Phi_{t} : E_{t} \rightarrow \K \times \R$ be defined by $\Phi_{t}(x) :=\left(X_t^{x,I_0'(x)},P_t^{x,I_0'(x)}\right)$. Note that $\cG_t = \Phi_{t}(E_{t})$.

\begin{enumerate}

\item 
\label{item:E_t_is_open}
$E_t$ is open for all $t$, $\Phi_t$ is continuous and injective on $E_t$. 

\item
\label{item:max_interval_in_E_t}
Let $t>0$ and suppose that $(a,b) \subseteq E_t$, $a, b \notin E_t$. Then $\Phi_t(a,b)$ is path-connected and both $\lim_{z \downarrow a} \Phi_t(z)$ and $\lim_{z \uparrow b} \Phi_t(z)$ are in $\{(\partial_-,-\infty),(\partial_+,\infty)\}$. 

\item 
\label{item:consecutive_intervals}
If $(a,b), (c,d)$ are consecutive maximally (as in \ref{item:max_interval_in_E_t}) connected components in $E_t$, then $\lim_{z \uparrow b} \Phi_t(z) = \lim_{z \downarrow c} \Phi_t(z)$.

\item 
\label{item:Phi_at_boundary}
Let 	\begin{equation*}
	z_- := \inf \left\{ z \in \bK^\circ \, \middle| \, z \in E_t\right\}, \qquad z_+ := \sup \left\{ z \in \bK^\circ \, \middle| \, z \in E_t\right\}.
	\end{equation*}
Then  $\lim_{z \downarrow z_-} \Phi_t(z) = (\partial_-,-\infty)$ and $\lim_{z \uparrow z_+} \Phi_t(z) = (\partial_+,\infty)$ for all $t>0$ .

\item
\label{item:interval_connecting_boundaries}
There exists a maximal (as in \ref{item:max_interval_in_E_t}) connected component $(a,b)$ in $E_t$ that connects $(\partial_-,-\infty)$ to $(\partial_+,\infty)$, i.e., 
$
		\lim_{z \downarrow a} \Phi_t(z) =(\partial_-,-\infty) 
		$ and $
		 \lim_{z \uparrow b} \Phi_t(z) = (\partial_+,\infty).
$
\item 
\label{item:gamma_t}  
Let $\gamma_t : E_t \rightarrow \K$ be defined by $\gamma_t(x) := X_t^{x,I_0'(x)}$. 
 $\gamma_t$ is strictly increasing on $E_t$ if and only if  $\cG_t$ is a graph. 
\end{enumerate}
\end{proposition}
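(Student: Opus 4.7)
The plan proceeds part by part, leveraging throughout the continuity of the extended flow $\Psi$ from Proposition \ref{proposition:extension_of_flow_with_bdr}(b) and the preservation of the quadrants $\upq_{y_n^+, q_n^+}$ and $\downq_{y_n^-, q_n^-}$. For part (a), openness of $E_t$ follows by composing the lower semicontinuity of $(x,p) \mapsto t_{x,p}$ with the continuity of $x \mapsto (x, I_0'(x))$ guaranteed by $I_0 \in C^{1,\partial}(\K)$; continuity of $\Phi_t$ is inherited directly from the continuous dependence of the Hamiltonian flow on its initial data; injectivity is immediate since backwards uniqueness of solutions to the Hamilton equations applied to a common terminal value $\Phi_t(x_1) = \Phi_t(x_2)$ forces $x_1 = x_2$. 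For part (b), path-connectedness of $\Phi_t(a,b)$ is the continuous image of the connected interval $(a,b)$; for the endpoint limits, $a \notin E_t$ means $t_{a, I_0'(a)} \leq t$, so Lemma \ref{lemma:after_lifetime_in_boundary} places $\Psi(t, a, I_0'(a))$ in $\{(\partial_-, -\infty), (\partial_+, \infty)\}$, and continuity of $\Psi$ yields $\lim_{z \downarrow a} \Phi_t(z) = \Psi(t, a, I_0'(a))$. For part (c), on $[b, c]$ no point lies in $E_t$, so $z \mapsto \Psi(t, z, I_0'(z))$ is a continuous map from the connected interval $[b, c]$ into the two-point subspace $\{(\partial_-, -\infty), (\partial_+, \infty)\} \subseteq \fS$, which inherits the discrete topology; this forces the map to be constant, matching the one-sided limits.

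For part (d), when $z_- = \partial_-$ a sequence $z_n \downarrow \partial_-$ in $E_t$ satisfies $(z_n, I_0'(z_n)) \to (\partial_-, -\infty)$ in $\fS$ (since $I_0'(z) \to -\infty$), so continuity of $\Psi$ at this boundary point closes the argument. When $z_- > \partial_-$, the discrete-topology argument of (c) applies to $z \mapsto \Psi(t, z, I_0'(z))$ on the connected interval $(\partial_-, z_-]$, on which no point lies in $E_t$; for $z$ sufficiently near $\partial_-$ one has $(z, I_0'(z)) \in \downq_{y_n^-, q_n^-}$ for $n$ large, so the trapped trajectory must escape through $(\partial_-, -\infty)$, pinning the constant value of $\Psi(t, z, I_0'(z))$. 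The statement at $z_+$ is symmetric. For part (e), let $(a_0, b_0)$ be the leftmost maximal component of $E_t$; by (d) its left limit is $(\partial_-, -\infty)$; if the right limit at $b_0$ is also $(\partial_-, -\infty)$, apply (c) to jump to the next component on the right, whose left limit is again $(\partial_-, -\infty)$, and iterate; by the dual of (d) the rightmost component ends at $(\partial_+, \infty)$, so the iteration terminates at a component with the required mixed endpoint limits.

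For part (f), if $\gamma_t$ is strictly increasing then it is injective, so each first coordinate of $\cG_t$ is attained uniquely and $\cG_t$ is a graph. Conversely, if $\cG_t$ is a graph, injectivity of $\Phi_t$ from (a) propagates to $\gamma_t$; on each maximal component of $E_t$, $\gamma_t$ is continuous and injective, hence strictly monotonic. The bridging component from (e) carries $\gamma_t$ strictly increasingly from $\partial_-$ to $\partial_+$ and covers $\K^\circ$. Any additional component $(c, d)$ would, by (b), have endpoint limits in $\{\partial_-, \partial_+\}$; if equal, monotonicity combined with the common limit forces $\gamma_t$ constant, contradicting injectivity; if distinct, $\gamma_t$ covers $\K^\circ$ a second time and produces duplicated values already attained on the bridging component, again contradicting injectivity. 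Thus $E_t$ reduces to the single bridging component, on which $\gamma_t$ is strictly increasing.

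I expect part (d) in the case $z_- > \partial_-$ to be the main obstacle: one must identify precisely which of the two ``infinity'' points is the terminal target of the flow on the closed complement of $E_t$, and the argument interlocks the discrete-topology rigidity from the construction of $\fS$, the deterministic trapping inside $\downq_{y_n^-, q_n^-}$ for trajectories starting with sufficiently low momentum, and the continuity of $\Psi$ across the boundary. Once (d) is in hand, parts (e) and (f) are essentially combinatorial consequences of (b), (c), and injectivity.
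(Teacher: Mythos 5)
Your treatment of parts (a)--(d) and (f) is sound and follows essentially the same route as the paper: everything rests on the continuity of the extended flow $\Psi$ from Proposition \ref{proposition:extension_of_flow_with_bdr}, the preservation of the quadrants, and the observation that $\{(\partial_-,-\infty),(\partial_+,\infty)\}$ carries the discrete subspace topology in $\fS$. Part (e), however, has a genuine gap. You propose to start at ``the leftmost maximal component'' of $E_t$, pass to the next component on the right via (c) whenever the endpoint limits coincide, and appeal to ``the rightmost component'' to terminate. None of this is warranted a priori: $E_t$ is merely an open subset of $\K^\circ$, so its connected components may accumulate at an interior point, in which case no leftmost (or rightmost) component exists; and even when one does, the induction could run through infinitely many jumps without ever terminating.

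The repair stays within your toolkit. Let $f\colon[\partial_-,\partial_+]\to\fS$ be the continuous map $f(x):=\Psi(t,x,I_0'(x))$ (with the conventions $I_0'(\partial_\pm)=\pm\infty$), and set $A:=f^{-1}(\{(\partial_-,-\infty)\})$ and $B:=f^{-1}(\{(\partial_+,\infty)\})$. These are disjoint, closed (both boundary points are closed in $\fS$) and nonempty (since $f(\partial_\pm)=(\partial_\pm,\pm\infty)$), and $E_t=[\partial_-,\partial_+]\setminus(A\cup B)$. Put $\alpha:=\sup A$. Then $\alpha\in A$ by closedness and $\alpha<\partial_+$, so $\beta:=\inf\{x\in B: x>\alpha\}$ exists, lies in $B$, and satisfies $\beta>\alpha$. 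The interval $(\alpha,\beta)$ meets neither $A$ nor $B$, hence lies in $E_t$, while $\alpha,\beta\notin E_t$; it is therefore a maximal connected component, and continuity of $\Psi$ gives $\lim_{z\downarrow\alpha}\Phi_t(z)=f(\alpha)=(\partial_-,-\infty)$ and $\lim_{z\uparrow\beta}\Phi_t(z)=f(\beta)=(\partial_+,\infty)$. This produces the bridging component directly, without any induction over components; your argument for (f) then goes through unchanged.
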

\begin{proof}
Note that $I'_0$ is continuous and that $\Phi_t(x) = \Psi(t, x, I'_0(x))$ for $x\in E_t$, with $\Psi$ as in Proposition \ref{proposition:hamiltonian_paths_and_graph_structures}. 
By this one deduces \ref{item:E_t_is_open}, \ref{item:max_interval_in_E_t} and \ref{item:consecutive_intervals}. 
\ref{item:Phi_at_boundary} follows from preservation of quadrants and the fact that $I_0 \in C^{1,\partial}(\K)$. \ref{item:interval_connecting_boundaries} then follows as $\K \rightarrow \fS$ given by $ x \mapsto \Psi(t, x, I'_0(x))$ is continuous and connects $(\partial_-,-\infty)$ with $(\partial_+,\infty)$ by \ref{item:Phi_at_boundary}. 
\ref{item:gamma_t} follows from the previous. 
\end{proof}

\begin{remark}
\label{remark:overhangs_by_multiple_connected_components_E_t}
By Proposition \ref{proposition:hamiltonian_paths_and_graph_structures}\ref{item:interval_connecting_boundaries}, we see that if $E_t$ has at least two disconnected components, then $\cG_t$ has an overhang at an $x_0$.  
But in this situation, generally one cannot find $x_1,x_2$ at which $\cG_t$ has no overhang, with $x_1<x_0<x_2$,   so that we cannot apply Proposition \ref{proposition:graph_of_derivative_pushforward} to conclude that $I_t$ is not differentiable. 
\end{remark}

\section{Dynamics that preserve order} \label{section:abstract_convexity_preserving}


In this section, we give the proof of Theorem \ref{theorem:convexity_preserving_implies_diffb_preserv} and give sufficient conditions on $H$ for it to preserve order (in Proposition \ref{proposition:ordering_is_preserved},  see Definition \ref{definitions:H_preservation_convexity_I_convexity}\ref{item:preserves_order}). 
First we prove in Lemma \ref{lemma:ordering_is_preserved_basic} that if the order of the momentum is preserved along some time interval, that the order of the position is also preserved.  

%
%
%


\begin{lemma} \label{lemma:ordering_is_preserved_basic}
	Assume Assumption \ref{assumption:general_ones_on_R_and_CW}. Consider two pairs $(x_1,p_1),(x_2,p_2) \in \bK\times \bR$ and a time $t^* \in [0,\infty]$ with $t^* \leq t_{x_1,p_1} \wedge t_{x_2,p_2}$ such that $x_1 < x_2$ and $P_t^{x_1,p_1} < P_t^{x_2,p_2}$ for all $0 < t < t^*$.
	Then $X_t^{x_1,p_1} < X_t^{x_2,p_2}$ for all $t < t^*$.
\end{lemma}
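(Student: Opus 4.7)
The plan is to derive the conclusion by a simple contradiction argument based on the strict convexity of $p \mapsto H(x,p)$ guaranteed by Assumption~\ref{assumption:Hamiltonian_for_domain_extension}\ref{item:assumption_extended_regularity}. Define the continuous function
\begin{equation*}
f(t) := X_t^{x_2,p_2} - X_t^{x_1,p_1}, \qquad 0 \le t < t^*.
\end{equation*}
By hypothesis $f(0) = x_2 - x_1 > 0$, and the goal is to show $f(t) > 0$ for every $t \in [0, t^*)$.

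Suppose, for contradiction, that the set $\{t \in (0, t^*) : f(t) \le 0\}$ is nonempty, and let $t_0$ be its infimum. By continuity $f(t_0) = 0$, and $f(t) > 0$ on $[0, t_0)$. Hence $\dot f(t_0) \le 0$ (a right derivative argument, or simply the mean value theorem applied to $f$ on $[t_0 - \varepsilon, t_0]$). On the other hand, using the Hamilton equation $\dot X = \partial_p H(X,P)$ and the identity $X_{t_0}^{x_1,p_1} = X_{t_0}^{x_2,p_2}$ (call this common value $\bar x$),
\begin{equation*}
\dot f(t_0) = \partial_p H\!\left(\bar x, P_{t_0}^{x_2,p_2}\right) - \partial_p H\!\left(\bar x, P_{t_0}^{x_1,p_1}\right).
\end{equation*}
Since $t_0 \in (0, t^*)$, the hypothesis gives $P_{t_0}^{x_1,p_1} < P_{t_0}^{x_2,p_2}$, and strict convexity of $p \mapsto H(\bar x, p)$ (i.e.\ $\partial_p^2 H > 0$) makes $p \mapsto \partial_p H(\bar x, p)$ strictly increasing. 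Therefore $\dot f(t_0) > 0$, contradicting $\dot f(t_0) \le 0$.

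There is nothing particularly delicate here: the only ingredient beyond the definition of the Hamiltonian flow is the strict convexity of $H$ in $p$, which is built into Assumption~\ref{assumption:Hamiltonian_for_domain_extension}\ref{item:assumption_extended_regularity}, and the trivial observation that any first zero of a function that starts strictly positive must be approached with non-positive derivative. The statement holds up to $t = t^*$ because the strict inequality propagates: the minimality of $t_0$ ensures that on $[0,t_0)$ we are allowed to use the assumed strict inequality on $P$.
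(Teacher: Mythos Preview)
Your proof is correct and follows essentially the same approach as the paper: take the first time $t_0$ at which the two $X$-trajectories meet, observe that the derivative of their difference must be $\le 0$ there, and contradict this using the strict monotonicity of $p \mapsto \partial_p H(\bar x, p)$ together with the assumed ordering of the momenta. One small slip: you write ``right derivative argument'' where you mean the left derivative (you are approaching $t_0$ from below), but the conclusion $\dot f(t_0) \le 0$ is correct.
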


\begin{proof}
	Assume the contrary, that there is some $t < t^*$ such that $X_t^{x_1,p_1} \geq X_t^{x_2,p_2}$.	Then there exists a $0 < t_0 < t^*$ such that
	\begin{equation*}
	X^{x_1,p_1}_t< X^{x_2,p_2}_t \text{ for all } t< t_0 \text{ and }
	X^{x_1,p_1}_{t_0}= X^{x_2,p_2}_{t_0}=a.
	\end{equation*}
	Therefore
	$
	\partial_p H(a,P^{x_1,p_1}_{t_0}) = \dot{X}^{x_1,p_1}_{t_0} \geq  \dot{X}^{x_2,p_2}_{t_0}= \partial_p H(a,P^{x_2,p_2}_{t_0})
	$ 
	from which we conclude, by strict monotonicity of $\partial_p H$ in the $p$ variable that $P^{x_1,p_1}_{t_0} \geq P^{x_2,p_2}_{t_0}$ contradicting our assumption.
\end{proof}

\begin{proposition} \label{proposition:ordering_is_preserved}
Assume Assumption \ref{assumption:general_ones_on_R_and_CW}. 
Suppose that  $A \subseteq \K \times \bR$ is preserved under $H$ and that either (i) or (ii) holds: 
		\begin{enumerate}[label=\emph{(\roman*)}]
			\item 
			\label{item:partial_x_strictly_decr_in_x}
			for all $p$, we have $x \mapsto \partial_x H(x,p)$
			is strictly decreasing on  $\{x  \, : \,  (x,p) \in A\}$.
			\item 
			\label{item:partial_x_decreasing_in_x_p}
			the map $(x,p) \mapsto \partial_x H(x,p)$ is decreasing in both coordinates on the set $A$.
		\end{enumerate}
Then $H$ preserves order on $A$. 
\end{proposition}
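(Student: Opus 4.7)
The plan is to argue by contradiction, letting $\tau$ be the first time at which one of the two strict orderings fails, and using Lemma~\ref{lemma:ordering_is_preserved_basic} together with the monotonicity assumption on $\partial_x H$ to derive a contradiction. Fix $(x_1,p_1),(x_2,p_2) \in A$ with $x_1 < x_2$ and $p_1 < p_2$, and set $t^* = t_{x_1,p_1} \wedge t_{x_2,p_2}$. Since $A$ is preserved, both Hamiltonian trajectories remain in $A$ throughout their lifetimes. Define
\[
\tau := \inf\bigl\{ t \in (0,t^*) : X^{x_1,p_1}_t \ge X^{x_2,p_2}_t \text{ or } P^{x_1,p_1}_t \ge P^{x_2,p_2}_t \bigr\},
\]
with the convention $\inf \emptyset = t^*$. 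By continuity in the initial data (see Proposition~\ref{proposition:extension_of_flow_with_bdr}\ref{item:lower_semi_cont_of_max_time}), at the starting time both strict orderings do hold on some small interval $(0,\delta)$, so $\tau > 0$. The aim is to show $\tau = t^*$; suppose, for contradiction, that $\tau < t^*$. Then both strict inequalities hold on $[0,\tau)$, and at least one equality holds at $\tau$ by continuity.

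First I would localize the break to the momentum coordinate. If $X^{x_1,p_1}_\tau = X^{x_2,p_2}_\tau$, then the argument in the proof of Lemma~\ref{lemma:ordering_is_preserved_basic} (the strict monotonicity of $p \mapsto \partial_p H(x,p)$ coming from Assumption~\ref{assumption:Hamiltonian_for_domain_extension}\ref{item:assumption_extended_regularity}) would already force $P^{x_1,p_1}_\tau \ge P^{x_2,p_2}_\tau$; combining this with the strict inequality on $[0,\tau)$ gives $P^{x_1,p_1}_\tau = P^{x_2,p_2}_\tau$ as well. Thus in every case one has $P^{x_1,p_1}_\tau = P^{x_2,p_2}_\tau =: p_\ast$. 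Applying Lemma~\ref{lemma:ordering_is_preserved_basic} on $(0,\tau)$ (where the strict $P$-ordering holds) yields $X^{x_1,p_1}_t < X^{x_2,p_2}_t$ for all $t < \tau$, so by continuity $X^{x_1,p_1}_\tau \le X^{x_2,p_2}_\tau$.

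Now consider $f(t) := P^{x_2,p_2}_t - P^{x_1,p_1}_t$, which is positive on $[0,\tau)$ and satisfies $f(\tau)=0$, hence necessarily $\dot f(\tau) \le 0$. By the Hamilton equations,
\[
\dot f(t) \;=\; \partial_x H\bigl(X^{x_1,p_1}_t, P^{x_1,p_1}_t\bigr) - \partial_x H\bigl(X^{x_2,p_2}_t, P^{x_2,p_2}_t\bigr).
\]
Under hypothesis (i), at $t=\tau$ the two momenta agree, so $\dot f(\tau) = \partial_x H(X^{x_1,p_1}_\tau,p_\ast) - \partial_x H(X^{x_2,p_2}_\tau,p_\ast) \ge 0$ by strict monotonicity of $x \mapsto \partial_x H(x,p_\ast)$ and $X^{x_1,p_1}_\tau \le X^{x_2,p_2}_\tau$, with equality only if $X^{x_1,p_1}_\tau = X^{x_2,p_2}_\tau$. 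Combined with $\dot f(\tau)\le 0$, this forces equality of both position and momentum at $\tau$, and uniqueness of solutions to the Hamilton equations then forces the two full trajectories to coincide, contradicting $(x_1,p_1) \neq (x_2,p_2)$. Under hypothesis (ii), for all $t \in [0,\tau)$ we have $X^{x_1,p_1}_t < X^{x_2,p_2}_t$ and $P^{x_1,p_1}_t < P^{x_2,p_2}_t$, so monotonicity of $\partial_x H$ in each coordinate on $A$ gives $\dot f(t) \ge 0$ on $[0,\tau)$. Thus $f$ is non-decreasing on $[0,\tau]$, so $f(\tau) \ge f(0) = p_2 - p_1 > 0$, contradicting $f(\tau)=0$.

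The main point that requires care is the reduction to "the momentum ordering breaks first", which is the place where Assumption~\ref{assumption:Hamiltonian_for_domain_extension}\ref{item:assumption_extended_regularity} (strict convexity in $p$) is genuinely used; once that reduction is in hand, cases (i) and (ii) are handled by two complementary but short arguments—a pointwise sign check in (i), and a monotonicity argument for $f$ on the whole interval in (ii). No additional compactness or boundary analysis is needed because $A$ being preserved keeps both trajectories inside the region where the hypotheses apply.
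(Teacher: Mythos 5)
Your proof is correct and takes essentially the same approach as the paper: both argue by contradiction from the first failure time of one of the two strict orderings, use the strict monotonicity of $\partial_p H$ in $p$ to rule out the position ordering breaking before the momentum ordering, and then handle hypotheses (i) and (ii) by a pointwise sign check at the failure time and an integral/monotonicity estimate over the whole interval, respectively. The only organizational difference is that the paper dispatches the ``position-equality first'' case immediately as a contradiction to the still-strict momentum ordering, whereas you fold it into the same $f$-argument, which is a harmless detour.
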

	\begin{proof}
		Assume the contrary, that there is some $s <t_{x_1,p_1} \wedge t_{x_2,p_2}$
		 such that $X_{s}^{x_1,p_1} \geq X_{s}^{x_2,p_2}$ or $P_{s}^{x_1,p_1} \geq P_{s}^{x_2,p_2}$. We assume that $t_0$ is the smallest of such times $s$. Then we have exactly one of the following two situations as the Hamiltonian trajectories do not meet:
		\begin{enumerate}
			\item $P^{x_1,p_1}_t< P^{x_2,p_2}_t $ for all $t \leq t_0$ and
			\begin{equation*}
			X^{x_1,p_1}_t< X^{x_2,p_2}_t \text{ for all } t< t_0 \text{ and }
			X^{x_1,p_1}_{t_0}= X^{x_2,p_2}_{t_0}=a.
			\end{equation*}
			\item $X^{x_1,p_1}_t< X^{x_2,p_2}_t $ for all $t \leq t_0$ and
			\begin{equation}\label{eqn:P_1_less_then_P_2_all_t_small}
			P^{x_1,p_1}_t< P^{x_2,p_2}_t \text{ for all } t< t_0 \text{ and }
			P^{x_1,p_1}_{t_0}= P^{x_2,p_2}_{t_0}=q.
			\end{equation}
		\end{enumerate}
		If (a) holds then we have a contradiction by Lemma \ref{lemma:ordering_is_preserved_basic}. If (b) holds we first proceed by using variant \ref{item:partial_x_strictly_decr_in_x}.  
		As in the proof of the lemma above, we find
		$
		- \partial_x H(X^{x_1,p_1}_{t_0}, p)=\dot{P}^{x_1,p_1}_{t_0}\geq  \dot{P}^{x_2,p_2}_{t_0}= -\partial_x H(X^{x_2,p_2}_{t_0}, p)
		$ 
		which by the assumed strict monotonicity of	$\partial_x H$ on $A$ implies that $X^{x_1,p_1}_{t_0} > X^{x_2,p_2}_{t_0}$.	This is in contradiction with (b). 
		
		Next, we use variant \ref{item:partial_x_decreasing_in_x_p}. We find
		\begin{equation*}
		p_2 - p_1 = P_{0}^{x_2,p_2} - P_{0}^{x_1,p_1} = \int_{0}^{t_0} \partial_x H(X_{s}^{x_2,p_2},P_{s}^{x_2,p_2}) - \partial_x H(X_{s}^{x_1,p_1},P_{s}^{x_1,p_1})  \dd s.
		\end{equation*}
		By assumption, the left-hand-side is strictly greater than $0$.	For $0 \leq s \leq t_0$, however, we have $X_{s}^{x_1,p_1} < X_{s}^{x_2,p_2}$ and $P_{s}^{x_1,p_1} < P_{s}^{x_2,p_2}$, which by \ref{item:partial_x_decreasing_in_x_p} is non-positive. 
	\end{proof}

\begin{proof}[Proof of Theorem \ref{theorem:convexity_preserving_implies_diffb_preserv}]

For both \ref{item:conv_implies_diffb} and \ref{item:conv_at_infinity_implies_short_time} we use the following for $t>0$. 
By Proposition \ref{proposition:hamiltonian_paths_and_graph_structures}\ref{item:gamma_t} it follows that if $\gamma_t$ is strictly increasing, then $\cG_t$ is a graph and thus $I_t \in C^{1,\partial}(\K)$ by Theorem \ref{theorem:equivalences_differentiability}. 

Theorem \ref{theorem:convexity_preserving_implies_diffb_preserv}\ref{item:conv_implies_diffb} follows then immediately from Proposition \ref{proposition:ordering_is_preserved} as convexity of a differentiable function implies that its derivative is increasing. 

Theorem \ref{theorem:convexity_preserving_implies_diffb_preserv}\ref{item:conv_at_infinity_implies_short_time}. 
Let $a,b>0$, $a\in \K^ \circ$ be such that $H$ preserves order on $\downq_{-a,-b,}$ and on $\upq_{a,b}$, $I_0'([-a,a]) \subseteq [-b,b]$ and $I_0$ is convex on $(\partial_-,-a]$ and on $[a,\partial_+)$. 

That $\gamma_t$ is strictly increasing outside $(-a,a)$ follows by the assumed strict convexity of $I_0$ and that $H$ preserves order by using Proposition \ref{proposition:ordering_is_preserved}. 
We will show that $\gamma_t$ is differentiable and that $\gamma_t'(x)>0$ for all $x\in [-a,a]$ and all $t$ small enough. 

As $[-a,a]\times [-b,b]$ is compact, by Proposition \ref{proposition:extension_of_flow_with_bdr}\ref{item:lower_semi_cont_of_max_time} 
there exists a $t^*>0$ such that $U:=(0,t^*) \times (-a,a) \times (-b,b) \subseteq E$. 
By \cite[Theorem 2.5.1]{Pe01} the map $\Psi$ is $C^1$ on $U$. 
As $x\mapsto (x,I'_0(x))$ is $C^1$ by assumption, the map $(t,x) \mapsto \Psi(t,x,I_0'(x))$ is $C^1$ on $(0,t^*)\times [-a,a]$. 
By definition $\gamma_t(x)$ is the first coordinate of $\Psi(t,x,I_0'(x))$ and therefore $(t,x)\mapsto \gamma_t(x)$ is also $C^1$ on $(0,t^*)\times [-a,a]$. 

Note that $t\mapsto \inf_{x\in [-a,a]} \gamma_t'(x)$ is continuous and equal to $1$ for $t=0$. Therefore there exists a $0<t_0\le t^*$ such that $\gamma_t'(x)>0$ for all $0<t<t_0$ and $x\in [-a,a]$. 
\end{proof}

\section{Occurrence of non-differentiability} \label{section:creation_of_overhangs_proofs}

In this section, we introduce the methods necessary for the proofs of Theorem \ref{theorem:loss_of_differentiability}.

\subsection{Linearization around stationary points} \label{section:linearization_around_stationary_points}

We start by studying linearizations of the Hamiltonian flows around stationary points. In contrast to homeomorphisms between flows, see e.g. \cite{Pe01}, $C^1$ diffeomorphisms are difficult to construct. We will refer to \cite{GuHaRa03} for one such construction. 

\begin{theorem} \cite[Theorem 3]{GuHaRa03} \label{theorem:existence_linearization}
Let $H$ satisfy Assumption \ref{assumption:general_ones_on_R_and_CW} and be $C^\infty$. Assume that $x_0$ is a stationary point and that $m \neq 0$ ($m$ as in \eqref{eqn:definition_m_and_c}). Then $H$ admits a $C^1$ linearization at $(x_0,0)$. 
\end{theorem}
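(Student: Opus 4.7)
The plan is to obtain the theorem as a corollary of the $C^1$-linearization theorem \cite[Theorem~3]{GuHaRa03} for smooth planar flows at a hyperbolic equilibrium, after verifying hyperbolicity of $(x_0,0)$ under the Hamiltonian vector field and then renormalizing the resulting diffeomorphism so that its derivative at $(x_0,0)$ equals $\bONE$.

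First I would verify that $(x_0,0)$ is a hyperbolic fixed point. Stationarity reads $\partial_p H(x_0,0)=0$ (by hypothesis) and $\partial_x H(x_0,0)=0$ (since Assumption \ref{assumption:Hamiltonian_for_domain_extension} gives $H(x,0)\equiv 0$, whose $x$-derivative vanishes identically). Differentiating $H(x,0)\equiv 0$ twice also yields $\partial_x^2 H(x_0,0)=0$, so the Jacobian of the Hamiltonian vector field at $(x_0,0)$ is the matrix $J=\begin{bmatrix}m & c\\ 0 & -m\end{bmatrix}$ from \eqref{eqn:linearised_system}, with eigenvalues $\pm m$. Since $m\neq 0$, these eigenvalues are real and nonzero, so $(x_0,0)$ is a hyperbolic saddle.

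Next I would invoke \cite[Theorem~3]{GuHaRa03} on the smooth Hamiltonian vector field, to obtain a $C^1$-diffeomorphism $\tilde\Psi$ between open neighbourhoods of $(x_0,0)$, fixing $(x_0,0)$, such that $\tilde\Psi$ carries trajectories of the nonlinear Hamiltonian flow $\phi_t$ to trajectories of the linear flow $L_t\colon y\mapsto (x_0,0)+e^{tJ}(y-(x_0,0))$ generated by \eqref{eqn:linearised_system}.

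Finally I would renormalize. Differentiating the conjugacy $\tilde\Psi\circ\phi_t=L_t\circ\tilde\Psi$ at $(x_0,0)$ shows that $A:=D\tilde\Psi(x_0,0)$ commutes with $e^{tJ}$ for every $t$, hence with $J$. Setting
\[
\Psi(q):=(x_0,0)+A^{-1}\bigl(\tilde\Psi(q)-(x_0,0)\bigr)
\]
therefore gives a $C^1$-diffeomorphism between neighbourhoods of $(x_0,0)$ with $\Psi(x_0,0)=(x_0,0)$ and $D\Psi(x_0,0)=\bONE$; and since $A^{-1}$ commutes with $e^{tJ}$, the conjugacy $\Psi\circ\phi_t=L_t\circ\Psi$ is preserved. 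This yields the $C^1$ linearization required by Definition \ref{definition:linearization}.

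The main obstacle is entirely the existence of $\tilde\Psi$, which is the content of \cite[Theorem~3]{GuHaRa03}; the verification of hyperbolicity and the renormalization step above are elementary given that result. In two dimensions one could alternatively appeal to Hartman's classical $C^1$-linearization theorem for hyperbolic saddles, the $C^\infty$ hypothesis here being more than sufficient.
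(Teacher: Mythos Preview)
Your proposal is correct and in fact more detailed than the paper's treatment: the paper gives no proof of this theorem at all, simply stating it with the citation \cite[Theorem~3]{GuHaRa03} as its source. Your argument supplies the two pieces that the citation alone leaves implicit. First, you verify that $(x_0,0)$ is a hyperbolic fixed point of the Hamiltonian vector field, so that the cited $C^1$-linearization result applies. Second, and more importantly, you address the renormalization to achieve $D\Psi(x_0,0)=\bONE$; the paper itself flags in Remark~\ref{remark:value_of_Jacobi_linearization} that this condition is non-standard and is needed for the subsequent analysis of $\cG_t$, yet never explains why it can be arranged. Your commutation argument (that $A=D\tilde\Psi(x_0,0)$ commutes with $e^{tJ}$, so post-composing with the affine map $y\mapsto (x_0,0)+A^{-1}(y-(x_0,0))$ preserves the conjugacy while forcing the derivative to be the identity) is the natural way to close this gap.
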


\begin{proof}[Proof of Theorem \ref{theorem:loss_of_differentiability} \ref{item:loss_linearization}]
We show that $\gamma_t'(x_0) <0$, which establishes an overhang at $x_0$ by Proposition \ref{proposition:hamiltonian_paths_and_graph_structures}\ref{item:gamma_t}. 

We first establish this for the case that $m \neq 0$. The linearized system \eqref{eqn:linearised_system} with $(\xi_x(0),\zeta_x(0)) := (x,x I''_0(x_0))$ is solved by
	\begin{equation*}
	\begin{bmatrix}
	\xi_x(t) \\ \zeta_x(t) 
	\end{bmatrix}
	= \exp \left(t \begin{bmatrix}
	m & c \\ 0 & -m
	\end{bmatrix}\right) 
	\begin{bmatrix}
	x \\ x I''_0(x_0)
	\end{bmatrix}
	= 
	x \begin{bmatrix}
	e^{mt}  + \frac{c}{2m} (e^{mt} - e^{-mt})   I''_0(x_0) \\ e^{-mt}  I''_0(x_0)
	\end{bmatrix}.
	\end{equation*}
$e^{mt} + \frac{c}{2m} (e^{mt} - e^{-mt} ) I''_0(x_0) <0 $ if $t>t_0$, whence  $\partial_x \xi_x(t) |_{x=x_0} < 0$.  
Denote by $\Theta$ the first component of $\Psi^{-1}$,  the inverse of the linearization  (see Definition \ref{definition:linearization}). As $D \Psi ( x_0,0) = \1$, the identity matrix, by the inverse function theorem we have $D \Psi^{-1} (x_0,0) = \1$ and thus $\partial_{\xi} \Theta(x_0,0) = 1$ and $\partial_{\zeta} \Theta(x_0,0) =0$. Therefore 
\begin{align*}
\gamma_t'(x_0) 
& = \partial_\xi \Theta( \xi_{x_0}(t), \zeta_{x_0}(t)) 
\partial_x
\xi_x(t) |_{x=x_0} \\
& \qquad + \partial_\zeta \Theta( \xi_{x_0}(t), \zeta_{x_0}(t))  
\partial_x \zeta_x(t) |_{x=x_0} < 0,
\end{align*}
	
	\smallskip
	
	In case $m= 0$, the linearized system \eqref{eqn:linearised_system} with $(\xi_x(0),\zeta_x(0)) := (x,x I''_0(x_0))$ is solved by
	\begin{equation*}
	\begin{bmatrix}
	\xi_x(t) \\ \zeta_x(t) 
	\end{bmatrix}
	= \exp \left(t \begin{bmatrix}
	0 & c \\ 0 & 0
	\end{bmatrix}\right) 
	\begin{bmatrix}
	x \\ x I''_0(x_0)
	\end{bmatrix}
	= 
	x \begin{bmatrix}
	1 + tc I''_0(x_0) \\  I''_0(x_0)
	\end{bmatrix}.
	\end{equation*}
As 	$1+tc I_0''(x_0) <0 $ for $t>t_0$, again we obtain $\partial_x \xi_x(t) < 0$ so that the result follows as for the case $m \neq 0$.

	\begin{calculations}
		\begin{align*}
		\begin{bmatrix}
		m & c \\
		0 & -m 
		\end{bmatrix}
		\begin{bmatrix}
		1 & 1 \\
		0 & -\frac{2m}{c} 
		\end{bmatrix}
		= \begin{bmatrix}
		1 & 1 \\
		0 & -\frac{2m}{c} 
		\end{bmatrix}
		\begin{bmatrix}
		m & 0 \\
		0 & -m 
		\end{bmatrix}
		\end{align*}
		and
		\begin{align*}
		\begin{bmatrix}
		1 & 1 \\
		0 & -\frac{2m}{c} \\
		\end{bmatrix}^{-1}
		= \begin{bmatrix}
		1 & \frac{c}{2m} \\
		0 & -\frac{c}{2m}
		\end{bmatrix}
		\end{align*}
		so that
		\begin{align*}
		\begin{bmatrix}
		m & c \\
		0 & -m 
		\end{bmatrix}
		= \begin{bmatrix}
		1 & 1 \\
		0 & -\frac{2m}{c} 
		\end{bmatrix}
		\begin{bmatrix}
		m & 0 \\
		0 & -m 
		\end{bmatrix}
		\begin{bmatrix}
		1 & \frac{c}{2m} \\
		0 & -\frac{c}{2m}
		\end{bmatrix}
		\end{align*}
		and 
		\begin{align*}
		\exp\left( t 
		\begin{bmatrix}
		m & c \\
		0 & -m 
		\end{bmatrix} \right)
		& 
		= \begin{bmatrix}
		1 & 1 \\
		0 & -\frac{2m}{c} 
		\end{bmatrix}
		\begin{bmatrix}
		e^{tm} & 0 \\
		0 & e^{-tm}
		\end{bmatrix}
		\begin{bmatrix}
		1 & \frac{c}{2m} \\
		0 & -\frac{c}{2m}
		\end{bmatrix} 
		\\
		&= \begin{bmatrix}
		1 & 1 \\
		0 & -\frac{2m}{c} 
		\end{bmatrix}
		\begin{bmatrix}
		e^{tm} & \frac{c}{2m} e^{tm} \\
		0 & - \frac{c}{2m} e^{-tm}
		\end{bmatrix}
		= 
		\begin{bmatrix}
		e^{tm} & \frac{c}{2m} ( e^{tm} - e^{-tm}) \\
		0 & e^{-tm} 
		\end{bmatrix}
		\end{align*}
	\end{calculations}
\end{proof}

\begin{remark}
Note that we actually proved that $\{ y \in \R: (x_0,y) \in \cG_t\} \ge 3$ has at least three elements. 
\end{remark}

\subsection{Rotating areas in the Hamiltonian flow}

In this section, we study `rotating  loops' in the Hamiltonian flow, introduced in Definition \ref{def:loop}, and their connection to the emergence of points of non-differentiability. We give a short overview:

\begin{itemize}
	\item Under the Condition of Theorem \ref{theorem:loss_of_differentiability} \ref{item:loss_rotating}, we find a rotating loop in the Hamiltonian flow that intersects $\cG$:  solutions of the flow on this loop come back to the same point in some finite time. (Lemma \ref{lemma:regular_rotating_region}.)
	\item Consider some fixed rotating loop. A homotopy argument, considering the punctured space $\bK \times \bR$ without the interior of this loop, can be used to show that any intersection of the loop with the graph $\cG$ of the gradient of a rate function $I_0$ implies that for large $t$ there is an element of $\cG_t$ `above' the loop. Similarly, one can show that for large $t$ there is an element of $\cG_t$ `below' the loop. This will lead to the creation of an overhang. (Theorem \ref{theorem:above_and_below_lines}.)
	\item First, we characterise rotating loops by their insides. (Lemma \ref{lemma:partial_x_ne_0_then_less_or_larger})
\end{itemize}

\begin{definition}
\label{def:loop}
We call a set $\fL \subset \K \times \R$ a \emph{loop} if there exist $a,b\in \K$, $a<b$ and functions $g,h\in C[a,b]$ with $g(x) < h(x)$ for $x\in (a,b)$, $g(a)=h(a)$ and $g(b)=h(b)$ such that $\fL = g[a,b] \cup h[a,b]$. 

Let $(x,p) \in \K \times \R$ be a non-stationary point, $E=H(x,p)$. 
If there exists a $t_0>0$ such that $(X_{t_0}^{x,p},P_{t_0}^{x,p})= (x,p)$, then $\fL = \{(X_t^{x,p},P_t^{x,p}) : t\in [0,t_0]\}$ is called a \emph{rotating loop}. 
$E$ is called the \emph{energy} of $\fL$. 
Let $a$ be the largest value and $b$ be the smallest value for which $\fL \subseteq [a,b] \times \R$. 
The set $A= \{(x,p) \in [a,b]\times \R : H(x,p) <E\}$ is called the 
\emph{inside of $\fL$}. 
\end{definition}
Note that if  $H$ satisfies Assumption \ref{assumption:general_ones_on_R_and_CW}, then a rotating loop is indeed a loop.

\begin{figure}[h]
	\begin{center}
\begin{tabular}{cc}
		\includegraphics[width=0.3\textwidth]{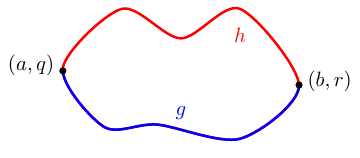}
		& 
		\includegraphics[width=0.3\textwidth]{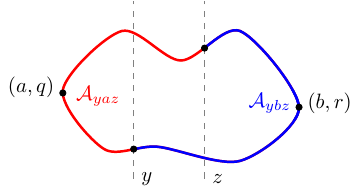}  
		 \\
		(a) & (b) 
\end{tabular}
\caption{(a) Functions $g$ and $h$, (b) The arcs $\cA_{y,a,z}$ and $\cA_{y,b,z}$.}
	\label{fig:A}
	\end{center}
\end{figure}

\begin{lemma}
\label{lemma:partial_x_ne_0_then_less_or_larger}
Let $H$ satisfy Assumption \ref{assumption:general_ones_on_R_and_CW}.

Let $E\in \R$. Suppose that $A$ is nonempty, relatively compact  and a connected component of $H^{-1}(-\infty,E)$ and that $\partial A$ is a connected component of $H^{-1}(\{E\})$. Then $\partial A$ is a loop and with $a,b,g,h$ as in Definition \ref{def:loop}, the functions $g$ and $h$ are $C^1$ on $(a,b)$. 
We write $q:= g(a)=h(a)$ and $r:= g(b) = h(b)$. 

Then $\partial_p H(a,q) = \partial_p H(b,r) =0$ and 
\begin{align}
\label{eqn:p_derivative_H}
&\partial_p H(x,h(x))>0, \quad  \partial_p H(x,g(x))<0 \qquad \mbox{ for } x\in (a,b), \\
\label{eqn:partial_xH_aq}
& \partial_x H(a,q) \ne 0\quad \Longrightarrow \quad \partial_x H(a,q) < 0, \\
\label{eqn:partial_xH_br}
&  \partial_x H(b,r) \ne 0\quad \Longrightarrow \quad \partial_x H(b,r) > 0. 
\end{align}
 $\fL$ is a rotating loop if\footnote{Actually ``if and only if''; as we don't use this we leave out the proof.} the inside of $\fL$ is a set $A$ as above with
\begin{align}
\label{eqn:partial_x_endpoints}
\partial_x H(a,q) \ne 0 \quad \mbox{ and } \quad \partial_x H(b,r) \ne 0.  
\end{align}
\end{lemma}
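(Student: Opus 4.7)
The plan is to first reconstruct the geometric structure of $A$ and of $\partial A$, then inspect the two corners $(a,q)$ and $(b,r)$ via a local Taylor expansion of $H$, and finally deduce the rotating-loop property from the classical fact that a nowhere-vanishing continuous flow on a topological circle has every orbit periodic.

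First I would parametrise $\partial A$. Using $\partial_p^2 H>0$ and the coercivity of $p\mapsto H(x,p)$ from Assumption \ref{assumption:Hamiltonian_for_domain_extension}, every fibre $\{p:H(x,p)<E\}$ is empty or an open interval. Because $A$ is open, connected, and relatively compact, its projection on the $x$-axis satisfies $\pi(\overline A)=\overline{\pi(A)}=[a,b]$ with $\pi(A)=(a,b)$. If $H(a,p)<E$ for some $p$, then by continuity a vertical segment through $(a,p)$ lies in $\{H<E\}$, so points $(a-\epsilon,p)$ and $(a+\epsilon,p)\in A$ live in the same connected component, contradicting $a=\inf\pi(A)$; hence $\{p:H(a,p)<E\}=\emptyset$ and similarly at $x=b$. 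For $x\in(a,b)$ the fibre of $A$ therefore equals the open interval $(g(x),h(x))=\{p:H(x,p)<E\}$, while at $x\in\{a,b\}$ the closed fibre of $\overline A$ reduces to a single point $q$ resp.\ $r$. Strict convexity of $H(x,\cdot)$ with interior minimiser on $(a,b)$ gives
\[
\partial_p H(x,g(x))<0<\partial_p H(x,h(x)),
\]
which is \eqref{eqn:p_derivative_H}, and the implicit function theorem applied to $H(x,\cdot)=E$ yields $g,h\in C^1(a,b)$. Thus $\partial A$ is a loop in the sense of Definition \ref{def:loop}.

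At the corner $(a,q)$, the fact that $\min_p H(a,p)=E$ is attained uniquely at $q$ forces $\partial_pH(a,q)=0$. To bound $\partial_xH(a,q)$ I Taylor-expand around $(a,q)$ using $H(a,q)=E$ and $\partial_pH(a,q)=0$:
\[
H(a+s,q+u)-E=\partial_xH(a,q)\,s+\tfrac12\partial_p^2H(a,q)\,u^2+O(s^2+|s||u|).
\]
Since $A_{a+s}\neq\emptyset$ for arbitrarily small $s>0$, there exists $u$ making the left-hand side negative. Minimising the right-hand side in $u$ (valid because $\partial_p^2H>0$) yields $u=O(s)$ and leaves a leading contribution $\partial_xH(a,q)\,s+O(s^2)$, which forces $\partial_xH(a,q)\le 0$ and proves \eqref{eqn:partial_xH_aq}; a symmetric expansion with $s<0$ handles $(b,r)$ and gives \eqref{eqn:partial_xH_br}.

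Finally, under the non-degeneracy \eqref{eqn:partial_x_endpoints}, the analysis above shows $\nabla H\neq 0$ on $\partial A$: non-zero $\partial_pH$ on the two open arcs by \eqref{eqn:p_derivative_H}, and non-zero $\partial_xH$ at the two corners by hypothesis. The Hamiltonian vector field $(\partial_pH,-\partial_xH)$ is therefore nowhere vanishing on the compact topological circle $\partial A$; since the flow preserves $H$, every trajectory starting on $\partial A$ remains on the compact set $\partial A$ for all time. A nowhere-vanishing continuous flow on a $1$-manifold homeomorphic to $S^1$ has every orbit periodic, so $\partial A$ coincides with the orbit of any of its points and is therefore a rotating loop with inside $A$. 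The main obstacle in this plan is the corner analysis: correctly deducing $\partial_pH(a,q)=0$ from the degeneration of the fibre and then extracting the sign of $\partial_xH(a,q)$ from the second-order expansion, which requires careful bookkeeping of how extremality of $a$ in $\pi(\overline A)$ enters.
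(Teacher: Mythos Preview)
Your argument is correct and reaches the same conclusions as the paper, but by two genuinely different sub-arguments. For the sign of $\partial_x H(a,q)$, the paper does not Taylor-expand: instead, since $\partial_x H(a,q)\neq 0$, it invokes the implicit function theorem to write $\partial A$ near $(a,q)$ as a graph $x=j(p)$, computes $j'(p)=-\partial_p H/\partial_x H$, and reads off the sign from the fact that $j$ has a minimum at $p=q$ together with \eqref{eqn:p_derivative_H}. Your Taylor argument is equally valid once you note (as you implicitly use) that the fibre $A_{a+s}$ concentrates near $q$ as $s\downarrow 0$, so the $u$ witnessing $H(a+s,q+u)<E$ is small and the expansion applies; it would be worth stating this explicitly. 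For the rotating-loop conclusion, the paper gives a hands-on argument: it partitions $\partial A$ into three arcs near each corner, finds a uniform lower bound $\epsilon>0$ on the relevant component of the vector field along each arc, and thereby bounds the transit time explicitly. Your appeal to the standard fact that a nowhere-vanishing $C^1$ flow on a compact invariant circle has every orbit periodic is cleaner and more conceptual, and it is fully justified here because $\nabla H\neq 0$ on $\partial A$ makes $\partial A$ a $C^1$ embedded circle on which the Hamiltonian vector field is $C^1$ and tangent. The paper's approach has the minor advantage of producing an explicit time bound, which is not needed for the lemma itself but is in the spirit of the quantitative estimates used elsewhere (e.g.\ Theorem \ref{theorem:above_and_below_lines}).
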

\begin{proof}
 
Let $a$ and $b$ be the smallest and largest element in the set $\{ x: \exists p, (x,p) \in \overline A \}$, respectively. By strict convexity of $p  \mapsto H(x,p)$, we  have $\partial A = \{ (x,p) \in \overline A: H(x,p) =E\}$. This strict convexity, together with the implicit function theorem, implies the existence of $g,h$ as in the first part of Definition \ref{def:loop}, which are $C^1$ on $(a,b)$ and it implies \eqref{eqn:p_derivative_H}.

	We prove \eqref{eqn:partial_xH_aq}, the proof of \eqref{eqn:partial_xH_br} being similar.
Suppose that $\partial_x H(a,q) \ne 0$. 
By the implicit function theorem, $\partial A$ can be described by a $C^1$ function around $(a,q)$, i.e., there exists a $\delta>0$ and a $C^1$ function $j: (q-\delta, q +\delta) \rightarrow [-1,1]$ such that $\{ (j(p),p) : p \in (q-\delta, q +\delta)\} = \{ (x,p) \in \partial A : p \in (q-\delta, q +\delta) \}$ and 
\begin{align}
j'(p) = - \frac{ \partial_p H(j(p),p)}{\partial_x H(j(p),p)} \qquad (p \in (q-\delta,q+\delta)). 
\label{eqn:derivative_j}
\end{align}
Note that $j'(p) =0$ if and only if $p=q$. 

Therefore for $p \in (q,q+\delta)$ we have $j'(q) >0$ and thus by \eqref{eqn:derivative_j} and \eqref{eqn:p_derivative_H}  $\partial_x H(j(p),p)<0$. Then $\partial_x H(a,q)<0$ follows by taking a limit. 

	 We will now show that $\partial_x H(a,q) \ne 0$ implies that one has a rotation around $a$ in the following sense. 
	Let $y,z \in (a,b)$ and $\cA_{y,a,z}$ be the arc from $y$ to $z$ via $a$ (see Figure \ref{fig:A}(b)), i.e., $\cA_{y,a,z} :=  \{ (x,g(x)) : x\in (a,y)\} \cup \{(a,q)\} \cup \{(x,h(x)) : x\in (a,z)\}$. 
We show that every point in the arc passes thought the point $(z,h(z))$ in a finite and bounded time: 
	We show that there exists a $t_a$ such that for all $(x,p) \in \cA_{y,a,z}$ there exists a $t_0< t_a$ such that 
		\begin{align*}
		(X_t^{x,p},P_t^{x,p}) \in \cA_{y,a,z} \mbox{ for all } t<t_0, \qquad 
		(X_{t_0}^{x,p},P_{t_0}^{x,p}) = (z,h(z)). 
		\end{align*}
	Let $q_1,q_2 \in (q-\delta,q+\delta)$ be such that $q_1<q<q_2$. 
	We may assume that $x_1:= j(q_1) < y$ and $x_2:= j(q_2)<z$. 
	There exists an $\epsilon>0$ such that 
	\begin{align*}
	\partial_p H(x,g(x)) < \epsilon \qquad (x \in [x_1,y]), \\
	- \partial_x H(j(p),p)>\epsilon \qquad (p \in [q_1,q_2]), \\
	\partial_p H(x,h(x)) >\epsilon \qquad (x \in [x_2,z]). 
	\end{align*}
	Therefore there exist $t_1,t_2,t_3$ such that 
	for all $x \in [x_1,y]$ 
	there exists a $t_0 \le t_1$ such that $X_t^{x,g(x)}> x_1$ for $t<t_0$, $X_{t_0}^{x,g(x)}=x_1$; 
	for all $x\in [x_2,z]$  there exists a $t_0 \le t_2$ such that $X_t^{x,h(x)}< z$ for $t<t_0$, $X_{t_0}^{x,h(x)}=z$; 
	and  for all $p \in [q_1,q_2]$ there exists a $t_0\le t_3$ such that $P_t^{j(p),p}< q_2$ for $t<t_0$, $P_{t_0}^{j(p),p}=q_2$. 
	Now by taking $t_a = t_1+t_2+t_3$, we leave it to the reader to check that the claim follows.

Similarly, one shows that $\partial_x H(b,r) \ne 0$ implies rotation around $b$ (along arcs $\cA_{y,b,z}$, see Figure \ref{fig:A}(b)). 
Therefore  it follows that $\fL = \partial A$ is a rotating loop if $\partial_x H(a,q) \ne 0$ and $\partial_x H(b,r) \ne 0$. 
\end{proof}

\begin{theorem}
\label{theorem:above_and_below_lines}
Let $H$ satisfy Assumption \ref{assumption:general_ones_on_R_and_CW}. 
Suppose that $\fL$ is a loop, $\cG \cap \fL \ne \emptyset$, and that $(x_1,p_1)$ and $(x_2,p_2)$ with $x_1 \le x_2$ are those points in $\cG \cap \fL$  such that $\{(x,p) \in \cG : x<x_1\} \cap \fL = \emptyset $ and $\{(x,p) \in \cG : x>x_2\} \cap \fL = \emptyset$. 
Let $x_0 \in [x_1,x_2]$. 
\begin{enumerate}
\item
\label{item:above_line_crossed}
 If  $t_1\ge 0$ is  such that  $ \left(X_{t_1}^{x_1,I_0'(x_1)}, P_{t_1}^{x_1,I_0'(x_1)}\right) = (x_0,h(x_0))$, then for all $t \ge t_1$ 
$\cG_t$ intersects the half-line $\{x_0\} \times [h(x_0),\infty)$. 
\item 
\label{item:below_line_crossed}
 If  $t_2\ge 0$ is  
 such that  $ \left(X_{t_2}^{x_2,I_0'(x_2)}, P_{t_2}^{x_2,I_0'(x_2)}\right) = (x_0,g(x_0))$, then for all $t \ge t_2$ 
$\cG_t$ intersects the half-line $\{x_0\} \times (-\infty,g(x_0)]$. 
\end{enumerate}

Consequently, if $\fL$ is a rotating loop, then there exists a $t_0$ such that $\cG_t$ contains an overhang for all $t\ge t_0$. 

\end{theorem}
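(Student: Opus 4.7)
The plan is to combine energy conservation for the Hamilton flow with a continuity/homotopy argument on the curve $\cG_t$, reducing the final ``overhang'' statement to parts (a) and (b) via the periodic structure of a rotating loop.

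\textbf{Setup.} First I would exploit that $I_0 \in C^{1,\partial}(\K)$ forces $I_0'(x)\to\pm\infty$ as $x\to\partial_\pm$, while $\fL$ is contained in a compact subset of $\K^\circ\times\R$ (since $A$ is relatively compact by the hypothesis of Lemma \ref{lemma:partial_x_ne_0_then_less_or_larger}). The sets $\{(x,I_0'(x)):x<x_1\}$ and $\{(x,I_0'(x)):x>x_2\}$ are connected and, by minimality/maximality of $x_1,x_2$, disjoint from $\fL$; since they escape every compact set in $\K\times\R$, they lie in the unbounded component $\Omega^+:=(\K\times\R)\setminus\bar A$ of the complement of $\fL$, so $H(x,I_0'(x))>E$ there. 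Because $H$ is a first integral of the Hamilton flow, $H(\Phi_s(x))=H(x,I_0'(x))$ for all $s<t_{x,I_0'(x)}$, and continuous dependence on initial data (Proposition \ref{proposition:extension_of_flow_with_bdr}) together with connectedness pins $\Phi_s(x)$ in $\Omega^+$ for all $s\ge 0$ and $x\in(\partial_-,x_1)\cup(x_2,\partial_+)$.

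\textbf{Proof of (a).} The case $t=t_1$ is immediate since $\Phi_{t_1}(x_1)=(x_0,h(x_0))$ already sits on $L^+:=\{x_0\}\times[h(x_0),\infty)$. For $t>t_1$, Lemma \ref{lemma:partial_x_ne_0_then_less_or_larger} yields $\partial_p H(x_0,h(x_0))>0$, so by Hamilton's equations $\dot X>0$ at $(x_0,h(x_0))$ and the trajectory $s\mapsto\Phi_s(x_1)$ moves to the right along the upper arc $h$ immediately after $t_1$. I would then introduce the continuous family of paths $\gamma_s:[\partial_-,x_1]\to\fS$, $\gamma_s(x):=\Phi_s(x)$, each going from $(\partial_-,-\infty)$ to $\Phi_s(x_1)\in\fL$ with interior in $\Omega^+$, and use a homotopy/intersection-number argument to show that $\gamma_s\cap L^+\neq\emptyset$ persists for all $s\ge t_1$. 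At $s=t_1$ the intersection is created transversally at the endpoint $(x_0,h(x_0))$ of $L^+$ (because $\Phi_s(x_1)$ enters from the left of $x_0$ and exits to the right). The only way an intersection with $L^+$ can be destroyed as $s$ varies is either through a boundary event at the endpoint $(x_0,h(x_0))$ or through a tangential cancellation in the interior of $L^+$; the first can only happen at discrete times $s$ for which $\Phi_s(x_1)=(x_0,h(x_0))$ (by energy conservation, no interior point of $\gamma_s$ can touch $\fL$), and the creation/destruction events must balance in a way compatible with transversality, so the count remains at least one.

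\textbf{Proof of (b) and the overhang.} Part (b) is entirely symmetric, using the right-hand tail $\{(x,I_0'(x)):x>x_2\}$, the point $(x_2,p_2)$ and the lower arc $g$. For the ``consequently'' statement, assume $\fL$ is a rotating loop of period $T$. Then the trajectories from $(x_1,p_1)$ and $(x_2,p_2)$ are periodic on $\fL$ with the same period $T$ and each visits every point of $\fL$ within one period. Fix any $x_0\in(a,b)$; I can select $t_1,t_2\in[0,T)$ so that $\Phi_{t_1}(x_1)=(x_0,h(x_0))$ and $\Phi_{t_2}(x_2)=(x_0,g(x_0))$. For every $t\ge t_0:=\max(t_1,t_2)$, parts (a) and (b) provide points of $\cG_t$ on $\{x_0\}\times[h(x_0),\infty)$ and on $\{x_0\}\times(-\infty,g(x_0)]$ respectively, which are distinct because $h(x_0)>g(x_0)$; this is precisely an overhang of $\cG_t$ at $x_0$.

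\textbf{Main obstacle.} The hard step is the intersection-persistence argument in the proof of (a). The key structural input is that by energy conservation the interior of $\gamma_s$ lies strictly in $\Omega^+=\{H>E\}$, so it can only hit the closed endpoint $(x_0,h(x_0))$ of $L^+$ via its own right endpoint $\Phi_s(x_1)\in\fL$; this restricts the creation and destruction of intersections to the (discrete, for non-rotating, or periodic, for rotating loops) set of times $s$ with $\Phi_s(x_1)=(x_0,h(x_0))$, and one has to track signs/orientations carefully at $s=t_1$ to conclude that the intersection count jumps from $0$ to at least $1$ and cannot be undone later by a cancelling pair.
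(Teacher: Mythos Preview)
Your overall plan is the paper's plan: keep the left tail of $\cG$ outside the region $A$ enclosed by $\fL$ via connectedness plus energy conservation, then use a topological argument to force $\cG_t$ to cross the half-line $L^+=\{x_0\}\times[h(x_0),\infty)$ once the endpoint $\Phi_t(x_1)$ has passed $(x_0,h(x_0))$. The derivation of the ``consequently'' clause from (a) and (b) via periodicity of the rotating loop is exactly what the paper does.

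Where you diverge from the paper is in the execution of the topological step, and this is where your argument has a gap. You try to track an intersection number of the \emph{open} moving arc $\gamma_s=\Phi_s\big((\partial_-,x_1]\big)$ with $L^+$ and argue that creation/destruction events ``balance'' by transversality. But you only have continuity of $(s,x)\mapsto\Phi_s(x)$, so speaking of transversal crossings or tangential cancellations is not justified; and intersections of $\gamma_s$ with $L^+$ can also disappear by wandering off towards $(x_0,+\infty)$, which you do not address. The paper bypasses all of this by turning the problem into a \emph{closed}-path statement: for each $t$ it concatenates three arcs
\[
\underbrace{\Theta(0,[\partial_-,x_1])}_{\text{initial left tail}}\;\;*\;\;\underbrace{\Theta([0,t],x_1)}_{\text{Hamilton orbit on }\fL}\;\;*\;\;\underbrace{\Theta(t,[\partial_-,x_1])^{-1}}_{\text{pushed-forward tail, reversed}}
\]
into a closed loop $\gamma_t$ in $\fS\setminus A$. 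The family $\{\gamma_t\}$ is a continuous homotopy with $\gamma_0$ the constant path at $(\partial_-,-\infty)$, so every $\gamma_t$ is null-homotopic in $\fS\setminus\{(x_0,p_0)\}$ for any $p_0\in(g(x_0),h(x_0))$. The paper then invokes the elementary fact that a null-homotopic closed curve based at $(\partial_-,-\infty)$ must have its \emph{first} and \emph{last} crossings of the vertical line $\{x_0\}\times\R$ on the same side of $p_0$. Since for $t\ge t_1$ the middle arc forces the first crossing to occur above $p_0$ (indeed at $(x_0,h(x_0))$, where $\partial_pH>0$ rules out the last crossing being on the middle arc), the last crossing---which lies on the third arc $\Theta(t,[\partial_-,x_1])$---must also be above $p_0$, hence in $L^+$. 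This delivers the intersection with no appeal to smoothness, transversality, or counting.

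Two smaller points. First, your identification $\Omega^+=\{H>E\}$ is not quite right: the complement of $\bar A$ can contain points with $H\le E$ outside the strip $[a,b]\times\R$; what you actually need (and what the paper uses) is only that the relevant arcs lie in $\fS\setminus A$. Second, your setup tacitly uses that $\fL$ is a level set of $H$ so that the Hamilton orbit from $(x_1,I_0'(x_1))$ stays on $\fL$---this is also what makes the paper's middle arc lie in $\fS\setminus A$---so the effective hypothesis in both arguments is the rotating-loop case, even though (a) and (b) are phrased for general loops.
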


\begin{proof}
We prove \ref{item:above_line_crossed} only, as the proof of \ref{item:below_line_crossed} is similar. 
Suppose that $t_1$ is as in \ref{item:above_line_crossed}. 
Write L for the half-line $\{x_0\} \times [h(x_0), \infty)$ and
write $\Theta: [0,\infty) \times [\partial_-, \partial_+] \rightarrow \fS$ for the function given by $\Theta(t,x) = \Psi(t,x,I_0'(x))$ (with the convention that $I_0'$ is defined in $\partial_-$ and $\partial_+$ as $-\infty$ and $\infty$, respectively) where $\Psi$ is as in Proposition \ref{proposition:extension_of_flow_with_bdr}. 
Note that $\Theta$ is continuous by continuity of $\Psi$ and because $I_0\in C^{1,\partial}(\K)$. 
We prove that  for all $t \ge t_1$  there exists an $x\in (\partial_-,x_1]$ such that $\Theta(t,x) \in L$.

The idea is that the curve $\Theta(t,[\partial_-,x_1])$ is pulled through the line $L$ by the rotation of $\Phi_t(x_1)$ over $\fL$ , and as it is connected to $(\partial_-,-\infty)$ it will be connected via $L$ for all larger times. 
We use an argument using homotopy theory 
to prove this. 

Let $\sigma	 : [0,1] \rightarrow [\partial_-,x_1]$ be a continuous function with $\sigma(0) = \partial_-$ and $\sigma(1) = x_1$. 
Define the map $\Gamma : [0,\infty) \times [0,3]$ as follows, 
$\Gamma(t,[0,1])$ is the  set $\Theta(0, [\partial_-,x_1])$; $\Gamma(t,[1,2])$ is the set $\Theta ([0,t],x_1)$, i.e., the Hamiltonian path starting at $(x_1,I_0'(x_1))$ up to time $t$; $\Gamma(t,[2,3))$ is  the set $\Theta(t,[\partial_-,x_1])$ (see also Figure \ref{fig:push_forward}); more precisely,
for $t\in [0,\infty)$ and $s\in [0,1]$ 
\begin{align*}
\Gamma(t,s)& := \Theta(0,\sigma(s)), \\
\Gamma(t,1+s)& := \Theta(ts,x_1), \\
\Gamma(t,2+ s)& := \Theta(t,\sigma(1-s)). 
\end{align*}

\begin{figure}[h]
	\begin{center}
		\includegraphics[width=0.6\textwidth]{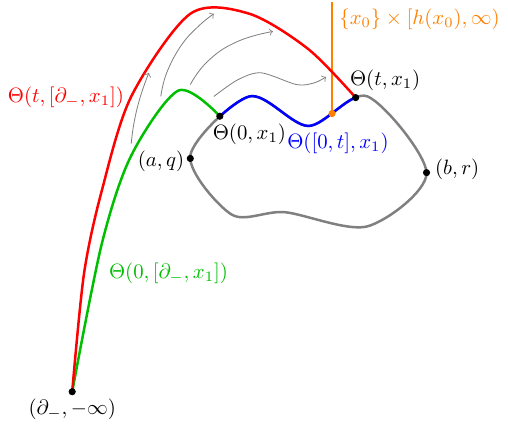}  
\caption{ {\color{red}$\Theta(0, [\partial_-,x_1])$}, {\color{dgreen}$\Theta(t,[\partial_-,x_1])$} and {\color{blue}$\Theta([0,t],x_1)$.}}
	\label{fig:push_forward}
	\end{center}
\end{figure}
$\Gamma$ is continuous and therefore a path homotopy between the paths $\gamma_t: [0,3] \rightarrow \fS\setminus A $ for $t\in [0,\infty)$ where $\gamma_t(s) = \Gamma(t,s)$  and $A$ is the inside of $\fL$. 
Hence $\gamma_t$ is path-homotopic to $\gamma_0$   in the space $\fS \setminus A$ and thus to the single point $(\partial_-,\infty)$. 
That a closed path is homotopic to another one in a topological space, basically means that one can continuously transform one path in that space to the other one, being homotopic to a point means homotopic to a path that stays at that point. 
In \cite[Section 5]{Ru05} one finds the necessary background for homotopy theory. 

We will use the following fact: 
\emph{
A closed path $\gamma :[0,3] \rightarrow \fS \setminus \{(x_0,p_0)\}$ is homotopic to the point $(\partial_-,-\infty)$ if and only if there exist $s_1,s_2 \in [0,3]$, $s_1<s_2$ such that $\gamma(s_1), \gamma(s_2)$ are either both in   $\{x_0\} \times (p_0,\infty)$  or both in  $\{x_0\} \times (-\infty,p_0)$  and $s_1$, $s_2$ are the first and last time such that the path crosses the line $\{x_0\} \times \R$, respectively, i.e., }
\begin{align*}
\gamma([0,s_1)) \cup \gamma((s_2,3]) \subseteq(-\infty,x_0) \times \R \cup \{(\partial_-,-\infty)\}. 
\end{align*}
This fact can be proven as follows; for simplicity with $(0,0)$ instead of $(x_0,p_0)$ and $(-1,0)$ instead of $(\partial_-,-\infty)$. 
Every closed path is homotopic to $s\mapsto (-\cos(2\pi k s), \sin(2\pi k s))$ for some $k\in \Z$ (see, e.g. \cite[Example 5.2.7]{Ru05}). It is straightforward to check that such $s_1$ and $s_2$ do not exist in case $k \ne 0$. 

Let $p_0 \in (g(x_0),h(x_0))$. 
Let $t \ge t_1$. 
As $x_1 < x_0$, $s_1 \ge 1$. 
By the choice of $t_1$ we have $s_1\in [1,2]$ and $\gamma_t(s_1) \in \{x_0\} \times (p_0,\infty)$. 
As the $\partial_p H$ at $(x_0,h(x_0))$ is strictly positive (see \eqref{eqn:p_derivative_H}), the $s_2$ as above cannot be in $[1,2)$. 
Thus $s_2\in [2,3]$, which proves that there exists an   $x\in [\partial_-,x_1]$  such that $\Theta(t,x) \in \{x_0\} \times [h(x_0),\infty)$. 
\end{proof}

The following lemma establishes the existence of a rotating loop $\fL$ with $\cG \cap \fL \ne \emptyset$ under the assumptions made in Theorem \ref{theorem:loss_of_differentiability}\ref{item:loss_rotating}, so that with Theorem \ref{theorem:above_and_below_lines}  the statement of Theorem \ref{theorem:loss_of_differentiability}\ref{item:loss_rotating} is proven.

	\begin{lemma} \label{lemma:regular_rotating_region}
		Assume Assumption \ref{assumption:general_ones_on_R_and_CW} and suppose that $H$ is $C^3$. Suppose that $m_1, m_2 \in \bK^\circ$ are two points such that $m_1 < m_2$ and 
		\begin{enumerate}[label=\textnormal{(\roman*)}]
			\item $\partial_p H(m_1,0) = 0 = \partial_p H(m_2,0)$ and $\partial_p H(x,0) \neq 0$ for all $x \in (m_1,m_2)$,
			\item $\partial_x \partial_p H(m_1,0) \neq 0$ and $\partial_x \partial_p H(m_2,0) \neq 0$.
		\end{enumerate}
		If $ \left\{(x,p) \in (m_1,m_2) \times \bR \, \middle| \, H(x,p) < 0 \right\} \cap \cG \ne \emptyset$, then 
 there exists a rotating loop $\fL$ such that $\cG \cap \fL \ne \emptyset$. 
	\end{lemma}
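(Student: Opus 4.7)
The plan is to produce the desired rotating loop as the boundary of a connected sublevel set $\{H<E\}$ for $E$ slightly below $0$, living inside the ``lens'' of points in $(m_1,m_2)\times\bR$ where $H<0$.

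First I would carry out a local analysis at each stationary point $(m_i,0)$, $i=1,2$. Differentiating the identity $H(x,0)=0$ in $x$ gives $\partial_xH(m_i,0)=\partial_x^2H(m_i,0)=0$. Combined with the hypotheses $\partial_pH(m_i,0)=0$, $A_i:=\partial_x\partial_pH(m_i,0)\ne 0$, and $B_i:=\partial_p^2H(m_i,0)>0$, this means the Hessian of $H$ at $(m_i,0)$ has determinant $-A_i^2<0$, so $(m_i,0)$ is a non-degenerate saddle at critical value $0$. By the implicit function theorem the curve $\{\partial_pH=0\}$ is, near $(m_i,0)$, a $C^2$ graph $x=\phi_i(p)$ with $\phi_i(0)=m_i$ and $\phi_i'(0)=-B_i/A_i$, and differentiation gives
\[
h_i(p):=H(\phi_i(p),p),\qquad h_i(0)=h_i'(0)=0,\qquad h_i''(0)=-B_i<0.
\]
A Taylor expansion at $(m_i,0)$, using $\partial_xH(m_i,0)=\partial_x^2H(m_i,0)=0$, also yields $\partial_xH(\phi_i(p),p)=A_ip+O(p^2)$, hence nonzero for small $p\ne 0$.

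Next I would study the region $R:=\{(x,p)\in(m_1,m_2)\times\bR:H(x,p)<0\}$. Strict convexity of $p\mapsto H(x,p)$, together with $H(x,0)=0$ and $\partial_pH(x,0)\ne 0$ for $x\in(m_1,m_2)$, produces a unique $p_*(x)\ne 0$ with $H(x,p_*(x))=0$, and $\{p:H(x,p)<0\}$ is the open interval between $0$ and $p_*(x)$. Continuity of $p_*$ with $p_*(x)\to 0$ as $x\to m_i$ shows that $R$ is a bounded, simply connected open set with cusp corners at the two saddles. Because $p=0$ is the minimum of $H(m_i,\cdot)$, the vertical lines $\{m_i\}\times\bR$ carry $H\ge 0$, so $R$ is actually a connected component of $\{H<0\}\subseteq\bK\times\bR$.

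Now fix $(x_0,p_0)\in\cG\cap R$, which exists by hypothesis, and set $E_0:=H(x_0,p_0)<0$. For $E\in(E_0,0)$, let $A_E$ be the connected component of $\{H<E\}$ containing $(x_0,p_0)$; since $\{H<E\}\subseteq\{H<0\}$ and $R$ is a component of the latter, $A_E\subseteq R$, so $A_E$ is relatively compact. Choosing $E$ to be a regular value of $H$ sufficiently close to $0$, the local saddle normal form $H\approx A_iuv+\tfrac{B_i}{2}v^2$ at each corner, combined with the pinching of $R$ towards the $x$-axis near $(m_i,0)$, forces the level set $\{H=E\}\cap\overline R$ to be a single smooth closed curve $\fL=\partial A_E$. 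Its leftmost and rightmost points $(a,q),(b,r)$ satisfy $\partial_pH=0$ and so lie on the graphs $\phi_1,\phi_2$ at parameter values $q,r$ close to but different from $0$; by the first step $\partial_xH(a,q)\ne 0\ne\partial_xH(b,r)$, and Lemma \ref{lemma:partial_x_ne_0_then_less_or_larger} identifies $\fL$ as a rotating loop. Finally, $(x_0,I_0'(x_0))\in A_E$ and, since $I_0\in C^{1,\partial}(\bK)$ forces $I_0'(x)\to\pm\infty$ as $x\to\partial_\pm$, the continuous curve $\cG$ cannot remain in the bounded set $A_E$ and must cross $\fL$. The main obstacle is the global topological claim that $\{H=E\}\cap\overline R$ is a single smooth closed curve rather than several: locally near each corner this follows from the saddle normal form, while globally one has to rule out extra components coming from interior critical values of $H|_R$ in $(E,0)$, which is achieved by choosing $E$ a regular value sufficiently close to $0$ (Sard's theorem).
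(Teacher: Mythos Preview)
Your approach is essentially the paper's: both show that for $E<0$ close to $0$ the set $A=\{(x,p)\in(m_1,m_2)\times\bR:H(x,p)<E\}$ meets the hypotheses of Lemma~\ref{lemma:partial_x_ne_0_then_less_or_larger} together with \eqref{eqn:partial_x_endpoints}, and that $\cG$ meets $A$. Your local saddle computation $h_i''(0)=-B_i<0$ is equivalent, after reparametrisation, to the paper's key step.

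The one weak spot is your connectedness argument. Invoking Sard's theorem only makes $\{H=E\}$ a smooth $1$-manifold; it does not by itself rule out several components inside $R$, and ``interior critical values of $H|_R$ in $(E,0)$'' is not quite the right obstruction. The paper avoids this by parametrising $\{\partial_pH=0\}$ globally by $x$ rather than locally by $p$: with $p(x)=\argmin_q H(x,q)$ and $\mathbf{E}(x)=H(x,p(x))$ on $[m_1,m_2]$, your computation becomes $\mathbf{E}''(m_i)=-A_i^2/B_i<0$, so $\mathbf{E}$ is strictly monotone on $[m_1,m_1+\delta]$ and $[m_2-\delta,m_2]$ and bounded above by some $-\epsilon$ on the middle piece. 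For $E\in(-\epsilon,0)$ the set $\{x:\mathbf{E}(x)<E\}$ is then a single interval, and strict convexity in $p$ immediately gives connectedness of $A$, with no appeal to Morse theory. As a bonus, the non-degeneracy at the extreme points falls out for free, since $\partial_xH(a,q)=\mathbf{E}'(a)\ne 0$ by the strict monotonicity just established.
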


\begin{proof}

By Lemma \ref{lemma:partial_x_ne_0_then_less_or_larger} it is sufficient to show that there exists an $E<0$ such that the set
		\begin{equation}
		\label{eqn:level_set_suitable_for_A}
A=	 \left\{(x,p) \in (m_1,m_2) \times \bR \, \middle| \, H(x,p) < E \right\},
		\end{equation}
		is as in Lemma \ref{lemma:partial_x_ne_0_then_less_or_larger} such that $\cG \cap A \ne \emptyset$ and \eqref{eqn:partial_x_endpoints} holds.

To find such an $E$ so that $A$ is connected, we consider the function that gives the minimum of $H$ at $x$, $\bfE(x) = \inf_{p\in \R} H(x,p)$. 
We use that a set $A$ as defined in \eqref{eqn:level_set_suitable_for_A} is connected as soon as $(x_1,p_1),(x_2,p_2) \in A$ imply that $\bfE(x) <E$ for all $x\in [x_1,x_2]$. 
Let $p(x) =\argmin_{q\in \bR} H(x,q)$, so that $q = p(x)$ if and only if $\partial_p H(x,q)=0$ and $\bfE(x) = H(x,p(x))$. 
By Assumption (i) $\bfE(m_1) = 0 = \bfE(m_2)$ and $\bfE <0 $ on $(m_1,m_2)$. 
As $H$ is $C^3$, $p$ is $C^2$ by the implicit function theorem and 
\begin{equation}
\label{eqn:derivative_of_p}
p'(x) = - \frac{ \partial_x \partial_p H(x,p(x))}{ \partial_p^2 H(x,p(x))}. 
\end{equation}
Note that $\bfE$ is also $C^2$. 
We show that $\bfE''(m_*) <0$ for $m_*\in \{m_1,m_2\}$. 
We have 
\begin{equation} \label{eqn:derivative_of_E}
\bfE'(x) = \partial_x [ H(x,p(x))]
= \partial_x H(x,p(x)) + \partial_p H(x,p(x)) p'(x) 
= \partial_x H(x,p(x)),
\end{equation}
as $\partial_p H(x,p(x))=0$. Moreover, 
\begin{align*}
\bfE''(x) = \partial_x^2 H(x,p(x))+ \partial_x \partial_p H(x,p(x)) p'(x). 
\end{align*}
As for $m_* \in \{m_1,m_2\}$, $p(m_*)=0$ we have $\partial_x^2 H(m_*,p(m_*))=0$ (as $H(x,0)$ for all $x$), 
Hence by \eqref{eqn:derivative_of_p} and Assumption (ii) 
\begin{align*}
\bfE''(m_*) = - \frac{ [ \partial_x \partial_p  H(m_*,0)]^2}{ \partial_p^2 H(m_*,0)} <0.
\end{align*}
Let $\delta>0$ be such that $\bfE''<0$ on $[m_1, m_1 + \delta]$ and $[m_2-\delta, m_2]$ and thus 
\begin{align}
\label{eqn:bfE_derivative_negative}
\bfE' <0 \mbox{ on } 
[m_1, m_1 + \delta]
\mbox{ and } 
\bfE' >0 \mbox{ on } 
[m_2-\delta, m_2].
\end{align}
Let $\epsilon$ be such that $-\epsilon$ is the maximum of $\bfE$ on $[m_1 + \delta, m_2 - \delta]$. 
Then for all $E\in (0,\epsilon)$ the set \eqref{eqn:level_set_suitable_for_A} is nonempty, connected and its closure is a compact set and a connected component of $H^{-1}(-\infty,E]$. Moreover,  \eqref{eqn:partial_x_endpoints}  is satisfied by \eqref{eqn:derivative_of_E} and \eqref{eqn:bfE_derivative_negative}.
By choosing $E$ small enough, one obtains $ \cG \cap A \ne \emptyset$. 
\end{proof}

\begin{remark}
Suppose that $A$ is a set as in Lemma \ref{lemma:partial_x_ne_0_then_less_or_larger} with $\cG \cap A \ne \emptyset$ and $\partial_x H(a,q)\ne 0$ and $\partial_x H(b,r) = 0$ (or the other way around). 
Then one can also conclude the creation of overhangs for large times as follows. 
There exists a point $(x_0,p_0) \in \cG \cap A$ with a lower energy, i.e., $ H(x_0,p_0) < E$. Therefore $X_t^{x_0,p_0}$ stays bounded away from $b$, 
while by Lemma \ref{lemma:partial_x_ne_0_then_less_or_larger} (see the proof where we show the rotation of the arcs $\cA_{y,a,z}$) $X_t^{x_i,I_0'(x_i)}$ for large $t$ will be arbitrarily close to $b$, where $(x_1,I_0'(x_1)$ and $(x_2,I_0'(x_2)$ are distinct elements of $\cG \cap \partial A$. 
\end{remark}

\section{Proof of Theorem \ref{theorem:application_explicit} and Theorem \ref{theorem:loss_recovery_differentiability} } \label{section:verification_explicit_example}

We end our paper with the application of the developed methods for two main classes of examples. To be precise, we prove Theorem \ref{theorem:application_explicit}. The proof will only be carried out for the $\pm 1$-space-model as the proof of the $\R$-space-model is up to a change of notation the same.

\begin{proof}[Proof Theorem \ref{theorem:application_explicit} for the $\pm 1$-space-model]
	\ref{item:all_times_I_t_equals_I_0} is immediate as $H(x,I_0'(x)) = 0$, and hence $\cG_t = \cG_0$.

\begin{calculations}
\begin{align*}
 I_0(x) & = \frac{1-x}{2} \log (1-x) + \frac{1+x}{2} \log (1+x) - \frac{1}{2}\alpha x^2 + C, 
\\
I_0'(x) &= -\frac12 \log (1-x) +\frac12 \log (1+x) + \alpha x =  \arctanh (x) - \alpha x. 
\\
H(x,I_0'(x)) &= 
\frac{1-x}{2} e^{\beta x} \left[ \frac{1+x}{1-x} e^{-2\alpha x} -1 \right] + \frac{1+x}{2} e^{-\beta x} \left[ \frac{1-x}{1+x} e^{2\alpha x}  -1 \right] \\
&= 
- \frac{1-x}{2} e^{\beta x} 
+ \frac{1+x}{2} e^{-\alpha x} +
-  \frac{1+x}{2} e^{-\beta x} 
+ \frac{1-x}{2} e^{\alpha x}
\end{align*}
\end{calculations}	
	
	\smallskip

	For the proof of \ref{item:all_a,b_alpha_beta_allways_short_time_gibbs} we apply Theorem \ref{theorem:convexity_preserving_implies_diffb_preserv} \ref{item:conv_at_infinity_implies_short_time}: $I_0$ is strictly convex at infinity (see e.g. \eqref{eqn:second_derivative_I_0}) and $H$ preserves order at infinity  by Proposition \ref{proposition:convex_at_infty_explict_functions}\ref{item:order_preserving_for_CW}.

	\smallskip
	
	For the proof of \ref{item:high_temp_starting} we apply Theorem \ref{theorem:convexity_preserving_implies_diffb_preserv} \ref{item:conv_implies_diffb}. For $\alpha \leq 1$, $I_0'$ is strictly increasing (see also \eqref{eqn:derivative_I_0} and \eqref{eqn:second_derivative_I_0}). For $\beta \in [0,1]$ 	by Proposition \ref{proposition:convex_at_infty_explict_functions}\ref{item:order_preserving_for_CW} $H$  preserves order on $ \downq_{0,0}^\circ \cup \{(0,0)\} \cup \upq_{0,0}^\circ$, of which the graph of $I_0'$, $\cG$, is a subset.

	\smallskip
	
	For the proof of \ref{item:heating_up_low_temp} we apply Theorem \ref{theorem:loss_of_differentiability}\ref{item:loss_linearization}. We consider the stationary point $x_0 = 0$. First, note that $m = \partial_x \partial_p H(0,0) = 2(\beta - 1) \neq 0$ and $c = \partial_p^2 H(0,0) = 4$. As the Hamiltonian is $C^\infty$, there is a $C^1$ linearization of the Hamiltonian flow at $(0,0)$ by Theorem \ref{theorem:existence_linearization}. Explicit calculation yields $I_0'(0) = 0$ and $I_0''(0) = (1-\alpha)$. The condition $I_0''(0) < - \frac{2m}{c} \wedge 0$ translates into $1 \vee \beta < \alpha$   and 
\eqref{eqn:time_for_vertical_in_linearized_sytem} into $t_1$ as in (ii). 

\begin{calculations}
Indeed \towil{aanpassen}
\begin{align*}
& - \frac{1}{2m} \log \left(1+\left(\frac{c}{2m}I_0''(0)\right)^{-1} \right) \\
& =  
- \frac{1}{8 (\beta - 1) \cosh(h) } \log \left(1+\left(\frac{8\cosh(h) }{8 (\beta - 1) \cosh(h)}(1- \alpha) \right)^{-1} \right) \\
& = 
- \frac{1}{8 (1- \beta ) \cosh(h) } \log \left(\frac{1-\alpha}{1-\alpha}+ \frac{\beta -1}{1-\alpha} \right) \\
& = 
- \frac{1}{8 (1- \beta ) \cosh(h) } \log \left(\frac{\beta -\alpha}{1-\alpha} \right) 
\end{align*}

\end{calculations}

%

	\smallskip

	For  \ref{item:cooling_down_low_temp}, first we make the following observations. 
	By Remark \ref{remark:derivatives_of_H_and_I_0} we see that $H(x,p) =0$ if and only if $p=0$ or $p = f_\beta(x)$, where 
	\begin{align}
	\label{eqn:f_beta}
	f_\beta(x) = \arctanh(x) - \beta x. 
	\end{align}
	As $\beta>1$ and $\arctanh'(x) = \frac{1}{1-x^2}$, the function $f_\beta$ intersects the $x$-axis at $3$ points, $m_-,0,m_+$.
	As $1<\alpha<\beta$, then function $I_0'$, being $f_\alpha$ (see \eqref{eqn:derivative_I_0}, $f_\alpha$ is as $f_\beta$ in \eqref{eqn:f_beta}), intersects the $x$-axis at $3$ points too, $y_-,0,y_+$ and $m_-<y_-<0<y_+<m_+$ (see Figure \ref{fig:arctanh}). 
		Moreover, the graph of $I_0'$ has a nonempty intersection with $B_-$ and $B_+$, where
\begin{align*}
B_- & = \{(x,p) \in (m_-,0) \times \R : H(x,p) <0 \}, \\ 
B_+ & = \{(x,p) \in (0,m_+) \times \R : H(x,p) <0 \}. 
\end{align*}	

First we will show that $\cG_t$ has no overhang at $\gamma_t(y_-)$, at $0$ and at $\gamma_t(y_+)$. 
Let $x<y_-$, then $I_0'(x) < 0$  and $H(x,I_0'(x)) \ne 0$, so that $P_t^{x,I_0'(x)}<0$ for all $t \ge 0$. With Lemma \ref{lemma:ordering_is_preserved_basic} this implies that $\gamma_t(x) < \gamma_t(y_-)$ for all $t$. 
On the other hand, $\Phi_t(x) \in B_1$ for all $x \in (y_-,0)$ and all $t$, whence Lemma \ref{lemma:ordering_is_preserved_basic} implies $\gamma_t(y_-) <\gamma_t(x)$. So that (by symmetry) we obtain for $x_1,x_2,x_3,x_4$ with $x_1 < y_- < x_2 <0 < x_3 < y_+ < x_4$ and all $t \ge 0$ (as long as $\gamma_t(x_i)$ exists)
\begin{align*}
\gamma_t(x_1) < \gamma_t(y_-) < \gamma_t(x_2) <0 < \gamma_t(x_3) < \gamma_t(y_+) < \gamma_t(x_4) . 
\end{align*}
So indeed, $\cG_t$ has no overhangs at $\gamma_t(y_-)$, $0$ and $\gamma_t(y_+)$. 

By Proposition \ref{proposition:graph_of_derivative_pushforward} we obtain that $I_t$ is non-differentiable at two points $x_- \in (m_-,0)$ and $x_+\in (0,m_+)$, as soon as  $\cG_t$ has an overhang in $B_-$ and $B_+$. 
The existence of a $t_2$ such that for $t \ge t_2$ this is the case follows from Theorem \ref{theorem:loss_of_differentiability}\ref{item:loss_rotating}, as soon as the conditions (i) and (ii) are satisfied for both $(m_1,m_2) = (m_-,0)$ and $(m_1,m_2)=(0,m_+)$. We show that this is indeed the case.

%
%
%

\begin{figure}
	\begin{center}
		\includegraphics[width=0.2\textwidth]{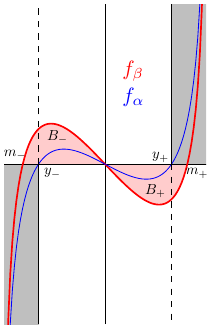}  
\caption{$f_\beta$, $f_\alpha$, their zero's and regions $B_{\pm}$.}
\label{fig:arctanh}
	\end{center}
\end{figure}
	
	By \eqref{eqn:partial_p_H_in_hyperbolic_functions} we see that $\partial_p H(x,0)=0$ if and only if $\arctanh(x) = \beta x$, whence it is clear that (i) of Theorem \ref{theorem:loss_of_differentiability}\ref{item:loss_rotating} is satisfied. 
	By \eqref{eqn:partial_p_partial_x_H_in_hyperbolic_functions} we see that $\partial_p \partial_x H(x,0) =0$ if and only if 
	\begin{align}
	\label{eqn:formula_for_partial_px_H_x0_equals_0}
	1- \beta + \beta x \tanh(\beta x) =0. 
	\end{align}
$f_\beta$ has a local maximum or minimum at $x$ if $f_\beta'(x)=0$, which is the case if and only if $1 - \beta + \beta x^2 =0$. 
At those points $f_\beta$  is not equal to zero. 
By definition of $m_\pm$ we have $1- \beta + \beta m_\pm \tanh(\beta m_\pm)= 1- \beta + \beta m_{\pm}^2$, from which we conclude that \eqref{eqn:formula_for_partial_px_H_x0_equals_0} does not hold for $x= m_{\pm}$. Similarly, we have that \eqref{eqn:formula_for_partial_px_H_x0_equals_0}  does not hold at $x=0$. This proves condition (ii). 
\end{proof}

\begin{proof}[Proof of Theorem \ref{theorem:loss_recovery_differentiability}]
First we make the following observation. 

\underline{Observation} 
Note that 
the Hamiltonian dynamics for the momentum is autonomous: $\dot{P} = \sinh(2P)$. 
Therefore $t_{x_1,p} = t_{x_2,p} <\infty$ for all $p \ne 0$ and $x_1,x_2\in [-1,1]$, and, if  $0<|p_1|<|p_2|$, then $t_{x_1,p_1} < t_{x_2,p_2}$. 
 By Proposition \ref{proposition:convex_at_infty_explict_functions} 
$H$ preserves order $[-1,1] \times \bR$.

\ref{item:unique_zero_derivative_then_later_diff}
Let $x_0$ be the unique solution to $I_0' = 0$. 
Then there exists a $\delta>0$ and a neighbourhood $U$ of $x_0$, such that $I_0'$ is strictly increasing on  $U$ and such that $I_0'(x) < \delta$ implies $x\in U$. 
Let $t^* = t_{0,\delta}$. As we saw in our observation this implies that $t_{z,I_0'(z)} <t^*$ for $z \notin U$ and thus $E_t $ as in Proposition \ref{proposition:hamiltonian_paths_and_graph_structures} is a subset of $U$ for $t \ge t^*$. 
As $H$ preserves order, this implies that $\gamma_t$ is strictly increasing on $E_t$, i.e., $\cG_t$ is a graph. 


\ref{item:loss_recovery_differentiability}
Fix $\alpha>1$. We write 
\begin{align*}
g_\theta (x) = \arctanh(x) - \alpha x - \theta. 
\end{align*}
There exists a $z>0$, such that $-z$ is a local maximum and $z$ is a local minimum of $g_\theta$ for all $\theta$. 
Let $\kappa = g_0(-z) =  - g_0(z)$ (note that the graph of $g_\kappa$ looks like the graph of $I_0'$, the dashed blue graph 
\includegraphics[height=8pt]{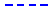}  
 in Figure \ref{fig:I_0_derivative_different_alphas}). 
For the rest of the proof we let $\theta$ be fixed and such that 
$
-\theta > \kappa >0. 
$ 
(By symmetry we could have also treated $\theta > \kappa $.)

	
Let $t_1 = t_{-z,g_\theta(-z)}$. 
We show in STEP 1 that there exists an overhang at a time before $t_1$, 
so that we can take $t_0$ to be be the infimum of all times at which there is an overhang. 
By the choice of $\theta$, $g_\theta = I_0'$ has a unique zero, which is the unique global minimiser of $I_0$. By \ref{item:unique_zero_derivative_then_later_diff} there exists a $t^*$ such that $I_t$ is $C^1$ and so that there is no overhang for $t \ge t^*$. We let $t_2$ be the supremum of all times at which there is an overhang. 
In STEP 2 we show that $I_t$ is non-differentiable for $t\in (t_0,t_1)$. 

\underline{STEP 1} 
By our observation, $t_{z,I_0'(z)}<t_1$ and thus $M:= \lim_{t\uparrow t_1} P_t^{z,g_\theta(z)}<\infty$. By Lemma \ref{lemma:delta_n} there exists a $u \in (0,1)$ such that $\gamma_t(z) \le u$ for all $t\le t_1$.
\begin{calculations}
(take $u= z_n$ for $n$ such that $q_n^+ \ge M$.)
\end{calculations}
As $P_t^{-z,g_\theta(-z)} \rightarrow \infty$ as $t \uparrow t_1$, by Lemma \ref{lemma:delta_n} again (or Lemma \ref{lemma:after_lifetime_in_boundary}) we have $\lim_{t\uparrow t_1} \gamma_t(-z) = 1$. 
Hence there exists a $t<t_1$ such that $\gamma_t(-z) >u \ge \gamma_t(z)$, i.e., there is an overhang. 

\underline{STEP 2}
Let $y<-z$ be the unique point such that $I_0'(y)=0$, 
and let $w> z$ be such that $g_\theta(w) = g_\theta(-z)$. 
We show that for all $t< t_1$, there is no overhang in $(-1,\gamma_t(y))$ and in $(\gamma_t(w),1)$, so that Proposition \ref{proposition:graph_of_derivative_pushforward} implies that $I_t$ is non-differentiable for $t\in (t_0,t_1)$. 
But this follows 
as $g_\theta$ is strictly increasing on $(-1,-z)$ and on $(z,1)$, and $g_\theta <0$ on $(-1,y)$ and $g_\theta > g_\theta(-z)$ on $(w,1)$ 
by the fact that $H$ preserves order. 

\end{proof}



\begin{remark}
\label{remark:t_1_strict_less_t_2}
Note that in our proof $t_1$ is strictly less than $t_2$; 
Indeed, for $t >t_1$ and $t < t_{z,g_\theta(z)}$ the set $\Phi_t(E_t \cap [-1,-z])$ connects $(-1,-\infty)$ with $(1,\infty)$ and the set $\Phi_t(E_t \cap (-z,1])$ is non-empty as  it contains $\Phi_t(z)$. 
\end{remark}

\appendix

\section{The verification of Assumption \ref{assumption:Hamiltonian_for_domain_extension} for the main examples} \label{appendix:assumptions_for_models}

In this appendix, we show that Hamiltonians that satisfy Assumption \ref{assumption:general_ones_on_R_and_CW} (a) in fact satisfy Assumption \ref{assumption:general_ones_on_R_and_CW} (b).

\subsection{Verification for the Curie-Weiss example}

\begin{lemma} \label{lemma:assumptions_jump_imply_assumptions_general1}
		Let $H$ satisfy Assumption  \ref{assumption:general_ones_on_R_and_CW}(a) for the $\pm 1$-space-model,  then $H$ satisfies Assumption \ref{assumption:Hamiltonian_for_domain_extension}.
\end{lemma}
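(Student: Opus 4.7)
The plan is to verify each of the five items (a)--(e) in Assumption \ref{assumption:Hamiltonian_for_domain_extension} directly from the explicit form
\[
H(x,p)=v_+(x)\bigl[e^{2p}-1\bigr]+v_-(x)\bigl[e^{-2p}-1\bigr],
\]
using the properties of $v_\pm$ listed in Assumption \ref{assumption:jump_rates}. For each item I describe the main computation but leave routine algebra implicit.

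\textbf{(a) Regularity.} Since $v_\pm$ extend to $C^2$ functions on some $(-1-\epsilon,1+\epsilon)$ by Assumption \ref{assumption:jump_rates}(ii), defining $\tilde H$ by the same formula gives a $C^2$ extension. For $\partial_p^2 H(x,p)=4v_+(x)e^{2p}+4v_-(x)e^{-2p}$: in the interior both $v_\pm>0$, and at the endpoints only one term survives ($v_+(-1)>0$ and $v_-(1)>0$), so $\partial_p^2 H>0$ on $[-1,1]\times\bR$.

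\textbf{(c) Quotients.} For a compact $K\subseteq(-1,1)$, $\min_{x\in K}\{v_+(x)\wedge v_-(x)\}>0$, so $H(x,p)$ grows exponentially in $|p|$ uniformly in $x\in K$, which gives the required limit. On the boundary, $H(-1,p)=v_+(-1)[e^{2p}-1]$ with $v_+(-1)>0$, so $H(-1,p)/p\to\infty$ as $p\to\infty$; symmetrically for $H(1,p)/(-p)$ as $p\to-\infty$.

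\textbf{(d) Argmin at the boundary.} Setting $\partial_p H=0$ gives $\argmin_p H(x,p)=\tfrac{1}{4}\log\bigl(v_-(x)/v_+(x)\bigr)$ in the interior. Since $v_-(x)\to 0$ while $v_+(x)\to v_+(-1)>0$ as $x\to -1$, this argmin tends to $-\infty$; the other limit is analogous.

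\textbf{(e) Preserved quadrants.} Compute
\[
\partial_x H(y,q)=v_+'(y)(e^{2q}-1)+v_-'(y)(e^{-2q}-1).
\]
Since $v_+'(1)<0$, by continuity $v_+'<0$ on $[1-\delta,1]$ for some $\delta>0$; choose $q_n^+\to\infty$ so large that $v_+'(y)(e^{2q_n^+}-1)$ dominates the second term for all $y\in[1-\delta,1]$. Taking $y_n^+\uparrow 1$ with $y_n^+\geq 1-\delta$ and $q_n^+$ large enough that $\argmin_p H(y_n^+,p)\leq q_n^+$ gives $\partial_p H(y_n^+,q)\geq 0$ for $q\geq q_n^+$ and $-\partial_x H(y,q_n^+)\geq 0$ for $y\geq y_n^+$. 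The left-hand sequence is constructed symmetrically using $v_-'(-1)>0$.

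\textbf{(b) Lagrangian growth.} This is the step that requires most care. Solving $\partial_p H(x,p)=v$ for $p$ via the quadratic $v_+(x)u^2-(v/2)u-v_-(x)=0$ with $u=e^{2p}$ yields the explicit minimiser
\[
p^\ast(x,v)=\tfrac{1}{2}\log\!\Bigl(\tfrac{v/2+\sqrt{v^2/4+4v_+(x)v_-(x)}}{2v_+(x)}\Bigr),
\]
so that $\cL(x,v)=vp^\ast(x,v)-H(x,p^\ast(x,v))$. For a compact $K\subseteq(-1,1)$ we have $v_\pm(x)\geq c_K>0$, so $|p^\ast(x,v)|\sim\tfrac{1}{2}\log|v|$ as $|v|\to\infty$, uniformly in $x\in K$. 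This yields $\cL(x,v)\sim\tfrac{|v|}{2}\log|v|$, so one can take $\theta_K(r)=(1+r)\log(1+r)$; the sub-additivity assumption (ii) is a routine property of this function. For assumption (iv), standard Legendre-transform identities give $\partial_v\cL(x,v)=p^\ast(x,v)$ and $\partial_x\cL(x,v)=-\partial_x H(x,p^\ast(x,v))$; both are $O(\log|v|)$ and $O(v)$ respectively uniformly on $K$, hence bounded by a constant times $\theta_K(|v|)$.

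The main obstacle is the verification of (b), because one must extract enough uniformity in $x\in K$ from the explicit Legendre transform; the other items are direct computations using the properties of $v_\pm$.
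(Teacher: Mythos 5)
Your proposal follows essentially the same route as the paper's proof: handle (a), (c), (d), (e) directly from the structure of $H$ and Assumption \ref{assumption:jump_rates}, and invest the real work in item (b) via the explicit Legendre transform. Your treatments of (a), (c), (d), (e) are correct and match the paper's in spirit (the paper checks (d) qualitatively from the sign of $\partial_p H(\pm 1, p)$ rather than computing the argmin explicitly, but both are fine, and your formula $\argmin_p H(x,p) = \tfrac{1}{4}\log(v_-(x)/v_+(x))$ is right).

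There is, however, a genuine gap in (b). You correctly derive that $\cL(x,v) \sim \tfrac{|v|}{2}\log|v|$ as $|v|\to\infty$, uniformly on $K$, but you then propose $\theta_K(r)=(1+r)\log(1+r)$, and you never check condition (iii) of Assumption \ref{assumption:Hamiltonian_for_domain_extension}(b), which reads $\cL(x,v)\geq\theta_K(|v|)-c_K$. With your choice this fails: $\theta_K(|v|)\sim |v|\log|v|$ while $\cL\sim\tfrac{|v|}{2}\log|v|$, so $\cL - \theta_K \to -\infty$ and no finite $c_K$ works. Unlike (iv), which comes with a free multiplicative constant $C_K$, condition (iii) has no multiplier, so the leading coefficient of $\theta_K$ matters and must sit strictly below $\tfrac12$. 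The paper uses $\theta_K(r)=\tfrac14\max\{r\log r,1\}$, which fixes this; any $\theta_K(r)=c\,r\log r$ (with cutoff near $0$) and $c<\tfrac12$ would also do. In short: (iii) caps $\theta_K$ from above and (iv) pushes it from below, and your $\theta_K$ sits above the cap. The rest of your (b) argument — the asymptotics for $p^\ast$, the identities $\partial_v\cL=p^\ast$ and $\partial_x\cL=-\partial_x H(x,p^\ast)$, and the estimates $O(\log|v|)$ and $O(|v|)$ — is sound and is what makes (iv) go through once $\theta_K$ is corrected.
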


For convenience in the proof, we recall part of \cite[Corollary A.2.7]{CaSi04}.

\begin{lemma}
	\label{lemma:dual_convexity}
	$H$ is as in \eqref{eqn:Hamiltonian_CWmodel} then for all $x\in (-1,1)$ and $v\in \R$ there exists a unique $p$ for which $v= \partial_p H(x,p)$, 
$	\cL(x,v) = p\partial_p H(x,p) - H(x,p)$, 
$    \partial_x	\cL(x, v) = -\partial_x H(x,p)$, 
and $	\partial_v \cL(x,v) = p . $ 
\end{lemma}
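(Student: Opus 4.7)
The plan is to verify the three identities by exploiting the strict convexity of $p \mapsto H(x,p)$ and surjectivity of $\partial_p H(x,\cdot)$, so that the Legendre transform is achieved at a unique, smoothly varying maximiser $p = p(x,v)$, after which the identities follow from the envelope principle.

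First I would establish the existence and uniqueness of the maximiser. From \eqref{eqn:Hamiltonian_CWmodel} one computes directly
\begin{align*}
\partial_p H(x,p) &= 2 v_+(x) e^{2p} - 2 v_-(x) e^{-2p}, \\
\partial_p^2 H(x,p) &= 4 v_+(x) e^{2p} + 4 v_-(x) e^{-2p}.
\end{align*}
For $x \in (-1,1)$ Assumption~\ref{assumption:jump_rates}(1) gives $v_+(x) > 0$ and $v_-(x) > 0$, hence $\partial_p^2 H(x,p) > 0$ for all $p$, so $p \mapsto H(x,p)$ is strictly convex, and $\partial_p H(x,\cdot)$ is a strictly increasing bijection of $\mathbb{R}$ onto $\mathbb{R}$ (its limits at $\pm\infty$ being $\pm\infty$). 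Thus for every $v \in \mathbb{R}$ there is a unique $p = p(x,v) \in \mathbb{R}$ with $v = \partial_p H(x,p(x,v))$.

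Next, since $p \mapsto pv - H(x,p)$ is strictly concave with derivative $v - \partial_p H(x,p)$ vanishing precisely at $p(x,v)$, this $p(x,v)$ is the unique maximiser in the definition of $\cL$, giving
\begin{equation*}
\cL(x,v) = p(x,v)\, v - H(x,p(x,v)) = p(x,v)\, \partial_p H(x,p(x,v)) - H(x,p(x,v)),
\end{equation*}
which is the first identity.

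For the partial derivatives, I would invoke the implicit function theorem: since $H$ is $C^2$ and $\partial_p^2 H(x,p) > 0$ on $(-1,1) \times \mathbb{R}$, the map $(x,v) \mapsto p(x,v)$ defined by the equation $v = \partial_p H(x,p(x,v))$ is $C^1$ on $(-1,1) \times \mathbb{R}$. Differentiating $\cL(x,v) = p(x,v) v - H(x, p(x,v))$ in $v$ and in $x$ and collecting the terms proportional to $\partial_v p$ and $\partial_x p$ yields
\begin{align*}
\partial_v \cL(x,v) &= p(x,v) + \bigl(v - \partial_p H(x,p(x,v))\bigr)\, \partial_v p(x,v) = p(x,v), \\
\partial_x \cL(x,v) &= -\partial_x H(x,p(x,v)) + \bigl(v - \partial_p H(x,p(x,v))\bigr)\, \partial_x p(x,v) = -\partial_x H(x,p(x,v)),
\end{align*}
where in each case the second term vanishes by the defining relation for $p(x,v)$. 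This is the envelope calculation, and it completes the verification of all three identities. There is no real obstacle here beyond checking that the maximiser is unique and smooth on the relevant domain, both of which are immediate from the explicit positivity of $v_\pm$ on $(-1,1)$.
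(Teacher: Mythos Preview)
Your proof is correct. The paper does not actually prove this lemma but simply recalls it as an instance of \cite[Corollary A.2.7]{CaSi04}; your argument is precisely the standard Legendre-duality computation underlying that result, specialised to the explicit form \eqref{eqn:Hamiltonian_CWmodel}, so there is no substantive difference in approach.
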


\begin{proof}[Proof of Lemma \ref{lemma:assumptions_jump_imply_assumptions_general1}]
	
	\ref{item:assumption_extended_regularity} follows from Assumption \ref{assumption:jump_rates}(b). \ref{item:assumptions_quotient_H}  follows directly from Assumption \ref{assumption:jump_rates}(a).  For \ref{item:assumptions_drift_boundary}, note that $p(x) := \argmin_p H(x,p)$ satisfies $\partial_p H(x,p) = 0$. By Assumption \ref{assumption:jump_rates}(a), we find $\partial_p H(-1,p) > 0$ and $\partial_p H(1,p) < 0$ for all $p$, implying by the continuity of $\partial_p H$ that \ref{item:assumptions_drift_boundary} holds.
	
	In addition, \ref{item:assumptions_convergent_quadrants} follows by an explicit computation using Assumption \ref{assumption:jump_rates}(c). 
	
	We are left to verify \ref{item:assumptions_theta}. Pick some some compact set $K \subseteq (-1,1)$. 

	We consider the function $\theta_K = \theta$ with $\theta(r) = \frac{1}{4} \max\{r \log r, 1 \}$. Property (i) is immediate. 
	
	We proceed with the proof of (ii). 
	Let $M\ge 0$. Note that 
	\begin{align*}
	\sup \{ \theta(r+m) : r \in [0,M \vee 2], m\in [0,M]\} <\infty. 
	\end{align*}
	Hence it is sufficient for (ii) to show that $\theta(r+m)/ \theta(r)$ is bounded from above for $r\ge M\vee 2$ and $m\in [0,M]$. 
	For such $m$ and $r$ we have $m\le M \le r$ and thus 
	\begin{align*}
	\frac{\theta(r+m)}{\theta(r)} = \frac{r+m}{r} \frac{\log(r+m)}{\log r} \le 2 \frac{\log(r+m)}{\log r}.
	\end{align*} 
	The latter ratio is indeed bounded. 
	
	\smallskip
	
	For (iii), by Lemma \ref{lemma:dual_convexity}, it suffices to show the existence of a constant $c$ such that 
	\begin{align}
	\label{eqn:lower_bound_sufficient_for_iii}
	p \partial_p H(x,p) - H(x,p) \ge \theta(|\partial_p H(x,p)|) - c \qquad \mbox{  for all }x\in K, p\in \R. 
	\end{align}
	
	For (iv), by Lemma \ref{lemma:dual_convexity}, it suffices to show the existence of a constant $c$ such that  
	\begin{align}
	|\partial_x H(x,p)| + |p| \le c \theta(|\partial_p H(x,p)|) \qquad \mbox{  for all }x\in K, p\in \R. 
	\end{align}
	
	We will consider the following computations and estimations
	\begin{align*}
	\partial_p H(x,p) &= v_+(x) 2e^{2p} - v_-(x) 2e^{-2p}, \\
	\partial_x H(x,p) &= v_+'(x) \left[e^{2p} - 1\right] + v_-'(x)\left[e^{-2p} -1\right],
	\end{align*}
	By Assumption \ref{assumption:jump_rates}(a) there exists $0<a<b$  such that 
	\begin{align*}
	v_-(x), v_+(x) \in [a,b] \qquad \mbox{ and } \qquad
	|v_-'(x)|, |v_+'(x)| \le b \qquad \mbox{ for all } x\in K,
	\end{align*}
	Then 
	\begin{align}
	| \partial_x H(x,p)| \le 
	\label{eqn:bound_H_x}
	\begin{cases}
	b (e^{2p} -1) +b = b e^{2p} &  \mbox{ if } p \ge 0, \\ 
	b+ b (e^{-2p} -1) = b e^{-2p} & \mbox{ if } p \le 0. 
	\end{cases}
	\end{align}

	Set $\psi(p) = 2p e^{2p} - e^{2p} + 1$. Then $\psi \geq 0$, and 
	\begin{align}
	\notag p \partial_p H(x,p) - H(x,p) & = v_+(x)\psi(p) + v_-(x) \psi(-p) \\
	\label{eqn:ineq_pH_p_min_H} & \geq \begin{cases}
	\psi(p)  v_+(x)  & \text{if } p \geq 0, \\
	\psi(-p)  v_-(x)  & \text{if } p \leq 0.
	\end{cases}
	\end{align}
	Let $p_0 > 0$ be such that 
	\begin{align*}
	a e^{2p} \ge 2, \qquad 
	a e^{2p} \ge b e^{-2p} + \frac{a}{2} e^{2p} \qquad \mbox{ for } p \ge p_0. 
	\end{align*}
	So that for all $x\in K$ (e.g., $v_+(x) e^{2p} \ge b e^{2p} \ge a e^{-2p} +\frac{b}{2} e^{2p} \ge v_-(x) e^{-2p}+ \frac{b}{2} e^{2p} \ge v_-(x) e^{-2p}$ for $p \ge p_0$)
	\begin{align}
	| \partial_p H(x,p)| 
	\label{eqn:ineq_H_p_le}
	& \le 
	\begin{cases}
	2 e^{2p} v_+(x) & \mbox{ if } p \ge p_0 \\
	2 e^{-2p} v_-(x) & \mbox{ if } p \le  - p_0
	\end{cases}, \\
	| \partial_p H(x,p)| 
	\label{eqn:ineq_H_p_ge}
	& \ge a e^{2|p|} \qquad \mbox{ if } |p| \ge p_0. 
	\end{align}
	Note that \eqref{eqn:ineq_H_p_ge} also implies $| \partial_p H(x,p)| \ge 2$ for $|p|\ge p_0$ and thus that $\theta(r) = r\log r$ for $r = | \partial_p H(x,p)| $. 
	
	Using \eqref{eqn:ineq_pH_p_min_H} and \eqref{eqn:ineq_H_p_le} we obtain the following lower bounds
	\begin{align*}
	& | p \partial_p H(x,p) - H(x,p)| - \theta ( | \partial_p H(x,p)| ) \\
	&  \ge 
	\begin{cases}
	\psi(p) v_+(x) - \tfrac12 e^{2p}v_+(x) \log ( 2e^{2p}v_+(x) ) \\
	\qquad \qquad \qquad  \ge a\Big( \psi(p) - p e^{2p} - \tfrac12 e^{2p} \log (2 b) \Big) \qquad \mbox{ if } p \ge p_0, \\
	\psi(-p)  v_-(x) - \tfrac12 e^{-2p}v_-(x) \log ( 2e^{-2p} v_-(x) ) \\
	\qquad \qquad \qquad  \ge a\Big( \psi(-p) + p e^{-p} - \tfrac12 e^{-2p} \log (2b) \Big) \qquad \mbox{ if } p \le -p_0. 
	\end{cases}
	\end{align*}
	As $p \mapsto \psi(p) - p e^{2p} - \tfrac12 e^{2p} \log (2 b)$ is bounded from below for $p\in [0,\infty)$, this implies that there exists a $c>0$ such that the inequality in\eqref{eqn:lower_bound_sufficient_for_iii} holds for $p$ with $|p|\ge p_0$. 
	
	As \eqref{eqn:ineq_pH_p_min_H} implies that $p \partial_p H(x,p) - H(x,p) \ge 0$ and as $\theta ( |\partial_p H(x,p)|)$ is bounded from above for $x\in K$ and $|p|\le p_0$, we can choose $c$ such that \eqref{eqn:lower_bound_sufficient_for_iii} holds for all $p \in \R$.

	By \eqref{eqn:bound_H_x}  we have 
	$| \partial_x H(x,p)| + |p|
	\le b e^{2|p|} + |p|$ for all $x\in K$ and $p\in\R$.  
	
	To conclude (iv), by \eqref{eqn:ineq_H_p_ge} it is sufficient (and not difficult) to see that there exists a $c$ such that
	$be^{2|p|} +|p| \le c \frac12 e^{2|p|} (2|p| + \log (2a)) $ in case $ |p|\ge p_0$ 
	and $be^{2|p|} +|p| \le c$ in case $|p|\le p_0$ (as $\theta \ge 1$). 
\end{proof}

\begin{lemma} \label{lemma:assumptions_jump_imply_assumptions_general2}
Let $H$ satisfy Assumption  \ref{assumption:general_ones_on_R_and_CW}(a) for the $\pm 1$-space-model,  then $H$ satisfies Assumption \ref{assumption:trajectories_in_interior}. 
\end{lemma}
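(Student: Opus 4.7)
The plan is to construct an explicit Lyapunov function $S$ from the jump rates $v_+, v_-$ and verify the four conditions. Define $S : [-1,1] \to [0,\infty]$ by
\begin{equation*}
S(x) := \int_{x_0}^x \frac{1}{2}\log\!\left(\frac{v_-(y)}{v_+(y)}\right)\dd y + C,
\end{equation*}
where $x_0 \in (-1,1)$ is chosen such that $v_+(x_0) = v_-(x_0)$ (so $S'(x_0)=0$ is a minimum) and $C$ is chosen so that $\inf S = 0$. The first task is to check that $S$ is well-defined on $[-1,1]$, i.e.\ that the integrand is integrable near $\pm 1$: using Assumption~\ref{assumption:jump_rates}(c), $v_+(x) \sim |v_+'(1)|(1-x)$ near $x=1$ and $v_-(x) \sim v_-'(-1)(x+1)$ near $x=-1$, so $S'(x) \sim \pm\frac{1}{2}\log|1\mp x|$ at the boundary, which is integrable. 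Hence $S$ takes finite values on $[-1,1]$, is $C^2$ on $(-1,1)$ by Assumption~\ref{assumption:jump_rates}(b), and is nonnegative by construction.

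Condition (b) of Assumption~\ref{assumption:trajectories_in_interior} reduces to a direct algebraic verification: with $a := e^{2S'(x)} = v_-(x)/v_+(x)$ one computes
\begin{equation*}
H(x,S'(x)-p) = v_+(x)\bigl[a\,e^{-2p}-1\bigr] + v_-(x)\bigl[a^{-1}e^{2p}-1\bigr] = v_+(x)\bigl[e^{2p}-1\bigr] + v_-(x)\bigl[e^{-2p}-1\bigr] = H(x,p).
\end{equation*}
For condition (a), along a solution of $\dot{x}=\partial_p H(x,0) = 2(v_+(x)-v_-(x))$, one has
\begin{equation*}
\tfrac{\dd}{\dd t} S(x(t)) = S'(x)\,\partial_p H(x,0) = \bigl(v_+(x)-v_-(x)\bigr)\log\!\bigl(v_-(x)/v_+(x)\bigr) \leq 0,
\end{equation*}
since the two factors always have opposite sign, which yields the Lyapunov property.

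The remaining work is the boundary analysis for (c) and (d). By the Fenchel dual computation (the minimum of $p\mapsto H(x,p)$ is attained at $p=\frac{1}{4}\log(v_-/v_+)$), one obtains the explicit formula
\begin{equation*}
\cL(x,0) = \bigl(\sqrt{v_+(x)} - \sqrt{v_-(x)}\bigr)^{2},
\end{equation*}
which is continuous on $[-1,1]$. Differentiating and using $v_-(-1)=0$, $v_-'(-1)>0$, $v_+(-1)>0$ gives the leading asymptotic
\begin{equation*}
\partial_x\cL(x,0) \sim -\sqrt{\tfrac{v_+(-1)\,v_-'(-1)}{x+1}} \qquad (x\downarrow -1),
\end{equation*}
so $\cL(x,0)$ is strictly decreasing in a neighbourhood of $-1$; the symmetric analysis at $x=1$ handles the other half of (c). For (d), the same local expansion yields $S'(x) \sim \tfrac{1}{2}\log(x+1)$ as $x\downarrow -1$, and L'H\^opital gives
\begin{equation*}
\lim_{x\downarrow -1}\frac{\cL(x,0)-\cL(-1,0)}{S(x)-S(-1)} = \lim_{x\downarrow -1}\frac{\partial_x\cL(x,0)}{S'(x)} = \lim_{x\downarrow -1}\frac{-C/\sqrt{x+1}}{\tfrac{1}{2}\log(x+1)} = +\infty,
\end{equation*}
since the numerator diverges polynomially while the denominator diverges only logarithmically. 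The analysis at $x=1$ is identical.

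The main obstacle is the boundary limit in (d): one needs to be careful that both $\cL(x,0)-\cL(-1,0)$ and $S(x)-S(-1)$ vanish and that the resulting $0/0$ limit produces $+\infty$ (not a finite number or $0$). The algebraic checks for (a) and (b) are essentially free once $S$ is correctly defined, and (c) follows from the same asymptotic computation that drives (d).
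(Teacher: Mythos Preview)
Your proof plan is correct and follows essentially the same approach as the paper: the same choice $S'(x)=\tfrac12\log\bigl(v_-(x)/v_+(x)\bigr)$, the same algebraic verification of (a) and (b), the explicit formula $\cL(x,0)=(\sqrt{v_+}-\sqrt{v_-})^2$, and the same L'H\^opital/Cauchy mean-value argument for (d). Your write-up is in fact slightly more careful than the paper's in that you explicitly check integrability of $S'$ near the boundary (so that $S(\pm1)$ is finite and the ratio in (d) is well-defined), which the paper leaves implicit.
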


\begin{proof}
	Assumption \ref{assumption:Hamiltonian_for_domain_extension} has been verified in Lemma \ref{lemma:assumptions_jump_imply_assumptions_general1}. 
	We consider $S$ with $S'(x) = \frac{1}{2} \log \frac{v_-(x)}{v_+(x)}$. Note that $\lim_{x \downarrow -1} S'(x) = - \infty$ and $\lim_{x \uparrow 1} S'(x) = \infty$ because of Assumption \ref{assumption:jump_rates} (a). The integration constant of $S$ is chosen by choosing the infimum of $S$ equal to $0$. As $v_+,v_-$ are twice-continuously differentiable and positive on the interior, also $S$ is twice continuously differentiable on the interior.
	
	We leave the calculations for Assumptions \ref{assumption:trajectories_in_interior}(a) and (b) for the reader (for (a) one computes that $\partial_p H(x,0) S'(x) \le 0$ for all $x\in (-1,1)$). 
	
	By (b), $\partial_p H(x,p) = - \partial_p H(x, S'(x)-p)$, thus $\partial_p H(x, \frac12 S'(x))=0$ and so
	\begin{equation*}
	\cL(x,0) = H\left(x, \frac{1}{2} S'(x)\right). 
	\end{equation*}
	
	With this we have 
	\begin{equation*}
	\partial_y\cL(y,0) = \left(\frac{v_+'(y)}{\sqrt{v_+(y)}} - \frac{v_-'(y)}{\sqrt{v_-(y)}} \right)\left(\sqrt{v_+(y)}- \sqrt{v_-(y)}  \right).
	\end{equation*}
	By Assumption \ref{assumption:jump_rates} (a) and (c) it then follows that $\partial_y \cL(y,0)$ converges to $-\infty$ at $-1$ and to $\infty$ at $1$. This shows that (c) is satisfied. 
	
	For (d) we consider the $-1$ boundary, the other case follows similarly. By Cauchy's mean-value theorem, there exists $y \in (-1,x)$ such that
	\begin{equation*}
	\frac{\cL(x,0) - \cL(-1,0)}{S(x) - S(-1)} = \frac{\partial_y \cL(y,0)}{\partial_y S(y)}.
	\end{equation*}
	Hence (using Cauchy's mean-value theorem) 
	\begin{align*}
	\lim_{y\rightarrow -1} \frac{\partial_y \cL(y,0)}{\partial_y S(y)}
	& = v_-'(-1) \sqrt{v_+(-1)} \lim_{y\rightarrow -1} 
	\frac{ - v_-(y)^{-\frac12} }{\tfrac{1}{2} \log v_-(y) - \tfrac{1}{2} \log v_+(y)} \\
	& = v_-'(-1) \sqrt{v_+(-1)} \lim_{y\rightarrow -1} 
	\frac{ v_-'(y) v_-(y)^{-\frac32} }{ v_-' (y) v_-(y)^{-1} -  v_+'(y) v_+(x)^{-1}} = \infty. 
	\end{align*}	
\end{proof}

\subsection{Verification for the Brownian example}

\begin{lemma}
	Let $H$ satisfy Assumption  \ref{assumption:general_ones_on_R_and_CW}(a) for the $\R$-space-model, then $H$ satisfies Assumption \ref{assumption:Hamiltonian_for_domain_extension}.
\end{lemma}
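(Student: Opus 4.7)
The plan is to verify each of the five conditions (a)--(e) of Assumption \ref{assumption:Hamiltonian_for_domain_extension} for the Hamiltonian $H(x,p) = \tfrac{1}{2}p^2 - pW'(x)$ together with its Legendre dual $\cL(x,v) = \tfrac{1}{2}(v+W'(x))^2$, which is obtained from the optimiser $p = v + W'(x)$ in the Legendre transform. Since $\bK = \bR$, no extension to a strictly larger interval is required in (a); the regularity $H \in C^2$ follows from $W \in C^3$ under Assumption \ref{assumption:diffusion_drift}, and $\partial_p^2 H = 1 > 0$. For (c), the crude estimate $H(x,p)/|p| \geq |p|/2 - |W'(x)|$ gives superlinearity uniformly on each compact set $K \subseteq \bR$, since $W'$ is bounded on $K$.

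For (b) I would take $\theta_K(r) := 1 + r^2/4$. Condition (i) is immediate and (ii) follows from $(r+m)^2 \leq 2r^2 + 2M^2$ for $m \in [0,M]$, giving $\theta_K(r+m) \leq k_M(1+\theta_K(r))$ with $k_M$ depending only on $M$. For (iii), completing the square yields
\begin{equation*}
\cL(x,v) - \tfrac{1}{4}v^2 \;=\; \bigl(\tfrac{v}{2} + W'(x)\bigr)^2 - \tfrac{1}{2}W'(x)^2 \;\geq\; -\tfrac{1}{2}\sup_{x\in K}W'(x)^2,
\end{equation*}
hence $\cL(x,v) \geq \theta_K(|v|) - c_K$ for a suitable $c_K$ depending on $\sup_K|W'|$. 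For (iv) compute $\partial_v \cL(x,v) = v + W'(x)$ and $\partial_x \cL(x,v) = (v+W'(x))\,W''(x)$. Both are bounded in absolute value by $C_K'(1+|v|)$ on $K$ (using boundedness of $W'$ and $W''$ on compact sets), and the trivial bound $1+|v| \leq 4\,\theta_K(|v|)$ delivers the required estimate with $C_K := 4C_K'$.

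For (d) the strict convexity of $p \mapsto H(x,p)$ gives $\argmin_p H(x,p) = W'(x)$, so (d) reduces to $\lim_{x\to\pm\infty}W'(x) = \pm\infty$. For (e), since $\partial_p H(y,q) = q - W'(y)$ and $-\partial_x H(y,q) = q\, W''(y)$, I would construct $y_n^+ \to \infty$, $q_n^+ \to \infty$ satisfying $q_n^+ \geq \sup_{y\geq y_n^+} W'(y)$ and $W''(y) \geq 0$ for all $y \geq y_n^+$, and symmetrically a sequence $(y_n^-,q_n^-)$; the sign conditions on $\partial_p H$ and $-\partial_x H$ then follow directly.

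The main obstacle is that the coercivity of $W'$ at $\pm\infty$ required for (d) and the eventual non-negativity of $W''$ used in (e) are not automatic consequences of the bare hypotheses in Assumption \ref{assumption:diffusion_drift}; they do however hold for the motivating class $W(x) = \sum_{i=1}^{2k} a_i x^i$ with $a_{2k}>0$ that is highlighted after that assumption, where $W'(x) \sim 2k\,a_{2k} x^{2k-1}$ and $W''(x) \sim 2k(2k-1)\,a_{2k} x^{2k-2}$ at infinity. I would therefore carry out (d) and (e) under this polynomial-type growth, which is the setting in which Assumption \ref{assumption:general_ones_on_R_and_CW}(a) is actually applied in the examples of Section \ref{section:applications_of_analyzing_Hamiltonian_flow}. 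This parallels the role that the explicit behaviour of $v_\pm$ near $\pm 1$ in Assumption \ref{assumption:jump_rates}(a),(c) plays in the $\pm 1$-space setting treated in Lemma \ref{lemma:assumptions_jump_imply_assumptions_general1}.
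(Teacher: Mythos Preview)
Your approach is essentially the paper's: the paper's proof is a one-line remark saying to follow the verification for the $\pm 1$-space-model (Lemma \ref{lemma:assumptions_jump_imply_assumptions_general1}) with $\theta(r) = c\max\{|r|^2,1\}$, which is your $\theta_K(r) = 1 + r^2/4$ up to inessential constants. Your treatment of items (a), (b), (c) is exactly what that terse remark unpacks to.

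You have, moreover, put your finger on a genuine gap that the paper's proof glosses over. Conditions (d) and (e) of Assumption \ref{assumption:Hamiltonian_for_domain_extension} do \emph{not} follow from the bare hypotheses of Assumption \ref{assumption:diffusion_drift}: the one-sided Lipschitz condition on $-W'$ together with $W \in C^3$ does not force $W'(x) \to \pm\infty$ as $x \to \pm\infty$ (take $W \equiv 0$, or $W(x) = -x^2/2$, for counterexamples). The paper simply does not discuss this point. Your decision to carry out (d) and (e) under the polynomial growth $W(x) = \sum_{i=1}^{2k} a_i x^i$ with $a_{2k} > 0$ is therefore the honest way to close the argument, and it covers all the concrete applications in Section \ref{section:applications_of_analyzing_Hamiltonian_flow}.
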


\begin{proof}
	The proof follows the same lines as the proof of Lemma \ref{lemma:assumptions_jump_imply_assumptions_general1} using $\theta(r) = c \max\{|r|^2,1\}$. The calculations in this setting are significantly easier.
\end{proof}

\begin{lemma}
	Let $H$ satisfy Assumption  \ref{assumption:general_ones_on_R_and_CW}(a) for the $\R$-space-model, then $H$ satisfies Assumption \ref{assumption:compact_level_sets_R}.
\end{lemma}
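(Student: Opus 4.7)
The plan is to exhibit an explicit Lyapunov--type function. I will take
\begin{equation*}
\Upsilon(x) := \log(1+x^2),
\end{equation*}
which is $C^\infty$ and satisfies $\Upsilon(x)\to\infty$ as $|x|\to\infty$, so all its sublevel sets are closed and bounded in $\R$, hence compact. Its derivative $\Upsilon'(x)=\frac{2x}{1+x^2}$ is uniformly bounded (by $1$ in absolute value) and decays like $2/|x|$ at infinity; this decay is the point, since it will compensate the possibly linear growth of $W'$.

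The key computation is to bound
\begin{equation*}
H(x,\Upsilon'(x))=\tfrac12\Upsilon'(x)^2-\Upsilon'(x)W'(x)
\end{equation*}
from above. The first term is at most $\tfrac12$. For the second term, the one-sided Lipschitz assumption on $-W'$ yields, after setting one of the two variables equal to $0$ (splitting into the cases $x>0$ and $x<0$), the single inequality
\begin{equation*}
-xW'(x)\le -xW'(0)+Mx^2\qquad (x\in\R).
\end{equation*}
Multiplying by $\frac{2}{1+x^2}$ and using $|x|/(1+x^2)\le 1/2$ together with $x^2/(1+x^2)\le 1$ gives
\begin{equation*}
-\Upsilon'(x)W'(x)\le |W'(0)|+2M,
\end{equation*}
uniformly in $x\in\R$. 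Combining the two estimates produces a finite bound on $\sup_{x\in\R} H(x,\Upsilon'(x))$, which is exactly Assumption~\ref{assumption:compact_level_sets_R}.

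There is no real obstacle here: the only delicate point is the passage from the one-sided Lipschitz bound on $-W'$ (which controls $W'$ from below for $x>0$ and from above for $x<0$) to the two-sided estimate on $-xW'(x)$ above, and this is just a sign-tracking exercise that I carried out in the previous paragraph. The remaining verifications (continuous differentiability of $\Upsilon$ and compactness of sublevel sets) are immediate.
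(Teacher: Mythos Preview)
Your proof is correct and uses essentially the same Lyapunov function as the paper (the paper takes $\Upsilon(x)=\log(1+\tfrac12 x^2)$, which differs from yours only by a harmless constant). The paper in fact does not carry out the computation but simply cites \cite[Lemma 3.4]{CoKr17}; you have supplied the explicit estimate, including the sign-tracking from the one-sided Lipschitz condition to the bound $-xW'(x)\le -xW'(0)+Mx^2$, which is exactly what that external lemma does.
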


\begin{proof}
	By \cite[Lemma 3.4]{CoKr17}, the one-sided Lipschitz property of $W'$ implies that $\Upsilon(x) = \log(1+\frac{1}{2}x^2)$ is appropriate.
\end{proof}



\bibliography{KraaijBib}{}
\bibliographystyle{alpha}


\end{document}